\documentclass[reqno,oneside,12pt]{amsart}

\usepackage[T1]{fontenc}
\usepackage{times, mathptm}
\usepackage{amscd, color, enumerate}
\usepackage[svgnames]{xcolor}
\usepackage{amssymb}
\usepackage{mathtools}
\usepackage{stackrel}

\usepackage[colorlinks=true, pdfstartview=FitV, linkcolor=blue, citecolor=blue, urlcolor=blue]{hyperref}

\title{Solvable Automorphism Groups of Varieties}

\author[Cantat et al]{Serge Cantat, Hanspeter Kraft, \\Andriy Regeta, Immanuel van Santen}

\address{Univ. Rennes, CNRS, IRMAR - UMR 6625,\newline\indent
35000 Rennes, Frances}
\email{serge.cantat@univ-rennes1.fr}
\address{Departement Mathematik und Informatik,\newline\indent
Universit\"at Basel,  Spiegelgasse~1, CH-4051 Basel}
\email{hanspeter.kraft@unibas.ch}
\address{Dipartimento di Matematica ``Tullio Levi-Civita'',\newline\indent
Universit\`a di Padova, Via Trieste 63, I-35121 Padova}
\email{andriyregeta@gmail.com}
\address{Departement Mathematik und Statistik,\newline\indent
Universit\"at Bern, Sidlerstrasse 5, CH-3012 Bern}
\email{immanuel.van.santen@gmail.com}
\date{2024-2026}

\theoremstyle{plain}
\newtheorem{thm}{Theorem}[section]
\newtheorem{mthm}{Theorem}

\newtheorem{mcor}[mthm]{Corollary}
\newtheorem{ramthm}{Ramanujam's Theorem\!}

\newtheorem{prop}[thm]{Proposition} 
\newtheorem{lem}[thm]{Lemma}
\newtheorem*{lem*}{Lemma}

\newtheorem{cor}[thm]{Corollary}

\theoremstyle{definition}
\newtheorem{defn}[thm]{Definition}
\newtheorem{exa}[thm]{Example}
\newtheorem{exas}[thm]{Examples}
\newtheorem{rem}[thm]{Remark}
\newtheorem{cons}[thm]{Construction}

\newcommand{\into}{\hookrightarrow}

\newcommand{\ZZ}{{\mathbb Z}}
\newcommand{\NN}{{\mathbb N}}
\newcommand{\CC}{{\mathbb C}}
\newcommand{\RR}{{\mathbb R}}
\newcommand{\An}{{\mathbb A}^{n}}
\newcommand{\Aone}{{\mathbb A}^{1}}
\newcommand{\Atwo}{{\mathbb A}^{2}}
\newcommand{\Ast}{{\mathbb A}^\star}
\newcommand{\bbA}{{\mathbb A}}
\newcommand{\FF}{{\mathbb F}}

\newcommand{\NNN}{{\mathcal{N}}}
\newcommand{\VVV}{{\mathcal{V}}}
\newcommand{\WWW}{\mathcal W}
\newcommand{\UUU}{{\mathcal{U}}}
\newcommand{\bUUU}{{\overline{\mathcal{U}}}}
\newcommand{\HHH}{\mathcal H}
\newcommand{\bHHH}{\overline{\HHH}}
\newcommand{\KKK}{\mathcal K}
\newcommand{\OOO}{\mathcal{O}}
\newcommand{\GGG}{\mathcal{G}}
\newcommand{\FFF}{\mathcal{F}}
\newcommand{\bGGG}{\overline{\GGG}}
\newcommand{\BBB}{\mathcal{B}}

\newcommand{\PPP}{\mathcal P}
\newcommand{\simto}{\xrightarrow{\sim}}
\newcommand{\be}{
\begin{enumerate}}
\newcommand{\ee}{
\end{enumerate}}

\renewcommand{\phi}{\varphi}

\newcommand{\kst}{{\kk^\times}}

\DeclareMathOperator{\id}{id}
\DeclareMathOperator{\GL}{GL}
\DeclareMathOperator{\SL}{SL}

\DeclareMathOperator{\SO}{SO}
\DeclareMathOperator{\Aut}{Aut}

\DeclareMathOperator{\Spec}{Spec}
\DeclareMathOperator{\Quot}{Quot}
\DeclareMathOperator{\Aff}{Aff}

\newcommand{\aff}{\textrm{aff}}

\DeclareMathOperator{\pr}{pr}
\DeclareMathOperator{\B}{B}
\newcommand{\Ga}{\mathbf{G}_{a}}
\newcommand{\Gm}{\mathbf{G}_{m}}

\DeclareMathOperator{\Tr}{Tr}

\DeclareMathOperator{\End}{End}
\DeclareMathOperator{\Hom}{Hom}
\DeclareMathOperator{\Char}{char}

\DeclareMathOperator{\diag}{diag}

\DeclareMathOperator{\dl}{dl}
\DeclareMathOperator{\mdo}{{{\mathrm{mdo}}}}

\DeclareMathOperator{\Jonq}{Jonq}
\DeclareMathOperator{\codim}{codim}

\newcommand{\quot}{/\!\!/}

\renewcommand{\subset}{\subseteq}
\newcommand{\kk}{\Bbbk}
\newcommand{\KK}{{\mathbb K}}

\newcommand{\Xm}{X_{\text{\it{max}}}}
\newcommand{\MMM}{{\mathcal M}}

\newcommand{\SSS}{{\mathcal S}}

\newcommand{\Gaff}{G_{\text{\Tiny aff}}}
\newcommand{\Uaff}{U_{\text{\Tiny aff}}}
\newcommand{\Gant}{G_{\text{\Tiny ant}}}

\newcommand{\algebraicallyTame}{tempered}

\newcommand{\LP}{{\sf{L}}}
\newcommand{\TP}{{\sf{T}}}

\DeclarePairedDelimiter\ceil{\lceil}{\rceil}

\newcommand{\Xg}{X_{\eta}}
\newcommand{\Xgg}{X_{\bar\eta}}

\newcommand{\ps}{\par\smallskip}

\newcommand{\set}[2]{\left\{\,#1 \ | \ #2\,\right\}}

\addtocounter{section}{0} 
\numberwithin{equation}{section} 

\subjclass{Primary 14J50; Secondary 14L30, 14R20.}

\begin{document}

\begin{abstract}
Let $X$ be a variety of dimension $n$, and let
$\Aut(X)$ be its automorphism group.
When $X$ is quasi-affine, we prove that a solvable subgroup of $\Aut(X)$ that is generated by an irreducible family of automorphisms containing the identity is an algebraic subgroup.

Our main applications concern arbitrary varieties. First, every connected solvable  subgroup of $\Aut(X)$ is contained in a Borel subgroup and its derived length is  $\leq n+1$. Second, the notion of solvable and unipotent radicals are well defined for any subgroup of $\Aut(X)$. Third, if $X$ is quasi-affine and connected and $\BBB\subset \Aut(X)$ is a Borel subgroup of derived length $n+1$, then $X$ is isomorphic to the affine $n$-space $\An$ and $\BBB$  is conjugate to the Jonquières subgroup.
\end{abstract}

\thanks{The work of S.C. is supported by  the European Research Council (ERC GOAT 101053021). The work of A.R. was partially supported by  DFG, project number 509752046.
The work of I.v.S. was supported by the SNSF grant 10000940.}

\maketitle

\setcounter{tocdepth}{1}
\tableofcontents

\vfill
\pagebreak

\section{Introduction}

Let $\kk$ be an algebraically closed field. By convention, algebraic varieties, subvarieties, morphisms, etc. will always be defined over $\kk$. Throughout this article, $X$ 
denotes an algebraic variety, not necessarily irreducible, and $\Aut(X)$ its automorphism group.

\subsection{Algebraic families of automorphisms} \label{par:intro_algebraic_families}
A good example to keep in mind is the affine space of dimension $n$ which we denote by $\bbA^n$. Thanks to Jung's theorem (see~\cite{Ju42, Lamy:Book}), $\Aut(\bbA^2)$ is relatively well understood, but when $n\geq 3$, its properties are still rather mysterious. 

 \begin{defn}[see \cite{Ra1964A-note-on-automorp}]\label{Def.morphism} Let $A$ be an algebraic variety. A map $\rho\colon A\to \Aut(X)$ is a {\emph{morphism}} (or an {\emph{algebraic family}} of automorphisms) if  the map $A\times X\to X$ defined by 
$(a,x)\mapsto \rho(a)(x)$ is a morphism of algebraic varieties. A subset $V\subset \Aut(X)$ is {\emph{constructible}} if it is the image $\rho(A)$ of a morphism. If we can choose $A$ to be irreducible,  $V$ is an \emph{irreducible constructible} subset. 
\end{defn}

\begin{defn}\label{Defi.Algebraically_generated} 
A subgroup $G$ of $\Aut(X)$ is {\emph{algebraically generated}} if there is a morphism $\rho\colon A\to \Aut(X)$ such that (i) the algebraic variety $A$ is irreducible, (ii) 
$\rho(A)$ contains the identity $\id_X$, and (iii)
$G$ is generated  by $\rho(A)$ as a subgroup of $\Aut(X)$.
We write $G=\langle \rho(A)\rangle$.
\end{defn}
Equivalently, $G$ is algebraically generated if $G=\langle V\rangle$ for some irreducible constructible subset $V$ of $\Aut(X)$ containing $\id_X$.
We insist on assumptions (i) and (ii). They are crucial and could be easily forgotten, because they are implicit in the vocabulary ``algebraically generated'' (see Example~\ref{exa:introduction} below).  

\begin{defn}\label{defi:introduction_connected}
A subgroup $G$ of $\Aut(X)$ is  {\emph{connected}} if for every $g\in G$ 
there is an algebraic family $\rho\colon A\to \Aut(X)$ parametrized by an irreducible variety~$A$, such that $\rho(A)\subset G$ and $\rho(A)$ contains $\id_X$ and $g$. 
\end{defn}

The dimension $\dim(S)$ of a subset $S\subset \Aut(X)$  is the supremum of $\dim(A)$ over
all injective morphisms $\iota \colon A \to \Aut(X)$ with $\iota(A)\subset S$. If $\dim(S)<\infty$, then $S$ is  {\emph{finite dimensional}}. For instance,
$\Aut(\bbA^n)$ is finite dimensional if and only if $n\leq 1$.
The next result endows finite dimensional subgroups of $\Aut(X)$ with a canonical structure of an algebraic group.  We refer to \cite{Ra1964A-note-on-automorp} for the original version of this theorem, and to  \cite{Note_on_Ramanujam} for the generalization we state here which includes the case of reducible varieties.

\begin{ramthm}\hypertarget{Ramanujam.thm}{}
Let $X$ be an algebraic variety. Let $G$ be a subgroup of $\Aut(X)$ containing a connected and
finite dimensional subgroup of finite index. Then $G$ carries the structure of an
algebraic group which is uniquely determined by the following universal property:
\begin{enumerate}
\item[{\rm{(UP)}}] 
{The action $G\times X \to X$ is an action of the algebraic group $G$, and if $\mu\colon A \to \Aut(X)$ is a morphism such that $\mu(A) \subset G$, then the induced map 
$\mu\colon A \to G$ is a morphism of algebraic varieties.}
\end{enumerate}
\end{ramthm}
Such a group will be called an \textit{algebraic subgroup} of $\Aut(X)$. If an algebraic group acts algebraically on X, then its image is an algebraic subgroup, see Corollary~1.3 in \cite{Note_on_Ramanujam}.

A subgroup $\GGG$ of $\Aut(X)$ is \emph{nested} if it is a countable increasing union of algebraic subgroups: $\GGG = \bigcup_k G_k$. 
It is \emph{\algebraicallyTame{}} if it is connected and every algebraically generated subgroup is an algebraic group.
We shall see in Theorem~\ref{thm-nested-tame} that a group is \algebraicallyTame{} if and only if it is nested by connected algebraic subgroups.
 
\subsection{Solvable, algebraically generated groups} 
Recall that the derived subgroup $[G,G]$ of a group $G$ is the subgroup generated by all commutators $[g,h]=ghg^{-1}h^{-1}$
with $g,h\in G$.  Its derived series $G^{(n)}$ is defined recursively by $G^{(0)}:=G$ and $G^{(n+1)}:=[G^{(n)}, G^{(n)}]$. The group $G$ is \emph{solvable} if $G^{(n)}=\{e_G\}$ for some $n\geq 0$, and the {\emph{derived length}} $\dl(G)$ is the smallest such $n$.

Recall that a quasi-affine variety is an open subvariety of an affine variety.
\begin{mthm}\label{main-thm-A}
If $X$ is a quasi-affine variety, then 
every algebraically generated solvable  subgroup of $\Aut(X)$ is an 
affine algebraic subgroup. Thus,
a connected solvable subgroup is \algebraicallyTame{}.
\end{mthm}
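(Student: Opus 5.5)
We argue by induction on the derived length $d=\dl(G)$ of an algebraically generated solvable subgroup $G=\langle\rho(A)\rangle\subset\Aut(X)$. The goal each time is to show that $G$ is finite dimensional. Once that holds, $G$ is connected (every element is a product of elements of $\rho(A)^{\pm1}$, hence lies in an irreducible family through $\id_X$). \hyperlink{Ramanujam.thm}{Ramanujam's Theorem} then makes $G$ an algebraic subgroup. It is affine because $X$ is quasi-affine, since a connected algebraic group acting faithfully on a quasi-affine variety is affine by the standard argument with $\kk[X]$ and the Albanese/anti-affine part. The last sentence of the theorem follows directly: if $\GGG$ is connected and solvable, every algebraically generated subgroup of $\GGG$ is solvable, hence algebraic, so $\GGG$ is \algebraicallyTame{}.

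\textbf{Base case.} For $d\le1$, $G$ is commutative. Embed $X$ as an open subset of an affine variety $Y$, and consider the finitely generated algebra $\kk[X]$, or a $G$-stable finite-dimensional subspace of it. The key finiteness input is this: for an irreducible family $\rho(A)$, a finite-dimensional subspace $W\subset\kk[X]$ generating the field of rational functions has translates $\rho(a)^*W$ contained in a common finite-dimensional subspace $W'$. This holds because $A\times X\to X$ is a morphism and $A$ can be covered by finitely many affines. Commutativity then lets us build a $G$-stable finite-dimensional subspace: $\sum \rho(a_1)^*\cdots\rho(a_k)^*W$ stabilizes, by an irreducibility/dimension-count argument on the increasing chain of constructible images $\rho(A)^k$ (the closures of $\rho(A)^k$ stabilize, as in the classical proof that a group generated by irreducible constructible sets through the identity is closed). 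Faithfulness on a generating subspace embeds $G$ into $\GL(W'')$ with constructible, hence algebraic, image.

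\textbf{Inductive step.} For general $d$, I would show that $G'=[G,G]$ is again algebraically generated. It is generated by the irreducible family of commutators $[\rho(a),\rho(b)]$ with $(a,b)\in A\times A$, and by conjugates of these by elements of $\rho(A)^{\pm k}$; these families are irreducible and contain $\id_X$, and their union is an increasing union of algebraically generated groups. Each of those groups has derived length $<d$, so by induction it is algebraic. An increasing chain of connected algebraic subgroups of bounded dimension stabilizes, so $G'$ itself is a connected algebraic group $H$, provided the dimensions stay bounded. Then $G$ normalizes $H$, and $G/H$ is abelian. I would consider the algebraic quotient by $H$, or rather the action of $G$ on the invariants $\kk[X]^{H}$ (for solvable $H$ one works with rational quotients, Rosenlicht), and run the abelian argument there. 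Then lift: $G$ is an extension of a finite-dimensional group by $H$, hence finite dimensional.

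\textbf{Main obstacle.} The hardest step is the dimension bound, both on the chain in the inductive step and on the abelian base case, where commuting automorphisms of $\An$ can form infinite-dimensional groups, such as $(x,y)\mapsto(x,y+f(x))$. Such groups are not algebraically generated, and the proof must use irreducibility of $A$ decisively. I would first try to prove that $\dim\overline{\rho(A)^k}$ is nondecreasing and bounded by a quantity depending only on $\rho$, via degree bounds on $\rho(a)^*$ acting on a fixed filtration of $\kk[X]$. For unipotent-type families (additive groups of functions), I would use the fact that an irreducible family generating an abelian unipotent group lands in a finite-dimensional vector space of locally nilpotent derivations spanned by the family's logarithms.
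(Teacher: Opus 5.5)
Your proposal has a genuine gap. The one statement that carries all the weight is that some chain stabilizes, or that some dimensions stay bounded. You assert it twice and prove it nowhere, as your last paragraph essentially admits.

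\emph{Base case.} You appeal to the argument of Lemma~\ref{alg-gen-subgroup.lem}. There, the closures of $\rho(A)^k$ stabilize only because they are irreducible closed subsets of a fixed finite-dimensional algebraic group. In $\Aut(X)$ there is no such ambient bound. Example~\ref{exa:introduction}(2) shows that irreducibility and $\id\in\rho(A)$ alone give nothing: the irreducible family $h\SL_2(\kk)h^{-1}\SL_2(\kk)$ contains $\id$ and generates a non-algebraic group. So commutativity must be used structurally, and you never say how it forces $\sum\rho(a_1)^*\cdots\rho(a_k)^*W$ to stabilize. That local finiteness \emph{is} the commutative case.

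\emph{Inductive step.} It has the same hole. The groups generated by your commutator families are algebraic by induction, but nothing bounds their dimensions a priori. Already $\Jonq(n)$ contains connected algebraic subgroups of derived length $\leq n+1$ and arbitrarily large dimension. So ``provided the dimensions stay bounded'' is again the whole theorem.

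\emph{Your remedies.} They do not help. Degree bounds on individual $\rho(a)^*$ say nothing about products $\rho(a_1)\cdots\rho(a_k)$ unless you already know the elements are algebraic. Passing to logarithms (locally nilpotent derivations) only makes sense in characteristic $0$, and only for elements already known to be unipotent.

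\emph{Lifting step.} Suppose $H=[G,G]$ and the image of $G$ on a quotient by $H$ were both algebraic. $G$ still need not be ``an extension of a finite-dimensional group by $H$''. The kernel of the action on the quotient can be much larger than $H$ and a priori infinite dimensional: if $H$ is the vertical translations of $\Atwo$, every $(x,y)\mapsto(x,y+f(x))$ acts trivially on $\Atwo/H$. (Minor: $\OOO(X)$ need not be finitely generated for quasi-affine $X$, cf.\ \S\,\ref{quasi-affine.subsec}.)

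\emph{How the paper gets finiteness.}
\begin{itemize}
\item \textbf{Commutative case} (Theorem~\ref{orbits-of-commutative-groups.thm}). It passes to a geometric quotient $\pi\colon X\to Y$ (Popov--Rosenlicht). On the geometric generic fiber $\Xgg$ the base-changed group acts transitively, hence simply transitively. So it is algebraic there (Proposition~\ref{commutative-subgroup.prop}) and acts locally $\KK$-finitely on $\OOO(\Xgg)$. This descends to a $\GGG$-stable free module of finite rank over a localization of $\OOO(Y)$. The Lie--Kolchin theorem over rings (Theorem~\ref{Lie-Kolchin.thm}) then puts the image in the strongly nested group $\B_n(S)^\circ$. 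That is what absorbs the infinite-dimensionality along the base (Proposition~\ref{G-stable-submodule.pro}).
\item \textbf{Derived length $d\geq 2$.} The induction is on $(\dim X,\dl)$, not on $\dl$ alone.
\item \textbf{Dense orbit.} Write $\GGG^{(d-1)}=T\times\UUU$ with $\UUU$ non-trivial (Theorem~\ref{normal-torus.thm}). Take the $\bbA^k$-bundle $X\to X/\UUU$, whose base is handled by induction on dimension. A section gives it a group-scheme structure, and one shows $\GGG\subseteq\LP(\GGG)\ltimes S$ (Proposition~\ref{commutative-quotient3.prop}). Here $\LP(\GGG)$ has smaller derived length, and $S$, generated by conjugated translation parts, is commutative and algebraically generated. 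This is exactly what keeps the fiberwise part you ignored finite dimensional.
\item \textbf{No dense orbit.} Apply the transitive case over $\KK$ on $\Xgg$, and conclude with Proposition~\ref{G-stable-submodule.pro}.
\end{itemize}
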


This extends Theorem~B of~\cite{CaReXi2023Families-of-commut} from the affine and commutative to the quasi-affine and solvable setting. 
The proof  is given in \S\,\ref{Subsec.Proof_Main_Thm}.
 
\begin{exas} \label{exa:introduction} 
(1) Take $\kk=\CC$, the field of complex numbers. Consider the rotation $g\in \Aut(\bbA^1_\CC)$ defined by $g(x)=e^{2{\mathsf{i}}\pi\theta} x$ for some irrational~$\theta$. Then, $\langle g\rangle$ is an infinite cyclic group, it is dense in a copy of $\SO_2(\RR)\subset \Aut(\bbA^1_\CC)$ for the euclidean topology, and in a copy of $\Gm(\CC)\subset \Aut(\bbA^1_\CC)$ for the Zariski topology. The group $\langle g\rangle$ is {\emph{not}} algebraically generated in our sense. 

(2) Let $h\in \Aut(\bbA^2)$ be a Hénon map (see~\cite{Friedland-Milnor, Lamy:Book}), for instance 
$h(x,y)=(y+x^2+c, x)$ for some $c$ in $\kk$.
Set $G=\SL_2(\kk)\subset \Aut(\bbA^2)$.  Then, $W:= hGh^{-1}G$ is an irreducible constructible subset of $\Aut(\bbA^2)$ containing the identity. 
For a general choice of $g\in \SL_2(\kk)$, Jung's theorem implies that the automorphism $f=hgh^{-1}g$ satisfies $\deg(f^n)=\deg(h)^{2n}$; in particular, $\deg(f^n)$ is not bounded, so $f$ is not contained in an algebraic subgroup of $\Aut(\bbA^2)$. 
Thus, $\langle W\rangle$ is an algebraically generated (non-solvable) group that is not algebraic. 

(3) Let~$V \subseteq \textrm{Bir}(\bbA^2)$ be the irreducible constructible set
of birational transformations 
$(x, y) \mapstochar\dashrightarrow (x, y + \frac{\lambda}{x-\mu})$,
where $(\lambda, \mu) \in \kk^2$. The group 
$\langle V \rangle$ is abelian. It
is not algebraic, because the elements $\frac{1}{x - \mu} \in \kk(x)$ 
for $\mu \in \kk$ are linearly independent over $\kk$. Thus, Theorem~\ref{main-thm-A} fails for birational transformations.
\end{exas} 

\subsection{Connected solvable subgroups}\label{connected_solvable_introduction.sec}
An element $g\in \Aut(X)$ is said to be {\emph{algebraic}} if it is contained in an algebraic subgroup $G$ of $\Aut(X)$. 
When
$X$ is quasi-affine, $G$ is automatically an affine algebraic group
(see Lemma~\ref{quasiaffine-gives-affine.lem}), and we shall say that $g$ is {\emph{unipotent}}  if $g$ is  unipotent as an element of $G$. Then a subgroup  $\UUU \subseteq \Aut(X)$, $X$ again quasi-affine, is  \emph{unipotent} if every element of $\UUU$ is unipotent.
For instance the automorphisms $(x,y)\mapsto (x,y+p(x))$ with $p\in \kk[x]$ form a unipotent subgroup
of $\Aut(\bbA^2)$. We refer to \S\,\ref{Rosdecompandunipelem}  for a more detailed discussion of these notions.

An \emph{algebraic torus} (shortly a \emph{torus}) in $\Aut(X)$ is an algebraic subgroup isomorphic to $(\Gm)^s$ for some $s\geq 1$ where $\Gm$ denotes the multiplicative group $\kk^\times:=(\kk\setminus\{0\},\times)$. The following result is our first application of Theorem~\ref{main-thm-A}; its proof is given at the end of \S\,\ref{sec.Levi}.

\begin{mcor}
\label{corB}
Let $X$ be a quasi-affine
variety, and let $\HHH \subseteq \Aut(X)$ be a connected solvable subgroup. 
Let $\UUU \subseteq \HHH$ be the set of all unipotent elements in $\HHH$.
Then $\UUU$ is a tempered unipotent subgroup and 
$\HHH$ is a semi-direct product of $\UUU$ and an algebraic torus 
$T\subset \Aut(X)$.
\end{mcor}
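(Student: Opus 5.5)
By Theorem~\ref{main-thm-A}, $\HHH$ is \algebraicallyTame{}, and I will use the characterization from Theorem~\ref{thm-nested-tame}: $\HHH=\bigcup_k G_k$ is an increasing union of connected algebraic subgroups $G_k\subset\Aut(X)$. Each $G_k$ is affine by Lemma~\ref{quasiaffine-gives-affine.lem}, and it is connected and solvable. The structure theorem for connected solvable affine algebraic groups then gives $G_k=U_k\rtimes T_k$, where $U_k=(G_k)_u$ is the unipotent radical and $T_k$ is a maximal torus.

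\emph{Step 1: $\UUU$ is a unipotent subgroup.} Unipotence of an element is intrinsic: it does not depend on which algebraic subgroup contains it, because for $G\subset G'$ the inclusion is a morphism of algebraic groups by (UP) and so preserves Jordan decompositions. Hence $\UUU\cap G_k=U_k$. It follows that $\UUU=\bigcup_k U_k$ is an increasing union of connected unipotent algebraic groups, since $U_k\subset U_{k+1}$. So $\UUU$ is a subgroup; it is normal in $\HHH$; and it is nested, hence \algebraicallyTame{} by Theorem~\ref{thm-nested-tame}.

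\emph{Step 2: the torus.} Every torus in $\Aut(X)$ acts faithfully on the quasi-affine variety $X$. Its dimension is therefore at most $\dim X$, since a faithful torus action has generic orbits of dimension equal to the rank. Tori in $\HHH$ thus have bounded dimension. The ranks $\dim T_k=\dim G_k-\dim U_k$ are nondecreasing: $G_k/U_k\to G_{k+1}/U_{k+1}$ is injective because $G_k\cap U_{k+1}=U_k$. So the ranks stabilize for $k\ge k_0$.

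I then choose the tori compatibly, so that $T_{k}$ is a maximal torus of $G_{k+1}$ as well. Since $T_{k_0}\subset G_{k}$ has maximal rank there, it is a maximal torus of every $G_k$ with $k\ge k_0$. Set $T:=T_{k_0}$. Then $G_k=U_k\rtimes T$ for all $k\ge k_0$, and taking the union gives $\HHH=\UUU\cdot T$ with $\UUU\cap T=\{\id\}$, since torus elements are semisimple. As $\UUU$ is normal, $\HHH=\UUU\rtimes T$.

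The main obstacle is Step 2, specifically bounding the rank of tori in $\Aut(X)$ uniformly, so that the Levi-type decompositions stabilize. The inclusion $T\subset G_{k}$ might a priori fail to be maximal if ranks kept growing, and there is no conjugacy argument available across an infinite union. I would try first the faithful-action dimension bound $\dim T\le\dim X$ for quasi-affine $X$. To get it, embed $X$ equivariantly in an affine $T$-variety (or use a $T$-stable open affine) and note that the stabilizer of a general point is trivial. A secondary subtlety is ensuring that unipotence and semisimplicity are preserved under the inclusions $G_k\subset G_{k+1}$; this follows from (UP) and the preservation of Jordan decomposition by homomorphisms of affine algebraic groups.
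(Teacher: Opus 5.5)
The core of your argument is the paper's. The rank $\dim G/R_u(G)$ is monotone and bounded, so a single torus $T$ is a maximal torus of all sufficiently large connected algebraic subgroups. The problem is how you get the countable chain $G_1\subset G_2\subset\cdots$, and also the temperedness of $\UUU$: you invoke Theorem~\ref{thm-nested-tame}. In the paper that theorem (Theorem~\ref{Thm.Equivalence_alg_tame_nested}) is proved after Corollary~\ref{corB} and depends on it. The implication ``tempered $\Rightarrow$ nested'' goes through Proposition~\ref{Pro.Unipotent_radical_second_part}. That rests on Proposition~\ref{unipotent-indgroup-is-tame.prop}, hence on Corollary~\ref{Cor.bound_dl} and Proposition~\ref{Prop.U_transitive}, and the proof of Proposition~\ref{Prop.U_transitive} begins by applying Corollary~\ref{corB}. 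That implication also uses Proposition~\ref{Pro.Unipotent_radical}, which is the same stabilization of $\dim G/R_u(G)$ that your Step~2 performs. So, as written, the detour through a countable chain is circular, and it buys nothing.

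The repair is easy, and it gives exactly the paper's proof. Since $\HHH$ is tempered (Theorem~\ref{main-thm-A}), the set $\MMM$ of all connected algebraic subgroups of $\HHH$ is covering and directed (Remark~\ref{nested_alg_tame_covering.rem}). For directedness, $\langle G_1,G_2\rangle$ is generated by the irreducible family $G_1\times G_2\to\Aut(X)$, $(g_1,g_2)\mapsto g_1g_2$. Your Steps~1 and~2 then run verbatim over $\MMM$ instead of a chain:
\begin{itemize}
\item For each $G\in\MMM$ you have $\UUU\cap G=R_u(G)$. Hence $\UUU=\bigcup_{G\in\MMM}R_u(G)$ is a normal subgroup by directedness, and it is connected because each $R_u(G)$ is.
\item Being connected and solvable, $\UUU$ is tempered by Theorem~\ref{main-thm-A} applied to $\UUU$ itself; no nestedness is needed.
\item The rank is monotone under inclusion and bounded. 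Pick $G_0\in\MMM$ of maximal rank and a maximal torus $T\subset G_0$.
\item Every $h\in\HHH$ lies in some $G\in\MMM$ containing $G_0$, where $T$ is still a maximal torus by dimension. So $G=T\ltimes R_u(G)$ and $h\in T\,\UUU$.
\end{itemize}
This is Proposition~\ref{Pro.Unipotent_radical} together with the choice of a torus of maximal dimension, as in the paper.

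A minor point: the bound $\dim T\le\dim X$ (Remark~\ref{torus.faithful.rem}) holds for irreducible $X$. For reducible $X$ it fails; take two disjoint lines with $\Gm^2$ scaling each one separately. There you only get $\dim T\le\sum_i\dim X_i$ over the irreducible components, but boundedness is all you use.
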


The normal subgroup $\UUU\subset \HHH$ is called the \emph{unipotent radical} of $\HHH$.
Unipotent and solvable radicals will be discussed in more generality  in Section~\ref{unipotent_radical.sec}.

\subsection{Jonquières subgroups}\label{jonquieres.sec}
We denote by $\Jonq(n)\subset \Aut(\bbA^n)$ the subgroup of \emph{Jonquières automorphisms}, that is, automorphisms of type 
\begin{equation*}
f(x_1, \ldots, x_n)=(a_1x_1+p_1, a_2x_2+p_2(x_1), \ldots, a_n x_n+p_n(x_1, \ldots, x_{n-1}))
\end{equation*} 
where the $a_i$ are non-zero elements of $\kk$, $p_1$ is in $\kk$, and for $i\geq 2$ the $p_i$ are polynomial functions in the variables $x_1, \ldots, x_{i-1}$. Thus, 
$\Jonq(n)$ is the group of automorphisms $f \in \Aut(\An)$ such that $f^*\kk[x_1,\ldots,x_j] = \kk[x_1,\ldots,x_j]$ for $j=1,\ldots,n$.
Equivalently, $\Jonq(n)$ is the stabilizer of the coflag
\begin{equation}
\label{Eq.coflag}
\bbA^n \to \bbA^{n-1} \to \cdots \to \bbA^2 \to \bbA^1
\end{equation}
given by the linear projections $(x_1,\ldots,x_{j-1},x_{j})\mapsto (x_1,\ldots,x_{j-1})$. 
By Lemma 3.1 and 3.2 of~\cite{FuPo2018On-the-maximality-}, $\Jonq(n)$ is connected, nested, and solvable of derived length $n+1$.

Let $T_n\subset \GL_n(\kk)\subset \Aut(\An)$ be the standard
$n$-dimensional torus (acting by diagonal matrices).
Let $\Jonq_u(n)\subset \Jonq(n)$ be the subgroup of elements $f$ as above with trivial diagonal part, i.e., all $a_i$ equal to $1$.
Then 
\begin{equation*}
\Jonq(n) = T_n\ltimes \Jonq_u(n),
\end{equation*}
$\Jonq_u(n)=[\Jonq(n),\Jonq(n)]$,
and  $\Jonq_u(n)$ is nested by connected unipotent algebraic subgroups.  
Proposition~3.4 of~\cite{FuPo2018On-the-maximality-} shows that $\Jonq(n)$ is 
maximal among all solvable subgroups of $\Aut(\bbA^n)$. In particular, it is a Borel subgroup in the sense of \S\,\ref{Borel-subgroups} below. 
The next theorem, proven in  \S\,\ref{solvable.subgroups.sec}, complements this result and answers the question in~\cite[p. 394]{FuPo2018On-the-maximality-} on the derived length of solvable subgroups.

\begin{mthm}\label{main-thm-derived-length}
Let $X$ be an algebraic variety of dimension $n$, and let $\HHH$ be a connected solvable subgroup of  $\Aut(X)$. 
\begin{enumerate}[\rm (1)] 
\item The derived length of $\HHH$ is at most $n+1$.
\item Assume in addition that $X$ is connected and quasi-affine. 
If the unipotent radical $\UUU$ of $\HHH$ has derived length $\geq n$, then $X \simeq \An$ and $\HHH$ is conjugate to a subgroup of $\Jonq(n)$. 
\end{enumerate}
\end{mthm}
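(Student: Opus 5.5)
Part (1) will be proved by induction on $n=\dim X$, after two reductions to the quasi-affine, irreducible case with $\HHH$ algebraic. First, since $\HHH$ is connected, every element lies in an algebraically generated subgroup of $\HHH$. The derived series of $\HHH$ is the increasing union of the derived series of its algebraically generated subgroups. Hence it suffices to bound $\dl(G)$ for algebraically generated solvable $G\subset\HHH$. Second, connected groups preserve each irreducible component of $X$, and it suffices to treat each component. On a component $X$, I would pass to a $G$-stable dense open quasi-affine subset. For instance, take the complement of the non-normal locus and of the $G$-translates of a divisor, so that a $G$-invariant open affine-like subset remains. Alternatively, restrict to a $G$-invariant dense open set on which $G$ acts faithfully; faithfulness holds because $G$ is connected and acts on an irreducible variety. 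Theorem~\ref{main-thm-A} then makes $G$ a connected solvable affine algebraic group acting faithfully on an $n$-dimensional quasi-affine variety.

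Now write $G=T\ltimes U$ with $U$ unipotent (Corollary~\ref{corB}). Then $[G,G]\subset U$, so $\dl(G)\le \dl(U)+1$, and it remains to prove $\dl(U)\le n$ for a unipotent group acting faithfully on an irreducible $n$-dimensional variety. For this I would use induction via a rational quotient. Choose a nontrivial one-dimensional subgroup $Z\cong\Ga$ in the last nontrivial term of the derived series of $U$, or in the center of $U$; it is normal in $U$. By Rosenlicht, the quotient $X\dashrightarrow Y=X/\!\!/Z$ exists on a $U$-invariant open set, with $\dim Y=n-1$ because unipotent orbits of a faithful $\Ga$ are one-dimensional. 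The group $U$ acts on $Y$, and its kernel $K$ acts fibrewise on the generic fibre $\Aone_{\kk(Y)}$. A unipotent group acting faithfully on $\Aone_{\kk(Y)}$ commutes with the translations there. More precisely, it lies in $\Aut(\Aone_{\kk(Y)})\cap U$, which is contained in the affine group, and being unipotent it consists of translations, so $K$ is abelian. The image $U/K$ acts on the $(n-1)$-dimensional $Y$. Replacing $Y$ by a quasi-affine open set on which the action is faithful, induction gives $\dl(U/K)\le n-1$, and therefore $\dl(U)\le n$. The base case $n=0$ gives trivial $U$.

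For part (2), I would run the same induction with equality tracking. If $\dl(\UUU)\ge n$, every step above must be sharp. In particular, the quotient $Y$ must again carry a unipotent group of derived length $n-1$, so by induction $Y\simeq\bbA^{n-1}$ with the group in $\Jonq(n-1)$. The remaining task is to upgrade the rational statements to regular ones. Here I need the quotient $X\to Y$ to be a genuine $\Aone$-bundle over $\bbA^{n-1}$, hence trivial, which would give $X\simeq\An$. The kernel $K$ must then be the group of translations $x_n\mapsto x_n+p(x_1,\dots,x_{n-1})$, and this is what places $\HHH$ in $\Jonq(n)$.

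I expect the main obstacle to be this passage from birational to biregular statements in (2). One must show that the quasi-affine $X$ itself, not merely a dense open subset, is $\An$. I would attack it by using $\dl\ge n$ to force $K$ to be infinite-dimensional, containing translations by a large linear space of functions on $Y$. Such a large $K$ forces the $\Ga$-orbits to cover $X$ with a global slice, using that $X$ is quasi-affine with $\OOO(X)$ large. The torus part is then handled by conjugating $T$ into the diagonal torus normalizing the coflag.
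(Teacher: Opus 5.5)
\textbf{Part (1).} Your unipotent induction is sound, and it is a genuinely different route from the paper's. Take a unipotent algebraic group acting faithfully on an irreducible $n$-dimensional variety. A central $\Ga$, a quotient on an invariant open set, and the remark that the kernel $K$ acts on the generic fibre $\bbA^1_{\kk(Y)}$ by translations (hence is abelian) do give $\dl\le n$, with no Jonqui\`eres argument. The paper obtains the analogous bound, Corollary~\ref{Cor.bound_dl}, through Proposition~\ref{Prop.U_transitive}.

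The gap is the reduction that feeds this induction. The statement concerns an \emph{arbitrary} variety, and in general there is no $G$-stable dense open quasi-affine subset. An elliptic curve $E$ acting on itself by translations has no proper non-empty stable open subset, and $E$ is not quasi-affine. More generally, if $G$ has an open orbit, the $G$-translates of any divisor meeting it cover that orbit, so your first construction removes everything. Discarding a locus where the action is unfaithful has nothing to do with quasi-affineness.

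Without quasi-affineness, Theorem~\ref{main-thm-A} and Corollary~\ref{corB} are unavailable, so you do not know that $G$ is an algebraic group. Even an algebraic $G$ may have a nontrivial anti-affine part, so $G=T\ltimes U$ fails in general. Algebraicity would suffice, since then $[G,G]=[G_{\mathrm{aff}},G_{\mathrm{aff}}]$ is unipotent and your induction applies, but algebraicity is exactly what is not known. As written, your argument proves (1) only for quasi-affine $X$.

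The paper never needs $\GGG$ to be algebraic on $X$. It reduces to $\GGG$ transitive and takes a geometric quotient $X\to Y$ for the commutative last derived term $\GGG^{(d-1)}$. On the geometric generic fibre this term becomes a commutative algebraic group by Proposition~\ref{commutative-subgroup.prop}, which holds on any variety. It then kills the semi-abelian quotient using $\Aut(Z)^\circ=Z$ and the induction hypothesis on $Y$, so the fibre is an affine space and the Jonqui\`eres bound applies. Finally it treats $\dim Y=n-1$ via the group-scheme structure and the homomorphism of Proposition~\ref{commutative-quotient3.prop}.

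\textbf{Part (2).} The decisive idea is missing, and your sketch does not work as stated. Your $Y$ is the quotient of an unspecified invariant open subset of $X$. The induction hypothesis, which concerns a connected quasi-affine variety carrying the whole action, therefore cannot be applied to give $Y\simeq\bbA^{n-1}$. Even if it could, nothing controls $X$ outside that open set, and ``$K$ is infinite-dimensional, hence a global slice'' is not an argument.

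The paper's route is short:
\begin{enumerate}
\item First, $\dl(\UUU)\le\max_x\dim\UUU x$. Your own induction gives this if you apply it to the image of each connected algebraic subgroup of the tempered group $\UUU$ on each of its orbits, since derived length is the maximum over orbits.
\item Hence $\dl(\UUU)\ge n$ forces an $n$-dimensional orbit $O$.
\item Take a unipotent algebraic $U\subset\UUU$ with the same orbits. Orbits of unipotent groups on a quasi-affine variety are closed (Kostant--Rosenlicht), so $O$ is open and closed. Connectedness then gives $X=O\simeq U/U_x\simeq\An$.
\item Only then does one induct on $\An$ itself. A central $T$-stable subgroup $N\simeq\Ga$ of a large enough $U$ normalized by $T$ gives an $\HHH$-equivariant principal $\Ga$-bundle $\An\to\bbA^{n-1}$. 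This bundle is trivial because the base is affine.
\end{enumerate}
Do not try to conjugate $T$ into the diagonal torus. That is the linearization problem for torus actions on $\An$, which is unsolved in general and has counterexamples in positive characteristic. It is also unnecessary: the $T$-stability of $N$ already makes the coflag $T$-stable, and $\Jonq(n)$ is the stabilizer of that coflag.
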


The proof is given at the end of \S\,\ref{solvable.subgroups.sec}.
Assertion~(1) should be compared with analogous results obtained for Lie algebras of vector fields in characteristic $0$ in~\cite{Epstein-Thurston:1979, Makedonskyi-Petravchuk:2014}. 
In (2), $X$ must be connected because $\Aut(\An) = \Aut(\An\cup Y)$ for the disjoint union $\An\cup Y$ if $\Aut(Y)$ is trivial; and $X$ must be quasi-affine, as shown in Example~\ref{Exa.Smooth_proj_dl_n_plus_1}.

\subsection{Borel subgroups}\label{Borel-subgroups}
Recall that a {\emph{Borel subgroup}} of an algebraic group $G$ is a subgroup which is maximal among all connected and solvable subgroups of~$G$. Over an algebraically closed field, there is a single conjugacy class of Borel subgroups. For instance, the upper triangular matrices form a Borel subgroup of $\GL_n(\kk)$ of derived length $1+\ceil*{\log_2(n)}$ (see~\cite[\S18, Theorem~1]{Su1976Matrix-groups}).
By analogy, a subgroup of $\Aut(X)$ is a \textit{Borel subgroup} 
if it is maximal among connected solvable subgroups. 
We shall obtain the following results (see Theorem~\ref{main-thm-derived-length}, Corollary~\ref{Cor:Borel}, and Theorem~\ref{Tn-subgroup.thm}).
\begin{itemize}
\item {\textit{Any connected solvable subgroup of $\Aut(X)$  is contained in a  Borel subgroup}}  
{\textit{and its derived length is $\leq \dim(X)+1$}}.  
\item {\textit{Any Borel subgroup of $\Aut(\bbA^n)$ of derived length $n+1$ is conjugate to $\Jonq(n)$}}. 
\item  {\textit{If a Borel subgroup of $\Aut(\bbA^n)$ contains an $n$-dimensional torus, then it is conjugate to $\Jonq(n)$}}. 
\end{itemize} 

These results suggest interesting similarities between automorphism groups 
and linear algebraic groups, at least for  Borel subgroups, but there are also important differences, as shown in the next two items. 

\subsubsection{}
For the plane, the analogy is excellent: all Borel subgroups of $\Aut(\bbA^2)$ are conjugate to $\Jonq(2)$ (see Proposition \ref{Prop.Borel_dim2} below, and~\cite{Berest-Eshmatov-Eshmatov} when $\kk = \CC$). But the situation changes drastically in dimension $n\geq 3$.
In that case
 $\Aut(\An$) contains a Borel subgroup that is not conjugate to  $\Jonq(n)$ (see Corollary~1.5 of~\cite{ReUrSa2025Group-theoretical-} and 
Proposition~\ref{Prop.Exotic_Borels} below).
In characteristic $0$, we do not know whether the number of conjugacy classes of Borel subgroups of $\Aut(\bbA^n)$ is finite or infinite. But in positive characteristic, we shall use Gupta's counterexamples to the ``cancellation problem'' to show that $\Aut(\bbA^n)$ contains infinitely many conjugacy classes of Borel subgroups if~$n\geq 4$ (see Theorem~\ref{infmanyBorels}).

\subsubsection{} 
When $n\geq 2$, most elements of $\Aut(\bbA^n)$ are not algebraic, hence Borel subgroups do not cover $\Aut(\An)$. 
In fact,
when $X$ is affine, $\kk$  uncountable of characteristic zero, and $\Aut(X)$ connected, then {\emph{
 $\Aut(X)$ is covered by its Borel subgroups if and only if  all elements of $\Aut(X)$ are algebraic, if and only if $\Aut(X)$ coincides with a Borel subgroup}}. This follows from Theorem 1.1 in \cite{perepechko2024automorphism} and Theorem~\ref{main-thm-A}.

\subsection{Nested subgroups}
If a subgroup $\GGG$ of $\Aut(X)$ is \algebraicallyTame{}, it contains a lot of algebraic subgroups, but it is not clear whether it contains a countable
sequence $G_1 \subset G_2 \subset \cdots$ of algebraic subgroups covering~$\GGG$, i.e., whether $\GGG$ is nested. Conversely, if $\GGG$ is nested it is not obvious that every algebraically generated subgroup is algebraic. However, the following equivalence will be derived from Theorem~\ref{main-thm-A}, see Theorem~\ref{Thm.Equivalence_alg_tame_nested}.

\begin{mthm}\label{thm-nested-tame}
Let $X$ be an algebraic variety. A subgroup of $\Aut(X)$ is \algebraicallyTame{} if and only if it is nested by connected algebraic subgroups.
\end{mthm}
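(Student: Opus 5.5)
We prove the two implications separately. In both directions we use the following standard fact. If $G_1, G_2 \subset \Aut(X)$ are connected algebraic subgroups and $V = G_1 G_2 \cdots G_1 G_2$ is a product with enough factors, then $V$ is an irreducible constructible set containing $\id_X$. It is the image of the irreducible variety $G_1\times G_2\times\cdots$, and $\langle G_1, G_2\rangle = \langle V\rangle$.

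\emph{Nested $\Rightarrow$ tempered.} Let $\GGG=\bigcup_k G_k$ with $G_1\subset G_2\subset\cdots$ connected algebraic subgroups.

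First, $\GGG$ is connected: every $g\in\GGG$ lies in some connected $G_k$, and $G_k$ is itself an irreducible family through $\id_X$ and $g$.

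Next, let $\rho\colon A\to\Aut(X)$ be a morphism with $A$ irreducible, $\id_X\in\rho(A)$ and $\rho(A)\subset\GGG$. We must show $\langle\rho(A)\rangle$ is algebraic.

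The key step is to show that $\rho(A)\subset G_k$ for a single $k$. For this, consider $A_k=\rho^{-1}(G_k)$. This is a closed subset of $A$: indeed $\{(g,x)\}$-type arguments via the universal property give that $\set{a\in A}{\rho(a)\in G_k}$ is closed, since $G_k$ is closed in $\Aut(X)$ for the ind-topology. Alternatively, one can use that $G_k$ is the stabilizer of suitable data. So $A$ is a countable increasing union of closed subsets $A_k$.

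If $\kk$ is uncountable, irreducibility of $A$ forces $A=A_k$ for some $k$. For countable $\kk$, we pass to an uncountable algebraically closed extension. Alternatively, we use the bounded-degree argument: $\rho(A)$ has bounded degree with respect to a fixed quasi-affine-type embedding, and we compare with the $G_k$.

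This reduction is the main obstacle, especially the closedness of $\rho^{-1}(G_k)$ for a non-affine $X$ and the countable-field case. I would first prove it with the base-change trick, using Ramanujam's Theorem to see that $G_k$ remains an algebraic subgroup after extension.

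Once $\rho(A)\subset G_k$, the universal property (UP) makes $\rho\colon A\to G_k$ a morphism. Then $\langle\rho(A)\rangle$ is the subgroup of the algebraic group $G_k$ generated by an irreducible constructible subset containing $e$, so it is a closed connected subgroup by the classical generation lemma. Hence it is an algebraic subgroup.

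\emph{Tempered $\Rightarrow$ nested.} Let $\GGG$ be tempered. Every $g\in\GGG$ lies in an irreducible family $\rho(A)\subset\GGG$ through $\id_X$, so $\langle\rho(A)\rangle$ is a connected algebraic subgroup containing $g$. Therefore $\GGG$ is the union of its connected algebraic subgroups.

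Any two connected algebraic subgroups $H_1,H_2\subset\GGG$ generate $\langle H_1H_2\cdots\rangle$, which is algebraically generated and hence algebraic and connected. So the family of connected algebraic subgroups of $\GGG$ is directed.

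It remains to extract a countable cofinal chain. Order the connected algebraic subgroups by inclusion and consider their dimensions.

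If the dimensions are bounded, take $H$ of maximal dimension. For any other $H'$, the group $\langle H,H'\rangle$ is connected, contains $H$, and has the same dimension, so it equals $H$. Thus $\GGG=H$ is algebraic, and in particular nested.

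If the dimensions are unbounded, we need countably many subgroups to exhaust $\GGG$. Here I would use that a connected algebraic subgroup $H$ is contained in only countably many connected algebraic subgroups $H'$ with $\dim H'=\dim H$ modulo the relation $H'=H$: equal dimension together with connectedness forces $H'=H$. So the difficulty is only in choosing the chain.

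For countability, I would argue that $\Aut(X)$ has the structure of an ind-group, or at least a filtration by countably many "bounded-degree" constructible pieces $\Aut(X)_d$. Then for each $d$, the set $\GGG\cap\Aut(X)_d$ lies in a single connected algebraic subgroup $G_d$. This would follow from a Noetherian argument: the union of a chain of connected algebraic subgroups meeting a fixed constructible piece stabilizes. Setting $G_1\subset G_2\subset\cdots$ to be the groups generated successively by these pieces then gives the nesting.
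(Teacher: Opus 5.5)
Your proposal has genuine gaps in both directions. Both lie exactly where the paper needs Theorem~\ref{main-thm-A}, which you never use.

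\emph{Nested $\Rightarrow$ tempered.} Your key reduction, $\rho(A)\subset G_k$ for a single $k$, is false in general.

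In Example~\ref{exa.perepechko}(a) ($\kk$ countable, $\Char(\kk)=p>0$, $W=\{0\}$), the group $\Ga^\infty\subset\Aut(\Atwo)$ is nested by the connected groups $U_n$ of Construction~\ref{cons.NewPerepechko}. Yet the irreducible family $\Ga\subset\Ga^\infty$ of constant polynomials lies in no $U_n$, because $U_n\cap\Ga=V_n$ is finite.

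Neither of your fixes helps.
\begin{enumerate}
\item The hypothesis $\rho(A)\subset\bigcup_k G_k$ concerns only $\kk$-points and is not inherited by $\rho_\KK(A_\KK)$. In the example, the $\KK$-points of $(U_n)_\KK\cap\Ga_\KK$ are still the finite set $V_n$, so $\bigcup_n (U_n)_\KK$ meets $\Ga(\KK)$ only in $\kk$.
\item Degree bounds are useless, since all these translations have degree one.
\item Even in characteristic $0$ with $\kk$ countable, irreducibility of $A$ gives nothing. An increasing countable union of proper closed subsets can exhaust $A(\kk)$. The conclusion does hold there, but only a posteriori.
\end{enumerate}
Your argument is valid only for uncountable $\kk$, and only once closedness of $\rho^{-1}(G_k)$ is established.

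The paper never tries to put $\langle\rho(A)\rangle$ inside a member of the chain. It writes $\GGG=G\,\UUU$ with $G$ algebraic and $\UUU=R_u^{\SSS}(\GGG)$ (Propositions~\ref{Pro.Unipotent_radical} and~\ref{Pro.Unipotent_radical_second_part}). It then reduces to $V=\langle\rho(A)\cap\UUU\rangle$ and passes to a dense affine open $X_0$ stable under $\UUU$. The closure $\overline{\UUU'}$ of the image of $\UUU$ in the ind-group $\Aut(X_0)$ is connected and solvable, hence tempered by Theorem~\ref{main-thm-A}. That closure supplies an algebraic group containing $V$, which need not lie inside $\GGG$.

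\emph{Tempered $\Rightarrow$ nested.} The bounded-dimension case and the directedness argument are fine; they match Remark~\ref{nested_alg_tame_covering.rem}. The countability step fails: $\GGG\cap\Aut(X)_d$ need not lie in any algebraic subgroup of $\GGG$, and the chains $H_i\cap\Aut(X)_d$ need not stabilize.

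In Example~\ref{exa.perepechko}(b), $\UUU=\bigcup_n U_n$ is connected and commutative, hence tempered by Theorem~\ref{main-thm-A}. However, $\UUU\cap\Ga=V$ is an infinite group of degree-one translations whose Zariski closure $\Ga$ is not contained in $\UUU$. By Lemma~\ref{lem.intersection-of-constructible}, no algebraic subgroup of $\UUU$ contains $V$. Also $U_n\cap\Ga=V_n$ never stabilizes. You also do not say where a countable constructible filtration of $\Aut(X)$ would come from when $X$ is not affine.

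The paper again relies on Theorem~\ref{main-thm-A}, applied to the closure $\bUUU$ of the unipotent part on an affine stable open set (Proposition~\ref{unipotent-indgroup-is-tame.prop}). This closure is strongly nested, $\bUUU=\bigcup_k U_k$. Inside each $U_k$, the members of the covering family contained in $U_k$ generate a group that is generated by finitely many of them. Directedness then gives a chain of members of the family exhausting $\UUU$, and multiplying by the fixed algebraic group $G$ gives the nesting of $\GGG$.
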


Our final application of Theorem~\ref{main-thm-A}, given in \S\,\ref{proof_perepechko_sec}, is a proof of one of the main results of~\cite{Pe2024Structure-of-conne}, answering positively \cite[Question 4]{KZ24}. 
To state it, recall that if $X$ is affine, then $\Aut(X)$ is an ind-group with a natural
topology (see \S\,\ref{ind-algebraic-vs-indgroups.sec} below).

\begin{mthm}\label{main-thm-Perepechko}
Assume that $\Char\kk = 0$. Let $X$ be an affine variety and let $\GGG \subset \Aut(X)$ be a connected nested subgroup. 
Then $\GGG$ is closed in $\Aut(X)$, and $\GGG = L \ltimes \UUU$, with 
$L$ a reductive algebraic 
subgroup  and $\UUU$ a closed nested unipotent subgroup of $\Aut(X)$.
\end{mthm}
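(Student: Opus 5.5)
We prove Theorem~\ref{main-thm-Perepechko} by writing $\GGG=\bigcup_k G_k$ as an increasing union of connected algebraic subgroups and decomposing each $G_k$ compatibly. Since $X$ is affine, every $G_k$ is an affine algebraic group by Lemma~\ref{quasiaffine-gives-affine.lem}. In characteristic $0$ each $G_k$ has a Levi decomposition $G_k=L_k\ltimes R_u(G_k)$ with $L_k$ reductive.

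\textbf{Step 1: the unipotent radical.} Set $\UUU:=\bigcup_k R_u(G_k)$. To make this a group we need $R_u(G_k)\subset R_u(G_{k+1})$, which is not automatic. We therefore define $\UUU$ intrinsically as the set of unipotent elements $u\in\GGG$ such that $\langle u\rangle$ lies in a normal unipotent subgroup of some $G_m$, and handle the two inclusions separately.
\begin{enumerate}
\item[(a)] For $m\ge k$, the group $R_u(G_k)$ is a connected unipotent subgroup of $G_m$ normalized by $G_k$, but not obviously by $G_m$.
\item[(b)] To pass to the limit, we use that the groups $R_u(G_m)\cap G_k$ are normal in $G_k$, unipotent, and increasing in $m$. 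Their identity components stabilize for dimension reasons, say at $V_k$.
\item[(c)] We then set $\UUU=\bigcup_k V_k$. This is increasing in $k$, since $V_k\subset V_{k+1}$ by construction, and it is normal in $\GGG$.
\end{enumerate}
Theorem~\ref{main-thm-A} applied to connected solvable subgroups shows that the resulting nested unipotent group is \algebraicallyTame{}.

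\textbf{Step 2: boundedness of the reductive part.} The reductive quotients $G_k/(V_k)$ map onto one another. The key claim is that their dimension, hence their isomorphism type, eventually stabilizes. A reductive subgroup of $\Aut(X)$ acts faithfully on a finite-dimensional $G$-stable subspace generating $\kk[X]$. A maximal torus of $\GGG$ has dimension at most $\dim X$, because tori act faithfully on $X$. Rank bounded by $\dim X$ does not bound the dimension of a reductive group in general. However, all Levi factors $L_k$ embed into $L_{k+1}$ up to conjugacy by Mostow's theorem, so the root systems increase, and a root system of bounded rank is finite. Hence the $L_k$ stabilize to a reductive group $L$, and conjugating within $\GGG$ gives $G_k=L\ltimes(G_k\cap\UUU)$ for large $k$. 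This yields $\GGG=L\ltimes\UUU$.

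\textbf{Step 3: closedness.} We show $\GGG$ and $\UUU$ are closed in the ind-topology. For unipotent nested subgroups in characteristic $0$, the map $\exp$ identifies $\UUU$ with an increasing union of finite-dimensional Lie subalgebras of locally nilpotent derivations. The closure $\overline{\UUU}$ is a closed connected subgroup whose algebraic subgroups are unipotent, and we show it equals $\UUU$ by checking that every algebraic subgroup of $\overline{\UUU}$ is generated by elements of $\UUU$. Closedness of $\GGG$ then follows from $\GGG=L\ltimes\UUU$ with $L$ algebraic.

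\textbf{Main obstacle.} I expect the hardest steps to be the compatibility of the radicals across the union in Step 1 and the closedness of $\UUU$ in Step 3. The closedness requires an ind-group argument (curves in $\overline{\UUU}$ lie in some filtered piece) rather than purely the results stated so far.
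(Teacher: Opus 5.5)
Your outline (bound the reductive part, take $\UUU$ to be a union of unipotent radicals, then prove closedness) is the same as the paper's. However, the decisive step, $\UUU=\overline{\UUU}$ in Step~3, is not proved. ``Checking that every algebraic subgroup of $\overline{\UUU}$ is generated by elements of $\UUU$'' only restates the goal. Two ingredients are missing.

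(i) You must show that $\overline{\UUU}$ is \emph{strongly} nested by unipotent algebraic subgroups. Every connected unipotent algebraic subgroup of $\UUU$ has derived length $\le\dim X$ (Corollary~\ref{Cor.bound_dl}), so $\UUU$ and $\overline{\UUU}$ are solvable. Theorem~\ref{main-thm-A} then makes $\overline{\UUU}$ tempered. Take an admissible filtration of $\overline{\UUU}$ by irreducible sets through the identity (Lemma~\ref{connected-indgroups.lem}); the groups these sets generate form an admissible filtration by connected algebraic subgroups, and these are unipotent (Proposition~\ref{unipotent-indgroup-is-tame.prop}).

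(ii) You must use characteristic $0$. Let $U\subseteq\overline{\UUU}$ be such an algebraic subgroup and write $\UUU=\bigcup_k U_k$. Each $U\cap U_k$ is a closed subgroup of a unipotent group, hence connected since $\Char\kk=0$. So the increasing chain $U\cap U_k$ stabilizes by dimension, and $U\cap\UUU$ is closed in $U$. Admissibility then makes $\UUU$ closed in $\overline{\UUU}$. This is exactly what fails in Example~\ref{exa.perepechko}(b). Your $\exp$ picture could also give (ii), since $U\cap\UUU=\exp\bigl(\log(U)\cap\bigcup_k\log(U_k)\bigr)$ is the image of a linear subspace of $\log(U)$. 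But it only helps once (i) is in place.

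Closedness of $\GGG$ also needs more than ``$L$ is algebraic''. Take an admissible filtration $(A_k)$ of $\Aut(X)$ stable under left multiplication by $L$, with $L$ normalizing each $U_j$. Then $\GGG\cap A_k=L(\UUU\cap A_k)$ is closed in some $L\ltimes U_j$.

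Steps~1 and~2 also need correcting and reordering.

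\begin{itemize}
\item In 1(b), the chain $R_u(G_m)\cap G_k$ is \emph{decreasing} in $m$, not increasing. Indeed, $R_u(G_{m+1})\cap G_m$ is a closed normal unipotent subgroup of $G_m$, connected in characteristic $0$, hence contained in $R_u(G_m)$.
\item Your first ``intrinsic'' definition of $\UUU$ is wrong: the chain $G_1=\Ga\subset G_2=\SL_2=G_3=\cdots$ puts $\Ga$ into it, although the radical is trivial.
\item In Step~2, bounded rank \emph{does} bound the dimension of a reductive group. This removes the difficulty you anticipated in Step~1. Rank $\le\dim X$ bounds $\dim G_k/R_u(G_k)$ (Lemma~\ref{Quotient_by_unipotent_radical_bounded.lem}). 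This quantity is non-decreasing in $k$ (Remark~\ref{dim-G/R_u.rem}). Once it is constant, $R_u(G_k)\subseteq R_u(G_m)$ for $m\ge k$ (proof of Proposition~\ref{Pro.Unipotent_radical}), so $\UUU=\bigcup_k R_u(G_k)$ with no limiting procedure.
\end{itemize}

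The paper then takes $L$ to be a connected reductive subgroup of maximal dimension in $\GGG$. By Mostow's theorem, $L$ is a Levi factor of every connected algebraic subgroup containing it, which gives $\GGG=L\ltimes\UUU$.
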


The assumption $\Char(\kk)= 0$ is necessary as shown by Example \ref{exa.perepechko}.

\subsection{Organization of the paper} 
Theorem~\ref{main-thm-A} is proven by induction on the derived length of the group and the dimension of $X$. It relies on a version of Rosenlicht's theorem due to Popov (see~\cite{Po2014On-infinite-dimens, ReUrSa2025Group-theoretical-}) which is described in \S\,\ref{section:a_rosenlicht_theorem}. We also need a version of the Lie-Kolchin Theorem (see Theorem~\ref{Lie-Kolchin.thm} in \S\,\ref{lie-kolchin.par}). Then, Theorem~\ref{main-thm-A} is proven in \S\,\ref{quasi-affine.sec} and~\ref{maintheoremproof.sec}. 
Corollary~\ref{corB}, Theorems~\ref{main-thm-derived-length},~\ref{thm-nested-tame} and~\ref{main-thm-Perepechko}, and the main properties of Borel subgroups are obtained in \S\,\ref{applications.sec}. 

\subsection*{Notation} The characteristic of $\kk$ is arbitrary, except otherwise stated. 
If $X$ is a variety over $\kk$,  $\OOO_X$ is its structural sheaf of regular functions and $\OOO(X)$ the $\kk$-algebra of regular functions on $X$. If $X$ is irreducible, then $\kk(X)$ denotes its field of rational functions. 
If $G$ is a group acting by automorphisms on a ring~$R$, we let $R^G$ be the ring of invariants.
 
\subsection*{Acknowledgement} We thank Jérémy Blanc, 
David Bourqui, Michel Brion, Matthieu Romagny, and Christian Urech for interesting discussions.

\section{Ind-groups and Rosenlicht's theorem}\label{section:a_rosenlicht_theorem}
\vspace{0.1cm}
\begin{center}
\begin{minipage}{12.0cm}
{\sl{
We first discuss ind-groups and their properties, and then complete  \S\,\ref{par:intro_algebraic_families} with additional properties of $\Aut(X)$, in particular with Popov's version of Rosenlicht's theorem. 
}}
\end{minipage}
\end{center}

\subsection{Ind-groups} \label{ind-algebraic-vs-indgroups.sec}
An ind-group $\GGG$ is an ind-variety $\GGG = \bigcup_{k\geq 1} \GGG_k$ with a group structure such that multiplication and inverse are ind-morphisms (see \S\,1.1.1 and \S\,1.2.1 of \cite{FuKr2018On-the-geometry-of}). 
The $\GGG_k$ are algebraic varieties, but in general they are not subgroups. In this paper an ind-group $\GGG = \bigcup_{k\geq 1} \GGG_k$ will always be an {\it affine} ind-group which means that all $\GGG_k$ are affine varieties.

An ind-variety $\VVV = \bigcup_k \VVV_k$  
carries a natural topology in which a subset $S \subset \VVV$ is closed if all $S \cap \VVV_k \subset \VVV_k$ are closed subsets. A subset $ S \subset \VVV$ is \emph{algebraic} if it is locally closed and contained in some $\VVV_k$.

Let $\VVV = \bigcup_k\VVV_k$ and $\WWW = \bigcup_\ell\WWW_\ell$ be ind-varieties. A map $\phi\colon \VVV \to \WWW$ is a \textit{morphism} if for every $k$ there is an $\ell$ such that $\phi(\VVV_k) \subset \WWW_\ell$ and the induced map $\VVV_k \to \WWW_\ell$ is a morphism of varieties. 
An ascending filtration $\UUU_1 \subset \UUU_2 \subset  \UUU_3 \subset \cdots \subset \VVV$ of $\VVV$ by closed algebraic subsets is \textit{admissible} if each $\VVV_k$ is contained in some $\UUU_j$. This means that the identity is an isomorphism from the ind-variety $\VVV = \bigcup_k \VVV_k$ to the ind-variety $\VVV = \bigcup_j \UUU_j$
(cf. \S\,1.1 in ~\cite{FuKr2018On-the-geometry-of}).

\begin{lem}\label{connected-indgroups.lem}
For an ind-group $\GGG$ the following assertions are equivalent.
\begin{enumerate}[\rm (a)]
\item
$\GGG$ is connected with respect to the ind-topology.
\item
For every $g \in \GGG$ there is an irreducible closed algebraic subset $C\subset \GGG$ which contains $g$ and the neutral element $e_\GGG$ of $\GGG$.
\item
There is an admissible filtration $\GGG = \bigcup_k \GGG_k$ such that each $\GGG_k$ is irreducible and contains $e_\GGG$.
\end{enumerate}
\end{lem}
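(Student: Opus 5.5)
We prove the cycle (c)$\Rightarrow$(b)$\Rightarrow$(a)$\Rightarrow$(c), using the filtration $\GGG=\bigcup_k\GGG_k$ by closed affine algebraic subsets.

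\emph{(c)$\Rightarrow$(b).} Take the admissible filtration from (c). A given $g$ lies in some $\GGG_k$, which is irreducible, closed and algebraic, and contains $e_\GGG$. So $C=\GGG_k$ works.

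\emph{(b)$\Rightarrow$(a).} Let $\GGG=F_1\sqcup F_2$ be a decomposition into nonempty disjoint closed subsets, with $e_\GGG\in F_1$. For $g\in F_2$, choose $C$ as in (b). Then $C=(C\cap F_1)\sqcup(C\cap F_2)$. Since $C$ is contained in some $\GGG_k$, both pieces are closed in $C$. Both are nonempty, which contradicts the irreducibility (hence connectedness) of $C$.

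\emph{(a)$\Rightarrow$(c).} This is the main step. Write $\GGG_k=\bigcup_i C_{k,i}$ as a union of finitely many irreducible components. For each component $C$, define $\tilde C$ as follows:
\begin{itemize}
\item if $C$ meets the neutral component, choose $h\in C$ and set $\tilde C=h^{-1}C$, which is irreducible and contains $e_\GGG$;
\item otherwise, set $\tilde C=\emptyset$.
\end{itemize}
To organize this, let $\GGG^\circ$ be the union of all irreducible closed algebraic subsets containing $e_\GGG$. Any such subsets $A,B$ satisfy $A\cdot B\subset$ some $\GGG_\ell$, and $\overline{A\cdot B}$ is irreducible because $A\times B$ is irreducible and the image of a morphism of ind-varieties lands in a finite stage. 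So $\GGG^\circ$ is a subgroup, and it is a directed union.

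Next we show $\GGG^\circ$ is open and closed.
\begin{itemize}
\item \emph{Open:} the cosets $g\GGG^\circ$ cover $\GGG$, and each $\GGG_k$ meets only finitely many of them. Indeed, every irreducible component $C$ of $\GGG_k$ lies in a single coset, since $C\subset c\,(c^{-1}C)$ with $c^{-1}C\ni e_\GGG$ irreducible.
\item \emph{Closed:} $\GGG^\circ\cap\GGG_k$ is then a finite union of components of $\GGG_k$, hence closed. The same holds for every coset, so the complement of $\GGG^\circ$ is also closed.
\end{itemize}
Connectedness (a) then gives $\GGG=\GGG^\circ$.

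Finally, we build the filtration. Each $\GGG_k$ is a finite union of components $C_{k,1},\dots,C_{k,r}$. For each $j$, pick $c_j\in C_{k,j}$. By the previous step, $c_j$ lies in an irreducible closed algebraic set $D_j\ni e_\GGG$. Now set
\[ \UUU_k:=\overline{D_1\cdot C_{k,1}^{-1}c_1\cdots}, \]
or more simply, take $\UUU_k$ to be the closure of the product of all the irreducible sets $D_j$ and $c_j^{-1}C_{k,j}$ (each containing $e_\GGG$), together with $\UUU_{k-1}$. Products of irreducible sets containing $e_\GGG$ are irreducible, contain each factor, and lie in a finite stage. Hence $\UUU_k$ is an irreducible closed algebraic subset containing $e_\GGG$ and $\GGG_k$ (since $C_{k,j}\subset D_j\cdot c_j^{-1}C_{k,j}$ up to reordering factors). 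The sequence is also increasing, so it is an admissible filtration.

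The main obstacle is verifying that these product closures are closed algebraic subsets, i.e.\ lie in some $\GGG_\ell$. This uses that multiplication is an ind-morphism, and that $\GGG_\ell$ is closed.
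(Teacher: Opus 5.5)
Your proof is correct. Its final filtration step uses the same device as the paper's (b)$\Rightarrow$(c): the paper takes the irreducible components $B_i$ of a closed algebraic set, picks irreducible closed $C_i\ni e_\GGG, b_i^{-1}$, and uses $\overline{(B_1C_1)\cdots(B_kC_k)}$. Your factors $D_j\cdot c_j^{-1}C_{k,j}$ with $D_j\ni e_\GGG, c_j$ are the mirror image of this. Adding $\UUU_{k-1}$ explicitly also secures monotonicity, which the paper leaves implicit.

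The genuine difference is in (a)$\Leftrightarrow$(b). The paper simply cites Proposition~2.2.1 of Furter--Kraft, whereas you prove both directions by hand:
\begin{itemize}
\item (b)$\Rightarrow$(a): you restrict a separation of $\GGG$ to a single irreducible $C$ through $e_\GGG$ and $g$.
\item (a)$\Rightarrow$(b): you show that the union $\GGG^\circ$ of all irreducible closed algebraic subsets through $e_\GGG$ is a subgroup. Each of its cosets meets every $\GGG_k$ in a union of irreducible components, so $\GGG^\circ$ is open and closed.
\end{itemize}
This makes the lemma self-contained, at the price of a few routine checks you should state. First, $\GGG^\circ$ is stable under inversion, since inversion is an ind-isomorphism. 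Second, translates $c^{-1}C$ of closed algebraic sets are again closed algebraic, since left translation is an ind-isomorphism, hence a homeomorphism sending each $\GGG_k$ into some $\GGG_\ell$.

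Also clean up the write-up:
\begin{itemize}
\item The bullet defining $\tilde C$ refers to ``the neutral component'' before it is defined, and it is never used.
\item The first displayed formula should have $c_1^{-1}C_{k,1}$, not $C_{k,1}^{-1}c_1$.
\item Replace ``up to reordering factors'' by saying you order the factors so that each $D_j$ precedes $c_j^{-1}C_{k,j}$.
\end{itemize}
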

\begin{proof}
The equivalence of (a) and (b) follows from 
\cite[Proposition~2.2.1]{FuKr2018On-the-geometry-of}. 
It is clear that (c) implies (b). For the reverse implication we have to show that every closed algebraic subset $B \subset \GGG$ is contained in an irreducible closed algebraic subset containing $e_\GGG$. Decompose $B$ in its irreducible components $B_i$, $i=1,\ldots,k$,
and choose $b_i$ in $B_i$. By (b) there is an irreducible closed subvariety $C_i$ containing $e_\GGG$ and $b_i^{-1}$. Then $\overline{B_i C_i}$ is an irreducible algebraic subset containing $B_i$ and $e_\GGG$. Hence, 
$\overline{(B_1 C_1)(B_2C_2)\cdots(B_kC_k)}$ is irreducible and contains $B$ and $e_\GGG$.
\end{proof}

We denote by $\GGG^\circ$ the connected component of the identity in $\GGG$. It is a  closed subgroup of $\GGG$.

\begin{lem}
\label{alg-gen-subgroup.lem}
Let $G$ be an algebraic group and $A \subset G$ an irreducible constructible subset containing $e_G$. Then $H:=\langle A \rangle\subset G$ is a connected closed subgroup, and $H = (A \,A^{-1})^n$ for all $n \geq 2 \dim H$.
\end{lem}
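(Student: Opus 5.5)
This is the classical argument (as in Humphreys or Borel, ``Proposition 7.5''), which I would adapt to the given formulation.

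First replace $A$ by $B := A A^{-1}$. Since $A$ is irreducible constructible, $B$ is the image of the irreducible variety $A\times A$ under $(a,b)\mapsto ab^{-1}$, so $B$ is again irreducible constructible. Moreover $B$ contains $e_G$ and satisfies $B^{-1}=B$. Also $\langle B\rangle = \langle A\rangle$, because $e_G\in A$ gives $A\subset B$.

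Next consider the increasing chain $B \subset B^2 \subset B^3 \subset \cdots$. The inclusions hold since $e_G\in B$, and each $B^m$ is irreducible constructible as the image of $B^{\times m}$. The Zariski closures $\overline{B^m}$ are therefore irreducible closed subsets of $G$ whose dimensions increase weakly and are bounded by $\dim G$. Hence there is a smallest $m_0$ with $\overline{B^{m_0}}=\overline{B^{m_0+1}}$. The dimension strictly increases, by at least $1$, at each step before $m_0$, so $m_0\le \dim \overline{B^{m_0}}$.

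Set $C:=\overline{B^{m_0}}$. I then show that $C$ is a closed subgroup.
\begin{itemize}
\item[(i)] From $\overline{B^{m_0}}\,\overline{B^{m_0}}\subset \overline{B^{2m_0}}$ and the stabilization $\overline{B^{m}}=C$ for all $m\ge m_0$ (by induction, using $\overline{B\,\overline{B^m}}\subset\overline{B^{m+1}}$), we get $CC\subset C$.
\item[(ii)] Inversion is a homeomorphism and $B^{-1}=B$, so $C^{-1}=C$.
\end{itemize}
Thus $C$ is a closed connected subgroup containing $B$, which gives $H\subset C$.

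For the reverse inclusion, $B^{m_0}$ is constructible and dense in the irreducible group $C$, so it contains a dense open subset $U$ of $C$. For any $g\in C$, the sets $gU^{-1}$ and $U$ are both dense open in $C$, so they intersect. Hence $g\in UU\subset B^{2m_0}$. Therefore
\[
C = B^{2m_0}=B^{m}\quad\text{for all } m\ge 2m_0 .
\]
This also shows $C\subset H$, so $H=C$ is connected and closed, and $m_0\le \dim H$.

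Finally, $B^n = (AA^{-1})^n = H$ for every $n\ge 2\dim H\ge 2m_0$, which is the statement. If $\dim H = 0$, then $H=\{e_G\}$ and the claim holds for all $n\ge 1$, so $n\ge 0$ is fine once we interpret $(AA^{-1})^0=\{e_G\}$.

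The only delicate point is the standard fact that a constructible dense subset contains a dense open subset of the closure (Chevalley), together with the bookkeeping $m_0\le\dim H$. Neither should pose real difficulty.
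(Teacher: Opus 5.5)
Your proof is correct and follows the paper's argument. In both, the increasing chain $\overline{(AA^{-1})^m}$ of irreducible closed sets stabilizes within $\dim H$ steps to a closed connected subgroup containing $H$. A dense constructible subset of that group then contains a dense open $U$ with $UU$ equal to the whole group; the paper cites Humphreys, Lemma~7.4, for this last step, which you reprove. One minor point: the bound $m_0\le\dim\overline{B^{m_0}}$ fails when $B=\{e_G\}$, since then $m_0=1$, but your separate treatment of $\dim H=0$ already covers that case.
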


\begin{proof}
The algebraic subgroup $\overline{H}\subset G$ is connected. The increasing sequence of irreducible closed subsets 
$\overline{A A^{-1}} \subset\overline{(A A^{-1})^2} \subset \overline{(A A^{-1})^3} \subset \cdots$ stabilizes, 
and thus $\overline{(A A^{-1})^d} = \overline{H}$
for
$d:=\dim \overline{H}$. It follows that $(AA^{-1})^{d} \subset \overline{H}$ is a dense constructible subset, and thus $(AA^{-1})^{2d} =  \overline{H}$, by Lemma~7.4 in \cite{Hu1975Linear-algebraic-g}.
\end{proof}

\subsection{Nested and strongly nested ind-groups}\label{strongly_nested.sec}
Let $\GGG = \bigcup_k\GGG_k$ be an ind-group. Following the definition from \S~\ref{par:intro_algebraic_families}, $\GGG$  is nested if there is a countable ascending filtration $G_1 \subset G_2 \subset \cdots \subset \GGG$ by algebraic groups. 
This does not imply that  this filtration is admissible. E.g., the algebraic group 
$\SL_2(\overline{\FF_p})$ is nested by the finite subgroups $\SL_2(\FF_{p^k})$. 
We will call an ind-group \textit{strongly nested} if it admits an admissible filtration by closed algebraic subgroups. Note that for an uncountable $\kk$ an ascending filtration by algebraic subgroups is always admissible
(Lemma 1.3.1 in \cite{FuKr2018On-the-geometry-of}). In particular, a nested ind-group is strongly nested in this case. 

If $X$ is affine,  $\Aut(X)$ has a natural structure of an ind-group such that morphisms $A \to \Aut(X)$ in the sense of Definition~\ref{Def.morphism} correspond to
morphisms of ind-varieties (see \cite[Theorem~5.1.1]{FuKr2018On-the-geometry-of}).
A closed subgroup $\GGG$ of $\Aut(X)$ carries an ind-group structure coming from 
$\Aut(X)$, and $\GGG$ is connected with respect to this structure if and only if 
it  is connected in the sense of Definition~\ref{defi:introduction_connected}.
(We use the equivalence of (a) and (b) in Lemma~\ref{connected-indgroups.lem}.) 

Corollary~\ref{nested-strongly-nested.cor} will  show that a closed subgroup of $\Aut(X)$ which is nested by connected algebraic groups is strongly nested and \algebraicallyTame{}.

\subsection{Connected component}\label{connected_subgroups.sec}
Let $X$ be a variety and let $\HHH \subseteq \Aut(X)$ be a subgroup.
We denote by $\HHH^\circ \subseteq \HHH$ the subgroup generated by the images of all
morphisms $\rho \colon A \to \Aut(X)$ such that $A$ is irreducible and $\id_X \in \rho(A) \subseteq \HHH$;
 we call $\HHH^\circ$ the \emph{connected component of the identity in $\HHH$}.
If $X$ is affine and $\HHH$ is a closed subgroup of the ind-group $\Aut(X)$, this definition agrees with the usual notion of connected component of the identity introduced above in \S\,\ref{ind-algebraic-vs-indgroups.sec}.

\subsection{Orbits}\label{orbits-notation.sec}
Let $\rho\colon A\to \Aut(X)$ be a morphism. 
For simplicity, for $(a,x)\in A\times X$, we write $ax$ instead of $\rho(a)(x)$. The orbit of a point $x\in X$ under $A$~is 
\begin{equation}
Ax :=\{ax\mid a\in A\} \subset X.
\end{equation}
If $Y$ is a subset of $X$ and $a$ is an element of $A$ we set $aY:=\{ay\mid y\in Y\}$ and $AY:=\{ay\mid a\in A, y\in Y\}$. If $Y$ is constructible, then so are $aY$ and $AY$, by Chevalley's theorem. A subset $Y \subset X$ is \textit{stable under $A$} if $aY = Y$ for all $a \in A$. 

We use similar notation for actions of connected subgroups. 
Let $\GGG\subset \Aut(X)$ be such a connected subgroup. 
Proposition~\ref{Orbits.prop}(1) below implies that each orbit
$\GGG x$ is locally closed in $X$, 
and so $\dim \GGG x$ makes sense. 
We denote by $\mdo(\GGG;X)$ the  {\emph{maximal dimension of $\GGG$-orbits}} in $X$:
\begin{equation*}
\mdo(\GGG;X)\coloneqq \max\{\dim \GGG x\mid x \in X\}.
\end{equation*}
Then  $\Xm\subset X$ will be the union  of orbits of dimension $\mdo(\GGG ; X)$.
The following result can be found in Proposition~2.4 of \cite{Note_on_Ramanujam}.

\begin{prop}\label{Orbits.prop}
Let $\GGG \subset \Aut(X)$ be a connected  
subgroup.
\begin{enumerate}[\rm (1)]
\item
The $\GGG$-orbits in $X$ are open in their closure.
\item \label{algebraic-subset.item}
The subgroup $\GGG$ contains an irreducible constructible subset $V \subseteq \Aut(X)$
with $\id_X \in V$
such that $\GGG x = Vx$ for all $x \in X$. 
\item
The union $\Xm$ of orbits of maximal dimension is open in $X$.
\end{enumerate}
\end{prop}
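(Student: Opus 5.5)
The plan is to reduce everything to irreducible constructible subsets of $\GGG$ and then use a stabilization argument, as in Lemma~\ref{alg-gen-subgroup.lem}, replacing the closure in an ambient algebraic group by orbit closures in $X$.

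\emph{Step 1: part (2).} For $g\in\GGG$, Definition~\ref{defi:introduction_connected} gives an irreducible family $\rho_g\colon A_g\to\Aut(X)$ with $\rho_g(A_g)\subset\GGG$ and $\id_X,g\in\rho_g(A_g)$. Products of such families are again irreducible families containing $\id_X$: they are parametrized by $A_{g_1}\times\cdots\times A_{g_k}$, and likewise for inverses. Moreover $\GGG x=\bigcup_V Vx$, where $V$ runs over these products. Each $Vx$ is an irreducible constructible subset of $X$ containing $x$. If $V\subset V'$, which we can arrange by inserting $\id_X$ factors, then $\overline{Vx}\subset\overline{V'x}$. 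The dimension is bounded by $\dim X$, and the number of irreducible components of $X$ is finite. Hence for a fixed $x$ the closures $\overline{Vx}$ stabilize to a maximal irreducible closed set $Z_x$, and $\GGG x\subset Z_x$.

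We then need a single $V$ that works uniformly in $x$. The idea is a Noetherian induction on $X$. Pick $V$ maximizing the generic dimension of $\dim Vx$. On the open set where this dimension is attained, $\overline{Vx}=\overline{\GGG x}$. Replacing $V$ by $V^{-1}V V^{-1}V$ makes $Vx$ contain a dense open subset of $\overline{\GGG x}$ that is stable enough, and then the standard trick shows $VVx=\GGG x$. The trick is that for $g\in\GGG$ the sets $Vgx$ and $Vx$ are both dense constructible in the same irreducible closure, so they meet. On the closed complement we repeat the argument with a larger product $V'$. Taking the product of the finitely many $V$'s obtained in this way gives one $V$ with $\GGG x=Vx$ for all $x$.

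\emph{Step 2: part (1).} With this $V$, the orbit $\GGG x=Vx$ is constructible and therefore contains a dense open subset $U$ of its closure. Since $\GGG$ acts transitively on $\GGG x$ by automorphisms of $X$ preserving $\overline{\GGG x}$, the translates $gU$ with $g\in\GGG$ are open in $\overline{\GGG x}$ and cover $\GGG x$. So the orbit is open in its closure.

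\emph{Step 3: part (3).} Consider the morphism $\Phi\colon A\times X\to X\times X$, $(a,x)\mapsto(x,\rho(a)x)$, where $V=\rho(A)$. The orbit dimension $\dim Vx$ is the fiber dimension of the constructible image $\Phi(A\times X)$ over the first factor. Upper semicontinuity of fiber dimension, applied via Chevalley's theorem to the closure of the image, together with the equality $\Phi(A\times X)\cap(\{x\}\times X)=\{x\}\times\GGG x$, shows that $\{x\mid\dim\GGG x\geq d\}$ is open. Taking $d=\mdo(\GGG;X)$ gives that $\Xm$ is open.

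The main obstacle is the uniformity in Step 1. A priori the stabilizing family $V$ depends on $x$, and the Noetherian induction must ensure that the enlarged families still produce exactly the orbit, not a smaller set, on every stratum. One must also handle the reducible $X$ and the product trick carefully. I would first try to prove the uniform statement generically on the locus of maximal orbit dimension, and then descend to $\GGG$-stable closed complements, which are again varieties with a $\GGG$-action.
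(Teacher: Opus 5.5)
Your Step~1 has a genuine gap at exactly the uniformity you flag, and the mechanism you propose does not close it.

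The claim that for $g\in\GGG$ the sets $Vgx$ and $Vx$ are both dense in $\overline{\GGG x}$ needs $\dim Vgx=\dim\GGG x$ at \emph{every} point $gx$ of the orbit. However, the open set $X_V$ where $V$ attains the maximal orbit dimension (and where indeed $\overline{Vy}=\overline{\GGG y}$) is not $\GGG$-stable, and replacing $V$ by $V^{-1}VV^{-1}V$ changes nothing. Take $\GGG=\Aff_1$ acting on $\bbA^1$ and $V=\{x\mapsto ax : a\in\kk^\times\}$. Then $X_V=\bbA^1\setminus\{0\}$ and $V^{-1}VV^{-1}V=V$. Since the point $0$ of the orbit has $V\cdot 0=\{0\}$, you get $VV\cdot 1=\bbA^1\setminus\{0\}\neq\GGG\cdot 1$. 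Moreover $\bbA^1\setminus X_V=\{0\}$ is not $\GGG$-stable, so your induction on ``the closed complement'' is not an induction on $\GGG$-varieties.

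The missing idea is as follows:
\begin{enumerate}
\item Pass to the $\GGG$-stable open set $\GGG X_V=\bigcup_{g\in\GGG}gX_V$.
\item Cover it by finitely many translates $g_1X_V,\ldots,g_kX_V$; this is possible because every subspace of the Noetherian space $X$ is quasi-compact.
\item Choose one irreducible family $B\subset\GGG$ containing $\id_X,g_1^{-1},\ldots,g_k^{-1}$, for instance a product of the families given by connectedness, and set $W=VB$.
\item For $y\in g_iX_V$ you get $Wy\supseteq Vg_i^{-1}y$, which is dense in $\overline{\GGG g_i^{-1}y}=\overline{\GGG y}$. Hence $\overline{Wy}=\overline{\GGG y}$ for all $y\in\GGG X_V$.
\item Only now does your intersection trick give $\GGG y=W^{-1}Wy$ on $\GGG X_V$.
\end{enumerate}
A family that realizes the orbit of a point still does so after you multiply it by further families containing $\id_X$. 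So Noetherian induction on the $\GGG$-stable closed set $X\setminus\GGG X_V$, treating one $\GGG$-stable irreducible component at a time, produces the single $V$ of~(2). For reference, the paper itself does not reprove the proposition; it quotes Proposition~2.4 of \cite{Note_on_Ramanujam}.

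Step~3 is also wrong as written, for two reasons:
\begin{enumerate}
\item For $\overline{\Phi(A\times X)}\to X$, upper semicontinuity makes the sets $\{\dim\text{fibre}\geq d\}$ \emph{closed}, not open.
\item The fibres of this closure can be larger than orbit closures. For $\Gm$ acting on $\bbA^1$ by homotheties the closure is $\bbA^2$, whose fibre over the fixed point $0$ is a line.
\end{enumerate}
Instead, apply Chevalley's theorem to $\Phi$ itself on $A\times X$. Its fibre through $(a,x)$ is $\{a' : \rho(a')x=\rho(a)x\}\times\{x\}$. The open set where the local fibre dimension is $\leq\dim A-d$ has open image under the projection to $X$. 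Because $A$ is irreducible, that image is exactly $\{x : \dim Vx\geq d\}$.

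This lower semicontinuity for a fixed family is also what you silently need in Step~1 to know that $X_V$ is open. It also gives~(3) without using~(2): $\Xm=\bigcup_V\{x : \dim Vx\geq\mdo(\GGG;X)\}$ is a union of open sets. Step~2 is correct, and it only needs the pointwise families from the first half of Step~1.
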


\subsection{Algebraic subgroups}\label{algebraic_subgroups_and_basic_facts.sec}
We will constantly use the following facts, valid for any morphism $\rho\colon A \to \Aut(X)$ (see \cite[p. 26]{Ra1964A-note-on-automorp} and \cite[Prop.~3.3.2]{FuKr2018On-the-geometry-of}).

\begin{enumerate}[\rm (1)]
\item 
If the subvariety $Y \subset X$ is stable under $A$, the induced map $\rho_Y\colon A \to \Aut(Y)$ is a morphism.
\item 
The map $\rho^k\colon A^k \to \Aut(X)$, $(a_1,\ldots,a_k)\mapsto \rho(a_1)\circ\cdots\circ\rho(a_k)$, is a morphism, and  if  $\id_X \in \rho(A)$, then $\rho^k(A^k) \subset \rho^\ell(A^\ell)$ for any $k\leq \ell$.
\item\label{item2.5.3}
The map $(a,x)\mapsto (a,\rho(a)(x))$ is an automorphism of $A\times X$. In particular, the inverse family $A \ni a\mapsto \rho(a)^{-1} \in \Aut(X)$ is a morphism.
\end{enumerate}
Property~(3) is mentioned in \cite{Ra1964A-note-on-automorp}. 
A short proof can be found in the Appendix of \cite{Note_on_Ramanujam}.

Recall that \hyperlink{Ramanujam.thm}{Ramanujam's theorem} was used
to define the notion of algebraic subgroups of $\Aut(X)$ in \S\,\ref{par:intro_algebraic_families}.

\begin{lem}\label{lem.intersection-of-constructible}
If $C$ and $D$ are constructible subsets of $\Aut(X)$, then $C\cup D$ and $C\cap D$ are constructible as well. Thus, the intersection $G \cap H$ of two algebraic subgroups $G$, $H \subset \Aut(X)$ is an algebraic subgroup, closed in $G$ and in $H$.
\end{lem}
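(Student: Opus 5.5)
For the first assertion, I would write $C=\rho(A)$ and $D=\sigma(B)$ for morphisms $\rho\colon A\to\Aut(X)$ and $\sigma\colon B\to\Aut(X)$. For the union, the disjoint union $A\sqcup B$ is again an algebraic variety, and the map $A\sqcup B\to\Aut(X)$ equal to $\rho$ on $A$ and to $\sigma$ on $B$ is a morphism, because $(A\sqcup B)\times X=(A\times X)\sqcup(B\times X)$. Its image is $C\cup D$. Note that this uses reducible parameter varieties, which Definition~\ref{Def.morphism} allows.

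For the intersection, the natural parameter space is
\[
Z:=\{(a,b)\in A\times B \mid \rho(a)=\sigma(b)\}.
\]
I first check that $Z$ is closed in $A\times B$. It is the intersection over $x\in X$ of the preimages of the diagonal $\Delta_X\subset X\times X$ under the morphisms $(a,b)\mapsto(\rho(a)x,\sigma(b)x)$. If $X$ is separated, $\Delta_X$ is closed, so each of these sets is closed, and hence so is their intersection. Giving $Z$ its reduced structure, the restriction $Z\to A\to\Aut(X)$ is a morphism, since its action map is the restriction of $A\times X\to X$ to $Z\times X$. Its image is exactly $C\cap D$. The only point needing care here is that constructible sets are images of morphisms rather than subsets of some ambient variety, and this is handled by working inside $A\times B$.

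For the second assertion, let $G,H$ be algebraic subgroups. By (UP) the identity maps $G\to\Aut(X)$ and $H\to\Aut(X)$ are morphisms, so $G\cap H$ is constructible by the first part, and in fact equal to the image of $Z\subset G\times H$, where $Z$ is closed. Projecting to $G$, the set $Z$ maps bijectively onto $G\cap H$. I would rather argue directly that $G\cap H$ is closed in $G$. The inclusion $\iota\colon H\to\Aut(X)$ gives, by (UP) for $G$ applied to the family $\iota|_{\iota^{-1}(G)}$, that $G\cap H\to G$ is a morphism. Thus $G\cap H$ is a constructible subgroup of $G$. A constructible subgroup of an algebraic group is closed: it contains a dense open subset of its closure, and then a standard argument (Humphreys, Lemma~7.4) shows it equals its closure. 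By symmetry, $G\cap H$ is also closed in $H$.

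Finally, $G\cap H$ is a closed subgroup of the algebraic group $G$, so it is an algebraic group with finitely many components. Its identity component is a connected finite-dimensional subgroup of $\Aut(X)$ of finite index, so \hyperlink{Ramanujam.thm}{Ramanujam's theorem} applies, and $G\cap H$ is an algebraic subgroup. The main subtlety is making sure that the structure from Ramanujam's theorem agrees with the closed-subgroup structure inherited from $G$. This follows from the uniqueness in (UP): the inclusion of the closed subgroup into $G$ is a morphism, so the inherited structure gives an algebraic action with respect to which every family landing in $G\cap H$ is a morphism, via (UP) for $G$ and closedness.
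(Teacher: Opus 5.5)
Your proposal is correct and follows essentially the paper's proof: the disjoint union $A\sqcup B$ handles $C\cup D$, the closed set $Z\subset A\times B$ handles $C\cap D$, and closedness of $G\cap H$ comes from the fact that a constructible subgroup of an algebraic group is closed. The paper gets constructibility of $G\cap H$ in $G$ directly as $\pr_G(Z)$, by Chevalley. Your (UP) detour needs this same input, since the family $\iota|_{\iota^{-1}(G)}$ is only defined once $\iota^{-1}(G)=\pr_H(Z)$ is known to be constructible. Your extra verifications are details the paper leaves implicit: closedness of $Z$ via the diagonal, and agreement of the inherited structure with Ramanujam's via uniqueness in (UP).
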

\begin{proof}
Write $C = \rho(A)$ and $D = \mu(B)$ for some morphisms $\rho\colon A \to \Aut(X)$ and $\mu\colon B \to \Aut(X)$. Then $C \cup D$ is the image of $A \cup B$, hence constructible. 
For $C\cap D$
consider the closed subset 
$Z \coloneqq \set{(a,b)}{\rho(a)(x) = \mu(b)(x) \ \forall x \in X}$ of $A \times B$.
Projecting $Z$ to $A$, we get a constructible subset
\[
\pr_A(Z)=\set{a\in A}{\rho(a)\in C\cap D} = \rho^{-1}(C\cap D)
\]
of $A$; hence $C \cap D = \rho(\pr_A(Z))$ is constructible as well.
The last claim follows from  the fact that a constructible subgroup of an algebraic group is  closed (see~Proposition~1.3(c) in \cite{Bo1991Linear-algebraic-g}).
\end{proof}

The proof above shows that the preimage of a constructible subset of $\Aut(X)$
under a morphism is constructible.

\begin{prop}\label{commutative-subgroup.prop}
Let $\GGG \subset \Aut(X)$ be a connected commutative subgroup
with a dense orbit in $X$. Then $\GGG$ is an algebraic subgroup and  $\dim(\GGG) = \dim(X)$.
\end{prop}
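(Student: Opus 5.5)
We prove the proposition by showing that the map "evaluate at a general point" is injective on $\GGG$. After that, \hyperlink{Ramanujam.thm}{Ramanujam's theorem} will give the algebraic structure.

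\emph{Reduction to a single orbit.} By Proposition~\ref{Orbits.prop}, the dense orbit $O=\GGG x_0$ is open in its closure. Since it is dense, $O$ is open in $X$. By Proposition~\ref{Orbits.prop}(\ref{algebraic-subset.item}), there is an irreducible constructible subset $V\subset\GGG$ containing $\id_X$ with $\GGG x = Vx$ for all $x$; in particular $O=Vx_0$.

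\emph{Freeness on $O$.} Let $g\in\GGG$ fix $x_0$. For any $h\in\GGG$, commutativity gives $g(hx_0)=h(gx_0)=hx_0$. So $g$ fixes $O$ pointwise. Because $O$ is dense in $X$ and $X$ is separated, $g=\id_X$. Hence the orbit map $\GGG\to O$, $g\mapsto gx_0$, is bijective. Now take any morphism $\iota\colon A\to\Aut(X)$ that is injective with $\iota(A)\subset\GGG$. The composite $A\to O$, $a\mapsto\iota(a)x_0$, is then an injective morphism, so $\dim A\le\dim O=\dim X$. Therefore $\dim\GGG\le\dim X$, and $\GGG$ is finite dimensional.

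\emph{Algebraicity and dimension.} Since $\GGG$ is connected and finite dimensional, \hyperlink{Ramanujam.thm}{Ramanujam's theorem} (with finite index subgroup $\GGG$ itself) endows $\GGG$ with the structure of an algebraic group acting algebraically on $X$. The orbit map $\GGG\to O$ is then a bijective morphism of varieties, so $\dim\GGG=\dim O=\dim X$.

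The main obstacle is the finite-dimensionality step. One must make sure the notion of dimension in the paper (a supremum over injective families) really is bounded via the injective map to $O$. This uses that an injective morphism of varieties does not increase dimension, which holds because its fibers are finite. One also needs $O$ to have dimension $\dim X$ even when $X$ is reducible. Since $O$ is irreducible (it is the image of the irreducible $V$) and dense, $X$ is in fact irreducible here, so this is fine.
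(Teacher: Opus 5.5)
Your proof is correct and follows the paper's argument. Commutativity together with the density of the orbit makes $g\mapsto gx_0$ injective on $\GGG$, and $\dim\GGG=\dim X$ then comes from this injective dominant orbit map. The only difference is the closing step: the paper deduces $V=\GGG$, so that $\GGG$ is itself constructible, and invokes Corollary~1.3 of \cite{Note_on_Ramanujam}, whereas you bound the dimension of injective families by $\dim X$ and apply Ramanujam's theorem directly, which is equally valid.
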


\begin{proof} Proposition~\ref{Orbits.prop} provides  a 
constructible subset $V \subseteq \Aut(X)$,  $V \subseteq \GGG$, such that
$Vx = \GGG x \subset X$ is open and dense for some $x \in X$. Since $\GGG$ is commutative, the map 
$\GGG \to \GGG x$, $g \mapsto gx$ is bijective. Indeed, if $gx=g'x$, then $ghx=g'hx$ for all $h\in \GGG$, hence $g=g'$. Thus, $V = \GGG$, and Corollary~1.3 of \cite{Note_on_Ramanujam} shows that $\GGG$ is an algebraic subgroup of $\Aut(X)$. Moreover $\dim(\GGG) =\dim(X)$, because $\GGG \to X$, $g \mapsto gx$, is an injective and dominant morphism.
\end{proof}

\begin{lem}\label{alg-generated.lem}
Let $\HHH\subset  \Aut(X)$ be a connected subgroup, 
$H$ an algebraic group, and $\phi \colon \HHH \to H$ a homomorphism 
of groups such that for each morphism $\mu\colon B \to \HHH$, the composition $\phi \circ \mu \colon B \to H$ is a morphism of varieties. 
If $\phi$ is injective, then $\HHH$ is a connected algebraic group of dimension $\leq \dim H$.
\end{lem}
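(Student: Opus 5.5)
The plan is to show that $\HHH$ is finite dimensional, of dimension at most $\dim H$, and then apply \hyperlink{Ramanujam.thm}{Ramanujam's theorem}. That theorem applies to $\HHH$ itself, since $\HHH$ is connected and hence a connected finite dimensional subgroup of finite index (namely index one) in itself. It gives an algebraic group structure on $\HHH$. Connectedness of $\HHH$ in the sense of Definition~\ref{defi:introduction_connected} then gives connectedness as an algebraic group: each $g$ lies in the image of an irreducible family through $\id_X$, and by (UP) this image is an irreducible subset of the algebraic group $\HHH$ containing $e$ and $g$. So everything reduces to bounding the dimension.

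To bound $\dim \HHH$, take an injective morphism $\iota\colon A\to \Aut(X)$ with $\iota(A)\subset \HHH$. We may assume $A$ is irreducible. By hypothesis $\phi\circ\iota\colon A\to H$ is a morphism of varieties. It is injective, being a composition of injective maps. An injective morphism of varieties has finite fibers, so $\dim A = \dim \overline{\phi\iota(A)} \leq \dim H$. Taking the supremum over all such $\iota$ gives $\dim\HHH\leq\dim H<\infty$.

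It remains to compare the dimension of $\HHH$ as an algebraic group with this bound. Since $\HHH$ is now an algebraic group acting algebraically on $X$, the orbit map $\HHH\times X\to X$ is a morphism, so the identity $\HHH\to\Aut(X)$ is itself an injective morphism from a variety. The definition of $\dim$ therefore shows that the dimension of the algebraic group $\HHH$ is at most $\sup\dim A\leq \dim H$.

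The only subtle point is that the dimension of a subset of $\Aut(X)$ is defined through injective families. The argument needs injectivity of $\phi$ at exactly two places: to pass the injectivity of $\iota$ on to $\phi\circ\iota$, and to bound the algebraic-group dimension. I expect no real obstacle beyond checking that Ramanujam's theorem applies with $\HHH$ as its own finite-index connected subgroup.
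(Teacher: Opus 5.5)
Your proposal is correct and follows essentially the same route as the paper: compose each injective family $\mu\colon B\to\HHH$ with $\phi$ to get an injective morphism into $H$, deduce $\dim B\leq\dim H$ and hence $\dim\HHH\leq\dim H$, and conclude with Ramanujam's theorem. Your extra checks (that connectedness carries over to the algebraic group, and that its dimension is bounded by the supremum) are details the paper leaves implicit. One small inaccuracy: the second use of injectivity of $\phi$ you mention is really just the first bound applied again.
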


\begin{proof}
For any injective morphism $\mu\colon B \to \GGG$, the composition $\phi\circ\mu\colon B \to H$ is injective, hence $\dim B \leq \dim H$. Thus, $\dim\GGG \leq \dim H$ and the claim follows from \hyperlink{Ramanujam.thm}{Ramanujam's theorem}.
\end{proof}
 
\begin{lem}\label{Lem.restriction-to-open}
Let $X_0 \subset X$ be locally closed and $\rho\colon A \to \Aut(X)$ a morphism.
\begin{enumerate}[\rm (1)]
\item \label{Lem.restriction-to-open1}
The subset $A_0:=\{a\in A \mid \rho(a)(X_0) = X_0\}$ is closed in $A$, and the induced map $\rho_0\colon A_0 \to \Aut(X_0)$ is a morphism.
\item \label{Lem.restriction-to-open2}
Assume that $X_0$ is dense in $X$. Let $\HHH \subset \Aut(X)$ be a connected subgroup stabilizing $X_0$. If the image $\HHH'$ of $\HHH$ in $\Aut(X_0)$ is contained in an algebraic group $H$, then $\HHH$ is an algebraic group of dimension $\leq \dim H$.
In particular, if $\HHH'$ is \algebraicallyTame{}, then  so is $\HHH$.
\end{enumerate}
\end{lem}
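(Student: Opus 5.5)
For (1), I will write $X_0$ locally closed as $X_0 = Z\cap U$ with $Z$ closed and $U$ open in $X$. Consider the morphism $\Phi\colon A\times X\to X$, $(a,x)\mapsto \rho(a)(x)$, and the automorphism $\Psi\colon (a,x)\mapsto (a,\rho(a)(x))$ of $A\times X$ from property~(3) of \S\,\ref{algebraic_subgroups_and_basic_facts.sec}. The condition $\rho(a)(X_0)\subset \overline{X_0}$ is closed in $a$: it says $\{a\}\times X_0\subset \Psi^{-1}(A\times\overline{X_0})$, and the set of such $a$ is the complement of the projection of the open set $(A\times X_0)\setminus \Psi^{-1}(A\times \overline{X_0})$. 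Since $A\times X_0\to A$ is a projection from a product, it is open, so this complement is closed.

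It remains to replace "$\subset \overline{X_0}$" by "$=X_0$". Since $\rho(a)$ is an automorphism, $\rho(a)(X_0)=X_0$ is equivalent to $\rho(a)(\overline{X_0})=\overline{X_0}$ together with $\rho(a)(\overline{X_0}\setminus X_0)=\overline{X_0}\setminus X_0$. Both $\overline{X_0}$ and $\overline{X_0}\setminus X_0$ are closed. For a closed $Y$, the condition $\rho(a)(Y)\subset Y$ is closed by the argument above; the reverse inclusion is the same condition for the inverse family $a\mapsto\rho(a)^{-1}$, which is a morphism by property~(3). Intersecting these four closed conditions gives $A_0$ closed. Finally, $A_0\times X_0\to X_0$ is the restriction of $\Phi$ to a locally closed subvariety with image in $X_0$, hence a morphism, so $\rho_0$ is a morphism. (This is essentially fact~(1) of \S\,\ref{algebraic_subgroups_and_basic_facts.sec}, applied to the subfamily $A_0$.)

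For (2), I will apply Lemma~\ref{alg-generated.lem} to the composition $\phi\colon \HHH\to \HHH'\subset H$. Injectivity holds because $X_0$ is dense: an automorphism of a variety that is the identity on a dense subset is the identity, since the locus where $g$ agrees with $\id_X$ is closed as $X$ is separated. For a morphism $\mu\colon B\to\HHH$, part~(1) with $B_0=B$ shows that $B\to\Aut(X_0)$ is a morphism. What needs care is that $B\to H$ is then a morphism of varieties. Here I use that $\HHH'$ is connected, being the image of a connected group, and is contained in the algebraic group $H$. Since the families lie in $\HHH'$, the universal property (UP) of Ramanujam's theorem for $H$ yields that $B\to H$ is a morphism.

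This step is the main subtlety: it requires $H$ to be an algebraic subgroup of $\Aut(X_0)$, which is the intended reading of the statement. Lemma~\ref{alg-generated.lem} then gives that $\HHH$ is a connected algebraic group with $\dim\HHH\le\dim H$.

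For the "in particular", let $G\subset\HHH$ be algebraically generated by a family $\rho(A)$. Its image $G'=\langle\rho_0(A)\rangle$ is algebraically generated in $\HHH'$, hence algebraic by temperedness. Applying the first part to the connected group $G$ with $H=G'$ shows that $G$ is algebraic. Connectedness of $\HHH$ is given, so $\HHH$ is tempered.
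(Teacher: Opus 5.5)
Your proof is correct and follows the paper's argument. For (1), the paper also characterizes $A_0$ by requiring that $\rho(a)$ and $\rho(a)^{-1}$ preserve the closed sets $\overline{X_0}$ and $\overline{X_0}\setminus X_0$, and then cites fact~(1) of \S\,\ref{algebraic_subgroups_and_basic_facts.sec}. For (2), it simply applies Lemma~\ref{alg-generated.lem} to $\HHH\to H$, with $H$ implicitly an algebraic subgroup of $\Aut(X_0)$, exactly as you read it. Your added details (openness of the projection, injectivity via density, the universal property of $H$, the temperedness deduction) only make explicit what the paper leaves tacit, though the connectedness of $\HHH'$ you invoke is not needed for the (UP) step.
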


\begin{proof}
(1) 
The first statement follows from the observation that an element $a \in A$ lies in $A_0$ if and only if $\rho(a)$ and $\rho(a)^{-1}$ preserve the closed subsets $\overline{X_0}$ and $\overline{X_0} \setminus X_0$. This is a closed condition for $a$.
The second claim follows from property (1) listed at the beginning of 
the present \S\,\ref{algebraic_subgroups_and_basic_facts.sec}.
\ps
(2)
Apply Lemma~\ref{alg-generated.lem} to the homomorphism $\HHH \to H$.
\end{proof}

\subsection{Rosenlicht's decomposition and the unipotent radical} \label{Rosdecompandunipelem}
Unipotent elements of affine algebraic groups are discussed in \S 15 of 
\cite{Hu1975Linear-algebraic-g}. An algebraic group is \textit{unipotent} if it is affine and consists of unipotent elements. In characteristic zero a unipotent group is connected. In characteristic $p>0$ an element of an affine algebraic group is unipotent if and only if its order is finite and a power of $p$.

A variety is \textit{anti-affine} if $\OOO(Z)=\kk$.
Rosenlicht's decomposition theorem (see Theorem~1.2.4 in 
\cite{BrSaUm2013Lectures-on-the-st})
says that any connected algebraic group $G$ is a product $G = \Gaff \Gant$ where $\Gaff \subset G$ is the largest connected, affine and normal subgroup of $G$, and $\Gant\subset G$ is its largest anti-affine subgroup. 
The quotient $G/\Gaff$ is an abelian variety, and every connected affine subgroup is contained in $\Gaff$ (Proposition~3.1.1(i) in \cite{BrSaUm2013Lectures-on-the-st}).
The \emph{unipotent radical} $R_u(G)$ is  the largest closed, normal, connected and unipotent subgroup of $G$. Since unipotent groups are affine we get
$R_u(G) = R_u(\Gaff) \subset \Gaff$. 

\begin{rem}\label{unipotent-group.rem}
\textit{If $U$ is an algebraic group such that every element $u\in U$ is contained in some unipotent subgroup $U_u$, then $U$ is a unipotent group.}
Indeed, we show that the abelian variety $U^\circ/\Uaff^\circ$ 
is trivial, which gives that $U$ is affine. 
In characteristic zero $U_u$ is connected, hence contained in 
$\Uaff^\circ$, so $U^\circ = \Uaff^\circ$. 
In characteristic $p>0$ we use the fact that the order of every element of $U$ is a power of $p$, but a non-trivial abelian variety contains elements of order
prime to $p$.
\end{rem}

\begin{exa} \label{example_unipotent_char_p.exa}
Suppose $\Char(\kk)=p>0$. 
Let $E$ be an elliptic curve with  $p$-torsion $E[p] \simeq \ZZ/p\ZZ$. 
Define $G=(E\times \Ga)/(\ZZ/p\ZZ)$ where $\ZZ/p\ZZ$ is embedded diagonally.
This group is commutative and connected, $R_u(G)$ is the projection of $\{ 0\}\times \Ga$, and $R_u(G)$ intersects the projection of $E\times \{0\}$ in a subgroup $F_1\simeq \ZZ/p\ZZ$. Thus, $F_1$ is contained in a connected unipotent subgroup as well as in an elliptic curve. If  $z\in E$ is an element of order $p^r$ for some $r>1$, the finite group $F_r\subset G$ generated by $z$ is central, is not contained in $R_u(G)$, and is entirely made of elements of order a power of $p$.
\end{exa}

\begin{exa}\label{mostow_theorem.exa}
Let $G$ be an affine algebraic group. 
If $\Char(\kk) = 0$,  by a theorem of Mostow, $G$ is a semi-direct product $G = L \ltimes R_u(G)$ where $L$ is a maximal reductive subgroup and $R_u(G)$ is the unipotent radical of $G$ (see~\cite{Mo1956Fully-reducible-su}). Such an $L$ is called
a \emph{Levi subgroup}, and any two Levi subgroups are conjugate. 
This  decomposition no longer exists when $\Char(\kk) > 0$ (see~\cite[Prop.~A.6.4]{CoGaPr2010Pseudo-reductive-g}).
However, if $G$ is solvable and connected, then,  in any characteristic, 
$G = T \ltimes R_u(G)$ where $T \subseteq G$ is any torus of maximal dimension
(see \cite[\S19.3]{Hu1975Linear-algebraic-g}), and $R_u(G)$ consists of all
unipotent elements in $G$.
\end{exa}

\begin{lem}\label{quasiaffine-gives-affine.lem}
Let $G$ be an algebraic group acting faithfully on a quasi-affine variety $X$. Then $G$ is an affine algebraic group.
\end{lem}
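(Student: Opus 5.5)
By Rosenlicht's decomposition, the connected component $G^\circ$ is a product $G^\circ = \Gaff\,\Gant$ with $\Gaff$ affine and $\Gant$ anti-affine. The plan is to show that the anti-affine part acts trivially on $X$; faithfulness then forces $\Gant$ to be trivial. Since $G^\circ$ has finite index in $G$, it will follow that $G$ is affine.

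\emph{Step 1: $\Gant$ fixes every regular function.} Let $f\in\OOO(X)$ and $x\in X$, and consider the orbit map $\Gant\to\kk$, $g\mapsto f(gx)$. This is a regular function on $\Gant$. Because $\Gant$ is anti-affine, $\OOO(\Gant)=\kk$, so the function is constant and equal to its value $f(x)$ at $e$. Hence $f(gx)=f(x)$ for all $g\in\Gant$, $x\in X$ and $f\in\OOO(X)$.

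\emph{Step 2: $\Gant$ acts trivially on $X$.} Since $X$ is quasi-affine, it is an open subvariety of an affine variety $Y$. Regular functions on $Y$ restrict to regular functions on $X$, and these separate the points of $Y$, hence those of $X$. By Step 1, $gx$ and $x$ have the same value under every $f\in\OOO(X)$, so $gx=x$. Thus $\Gant$ acts trivially, and faithfulness gives $\Gant=\{e\}$.

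\emph{Step 3: conclusion.} We get $G^\circ=\Gaff$, which is affine. Therefore $G$, a finite extension of $G^\circ$, is an affine algebraic group.

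The only delicate point is the regularity of $g\mapsto f(gx)$ in Step 1, which requires the action $G\times X\to X$ to be a morphism. This holds because $G$ acts algebraically on $X$. Step 2 then needs only the standard fact that functions from an ambient affine variety separate points.
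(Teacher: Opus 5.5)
Your proposal is correct and follows essentially the same route as the paper: reduce to $G$ connected, use Rosenlicht's decomposition $G=\Gaff\Gant$, observe that regular functions on $X$ are constant along $\Gant$-orbits because $\OOO(\Gant)=\kk$, and conclude from point separation on the quasi-affine $X$ and faithfulness that $\Gant$ is trivial. You simply spell out the orbit-map and finite-index steps that the paper leaves implicit.
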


\begin{proof} 
We may assume $G$ connected. 
Since $X$ is quasi-affine, $\OOO(X)$ separates points on $X$.
As every regular function on $\Gant$ is constant, each  $\Gant$-orbit is reduced to a point,
and so $\Gant$ is trivial, because the action is faithful.
\end{proof}

For $X$ quasi-affine, we say that $g\in \Aut(X)$ is {\emph{unipotent}} if it is
contained in a unipotent algebraic subgroup of $\Aut(X)$. 
A subgroup is unipotent if all its elements are unipotent. Since $X$ is quasi-affine, Lemma~\ref{quasiaffine-gives-affine.lem} and 
Lemma~\ref{lem.intersection-of-constructible} show that an
element in $\Aut(X)$ is unipotent if and only if it is algebraic and unipotent
in \emph{any} algebraic subgroup of $\Aut(X)$ containing it (cf. \S~\ref{connected_solvable_introduction.sec}).

With Example~\ref{example_unipotent_char_p.exa} in mind, we shall not define the concept of unipotent element (or subgroup) of $\Aut(X)$ when $X$ is not quasi-affine.

\subsection{Geometric quotient and Popov's theorem} 

\begin{defn} \label{geometric-quotient.def} 
Let $\GGG \subset \Aut(X)$ be a subgroup.
A morphism $\pi\colon X \to Y$ of varieties 
is  a \emph{geometric quotient} for $\GGG$ if the following properties hold:
\begin{enumerate}[\rm (i)]
\item
The fibers of $\pi$ are the $\GGG$-orbits. In particular $\pi$ is surjective.
\item
A subset $U \subset Y$ is open if and only if $\pi^{-1}(U) \subset X$ is open.
\item
For any open subset $U \subset Y$ the pull-back $\pi^*$ induces an isomorphism $\OOO_Y(U)\simto\OOO_X(\pi^{-1}(U))^\GGG$.
\end{enumerate}
\end{defn}

A geometric quotient satisfies the following universal property:
If $f \colon X \to Z$ is $\GGG$-invariant, then there exists a unique
factorization through $\pi \colon X \to Y$. As a consequence, 
if $A \to \Aut(X)$ is a morphism such that
its image normalizes $\GGG$, then it descends to a morphism $A \to \Aut(Y)$.

\begin{rem}
  \label{Rem.Geom_quotient_smooth}
\textit{If $\pi \colon X \to Y$  is a geometric quotient for a subgroup $\GGG \subseteq \Aut(X)$, then $\pi$ is smooth over some open dense subvariety of $Y$.} 
Indeed, we may assume that $Y$ is irreducible and $\GGG$ acts transitively on the set of irreducible components of $X$.
For an irreducible component $Z \subseteq X$ denote by $\GGG_Z$ the subgroup of elements in $\GGG$ that preserve $Z$. 
Then $\kk(Y) = \kk(Z)^{\GGG_Z}$, and $\kk(Z)/\kk(Z)^{\GGG_Z}$  is separable, by Lemma 1.5(ii) in \cite[\S\,IV]{Sp1989Aktionen-reduktive}. Hence, there is a dense open subset $Z_0 \subseteq Z$ such that $\pi|_{{Z_0}}$ is smooth (\cite[Proposition~18.79]{GoWe0Algebraic-geometry-II}).
It follows that $\pi$ is smooth on 
  $\GGG Z_0 = \pi^{-1}(\pi(Z_0))$, which proves the claim.
\end{rem}

With this classical notion in mind, Theorem~3 of \cite{Po2014On-infinite-dimens} gives the following version of Rosenlicht's theorem. 

\begin{thm}\label{popov-rosenlicht.thm}
Let $X$ be a 
variety, and let $\GGG\subset\Aut(X)$ be a connected subgroup.  
Then there is a  dense open and $\GGG$-stable subset $X' \subset X$ which admits  a geometric quotient $\pi\colon X' \to X'/\GGG$.  This subset $X'$ can be taken as a subset of $\bigcup_i X_{i, \text{\it max}}$ where the $X_i$ are the irreducible components of $X$.  
If $X$ is irreducible and $\kk(X)^\GGG = \kk$, then $\GGG$ acts with a dense
open orbit on $X$.
\end{thm}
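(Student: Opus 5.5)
The plan is to reduce to the classical Rosenlicht argument, applied to the orbit equivalence relation of the constructible family given by Proposition~\ref{Orbits.prop}(\ref{algebraic-subset.item}). First I reduce to $X$ irreducible. By Proposition~\ref{Orbits.prop}(2) there is a morphism $\rho\colon A\to\Aut(X)$ with $A$ irreducible, $\id_X\in\rho(A)\subset\GGG$ and $\GGG x=Ax$ for all $x$. Since $A$ is irreducible and contains $\id_X$, every element of $\GGG$ maps each irreducible component $X_i$ onto itself. It therefore suffices to treat each $X_i$ minus the other components (an open $\GGG$-stable set) and take the disjoint union of the resulting open sets and quotients. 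So assume $X$ irreducible, and replace $X$ by the open subset $\Xm$ of Proposition~\ref{Orbits.prop}(3). In $\Xm$ every orbit is closed. Indeed, by Proposition~\ref{Orbits.prop}(1) an orbit is open in its closure, and the boundary $\overline{\GGG x}\setminus\GGG x$ is $\GGG$-stable of strictly smaller dimension, so it contains no orbit of maximal dimension.

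Next I construct the quotient map. Consider the morphism $\Phi\colon A\times X\to X\times X$, $(a,x)\mapsto(x,ax)$. Its image $R$ is the graph of the orbit equivalence relation, and it is constructible by Chevalley. Let $\eta$ be the generic point of $X$. The fibre of $\overline R\to X$ over $\eta$ is the generic orbit closure, a subvariety of $X_{\kk(X)}$. Its field of definition $K\subset\kk(X)$ is exactly the field of $\GGG$-invariant rational functions. The point is that the orbit closure is determined by the orbit, hence constant along orbits, and rational functions constant on generic orbits are the invariants. I take a variety $Y_0$ with $\kk(Y_0)=K$; the inclusion $K\subset\kk(X)$ gives a dominant rational map $\pi\colon X\dashrightarrow Y_0$. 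Equivalently, I may use the Chow-variety map $x\mapsto[\overline{\GGG x}]$, which has the same effect. By construction, on a dense open subset the fibres of $\pi$ are unions of orbit closures containing the given orbit. Spreading out the equality between the generic fibre of $\pi$ and the generic orbit closure, and shrinking $X$ to a $\GGG$-stable dense open set, I obtain a surjective morphism $\pi\colon X'\to Y$ whose fibres are exactly the (closed in $X'$) $\GGG$-orbits. The set $X'$ stays $\GGG$-stable because it is the preimage of an open set of $Y$. This gives property (i).

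Then I upgrade to a geometric quotient. I shrink $Y$ so that it is normal and $\pi$ is flat, hence open, which gives (ii). The main obstacle is (iii), namely that $\pi^*\colon\OOO_Y(U)\to\OOO_X(\pi^{-1}U)^\GGG$ is an isomorphism. The difficulty is inseparability in positive characteristic: a priori $\kk(X)^\GGG$ could be purely inseparable over $\kk(Y)$. I will handle this by choosing $Y_0$ from the full invariant field $K=\kk(X)^\GGG$ itself, not from some subfield that merely separates generic orbits. Then $\kk(Y)=\kk(X)^\GGG$, and an invariant regular function on $\pi^{-1}(U)$ is a rational function on $Y$ that is regular on $U$, because $\pi$ is open, surjective and $Y$ is normal. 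This is the standard argument used by Popov, and it is the step I expect to require care. Finally, if $\kk(X)^\GGG=\kk$, then $Y$ is a point. Hence $X'$ is a single orbit, dense and open in $X$, which gives the last assertion.
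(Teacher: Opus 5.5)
The paper gives no argument of its own here. It imports the statement from Theorem~3 of \cite{Po2014On-infinite-dimens}, just as it uses Popov's Theorem~1 in Proposition~\ref{Prop.Base-change}. Your proposal supplies a self-contained proof along the classical lines, and it is correct. You use Proposition~\ref{Orbits.prop}(2) to replace $\GGG$ by one irreducible family $A$ with the same orbits. You use Proposition~\ref{Orbits.prop}(1),(3) to make every orbit closed in $\Xm$. You then run Rosenlicht's field-of-definition argument on the orbit relation $R=\Phi(A\times X)$, followed by generic flatness and normality of the base. The citation buys brevity. Your route makes visible exactly where connectedness enters, and it yields the last assertion directly, since $\kk(Y)=\kk(X)^\GGG=\kk$ forces $Y$ to be a point.

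A few steps should be stated more carefully.

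\textbf{Field of definition.} What you wrote only justifies the inclusion $K\subseteq\kk(X)^\GGG$. Each $g\in\GGG$ acts $g^*$-semilinearly on $X_{\kk(X)}$ and preserves $\overline R_\eta$, so it fixes the canonical generators of its field of definition. Together with this you get that a model of $K$ separates generic orbits. The asserted equality $K=\kk(X)^\GGG$ does not follow from ``orbit closures are constant along orbits'', and in characteristic $p$ this is precisely where inseparability could intervene. The equality is also not needed. Taking $\kk(Y_0)=\kk(X)^\GGG$, as you finally do, the map $X\dashrightarrow Y_0$ factors the one defined by $K$, so it still separates generic orbits.

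\textbf{Generic fibre versus generic orbit closure.} Their equality is not ``by construction''. Invariance of $\pi$ gives only that the generic orbit closure lies in the generic fibre of $\pi$. For the reverse inclusion, note that the $K$-form of $\overline R_\eta$ is closed in $X_K$. It also contains the generic point of $X$, as the image of the diagonal $\kk(X)$-point, which lies in $R$ because $\id_X\in\rho(A)$. Hence it contains the closure of the whole generic fibre of $\pi$.

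\textbf{Stability of $X'$.} The domain of $\pi$ (with $Y_0$ affine) is $\GGG$-stable because its coordinates are invariant rational functions. This is what makes your $X'$, a preimage of an open subset of $Y$, $\GGG$-stable.

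\textbf{Condition (iii).} Surjectivity of $\pi$ and normality of $Y$ already suffice here. Openness of $\pi$ (from flatness) is only needed for (ii).
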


\begin{cor}
\label{Cor.global_geom_quotient}
Let $\GGG, \HHH$ be connected subgroups of $\Aut(X)$ such that 
$\GGG$ normalizes $\HHH$. If $\GGG X' = X$ for some open subset $X' \subseteq X$
admitting a geometric quotient for $\HHH$, then there is a geometric  quotient $\pi \colon X \to Y$ for $\HHH$.
\end{cor}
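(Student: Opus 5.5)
Our plan is to glue the geometric quotient on $X'$ by translating it with elements of $\GGG$. For $g\in\GGG$, set $X'_g := gX'$. Since $\GGG$ normalizes $\HHH$, the set $gX'$ is $\HHH$-stable: indeed $h gX' = g(g^{-1}hg)X' = gX'$. Let $\pi\colon X'\to Y'$ be the geometric quotient for $\HHH$ on $X'$. Then $\pi_g := \pi\circ g^{-1}\colon gX'\to Y'$ is a geometric quotient for $\HHH$ on $gX'$. To see this, note that the fibers of $\pi_g$ are the sets $g(\HHH x) = \HHH(gx)$. The topological condition (ii) and the sheaf condition (iii) are transported by the isomorphism $g$, using that $g^*$ maps $\HHH$-invariants to $\HHH$-invariants because $g^{-1}\HHH g=\HHH$. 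By hypothesis $X=\bigcup_{g\in\GGG} gX'$. Since $X$ is noetherian, finitely many translates $g_1X',\dots,g_mX'$ already cover $X$.

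We then glue the quotients $Y_i := Y'$ (attached to $g_iX'$) along the images of the overlaps. Put $U_{ij} := g_iX'\cap g_jX'$, which is open and $\HHH$-stable. By property (ii) of a geometric quotient, together with the fact that fibers are orbits, the image $V_{ij}:=\pi_{g_i}(U_{ij})$ is open in $Y_i$ and $\pi_{g_i}^{-1}(V_{ij}) = U_{ij}$. Moreover, the restriction $U_{ij}\to V_{ij}$ is again a geometric quotient. Uniqueness of geometric quotients, via their universal property, then gives a unique isomorphism $\phi_{ij}\colon V_{ij}\to V_{ji}$ with $\pi_{g_j}=\phi_{ij}\circ\pi_{g_i}$ on $U_{ij}$. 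The same uniqueness gives the cocycle condition $\phi_{jk}\phi_{ij}=\phi_{ik}$ on triple overlaps, since both sides are compatible with the quotient map from $U_{ijk}$. Gluing produces a prevariety $Y$ and a morphism $\pi\colon X\to Y$. Properties (i)--(iii) are local on $Y$, and they hold over each chart $Y_i$, so $\pi$ is a geometric quotient.

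The main obstacle is showing that $Y$ is separated, so that it is a variety and not just a prevariety. The plan is to check that the image of the diagonal, which is the image of $X\times_Y X$, is closed; equivalently, that the graph of each $\phi_{ij}$ is closed in $Y_i\times Y_j$. The relevant set $R := \{(x,x') \mid x'\in\HHH x\}\subset X\times X$ is the image of a closed subset under the map $\pi_{g_i}\times\pi_{g_j}$. So we will try to argue that $\pi\times\pi\colon X\times X\to Y\times Y$ is itself a geometric quotient for $\HHH\times\HHH$ (at least topologically, i.e.\ it is open and surjective), and that $R$ is closed. The closedness of $R$ is the delicate point, and it is where we would use that $\pi$ on $X'$ separates orbits by closed fibers. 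As a fallback, we will use that $Y'$ is a variety and that the translates come from the group $\GGG$. Since $\GGG$ normalizes $\HHH$, it acts on $Y'$ birationally and in fact descends to automorphisms of the glued object. We would then try to realize $Y$ as the $\GGG$-saturation of $Y'$ inside a separated model, for instance via the closure of the graph of $\pi\colon X'\to Y'$ in $X\times \overline{Y'}$ for a completion $\overline{Y'}$.
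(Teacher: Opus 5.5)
Your gluing step is correct and is what the paper does; reducing to finitely many translates by noetherianity is a harmless refinement. The gap is the separatedness of $Y$, which you rightly flag as the main obstacle but do not prove.

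Your main plan, proving that $R=\{(x,x')\mid x'\in\HHH x\}$ is closed, is no easier than the goal. Indeed $R=(\pi\times\pi)^{-1}(\Delta_Y)$ once $Y$ is separated. The ingredient you propose to use, that $\pi$ on $X'$ has closed orbits as fibres, ignores the one hypothesis that matters: $\GGG$ is \emph{connected}. Without connectedness the statement is false. Let $\HHH=\Gm$ act on $X=\bbA^2\setminus\{0\}$ by $t\cdot(x,y)=(tx,t^{-1}y)$. Let $X'=\{x\neq 0\}$, with geometric quotient $X'\to\bbA^1$, $(x,y)\mapsto xy$. Let $\GGG=\{\id,\sigma\}$ with $\sigma(x,y)=(y,x)$; then $\sigma$ normalizes $\HHH$ and $\GGG X'=X$. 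Every step of your construction goes through, and the glued quotient is the affine line with a doubled origin. Correspondingly $R$ is not closed: $((1,c),(c,1))\in R$ for $c\neq 0$, but its limit $((1,0),(0,1))$ as $c\to 0$ is not in $R$. Your fallback via a completion $\overline{Y'}$ and a graph closure is not yet an argument.

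The missing idea is that connectedness lets you move two points into $X'$ by a \emph{single} element of $\GGG$.

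\begin{enumerate}
\item Given $y_1,y_2\in Y$, pick $x_i\in\pi^{-1}(y_i)$ and $g_i\in\GGG$ with $g_ix_i\in X'$.
\item Since $\GGG$ is connected, composing two irreducible families through $\id_X$ gives a morphism $\rho\colon A\to\Aut(X)$ with $A$ irreducible, $\rho(A)\subseteq\GGG$ and $g_1,g_2\in\rho(A)$.
\item The sets $\{a\in A\mid \rho(a)(x_i)\in X'\}$, for $i=1,2$, are open and nonempty, hence they meet.
\item For $g=\rho(a)$ with $a$ in the intersection, both $y_i$ lie in $W=\pi(g^{-1}X')$. This set is open in $Y$, since $g^{-1}X'$ is open and $\HHH$-stable. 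It is isomorphic to $Y'$ by uniqueness of geometric quotients, hence separated.
\item Thus $Y\times Y$ is covered by open sets $W\times W$ in each of which $\Delta_Y$ is closed, so $Y$ is a variety.
\end{enumerate}

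The paper argues the same way on $Y$: $\GGG$ descends to a connected subgroup $\KKK\subseteq\Aut(Y)$ with $\KKK Y'=Y$, and the proof of Lemma~\ref{Prevariety.lem} is exactly this argument.
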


\begin{proof}
Let $\pi' \colon X' \to Y'$ be a geometric quotient for $\HHH$.
If $g \in \GGG$, the composition
of $g^{-1}X' \to X'$, $x \mapsto gx$ with $\pi'$ is a geometric
quotient for  $\HHH$ on $g^{-1}X'$. Gluing these quotients for 
$g \in \GGG$ gives a geometric quotient $\pi \colon X \to Y$ for $\HHH$.
The image $\KKK \subseteq \Aut(Y)$ of $\GGG$ is connected and satisfies $\KKK Y' = Y$, 
hence $Y$ is a variety by Lemma~\ref{Prevariety.lem} below. 
\end{proof}

\begin{lem}
\label{Prevariety.lem}
If $Y$ is a prevariety such that $\Aut(Y)^\circ U = Y$ for some open affine subset $U \subseteq Y$, then $Y$ is a variety. 
\end{lem}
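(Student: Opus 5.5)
The plan is to use the classical criterion for separatedness: \emph{a prevariety in which any two points lie in a common open affine subset is a variety}. Indeed, if $y,z \in V$ with $V$ open affine, then $(y,z)\in V\times V$, and the diagonal meets $V\times V$ in the closed subset $\Delta_V$. Since the sets $V\times V$ cover $Y\times Y$, this shows that the diagonal $\Delta_Y$ is closed. It therefore suffices, given two points $y,z\in Y$, to find one $g\in \Aut(Y)^\circ$ with $gy\in U$ and $gz\in U$. Then $y,z \in g^{-1}U$, which is an open affine subset.

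First I build an irreducible family that moves $y$ into $U$. Since $\Aut(Y)^\circ U = Y$, there is $g_y\in\Aut(Y)^\circ$ with $g_y y\in U$. By the definition of $\Aut(Y)^\circ$ in \S\,\ref{connected_subgroups.sec}, we can write $g_y=\rho_1(a_1)\cdots\rho_k(a_k)$, where each $\rho_i\colon A_i\to\Aut(Y)$ is a morphism from an irreducible $A_i$ whose image contains $\id_Y$. Replacing a factor by its inverse family where needed is allowed by property~(3) of \S\,\ref{algebraic_subgroups_and_basic_facts.sec}. Taking the product, as in property~(2), gives an irreducible variety $A:=A_1\times\cdots\times A_k$ and a morphism $\rho\colon A\to\Aut(Y)$ with a point $e_A$ such that $\rho(e_A)=\id_Y$, and a point $a_0$ such that $\rho(a_0)y\in U$. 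In the same way I obtain $\mu\colon B\to\Aut(Y)$ with $B$ irreducible, $\mu(e_B)=\id_Y$, and $\mu(b_0)z\in U$. These facts are stated for varieties, but they only use that $(a,x)\mapsto \rho(a)(x)$ is a morphism, so they hold for prevarieties too.

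Next I combine the two families into one. Let $C:=A\times B$, which is irreducible, and consider the morphism
\[
\nu\colon C\times Y\to Y,\qquad \nu\bigl((a,b),x\bigr)=\rho(a)\mu(b)x.
\]
Restricting $\nu$ to $C\times\{y\}$ and to $C\times\{z\}$ gives continuous maps, so
\[
W_y:=\set{c\in C}{\nu(c,y)\in U},\qquad W_z:=\set{c\in C}{\nu(c,z)\in U}
\]
are open subsets of $C$. They are nonempty, since $(a_0,e_B)\in W_y$ and $(e_A,b_0)\in W_z$. Because $C$ is irreducible, $W_y\cap W_z\neq\emptyset$. Any $c=(a,b)$ in this intersection gives $g:=\rho(a)\mu(b)\in\Aut(Y)^\circ$ with $gy,gz\in U$, as required.

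The main obstacle is moving the two points into $U$ with a \emph{single} element. The hypothesis only moves each point separately. The irreducible-product trick resolves this: it places both "good" sets as nonempty opens inside one irreducible parameter space, where they must intersect. The inclusion of $\id_Y$ in each family is exactly what makes both sets nonempty.
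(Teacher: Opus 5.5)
Your proof is correct and follows the paper's argument. Both reduce closedness of the diagonal to covering $Y\times Y$ by the sets $g^{-1}U\times g^{-1}U$, then find a single $g$ by intersecting two nonempty open subsets of an irreducible parameter space; you simply build the needed family explicitly as $A\times B$, where the paper just takes an irreducible family whose image contains $\varphi_1$ and $\varphi_2$.
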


\begin{proof}
Let $\Delta\subset Y\times Y$ be the diagonal.  
Since $U$ is separated, $\Delta\cap (U\times U)$ is closed in $U\times U$. 
Thus, it is enough to show that the subsets $\varphi(U)\times \varphi(U)$ with $\varphi\in \Aut(Y)$ cover $Y\times Y$. 
For this, we fix a pair $(y_1, y_2)$ in $Y\times Y$.
By assumption, there are $\varphi_1, \varphi_2 \in \Aut(Y)^\circ$ such that $\varphi_i(y_i)\in U$ for $i=1,2$. Let $\rho\colon A\to \Aut(Y)^\circ$ be an irreducible algebraic family, the image of which contains $\varphi_1$ and $\varphi_2$. Then, $A_i:=\{ a\in A \; ; \; \rho(a)(y_i)\in U\}$ is open and non-empty, hence open and dense in $A$. Thus, $A_1\cap A_2$ is non-empty, and for any $a\in A_1\cap A_2$ the automorphism $\rho(a)$ maps simultaneously $y_1$ and $y_2$ in $U$.  
\end{proof}

\begin{rem}
    \label{Rem.global_geom_quotient}
    The proof of Corollary~\ref{Cor.global_geom_quotient} above shows the following:
    If the geometric quotient $X' \to X' / \HHH$ satsifies a property (P)
    which is local on the target (e.g. "affine", "smooth", $\ldots$), then $X \to X / \HHH$ satisfies (P) as well.
\end{rem}

\section{Rational points and nested ind-groups}\label{section:rational_points_nested}
\vspace{0.1cm}
\begin{center}
\begin{minipage}{12.0cm}
{\sl We discuss rational points of algebraic groups, describe examples of 
nested ind-groups, and give a version of the Lie-Kolchin theorem for subgroups of $\Aut(X)$. By convention, $\kk$-algebras will be {\emph{commutative}}.}
\end{minipage}
\end{center}

\subsection{Rational points of algebraic groups} 
Consider the group $\GL_2$ and the algebra $R:=\kk[u]$. The group $\GL_2(R)$ acts on the affine space $\bbA^3$ by automorphisms  of type
$(x, y, z)\mapsto (x, a(x)y+b(x)z, c(x)y+d(x)z)$,
where $a, b, c, d$ are elements of $\kk[u]$ such that $ad-bc\in \kk^\times$. 
The projection on the first coordinate is $\GL_2(R)$-invariant, and the action on each fiber coincides with the linear action of $\GL_2(\kk)$ on $\bbA^2$.

This can be generalized to any linear group $G$. Firstly, as shown in Proposition~2.5.1 of~\cite{FuKr2018On-the-geometry-of}, if $R$ is a $\kk$-algebra of countable dimension, then the group $G(R)$ of $R$-rational points of $G$ is naturally an ind-group. Secondly, this structure of ind-group is compatible with morphisms in the following sense (see \cite[Proposition 2.5.3]{FuKr2018On-the-geometry-of}).

\begin{prop}\label{functoriality-of-rational-points.prop}
Let $G$ and $H$ be linear algebraic groups. Let $R$ and $S$ be  $\kk$-algebras of countable dimension.
\begin{enumerate}[\rm (1)]
\item
Let $\phi\colon G \to H$ be a homomorphism of  algebraic groups. Then  the induced homomorphism
$
\phi_R \colon G(R) \to H(R)
$
is a homomorphism of ind-groups. If $\phi$ is injective, then $\phi_R$ is a closed immersion.
\item
Let $\psi\colon R \to S$ be
a homomorphism of $\kk$-algebras. 
Then the induced homomorphism $\psi_G\colon G(R) \to G(S)$ is a homomorphism of ind-groups. If $\psi$ is injective, then $\psi_G$ is a closed immersion.
\end{enumerate}
\end{prop}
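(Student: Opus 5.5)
The plan is to make the ind-structure on $G(R)$ completely explicit in coordinates, so that both assertions reduce to statements about polynomial maps between finite-dimensional vector spaces. Fix a closed embedding $G\subset \bbA^m$, i.e.\ generators $f_1,\ldots,f_m$ of $\OOO(G)$ with ideal of relations generated by $q_1,\ldots,q_s\in\kk[t_1,\ldots,t_m]$. Then
\[
G(R)=\Hom_{\kk\text{-alg}}(\OOO(G),R)=\set{r\in R^m}{q_j(r)=0 \ \text{for } j=1,\ldots,s}.
\]
Write $R=\bigcup_k R_k$ as an increasing union of finite-dimensional subspaces; this is possible because $\dim_\kk R$ is countable. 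Put $G(R)_k:=G(R)\cap R_k^m$.

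For $r\in R_k^m$, the values $q_j(r)$ lie in a fixed finite-dimensional subspace $R_{k'}$. Choosing a basis of $R_{k'}$ turns the conditions $q_j(r)=0$ into finitely many polynomial equations in the coordinates of $r$. Hence $G(R)_k$ is closed in $R_k^m$, and $G(R)=\bigcup_k G(R)_k$ is an ind-variety. Multiplication and inversion are given by the comultiplication and antipode, which are polynomials in the $t_i$. So they are ind-morphisms by the same argument, and $G(R)$ is an ind-group.

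I would also check that changing the generators or the filtration of $R$ gives an admissible refiltration. Each new generator is a polynomial in the old ones and conversely, so the identity is an ind-isomorphism in both directions.

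\textbf{Part (1).} Embed $H\subset\bbA^l$ by generators $h_1,\ldots,h_l$. Write $\phi^*(h_j)=P_j(f_1,\ldots,f_m)$ for polynomials $P_j$. Then $\phi_R(r)=(P_1(r),\ldots,P_l(r))$, so $\phi_R$ sends $R_k^m$ into some $R_{k'}^l$ by a polynomial map. Hence $\phi_R$ is an ind-morphism, and it is a homomorphism because it is induced by a Hopf algebra map.

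For the closed-immersion claim, suppose first that $\phi$ is a closed immersion, so $\phi^*$ is surjective. Then I may take $f_i:=\phi^*(h_i)$ as generators of $\OOO(G)$. In these coordinates $\phi_R$ is the inclusion of $G(R)$ into $H(R)$ as the closed subset cut out by the extra equations generating $\ker\phi^*$. So $\phi_R$ is a closed immersion.

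The main obstacle is an injective $\phi$ that is not a closed immersion, which can happen in characteristic $p$ (e.g.\ Frobenius on $\Ga$). I would first try to factor $\phi$ as a purely inseparable bijective isogeny onto its closed image $\phi(G)$, followed by a closed immersion. It then remains to treat the isogeny part. I expect this is exactly where one must interpret ``closed immersion'' in the sense used in \cite{FuKr2018On-the-geometry-of}, or restrict to the case where $\phi$ is an isomorphism onto its image.

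\textbf{Part (2).} The map $\psi_G$ is $r\mapsto(\psi(r_1),\ldots,\psi(r_m))$. Since $\psi$ is linear, it maps each finite-dimensional $R_k$ into some $S_l$, so $\psi_G$ is a linear map on each filtration piece and in particular a morphism. It is a group homomorphism by functoriality of $\Hom_{\kk\text{-alg}}(\OOO(G),-)$.

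Now assume $\psi$ is injective. The filtration $R'_l:=\psi^{-1}(S_l)$ of $R$ is admissible, because each $R_k$ lands in some $S_l$. With respect to it, $\psi$ identifies $R'_l$ with the linear subspace $\psi(R)\cap S_l$ of $S_l$. Since $\psi$ is an injective algebra homomorphism, an element $r\in R^m$ satisfies $q_j(r)=0$ if and only if $q_j(\psi(r))=0$. Therefore
\[
\psi_G\bigl(G(R)\bigr)=G(S)\cap\psi(R)^m,
\]
and this is closed in $G(S)$ because it meets each $S_l^m$ in a closed subset. Thus $\psi_G$ is a closed immersion.
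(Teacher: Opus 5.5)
Your construction of the ind-structure on $G(R)$ via a closed embedding $G\subset\bbA^m$ is correct. So are your proof of (2) and your proof of (1) when $\phi$ is a closed immersion. This is the standard coordinate argument; the paper gives no proof of its own and only cites Furter--Kraft.

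The case you leave open cannot be completed, because there the assertion is false. In characteristic $p>0$, take $\phi=F\colon\Ga\to\Ga$, $t\mapsto t^p$, and $R=\kk$. Then $\Ga(\kk)$ is the variety $\Ga$ itself, and $\phi_\kk=F$ is bijective but not an isomorphism, so it is not a closed immersion. For $R=\kk[x]$ you get the same phenomenon: $f\mapsto f^p$ has closed image $\kk[x^p]$, but its inverse extracts $p$-th roots of coefficients.

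So the purely inseparable isogeny in your proposed factorization is exactly the obstruction. It is the hypothesis of (1) that must be adjusted, not the notion of closed immersion: ``$\phi$ injective'' has to mean that $\phi$ is a closed immersion, equivalently that it has trivial scheme-theoretic kernel.

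You should also state explicitly that in characteristic $0$ this is automatic. There, a bijective homomorphism of algebraic groups onto its closed image is an isomorphism, so with that remark your argument proves (1) in full. The closed-immersion version is also all the paper actually uses, e.g.\ for $\B_n\subset\GL_n$ in Example~\ref{nested.exa}.
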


\subsection{Endomorphisms of modules}\label{embedding-into-GL.subsec}
Let $R$ be a $\kk$-algebra of countable dimension, and let $M$ be a finitely generated $R$-module. Then $M$ and $\End_R(M)$ are $\kk$-vector spaces of countable dimension and thus carry a canonical structure of an affine ind-variety. Since the composition
$\End_R(M) \times \End_R(M) \to \End_R(M)$ is $\kk$-bilinear, it is a morphism of ind-varieties, and so $\End_R(M)$ is an ind-monoid.
The embedding $\Aut_R(M) \into \End_R(M) \times \End_R(M)$, $\phi\mapsto (\phi,\phi^{-1})$, identifies $\Aut_R(M)$ with the closed subset 
\begin{equation}
\label{eq:embedded_groups}
\PPP\coloneqq \{(\alpha,\beta) \in \End_R(M) \times \End_R(M) \mid \alpha\beta = \beta\alpha=\id_M\}
\end{equation} 
of $\End_R(M) \times \End_R(M)$. Since the composition  law is induced by the ind-morphism $((\alpha,\beta),(\alpha',\beta')) \mapsto (\alpha\alpha', \beta'\beta)$ and the inverse by $(\alpha,\beta)\mapsto (\beta,\alpha)$, both are ind-morphisms, and so $\Aut_R(M)$ is an ind-group.

\begin{exa}
\label{EndRM_free_M.exa}
Assume $M$ is a free $R$-module of rank $n < \infty$. \textit{Then $\Aut_R(M)$
and $\GL_n(R)$ are isomorphic as ind-groups.}
In fact, choosing a basis of $M$, Proposition~1.10.1 of~\cite{FuKr2018On-the-geometry-of} shows that the map $\GL_n(R) \to \End_n(R)\times\End_n(R)$, $g \mapsto (g,g^{-1})$ is a closed immersion. 
\end{exa}

\begin{lem}
\label{Lem:UP_Endos}
The ind-monoid $\End_R(M)$ satisfies the following universal property:
A map $\phi\colon Z \to \End_R(M)$ where $Z$ is an algebraic variety is a morphism of ind-varieties if and only if the map $\Phi\colon Z \times M \to M$, $\Phi(z,m)= \phi(z)(m)$, is a morphism of ind-varieties.
\end{lem}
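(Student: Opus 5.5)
The plan is to reduce to a finite-dimensional situation by fixing generators of $M$. Since $M$ is finitely generated, choose generators $m_1,\ldots,m_r$. This gives a surjection of $R$-modules $\pi\colon R^r\to M$, $(a_1,\ldots,a_r)\mapsto\sum a_im_i$, which is $\kk$-linear and hence a morphism of ind-varieties.

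First I will make the ind-structure on $\End_R(M)$ explicit. An endomorphism $\alpha$ is determined by the tuple $(\alpha(m_1),\ldots,\alpha(m_r))\in M^r$. The evaluation map
\[
\mathrm{ev}\colon \End_R(M)\to M^r,\qquad \alpha\mapsto (\alpha(m_i))_i,
\]
is therefore an injective $\kk$-linear map of countable-dimensional vector spaces. Its image is a linear subspace, and any linear subspace is closed in the ind-topology. An injective linear map between countable-dimensional spaces is a closed immersion of ind-varieties, since finite-dimensional subspaces go isomorphically onto finite-dimensional subspaces. So $\mathrm{ev}$ identifies $\End_R(M)$ with a closed ind-subvariety of $M^r$.

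Next I prove the two implications.

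\emph{($\Rightarrow$)} Suppose $\phi\colon Z\to\End_R(M)$ is a morphism. Then $\phi(Z)$ lies in a finite-dimensional subspace $E\subset\End_R(M)$, and $Z\to E$ is a morphism. Evaluation $E\times M\to M$ is $\kk$-bilinear, hence an ind-morphism: on $E\times M_k$ it lands in the finite-dimensional space $E(M_k)$ and is bilinear there. Composing, $\Phi$ is a morphism.

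\emph{($\Leftarrow$)} Suppose $\Phi$ is a morphism. Restrict it to $Z\times\{m_i\}$ to get morphisms $\phi_i\colon Z\to M$, $z\mapsto\phi(z)(m_i)$. Then $\mathrm{ev}\circ\phi=(\phi_1,\ldots,\phi_r)\colon Z\to M^r$ is a morphism. Its image lies in some finite-dimensional subspace $W\subset M^r$, and also in the closed subspace $\mathrm{ev}(\End_R(M))$. Because $\mathrm{ev}$ is a closed immersion, $\phi$ is a morphism into the finite-dimensional subspace $\mathrm{ev}^{-1}(W)$.

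The main obstacle is the closed-immersion claim for $\mathrm{ev}$, that is, ensuring the ind-structure of $\End_R(M)$ is really the one induced from $M^r$. This holds because every countable-dimensional vector space has a unique ind-structure, given by any filtration by finite-dimensional subspaces, and linear injections respect these structures. So the step is routine once it is stated carefully. The remaining care point is that $Z$ is an algebraic variety, so its image lies in a single filtration piece; this uses that $Z$ is of finite type, with quasi-compactness applied in the standard way.
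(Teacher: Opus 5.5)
Your proof is correct and follows essentially the paper's argument. The $(\Rightarrow)$ direction is the same bilinearity argument. For $(\Leftarrow)$, your injective evaluation map $\alpha\mapsto(\alpha(m_i))_i$ at the generators plays exactly the role of the paper's injective linear map $E=\{\alpha\mid\alpha(V)\subset W\}\to\Hom_\kk(V,W)$, where $V$ is a finite-dimensional subspace generating $M$ over $R$.

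Your closing remark about quasi-compactness is unnecessary. In the paper's definition, a morphism from a variety $Z$ lands in a single filtration piece by definition. Likewise, in your $(\Leftarrow)$ direction, each $\phi_i$ lands in a finite-dimensional subspace directly from the assumed ind-morphism property of $\Phi$.
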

Recall that $\Phi\colon Z \times M \to M$ is a morphism of ind-varieties if for any finite-dimensional $\kk$-subspace $V \subset M$ there is a finite-dimensional $\kk$-subspace $W \subset M$ such that $\Phi(Z \times V) \subset W$ and the induced map $Z \times V \to W$ is a morphism of varieties.

\begin{proof}
The ind-structure on $\End_R(M)$ is uniquely determined by the universal property. 
We have to see that the structure defined above has this property.
\ps
(a) If $\phi\colon Z \to \End_R(M)$ is an ind-morphism, there is a finite dimensional subspace $E \subset \End_R(M)$ such that $\phi(Z) \subset E$ and $\phi\colon Z \to E$ is a morphism. Let $V \subset M$ be a finite dimensional subspace. 
Then $W\coloneqq  \langle E(V) \rangle_{\kk} \subset M$ 
is finite dimensional, $\Phi(Z\times V) \subset W$, and we have a canonical $\kk$-linear map $E \to \Hom_\kk(V,W)$. Thus $Z \to \Hom_\kk(V,W)$ is a morphism and so  $\Phi\colon Z \times V \to W$ is a morphism as well.
\ps
(b) Now assume that $\Phi\colon Z \times M \to M$ is an ind-morphism.
For any finite-dimensional $V \subset M$ there is a finite-dimensional $W \subset M$ such that $\Phi(Z \times V) \subset W$ and $\Phi\colon Z\times V \to W$ is a morphism. Then, $\phi(Z)$ is contained in the  linear subspace $E\coloneqq \{\alpha\in\End_R(M)\mid\alpha(V) \subset W\}$, and we get a linear map $E \to \Hom_\kk(V,W)$ which is injective in  case $V$ generates $M$ as an $R$-module.
By assumption, the maps $\Phi(z,\cdot)\colon M \to M$ are $R$-linear, hence $\kk$-linear, and so the induced map $Z \to \Hom_\kk(V,W)$ is a morphism. Hence, $\phi\colon Z \to \End_R(M)$ is an ind-morphism.
\end{proof}

\begin{prop}\label{embedding-into-GLR.lem}
Let $A$ be a variety and  $A \to \Aut(X)$ a morphism.
\begin{enumerate}[\rm  (1)]
\item[\rm (1)] The induced map $\phi^*\colon A \times\OOO(X) \to \OOO(X)$, 
$(a,f)\mapsto a^* f$ is a morphism of ind-varieties. 
\end{enumerate}
 Set $R= \OOO(X)^A$, and let $M \subset \OOO(X)$ be an $A$-stable finitely generated $R$-module.
\begin{enumerate}[\rm  (1)]
\item[\rm (2)]  The induced map $\rho\colon A \to \Aut_R(M)$, $g \mapsto (g^*)^{-1}|_M$ is a 
morphism of ind-varieties.
\end{enumerate}
\end{prop}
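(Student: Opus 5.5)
For (1), I will reduce to the case of an affine variety and then use the standard fact that a morphism $A\to\Aut(X)$ induces a comorphism $A\times X\to X$.

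The main point of (1) is that the map $A\times\OOO(X)\to\OOO(X)$, $(a,f)\mapsto a^*f$, is an ind-morphism. Here $\OOO(X)$ is a vector space of countable dimension, filtered by its finite-dimensional subspaces. (Countable dimension holds for any variety: cover $X$ by finitely many affine opens, so $\OOO(X)$ embeds into a finite product of finitely generated algebras.) By the criterion recalled after Lemma~\ref{Lem:UP_Endos}, I must show the following. For each finite-dimensional $V\subset\OOO(X)$ there is a finite-dimensional $W$ with $a^*V\subset W$ for all $a$, and $A\times V\to W$ is a morphism.

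Consider the morphism $\Phi\colon A\times X\to X$ and the composite $F=f\circ\Phi\in\OOO(A\times X)$ for $f\in V$. Assume first that $A$ is affine; in general, cover $A$ by finitely many affine opens and take the sum of the resulting $W$'s. Then
\[
\OOO(A\times X)=\OOO(A)\otimes\OOO(X).
\]
This holds for any variety $X$ when $A$ is affine: use a finite affine cover of $X$, the separatedness of $X$, and the sheaf property, together with flatness of $\OOO(A)$ over $\kk$. So $f\circ\Phi=\sum_i \alpha_i\otimes h_i$, a finite sum. Choosing a basis $f_1,\dots,f_r$ of $V$, let $W$ be the span of all $h_i$ appearing in the $f_j\circ\Phi$. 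Then $a^*f=\sum_j c_j\sum_i\alpha_{ij}(a)h_{ij}$ for $f=\sum_j c_jf_j$, and this expression is polynomial in $(a,c)$. Hence $A\times V\to W$ is a morphism.

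For (2), the key step is a verification. Since $a^*$ fixes $R=\OOO(X)^A$ pointwise and is a ring automorphism, $a^*$ is $R$-linear. It also preserves $M$, because $M$ is $A$-stable, so $a^*|_M\in\Aut_R(M)$. Its inverse is $(a^{-1})^*|_M$ in the sense of property (3) in \S\,\ref{algebraic_subgroups_and_basic_facts.sec}: the inverse family $a\mapsto\rho(a)^{-1}$ is again a morphism. Moreover $(\rho(a)^{-1})^*$ preserves $M$: it is the inverse of the bijection $a^*|_M$, and $a^*M=M$ holds because $a^*M\subseteq M$ and both are finitely generated over $R$. If only the inclusion is available, I will define $A$-stable to mean $a^*M=M$.

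Using the embedding \eqref{eq:embedded_groups}, it suffices to show that the two maps $a\mapsto a^*|_M$ and $a\mapsto(\rho(a)^{-1})^*|_M$ into $\End_R(M)$ are ind-morphisms. By Lemma~\ref{Lem:UP_Endos}, this reduces to showing that $A\times M\to M$, $(a,m)\mapsto a^*m$, is an ind-morphism. That is the restriction of the map in (1) to the closed ind-subvariety $A\times M$, with image landing in $M$. The ind-structure on $M$ is the one induced from $\OOO(X)$: for a finite-dimensional $V\subset M$, the target $W$ may be replaced by $W\cap M$, which is finite-dimensional. Applying the same argument to the inverse family gives the second component. Since $\rho(a)$ is the pair $\bigl((a^*)^{-1}|_M,\,a^*|_M\bigr)$, it is an ind-morphism into $\PPP$.

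The main obstacle is the tensor decomposition $\OOO(A\times X)=\OOO(A)\otimes\OOO(X)$ for non-affine $X$. I will handle it via the Čech exact sequence for a finite affine cover $\{X_i\}$ of $X$: $\OOO(X)$ is the kernel of $\prod\OOO(X_i)\to\prod\OOO(X_i\cap X_j)$, and tensoring with the flat $\kk$-module $\OOO(A)$ preserves this kernel.
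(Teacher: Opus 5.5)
Your argument is correct and is essentially the paper's proof. Part (1) comes from expanding $f\circ\Phi$ in $\OOO(A\times X)=\OOO(A)\otimes\OOO(X)$, which you justify more carefully than the paper via affine $A$ and the Čech sequence; part (2) comes from Lemma~\ref{Lem:UP_Endos} applied to $g\mapsto g^*|_M$ and $g\mapsto (g^{-1})^*|_M$, together with the embedding of $\Aut_R(M)$ into $\End_R(M)\times\End_R(M)$. One caveat: finite generation alone does not upgrade $a^*M\subseteq M$ to $a^*M=M$, since an injective endomorphism of a finitely generated module need not be surjective, so you should use your fallback reading of ``$A$-stable'' as $a^*M=M$, which matches the paper's convention in \S\,\ref{orbits-notation.sec}.
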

\begin{proof}
The ring $\OOO(X)$ is an $R$-module,
and the homomorphisms $a^* \colon \OOO(X) \to \OOO(X)$ are $R$-linear for $a \in A$.
\ps
(a)
Let $\phi\colon A\times X \to X$ be the morphism induced by $A \to \Aut(X)$. 
The comorphism $\phi^*\colon \OOO(X) \to \OOO(A \times X)=\OOO(A)\otimes\OOO(X)$ has the following property:  if $f \in \OOO(X)$ and $\phi^*(f) = \sum_i h_i\otimes f_i$, then $a^*f = \sum_i h_i(a)f_i$ for $a \in A$. It follows that for any finite-dimensional subspace $V \subset \OOO(X)$ there is a finite-dimensional subspace $W$ 
such that $a^*V \subset W$ for all $a \in A$. 
For the first statement, it remains to see that the induced map $\rho\colon A \to \Hom_\kk(V,W)$ is a morphism. 
For this we choose a $\kk$-basis $(f_i)_{i\in I}$ of $\OOO(X)$ containing basis of $V$ and $W$. Then $a^* f_j = \sum_i h_{j,i}(a) f_i$, and so the matrix coefficients of $\rho(a)$ are regular functions on $A$.
\ps
(b) We get two maps $\rho_1,\rho_2\colon A \to \End_R(M)$, with $\rho_1(g)\coloneqq g^*|_M$ and $\rho_2(g)\coloneqq (g^{-1})^*|_M$. By  (a) and Lemma~\ref{Lem:UP_Endos} applied to $M$, both are ind-morphisms. Thus, by definition of the ind-structure on $\Aut_R(M)$,  $\rho\colon A \to \Aut_R(M)$ is a morphism of ind-varieties.
\end{proof}

The next result generalizes Proposition~\ref{functoriality-of-rational-points.prop}(2).
\begin{prop}\label{Hom-of-GLM.prop}
Let $R$ be a $\kk$-algebra of countable dimension, and let $S\supseteq R$ be a $\kk$-algebra which is finitely generated over $R$. Let $M$ be a finitely generated $R$-module and set $M_S:=M \otimes_R S$. Then the canonical map $\rho\colon\Aut_R(M) \to \Aut_S(M_S)$ is a homomorphism of ind-groups. If the natural map $M \to M_S$ is injective, then $\rho$ is a closed immersion.
\end{prop}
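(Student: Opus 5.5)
The plan is to use the same device as in the treatment of $\Aut_R(M)$ above. We embed both automorphism groups into products of endomorphism ind-monoids and check everything there via the universal property of Lemma~\ref{Lem:UP_Endos}. Write $\iota\colon M\to M_S$, $m\mapsto m\otimes 1$, and $\rho(\phi)=\phi\otimes_R\id_S$. Since $\Aut_R(M)\subset \End_R(M)\times\End_R(M)$ and $\Aut_S(M_S)\subset\End_S(M_S)\times\End_S(M_S)$ are the closed subsets $\PPP$ of \eqref{eq:embedded_groups}, and $\rho$ is clearly a group homomorphism, it suffices to prove two things. First, the map $\tilde\rho\colon\End_R(M)\to\End_S(M_S)$, $\alpha\mapsto\alpha\otimes\id_S$, is a morphism of ind-varieties (then $\rho$ is the restriction of $\tilde\rho\times\tilde\rho$). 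Second, when $\iota$ is injective, $\tilde\rho$ is a closed immersion.

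\textbf{Morphism.} We first note that $S$ and $M_S$ have countable dimension, since $S$ is finitely generated over $R$. Fix generators $m_1,\dots,m_r$ of $M$ over $R$ and $s_1,\dots,s_t$ of $S$ as an $R$-algebra. Let $Z$ be a variety and $\phi\colon Z\to\End_R(M)$ an ind-morphism. Then $\phi(Z)$ lies in a finite-dimensional subspace $E$ and $Z\to E$ is a morphism. The map $\tilde\rho$ is $\kk$-linear, and a linear map from a countable-dimensional space is an ind-morphism as soon as it maps finite-dimensional subspaces into finite-dimensional subspaces. That holds trivially here, because $\tilde\rho(E)$ is finite-dimensional. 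Hence $\tilde\rho$ is an ind-morphism. Alternatively, one can check the criterion of Lemma~\ref{Lem:UP_Endos} on $Z\times M_S\to M_S$ directly: every finite-dimensional $V\subset M_S$ lies in the span of finitely many $m_i\otimes \sigma$ with $\sigma$ monomials in the $s_j$, and $\alpha\otimes\id(m_i\otimes\sigma)=\alpha(m_i)\otimes\sigma$ depends algebraically on $\alpha\in E$.

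\textbf{Closed immersion.} Assume $\iota$ is injective. Then $\tilde\rho$ is an injective linear map, since $\tilde\rho(\alpha)\circ\iota=\iota\circ\alpha$. An injective linear map of countable-dimensional vector spaces is a closed immersion of ind-varieties precisely when its image is closed and it is an isomorphism onto its image with the induced filtration. I would identify the image as
\[
\{\beta\in\End_S(M_S)\mid \beta(\iota(M))\subset\iota(M)\},
\]
using that an $S$-linear $\beta$ preserving $\iota(M)$ equals $\tilde\rho(\iota^{-1}\beta\iota)$, because $\iota(M)$ generates $M_S$ over $S$. This is a linear subspace. It is closed because it is the intersection, over $i$, of the conditions $\beta(m_i\otimes1)\in\iota(M)$, and $\iota(M)$ is a subspace, hence closed. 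For the filtration, a finite-dimensional subspace $F$ of the image pulls back to the finite-dimensional subspace $\tilde\rho^{-1}(F)\subset \End_R(M)$. So the inverse map is an ind-morphism: for $Z\to F$ a morphism, composing with the linear isomorphism $F\to\tilde\rho^{-1}(F)$ is a morphism.

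Finally, $\rho$ corresponds to $(\alpha,\beta)\mapsto(\tilde\rho\alpha,\tilde\rho\beta)$ on the closed subsets $\PPP$. Its image is $\PPP_S\cap(\tilde\rho\times\tilde\rho)(\End_R(M)^2)$, which is closed, and the inverse on it is the restriction of $(\tilde\rho\times\tilde\rho)^{-1}$. So $\rho$ is a closed immersion. The main subtlety I expect is the linear-algebra fact that linear maps between countable-dimensional spaces are automatically ind-morphisms, and that injective ones have closed image with a compatible filtration. I would cite this from \cite{FuKr2018On-the-geometry-of} (\S1.4 on ind-vector spaces), where every subspace is closed and every injective linear map is a closed immersion.
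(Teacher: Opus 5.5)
Your proposal is correct and follows the paper's proof: you embed both groups via $\PPP$ into products of endomorphism spaces, observe that $\alpha\mapsto\alpha\otimes\id_S$ is $\kk$-linear (hence an ind-morphism) and is injective when $M\to M_S$ is (hence a closed immersion), and then restrict to $\PPP$. Your explicit description of the image as the $\beta$ preserving $\iota(M)$ only spells out what the paper takes for granted. One small slip is in your optional ``alternatively'' remark: finitely many $m_i\otimes\sigma$ do not span an arbitrary finite-dimensional $V\subset M_S$ over $\kk$ (you would need $m\otimes\sigma$ with $m$ ranging over a finite-dimensional subspace of $M$), but your main argument does not use this.
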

\begin{proof}
The map $\rho\colon \End_R(M) \to \End_S(M_S)$ is $\kk$-linear, hence a homomorphism of ind-monoids, and the map 
$\rho\times\rho\colon \End_R(M)\times\End_R(M) \to \End_S(M_S) \times \End_S(M_S)
$ sends the group $\Aut_R(M)$ to $\Aut_S(M_S)$ (where these groups are embedded as in Equation~\eqref{eq:embedded_groups}). This proves the first claim.
If the linear map $M \to M_S$ is injective,  the linear map $\End_R(M) \to \End_S(M_S)$ is injective as well and thus a closed immersion. The second claim follows.
\end{proof}

Let $R$ be a finitely generated  $\kk$-domain and let $\KK$ be an algebraic closure of the field of fractions
$\Quot(R)$. Let $M$ be a torsion-free $R$-module of finite rank $n$. Set $M_\KK=M\otimes_R\KK$, so that we get a canonical inclusion $\Aut_R(M) \subset \Aut_\KK(M_\KK) \simeq \GL_n(\KK)$.

\begin{lem}\label{connected-closure.lem}
If $\GGG \subset \Aut_R(M)$ is a connected subgroup, then  its closure  in $\Aut_\KK(M_\KK)$ is a connected algebraic $\KK$-group.
\end{lem}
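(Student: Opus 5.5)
The plan is to reduce to the standard fact that a subgroup of an algebraic group generated by irreducible constructible subsets containing the identity has connected closure. This is the argument behind Lemma~\ref{alg-gen-subgroup.lem}, applied in $\GL_n(\KK)$.

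Let $\overline{\GGG}$ be the Zariski closure of $\GGG$ in $\Aut_\KK(M_\KK)\simeq \GL_n(\KK)$; it is a closed subgroup, hence a linear algebraic $\KK$-group. We must show it is connected, i.e. that $\overline{\GGG}\subset \overline{\GGG}^\circ$. It suffices to show that every $g\in\GGG$ lies in $\overline{\GGG}^\circ$. In that case $\GGG\subset\overline{\GGG}^\circ$, and since $\overline{\GGG}^\circ$ is closed, $\overline{\GGG}=\overline{\GGG}^\circ$.

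Fix $g\in \GGG$. By Definition~\ref{defi:introduction_connected} (here interpreted in the ind-group $\Aut_R(M)$, i.e. via an algebraic family $\rho\colon A\to \Aut_R(M)$ that is a morphism of ind-varieties), there is an irreducible $\kk$-variety $A$ and a morphism $\rho\colon A\to\Aut_R(M)$ with $\rho(A)\subset\GGG$ containing $e$ and $g$. The key step is to show that the image $\rho(A)$ lies in an irreducible $\KK$-subvariety of $\GL_n(\KK)$ containing $e$.

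To this end, choose a $\KK$-basis of $M_\KK$ consisting of elements $m_1,\dots,m_n$ of $M$ (possible because $M$ is torsion free of rank $n$). Let $V\subset M$ be the finite-dimensional $\kk$-span of generators of $M$ together with the $m_i$. By the ind-morphism property (Lemma~\ref{Lem:UP_Endos}), there is a finite-dimensional $W\subset M$ such that $a\mapsto \rho(a)|_V\in\Hom_\kk(V,W)$ is a morphism of $\kk$-varieties. Hence the matrix entries of $\rho(a)$ in the basis $(m_i)$ have the form $\sum_j h_j(a)\,c_j$, with $h_j\in\OOO(A)$ and fixed $c_j\in\KK$: one expresses a $\kk$-basis of $W$ in the $\KK$-basis $(m_i)$. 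Therefore $\rho$ factors as $A\xrightarrow{\ h\ }\bbA^N(\kk)\subset \bbA^N(\KK)\xrightarrow{\ \ell\ }\mathrm{M}_n(\KK)$, with $\ell$ a $\KK$-linear map.

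Let $Z$ be the Zariski closure of $h(A)$ in $\bbA^N_\KK$. It is the base change of the closure of $h(A)$ in $\bbA^N_\kk$, which is an irreducible $\kk$-variety. Since $\kk$ is algebraically closed, base change preserves irreducibility, so $Z$ is irreducible. Then $\overline{\ell(Z)}\cap \GL_n(\KK)$ is irreducible, contains $e$ and $g$, and is contained in $\overline{\GGG}$: indeed $\ell(h(A))\subset \GGG$, so the closure of $\ell(h(A))$ in $\GL_n(\KK)$ lies in $\overline{\GGG}$, and $\ell(h(A))$ is dense in $\ell(Z)$. An irreducible subset of $\overline{\GGG}$ containing $e$ lies in $\overline{\GGG}^\circ$, so $g\in\overline{\GGG}^\circ$.

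The main obstacle is this base-change step. One must make precise that the $\kk$-rational family yields an irreducible $\KK$-variety whose $\kk$-points are dense, which relies on $\kk$ being algebraically closed so that irreducibility of the $\kk$-closure is geometric. One must also handle the detail that elements of $M$ span $M_\KK$ and that $\Aut_R(M)\to\GL_n(\KK)$ is given by $\kk$-linear expressions as above.
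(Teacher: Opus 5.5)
Your argument is correct and is essentially the paper's. Both base-change the algebraic family $\rho$ to $\KK$, use that irreducibility is preserved because $\kk$ is algebraically closed and that $\kk$-points are dense, and conclude that the closure of $\rho(A)$ is an irreducible subset of $\overline{\GGG}$ through the identity, hence contained in $\overline{\GGG}^\circ$. The only difference is that you carry out the base change explicitly in coordinates via the factorization $\rho=\ell\circ h$, whereas the paper base-changes the action morphism $A\times M\to M$ directly to obtain $\rho_\KK\colon A_\KK\to\Aut_\KK(M_\KK)$ with $\overline{\rho_\KK(A_\KK)}=\overline{\rho(A)}$.
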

\begin{proof}
The closure is an algebraic $\KK$-group. We have to show that it is connected.
Let $A$ be a variety and let $\rho\colon A \to \Aut_R(M)$ be a morphism. The $\kk$-morphism $\phi\colon A \times M \to M$ (Lemma~\ref{Lem:UP_Endos}) induces a $\KK$-morphism $\phi_\KK \colon A_\KK \times (M\otimes_\kk \KK) \to M \otimes_\kk \KK$, and thus a $\KK$-morphism $\bar\phi_\KK \colon A_\KK \times M_\KK \to M_\KK$. It follows that the $\rho$ defines a $\KK$-morphism $\rho_\KK \colon A_\KK \to\Aut_\KK(M_\KK)$, and so $\overline{\rho_\KK(A_\KK) } = \overline{\rho_\KK(A) } \subset \overline{\GGG} \subset \Aut_\KK(M_\KK)$. If $A$ is irreducible and $\id_M\in \rho(A)$, then $\rho(A) \subset \rho_\KK(A_\KK) \subset \overline{\GGG}^\circ$, and so $\GGG\subset\overline{\GGG}^\circ$, because $\GGG$ is connected.
\end{proof}

\subsection{Nested and strongly nested ind-groups}\label{nested.sec}
Recall from \S\,\ref{strongly_nested.sec} 
that an ind-group $\GGG$ is strongly nested if it admits an admissible
ascending filtration $\GGG = \bigcup_k \GGG_k$ such that all $\GGG_k$ are algebraic groups.

\begin{exas}\label{nested.exa}
\begin{enumerate}[\rm (1)]
\item 
\textit{If $\GGG$ is a nested (reps.\ strongly nested) ind-group,  then so is every
closed subgroup $\HHH \subset \GGG$. } 
\item
\textit{If $V$ is a $\kk$-vector space of countable dimension,  then the additive group $(V,+)$ is a strongly nested ind-group. We shall denote this group by $V^+$. }
\item\label{additive.item}
\textit{If $\Ga := (\bbA^1,+)$ is the additive group and $R$ is a $\kk$-algebra of countable dimension,  then the ind-group $\Ga(R) = (R,+)$ is strongly nested.}
\item\label{mult.item}
\textit{If $\Gm = (\bbA^1)^\times:=(\bbA^1\setminus\{0\},\times )$ is the multiplicative group and $R$ is a $\kk$-algebra, then $\Gm(R) = R^\times$,  the group of invertible elements of~$R$.} 
\end{enumerate}
\noindent
This is an ind-group in case $R$ has countable dimension, namely $R^\times=\GL_1(R)$. If $R = \OOO(Y)$ where $Y$ is an irreducible affine variety, Rosenlicht proved that $\OOO(Y)^\times/\kk^{\times}$ is a finitely generated torsion free abelian group 
(cf. \cite[Proposition~4.4.1]{FuKr2018On-the-geometry-of}).
This implies the following.
\begin{enumerate}[\rm (5)]
\item
\textit{For any variety $Y$ the connected component $(\OOO(Y)^\times)^\circ$ is isomorphic to $(\kst)^d$ where $d$ is the number of connected components of~$Y$.}
\end{enumerate}
\noindent
Indeed, if $Y$ is affine we apply Rosenlicht's result mentioned 
above,
and for the general case we choose an affine  
open and dense subset $U\subset Y$ to get a closed embedding 
$\OOO(Y)^\times \subset \OOO(U)^\times$ (see Proposition~\ref{functoriality-of-rational-points.prop}(2)).
\begin{enumerate}[\rm (6)]
\item\label{solvable.item}
\textit{Consider the group $\B_n \subset \GL_n$ of upper triangular matrices.
If $Y$ is any variety, then the closed ind-subgroup $B_n(\OOO(Y))^\circ \subset \GL(\OOO(Y))$ is connected and strongly nested.}
\end{enumerate}
\noindent
In fact, $B_n$ is a semi-direct product $B_n = T_n\ltimes U_n$ where $T_n$ is the diagonal torus and $U_n$ is the group of upper triangular unipotent matrices. By 
(5) and Lemma~\ref{semidirect-product.lem}(\ref{algebraic.item})
below it suffices to show that the ind-group $U_n(\OOO(Y))$ is strongly nested.  
Since $U_n$ has a chain $U_n(1) \supset U_n(2) \supset \cdots \supset \{e\}$ 
of closed normal subgroups such that $U_n(i) \to U_n(i) / U_n(i+1) \simeq \Ga$ splits
\cite[Exercise 7. \S17]{Hu1975Linear-algebraic-g},  this follows from  (\ref{additive.item}) 
and Lemma~\ref{semidirect-product.lem}(\ref{retraction.item}) below,
by induction on the dimension of $U_n$.
\end{exas}

\begin{lem}\label{semidirect-product.lem}
Let $\NNN, \HHH \subset \GGG$ be closed ind-subgroups of an ind-group $\GGG$. Assume that $\HHH$ normalizes $\NNN$,  $\GGG = \HHH \NNN$, and that $\NNN$ is strongly nested. 
\begin{enumerate}[\rm (i)]
\item 
If $\HHH$ is nested,  then so is $\GGG$. 
\item\label{algebraic.item}
If $\HHH$ is an algebraic group, then $\GGG$ is stronly nested.
\item\label{retraction.item}
If $\HHH$ is strongly nested and there is a retraction $\rho\colon \GGG\to\HHH$, i.e., a homomorphism of ind-groups which is the identity on $\HHH$,  then $\GGG$ is strongly nested.
\end{enumerate}
\end{lem}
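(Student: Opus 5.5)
Throughout, we use the ind-group structures and fix an admissible filtration $\NNN = \bigcup_k N_k$ of $\NNN$ by closed algebraic subgroups. We may assume each $N_k$ is stable under the relevant part of $\HHH$, enlarging it as described below. The guiding idea is that the products $H_k N_k$, with $H_k$ algebraic subgroups of $\HHH$ normalizing $N_k$, form the desired filtration of $\GGG$. Since $\HHH$ normalizes $\NNN$, each product $H N$ of an algebraic subgroup $H \subset \HHH$ and an $H$-stable algebraic subgroup $N \subset \NNN$ is a subgroup. It is the image of the semidirect product $H \ltimes N$ under multiplication, which is a homomorphism of ind-groups. Hence $HN$ is an algebraic subgroup; closedness follows since its image of an algebraic group in an ind-group is closed and algebraic.

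The key step is to make $N_k$ stable under a given algebraic group $H$. The conjugation action $H \times \NNN \to \NNN$ is an ind-morphism, so $H \cdot N_k := \bigcup_{h\in H} hN_kh^{-1}$ is contained in some $N_\ell$, by the definition of ind-morphisms applied to the algebraic set $H \times N_k$. Then the subgroup $N_k'$ generated by all conjugates $hN_kh^{-1}$ is contained in $N_\ell$. Being the subgroup of the algebraic group $N_\ell$ generated by a constructible family, it is a closed algebraic subgroup (cf. Lemma~\ref{alg-generated-subgroup.lem}, applied componentwise; for non-connected pieces, use that $N_k'$ is generated by finitely many components). It is also $H$-stable. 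This gives an ascending chain of $H$-stable algebraic subgroups, still admissible in $\NNN$.

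For (i), write $\HHH = \bigcup_k H_k$. Inductively choose $H_k$-stable $N_k' \supset N_k \cup N_{k-1}'$ as above. Then $G_k := H_k N_k'$ is an ascending chain of algebraic subgroups covering $\GGG = \HHH\NNN$. For (ii), take $H_k = \HHH$ for all $k$. Admissibility then follows because the multiplication map $\HHH\times\NNN \to \GGG$ is a surjective ind-morphism. Any algebraic subset of $\GGG$ lies in the image of some $\HHH \times N_k$, but this direction requires an inverse. I would deduce it from the fact that, with $\HHH$ algebraic, $\GGG/\NNN$ is finite-dimensional; more concretely, for a closed algebraic $B\subset\GGG$ one needs $B\subset \HHH N_k$ for some $k$. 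For (iii) this is easy: $g \mapsto (\rho(g), \rho(g)^{-1}g)$ is an ind-isomorphism $\GGG \to \HHH \times \NNN$, so algebraic subsets of $\GGG$ land in $H_k \times N_k$, giving admissibility of $H_kN_k'$ with $H_k$ admissible in $\HHH$.

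The main obstacle is admissibility in (ii) without a retraction. I would handle it via the ind-structure of the quotient. $\GGG \to \GGG/\NNN \simeq \HHH/(\HHH\cap\NNN)$ is algebraic, and one picks local sections over a finite open cover of the algebraic variety $\HHH/(\HHH\cap \NNN)$. Alternatively, I would note that in our applications $\HHH\cap\NNN$ is trivial and $\GGG\simeq \HHH\ltimes\NNN$ as ind-varieties. In that case the argument of (iii) applies with $\rho$ the projection.
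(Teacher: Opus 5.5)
Your (i) and (iii) follow the paper's strategy: filter $\GGG$ by products $H_kN_k'$, after making the $N_k$ stable under conjugation via the ind-morphism $(h,n)\mapsto hnh^{-1}$. But (ii) is not proved. You correctly isolate what is needed, namely that every closed algebraic $A\subset\GGG$ lies in some $\HHH N_k$, and then only sketch routes that do not deliver it.

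\begin{itemize}
\item The quotient route presupposes that $hn\mapsto h(\HHH\cap\NNN)$ is an ind-morphism $\GGG\to\HHH/(\HHH\cap\NNN)$, which is essentially what has to be shown. Moreover, $\HHH\to\HHH/(\HHH\cap\NNN)$ need not admit Zariski-local sections, e.g.\ when $\HHH\cap\NNN$ is finite and nontrivial.
\item The fallback assumes that the bijective ind-morphism $\HHH\times\NNN\to\GGG$ is an ind-isomorphism. That is again exactly the point at issue, and it only covers special cases anyway.
\end{itemize}

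The missing idea is the saturation trick the paper uses. Since $\HHH$ is algebraic, multiplication sends $\HHH\times\overline{A}$ into some filtration piece of $\GGG$. So you may replace $A$ by $\overline{\HHH A}$, which is still closed and algebraic and is now stable under left multiplication by $\HHH$. For such $A$ and $a=hn\in A$ with $h\in\HHH$, $n\in\NNN$, you get $n=h^{-1}a\in A\cap\NNN$, hence $A=\HHH\,(A\cap\NNN)$. Because $\NNN$ is closed, $A\cap\NNN$ is a closed algebraic subset of $\NNN$, so it lies in some $N_k$ by admissibility, and $A\subset\HHH N_k$. No inverse of multiplication and no quotient are needed.

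Two smaller points.

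\begin{itemize}
\item \emph{Closedness of the generated subgroup.} The subgroup generated by the conjugates $hN_kh^{-1}$, $h\in H$, need not be closed when $H$ or $N_k$ is disconnected. Finitely many closed subgroups can generate a non-closed one; for instance, two involutions in $\mathrm{PGL}_2$ can generate an infinite dihedral group. So ``generated by finitely many components'' does not help. Take instead its closure in $N_\ell$, which is still an $H$-stable algebraic subgroup containing $N_k$; this is the paper's $N_k'=\overline{\langle hN_kh^{-1} : h\in H_k\rangle}$.
\item \emph{The map in (iii).} Your ind-isomorphism $g\mapsto(\rho(g),\rho(g)^{-1}g)$ onto $\HHH\times\NNN$ requires $\NNN\subseteq\ker\rho$. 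This holds in the intended application (split quotients of $U_n$) but is not among the stated hypotheses. The paper instead reuses the saturation trick with $\overline{H_kA}$, where $\rho(A)\subset H_k$. That argument tacitly needs a similar compatibility, $\rho(\NNN)\subseteq\NNN$.
\end{itemize}
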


\begin{proof}
(a) Write $\HHH=\bigcup_k H_k$ and $\NNN=\bigcup_\ell N_\ell$ as increasing unions of algebraic groups where the second is an admissible filtration.
The map $\gamma\colon\HHH\times \NNN \to \NNN$,  $(h,n) \mapsto hnh^{-1}$, is a morphism of ind-varieties. Hence, for each $k$, there is an $m\geq k$ such that $\gamma(H_k\times N_k) \subseteq N_m$. This implies that $N'_k:= \overline{\langle \gamma(H_k\times N_k)\rangle}$ is an algebraic subgroup of $N_m$, normalized by $H_k$. It follows that $G_k:=H_k N'_k\subset\GGG$  is an algebraic subgroup, and so $\GGG = \bigcup_kG_k$ is nested. This proves (i).

\ps
(b)
If $\HHH$ is an algebraic subgroup $H$, then, as in (a), 
we can can assume that the admissible filtration $\NNN=\bigcup_k N_k$ is stable under conjugation by $H$. We claim that the filtration $\GGG=H\NNN = \bigcup_k H N_k$ is admissible. In fact, 
let $A \subset \GGG$ be an algebraic subset. Enlarging $A$ we can assume that $e_\GGG\in A$ and that $A$ is stable under left-multiplication by $H$. Then $A = H (A \cap \NNN)$, and since $A\cap \NNN \subset N_k$ for some $k$ we get $A \subset H N_k$ showing that $\GGG=\bigcup_k H N_k$ is an admissible filtration. This proves (ii).
\ps
(c) 
As in (a) we can assume that $N_k$ is stable under conjugation by $H_k$, and thus we get a filtration $\GGG = \bigcup_k G_k$ by closed algebraic subgroups $G_k:=H_k N_k$. 
Let $A \subset \GGG$ be an algebraic subset. Enlarging $A$ we can assume that $e_\GGG\in A$. Its image  $\rho(A) \subset \HHH$ is contained in some $H_k$. Define  $\tilde A:=\overline{H_k A} \subset \GGG$. This is a closed algebraic subset contained in $H_k  (\tilde A \cap \NNN)$. 
If $\tilde A \cap \NNN \subset N_\ell$ and 
$m := \max\{k,\ell\}$, then $A \subset \tilde A \subset H_m N_m = G_m$. This proves (iii).
\end{proof}

\subsection{A version of the Lie-Kolchin theorem}\label{lie-kolchin.par}
The Lie-Kolchin Theorem tells us that a connected solvable linear algebraic group can be embedded for some $n$ into
$\B_n \subset \GL_n$, the group of upper triangular matrices (see Corollary~10.5 of \cite[\S III.10]{Bo1991Linear-algebraic-g}). This can be generalized in the following way.

\begin{mthm}\label{Lie-Kolchin.thm}
Let $R$ be a finitely generated $\kk$-domain, $M$ a finitely generated  and torsion-free $R$-module, and $\GGG \subseteq \Aut_R(M)$ a connected solvable subgroup. 
There is a finitely generated extension $S/R$ such that $M \otimes_R S$ is a free $S$-module of rank $n$ and that the image of $\GGG$ in $\GL_n(S)$ is conjugate to a subgroup of $\B_n(S)^\circ$. 

If, in addition, $\GGG = \langle \rho(A) \rangle$ for a morphism $\rho \colon A \to \Aut_R(M)$
with $A$ irreducible and $\id_M \in \rho(A) \subseteq \GGG$, 
then $\GGG$ is an algebraic subgroup of $\Aut_R(M)$.
\end{mthm}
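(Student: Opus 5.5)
We pass to the generic point and apply the classical Lie--Kolchin theorem there, then descend to a finitely generated ring. Let $\KK$ be an algebraic closure of $\Quot(R)$, and let $n$ be the rank of $M$. Since $M$ is torsion-free, $M \to M_\KK$ is injective, so $\GGG \subset \Aut_R(M) \subset \Aut_\KK(M_\KK)\simeq \GL_n(\KK)$.

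By Lemma~\ref{connected-closure.lem}, the Zariski closure $\overline{\GGG}$ is a connected algebraic $\KK$-group. It is solvable, because the closure of a solvable group is solvable. The classical Lie--Kolchin theorem therefore gives a basis $e_1,\dots,e_n$ of $M_\KK$ in which $\overline{\GGG}\subset \B_n(\KK)$, and then $\overline{\GGG}\subset \B_n(\KK)^\circ$ by connectedness.

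Next we descend to a finitely generated $R$-algebra. Fix generators $m_1,\dots,m_r$ of $M$. Each $e_j$ is a finite $\KK$-combination of the $m_i$, and each $m_i$ is a combination of the $e_j$. Let $S\subset\KK$ be the $R$-algebra generated by all these coefficients together with the inverse of the change-of-basis determinant. Then the image of $M\otimes_R S$ in $M_\KK$ is the free $S$-module $\bigoplus S e_j$.

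If $M\otimes_R S$ has torsion, we kill it by replacing $S$ with a localization $S_f$. Generic freeness lets us choose $f$ so that $M_f$ is free over $R_f$ and then everything is flat. Alternatively, we work with the image $M_S$ of $M\otimes_R S$ in $M_\KK$; this is what Proposition~\ref{Hom-of-GLM.prop} needs, together with injectivity of $M\to M_S$.

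An element $g\in\GGG$ preserves $M$, and after base change it preserves $M_S$. In the basis $(e_j)$ its matrix has entries in $S$ and is upper triangular. Hence the image of $\GGG$ lies in $\B_n(S)$, and in fact in $\B_n(S)\cap \B_n(\KK)^\circ$.

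To get into $\B_n(S)^\circ$, we use connectedness of $\GGG$. Any irreducible family through $\id$ inside $\GGG$ maps, by Proposition~\ref{Hom-of-GLM.prop}, to an irreducible family in $\B_n(S)$ through $e$. Such a family lies in the identity component $\B_n(S)^\circ$ by Lemma~\ref{connected-indgroups.lem}. Since $\GGG$ is generated by such families, its image lies in $\B_n(S)^\circ$.

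For the second statement, suppose $\GGG=\langle\rho(A)\rangle$. By Example~\ref{nested.exa}(6), $\B_n(S)^\circ$ is strongly nested. Composing $\rho$ with the ind-homomorphism $\Aut_R(M)\to\GL_n(S)$, which is a closed immersion by Proposition~\ref{Hom-of-GLM.prop}, we get an ind-morphism from the variety $A$. Its image lands in a single member $B_k$ of the admissible filtration of $\B_n(S)^\circ$ by algebraic subgroups.

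By Lemma~\ref{alg-gen-subgroup.lem}, the subgroup generated by this image is a closed connected algebraic subgroup $H\subset B_k$, and $H=(\rho(A)\rho(A)^{-1})^{m}$ for some $m$. Because the map is a closed immersion, $\GGG$ is identified with $H$ and inherits the structure of a connected algebraic group. The universal property (UP) follows from Lemma~\ref{alg-generated.lem}, whose dimension bound also yields finite-dimensionality, so that Ramanujam's theorem applies.

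The main obstacle I expect is the descent step. One has to choose $S$ finitely generated so that $M\otimes_R S$ is genuinely free and $M\to M\otimes_R S$ is injective, as Proposition~\ref{Hom-of-GLM.prop} requires for the closed immersion. I would first localize $R$ at a single element to make $M$ free; this is possible by generic freeness, with injectivity holding since $M$ is torsion-free. Only then would I adjoin the finitely many Lie--Kolchin coefficients.
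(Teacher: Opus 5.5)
Your proposal is correct and follows essentially the same route as the paper. You apply Lie--Kolchin to the connected solvable closure $\overline{\GGG}\subset\GL(M_\KK)$, descend to a finitely generated $S\subset\KK$, and use the closed immersion $\Aut_R(M)\to\GL_n(S)$ of Proposition~\ref{Hom-of-GLM.prop} together with connectedness to land in $\B_n(S)^\circ$. You then finish with the strong nestedness of $\B_n(S)^\circ$ (Example~\ref{nested.exa}(6)) and Lemma~\ref{alg-gen-subgroup.lem}.

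Your preliminary localization of $R$, making $M$ free so that $M\otimes_R S$ is genuinely free and $M\to M\otimes_R S$ is injective, usefully spells out a point the paper leaves implicit. Your closing appeal to Lemma~\ref{alg-generated.lem} and Ramanujam's theorem is unnecessary and misplaced, since both concern subgroups of $\Aut(X)$; the closed immersion already makes $\GGG$ a closed algebraic subgroup of $\Aut_R(M)$.
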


\begin{proof}
Let $\KK$  be an algebraic closure of $\Quot(R)$, and  set $M_\KK := M\otimes_R \KK$. This is a  $\KK$-vector space of finite dimension $n$. Since $M$ is torsion-free, $M$ embeds into $M_\KK$, and we have an injective homomorphism 
of $\Aut_R(M)\into\GL(M_\KK)$.

The closure $\bGGG \subset \GL(M_\KK)$ is a solvable algebraic $\KK$-group which is connected by Lemma~\ref{connected-closure.lem}. The Lie-Kolchin Theorem provides a $\KK$-basis $(m_j)_{ 1\leq j \leq n}$ of $M_\KK$ such that $\bGGG \subseteq \B_n(\KK)$.
Let us fix a finitely generated extension $S$ of $R$ contained in $\KK$ such that $m_j \in M_S$ for all $j$ and  $M_S = \bigoplus_{j=1}^n S m_j$. 
Since $\B_n(\KK) \cap \GL_n(S) = \B_n(S)$ we get 
$\GGG \subseteq \B_n(S)$. 
By  Proposition~\ref{Hom-of-GLM.prop}, the homomorphism $\Aut_R(M) \to \Aut_S(M_S) = \GL_n(S)$ is a closed immersion of ind-groups, and so $\GGG \subset B_n(S)^\circ$, because $\GGG$ is connected.

Since $S$ is a finitely generated $\kk$-domain it is the coordinate ring
of an irreducible affine variety, and we can refer to  Example~\ref{nested.exa}\,(6) 
to conclude that 
$\B_n(S)^\circ$ is a  strongly nested closed ind-subgroup of $\GL_n(S)$. Hence, if $\GGG$ is generated by $\rho(A)$ for a morphism $\rho\colon A \to \Aut_R(M)$ with $A$ irreducible and $\id_M \in \rho(A)$,
then $\GGG$ is contained 
in an algebraic subgroup of $\B_n(S)^\circ$, and thus is a closed algebraic subgroup (see Lemma~\ref{alg-gen-subgroup.lem}).
\end{proof}

\section{Commutative automorphism groups}\label{quasi-affine.sec}
\vspace{0.1cm}
\begin{center}
\begin{minipage}{12.0cm}
{\textit{For a quasi-affine variety $X$, we establish a criterion ensuring that a connected solvable subgroup of $\Aut(X)$ is an algebraic subgroup
and apply it to derive the commutative case of Theorem~\ref{main-thm-A}.
}}
\end{minipage}
\end{center}

\subsection{Examples of quasi-affine varieties}\label{quasi-affine.subsec}

Recall, that a variety is \emph{quasi-affine} 
if it can be embedded as an open subset into an affine variety. For a quasi-affine variety
the canonical morphism $X \to \Spec(\OOO(X))$ is an open embedding 
(Proposition~5.1.2 of \cite{Gr1961Elements-de-geomet}). 
 
The plane minus the origin is quasi-affine and its algebra of regular functions $\OOO({\mathbb{A}}^2\setminus\{(0,0)\})$ is equal to $\OOO({\mathbb{A}}^2)=\kk[x,y]$. Now, let $D\subset {\mathbb {A}}^3$ be the union of the two planes $\{x=0\}$ and $\{y=0\}$. Let $X\subset D$ be the complement of the line $\{x=z=0\}$. Then, $X$ is quasi-affine and $\OOO(X)$ is not a finitely generated $\kk$-algebra (for this, consider the functions $y/z^m$ with $m\geq 1$).

Other interesting examples of quasi-affine varieties are given 
in~\cite{Neeman1988Steins, Jelonek2001, Vakil:example}.
Nagata's celebrated example provides a quasi-affine quotient $X$ for which $\OOO(X)$ is not finitely generated, see \cite{Nagata58}. 
We refer to~\cite{Wi2003Invariant-rings-an} and \cite{Nagata:lecture-notes} for general results on the existence of quasi-affine quotients.

\subsection{Base change and reduction to the generic fiber}

Let $\KK/\kk$ be an algebraically closed field extension.
If $\SSS$ is a set of morphisms $\rho \colon A \to \Aut(X)$ we denote by 
$\langle \SSS \rangle \subseteq \Aut(X)$ the subgroup generated by the images of the
$\rho \in \SSS$. Furthermore, $\SSS_{\KK}$ denotes the set of morphisms 
$\rho_{\KK} \colon A_{\KK} \to \Aut(X_{\KK})$ with $\rho \in \SSS$.

\begin{prop}
\label{Prop.Base-change}
Suppose $\SSS$ consists of morphisms $\rho\colon A \to \Aut(X)$ with $A$ irreducible and $\id_X\in \rho(A)$. 
If $\pi\colon X \to Y$ is a geometric quotient for the connected subgroup $\langle \SSS \rangle$, 
 then $\pi_\KK \colon X_{\KK} \to Y _{\KK}$
is a geometric quotient for $\langle \SSS_{\KK} \rangle$.
\end{prop}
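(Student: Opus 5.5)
We have to check the three conditions of Definition~\ref{geometric-quotient.def} for $\pi_\KK$ and $\GGG_\KK:=\langle \SSS_\KK\rangle$, using that each is either preserved by flat base change or can be tested via constructible/closed conditions that descend along $\kk\subset\KK$. The guiding observation is that $\langle\SSS_\KK\rangle$-orbits are the images of the families $A_{1,\KK}\times\cdots\times A_{m,\KK}\times X_\KK \to X_\KK$ built from finitely many $\rho_i\in\SSS$. These families are defined over $\kk$, and $\pi\circ\rho(a)=\pi$ holds identically on $A\times X$, hence also after base change. So $\pi_\KK$ is $\GGG_\KK$-invariant and each orbit lies in a fibre.

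\textbf{Condition (i): fibres are orbits.} By Proposition~\ref{Orbits.prop}(2) there is an irreducible constructible $V=\mu(B)\subseteq\langle\SSS\rangle$, with $\mu$ a product of finitely many members of $\SSS$ and their inverses, such that $Vx=\langle\SSS\rangle x$ for all $x\in X$. Consider the morphism $\Phi\colon B\times X\to X\times_Y X$, $(b,x)\mapsto(\mu(b)x,x)$. By (i) for $\pi$ it is surjective on $\kk$-points, and surjectivity of a morphism of finite type $\kk$-varieties is detected on $\kk$-points (via Chevalley's theorem applied to the constructible image). Hence $\Phi_\KK$ is surjective, and since $\mu_\KK(B_\KK)\subseteq\GGG_\KK$, every fibre of $\pi_\KK$ is a single $\GGG_\KK$-orbit. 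Surjectivity of $\pi_\KK$ follows the same way.

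\textbf{Condition (iii): invariant functions.} It suffices to work over affine opens $U\subseteq Y$, so we must show $\OOO(\pi^{-1}(U)_\KK)^{\GGG_\KK}=\OOO(U)\otimes_\kk\KK$. Since $\OOO(\pi^{-1}(U)_\KK)=\OOO(\pi^{-1}(U))\otimes_\kk\KK$ (flat base change for quasi-compact separated schemes), invariance under $\rho_\KK(A_\KK)$ reads $\phi^*(f)=1\otimes f$ in $\OOO(A)\otimes\OOO(\pi^{-1}(U))\otimes\KK$. This is a $\kk$-linear condition, and its solution space commutes with the extension of scalars $\otimes_\kk\KK$. Here I use that $A$ is irreducible with $\kk$-points dense, so invariance under $\rho(A)$ on $\kk$-points is equivalent to the identity $\phi^*f=1\otimes f$. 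Intersecting over $\rho\in\SSS$ gives an intersection of kernels of $\kk$-linear maps, which again commutes with $\otimes_\kk\KK$ since $\KK$ is flat over $\kk$ and we can take kernels into a product.

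\textbf{Condition (ii): quotient topology.} This is the main obstacle, because topologies do not obviously base change. I would first try to replace (ii) by a scheme-theoretic statement that does base change. If $\pi$ is open (e.g.\ universally open), then (ii) follows. A geometric quotient with (i) and (iii) is a submersive morphism, and I expect to shrink using Remark~\ref{Rem.Geom_quotient_smooth}: over a dense open $Y_0$ the map $\pi$ is smooth, hence universally open, so (ii) holds for $\pi_\KK$ over $Y_{0,\KK}$. For the complement I would argue by Noetherian induction on $Y$, restricting to $\pi^{-1}(Y\setminus Y_0)$ with its induced quotient. Alternatively, and perhaps more cleanly, I would verify that $U\subseteq Y_\KK$ is open iff $\pi_\KK^{-1}(U)$ is open by showing that $\pi_\KK$ maps $\GGG_\KK$-stable closed sets to closed sets. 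A $\GGG_\KK$-stable closed set $Z$ satisfies $Z=\pi_\KK^{-1}(\pi_\KK(Z))$ by (i), and $\pi_\KK(Z)$ is constructible. Closedness then reduces to stability under specialization, which I would check using that $\pi$ itself maps stable closed sets to closed sets, applied to families defined over a finitely generated subring of $\KK$.
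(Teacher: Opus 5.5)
Your fibre argument and your invariant-function argument amount to the base-change facts behind Mumford's remarks, done by hand. But the submersiveness condition (ii) of Definition~\ref{geometric-quotient.def} for $\pi_\KK$, which you correctly single out as the main obstacle, is not proved.

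Your first route fails. Knowing that $\pi_\KK$ is submersive over $Y_{0,\KK}$ and (inductively) over $(Y\setminus Y_0)_\KK$ says nothing about whether a set $W\subseteq Y_\KK$ with $\pi_\KK^{-1}(W)$ open is open near points of $(Y\setminus Y_0)_\KK$. Submersiveness is not local for an open/closed decomposition of the target: the bijection $(\Aone\setminus\{0\})\sqcup\{\mathrm{pt}\}\to\Aone$ is submersive over both strata but not submersive. The induction discards exactly the information you need, namely that $\pi_\KK^{-1}(W)$ is open in all of $X_\KK$.

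Your second route can be made to work, but its decisive step is only announced. Spread out a closed $\langle\SSS_\KK\rangle$-stable $Z\subseteq X_\KK$ to a closed $\mathcal Z\subseteq X\times S$ over a $\kk$-variety $S$. You must then show that $\mathcal Z_s$ is $\langle\SSS\rangle$-stable for $s$ in a dense open subset of $S$, so that $\pi(\mathcal Z_s)$ is closed. You also need the nontrivial fact that, for the constructible set $C=(\pi\times\id_S)(\mathcal Z)\subseteq Y\times S$, one has $(\overline{C})_s=\overline{C_s}$ for general $s$. Only this shows that $\overline{C}\setminus C$ does not dominate $S$, hence that $C$ is closed over the generic point of $S$ and $\pi_\KK(Z)$ is closed.

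The paper sidesteps all of this. Using Theorem~1 of \cite{Po2014On-infinite-dimens}, it realizes the $\langle\SSS\rangle$-orbits as the orbits of a single family $A_1\times\cdots\times A_n\to\Aut(X)$, $(a_i)\mapsto\rho_1(a_1)^{\varepsilon_1}\cdots\rho_n(a_n)^{\varepsilon_n}$, with $\rho_i\in\SSS$. Then $\pi$ is a geometric quotient in Mumford's sense for this family, and the paper quotes the base-change remarks of \cite{MuFoKi1994Geometric-invarian}: \S2\,(7) for the orbit and invariant-function conditions, and \S2\,(4) for submersiveness.

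There are two smaller points. First, Proposition~\ref{Orbits.prop}(2) only provides some irreducible constructible $V\subseteq\langle\SSS\rangle$. That $V$ can be chosen as a product of members of $\SSS$ and their inverses is Popov's refinement just mentioned. You need it, since otherwise nothing guarantees $\mu_\KK(B_\KK)\subseteq\langle\SSS_\KK\rangle$.

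Second, your invariant-function computation only treats opens $U_\KK$ with $U\subseteq Y$ affine. These cover $Y_\KK$ but are not a basis of its topology, so you still have to pass to smaller opens, e.g.\ principal opens $D(g)\subseteq U_\KK$. If $f/g^m$ is invariant under $h\in\langle\SSS_\KK\rangle$, then $g^N(h^*f-f)=0$ for some $N$. Since $X_\KK$ is reduced, this gives $g(h^*f-f)=0$. So $gf$ is invariant, and $f/g^m=gf/g^{m+1}$ lies in $\OOO(U_\KK)_g$.
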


\begin{proof}
By Theorem~1 in~\cite{Po2014On-infinite-dimens} 
(which is a refinement of Proposition~\ref{Orbits.prop}\eqref{algebraic-subset.item})
there exist $\rho_1, \ldots, \rho_n \in \SSS$ and 
$\varepsilon_1, \ldots, \varepsilon_n \in \{\pm1\}$, such that the orbits under
\[
    \tilde\rho \colon A \coloneqq A_1 \times \cdots \times A_n \to \Aut(X) \, , \quad
    (a_1, \ldots, a_n) \mapsto \rho_1(a_1)^{\varepsilon_1} \cdots \rho_n(a_n)^{\varepsilon_n}
\]
are exactly the $\langle \SSS \rangle$-orbits. Then
the morphism $\pi\colon X \to Y$, considered as a morphism of $\kk$-schemes, is a geometric quotient in the sense of \S1, Definition~0.6 of \cite{MuFoKi1994Geometric-invarian} when
we consider the action morphism 
$\sigma \colon A \times X \to X$, $(a, x) \mapsto \tilde\rho(a)(x)$. 
We have to show that $\pi_\KK \colon X_\KK \to Y_\KK$ satisfies Conditions (i)--(iv) of this definition for $\sigma_{\KK} \colon A_{\KK} \times X_{\KK} \to X_{\KK}$. 
For Conditions (i), (ii) and (iv) this follows from \S2\,(7) in 
\cite{MuFoKi1994Geometric-invarian} and for (iii) we can use \S2\,(4) loc. cit.
\end{proof}

Let $\pi \colon X\to Y$ be a dominant morphism, where $Y$ is irreducible.
Let $\KK/\kk(Y)$ be an algebraic closure. The $\kk(Y)$-scheme 
$\Xg = X \times_Y\Spec\kk(Y)$ is called the {\it generic fiber} of $\pi$.
The {\it geometric generic fiber} is the closed $\KK$-subscheme
\begin{equation*}
\Xgg \coloneqq \Xg \times_{\Spec \kk(Y)} \Spec\KK \subseteq X_{\KK}
\end{equation*}
which is a fiber of the morphism $\pi_\KK\colon X_\KK \to Y_\KK$. 
If $\pi$ is a geometric quotient, then $\Xgg$ is a  smooth $\KK$-variety by
Remark~\ref{Rem.Geom_quotient_smooth}. 

\begin{cor}
  \label{Cor.Geometric_fibre_of_Geom_quotient}
  Let $X$ be irreducible and 
  let $\GGG\subset\Aut(X)$ be a connected subgroup 
  admitting a geometric quotient $\pi\colon X \to Y$. Then, 
  \begin{enumerate}[\rm (1)]
    \item $\Xgg$ is a smooth irreducible $\KK$-variety.
        If $X$ is quasi-affine, then so is $\Xgg$.
    \item If $\GGG = \langle \SSS \rangle$ for some set $\SSS$ of morphisms $\rho \colon A \to \Aut(X)$     with irreducible $A$ and $\id_X \in \rho(A)$, then 
        $\langle \SSS_{\KK} \rangle$ acts transitively on $\Xgg$. \qed
  \end{enumerate}
\end{cor}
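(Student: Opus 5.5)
The corollary has two parts. Part (1) follows from Remark~\ref{Rem.Geom_quotient_smooth} and base change. Part (2) follows from Proposition~\ref{Prop.Base-change}.

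For (1), I first apply Remark~\ref{Rem.Geom_quotient_smooth}. It says $\pi$ is smooth over a dense open subset $Y_0\subseteq Y$. The generic point of $Y$ lies in $Y_0$, so the generic fiber $\Xg$ is smooth over $\kk(Y)$. Smoothness is stable under base field extension, so $\Xgg$ is a smooth $\KK$-scheme of finite type, hence a smooth $\KK$-variety.

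For irreducibility, I would argue that $\kk(Y)$ is algebraically closed in $\kk(X)$. If $\kk(X)\supseteq L\supseteq \kk(Y)$ with $L/\kk(Y)$ finite, then every $f\in L$ satisfies a monic polynomial over $\kk(Y)=\kk(X)^{\GGG}$. The $\GGG$-translates of $f$ are roots of that polynomial, so $f$ has a finite $\GGG$-orbit. Here I use that $\kk(Y)=\kk(X)^\GGG$, which comes from property (iii) of a geometric quotient applied to affine opens of $Y$. Take a morphism $\rho\colon A\to\GGG$ with $A$ irreducible and $\id_X\in\rho(A)$. Then $a\mapsto \rho(a)^*f$ is a rational map from $A$ into a finite set, so it is constant; since $\id_X\in\rho(A)$, that constant is $f$. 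As $\GGG$ is connected, $f$ is $\GGG$-invariant, so $f\in\kk(Y)$.

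Separability of $\kk(X)/\kk(Y)$ holds, as noted in Remark~\ref{Rem.Geom_quotient_smooth}. So $\kk(X)/\kk(Y)$ is a separable extension in which $\kk(Y)$ is algebraically closed, i.e.\ a regular extension. Hence $\kk(X)\otimes_{\kk(Y)}\KK$ is a domain, and $\Xgg$ is irreducible.

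Quasi-affineness: $\Xg$ is an open subscheme of $X\times_Y\Spec\kk(Y)$ obtained by localization. An open embedding $X\hookrightarrow \bar X$ with $\bar X$ affine gives $\Xg$ as an open subscheme of the affine scheme $\Spec(\OOO(\bar X)\otimes_{\OOO}\kk(Y))$. More carefully, I restrict to an affine open $V\subseteq Y$ and use that $X_V:=\pi^{-1}(V)$ is quasi-affine. Then $\Xg$ is a limit of the quasi-affine schemes $X_V$, and quasi-affineness passes to $\Xg$ and then to the base change $\Xgg$.

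For (2), Proposition~\ref{Prop.Base-change} shows that $\pi_\KK\colon X_\KK\to Y_\KK$ is a geometric quotient for $\langle\SSS_\KK\rangle$. Its fibers are therefore the $\langle\SSS_\KK\rangle$-orbits. The variety $\Xgg$ is exactly the fiber of $\pi_\KK$ over the $\KK$-point of $Y_\KK$ induced by $\Spec\KK\to\Spec\kk(Y)\to Y$, so $\langle\SSS_\KK\rangle$ acts transitively on it.

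The main obstacle I expect is the irreducibility in (1). It requires knowing that $\kk(Y)$ is algebraically closed in $\kk(X)$, and the connectedness argument above is where I would concentrate the care.
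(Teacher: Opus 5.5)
Your proposal is correct. For part~(2) and for smoothness in~(1) you follow exactly what the paper has in mind. The corollary is stated without a proof: smoothness comes from Remark~\ref{Rem.Geom_quotient_smooth}, and (2) comes from Proposition~\ref{Prop.Base-change}, because the fibres of a geometric quotient are orbits.

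You take a different route for irreducibility and quasi-affineness. The argument the paper presumably intends gets both almost for free from~(2).
\begin{itemize}
\item \textbf{Irreducibility.} Since $\GGG$ is connected, you may take $\SSS$ to be all morphisms $\rho\colon A\to\Aut(X)$ with $A$ irreducible and $\id_X\in\rho(A)\subseteq\GGG$, so (2) always applies. Then $\Xgg$ is a single orbit of a group generated by the irreducible families $\rho_\KK(A_\KK)$ through the identity, hence irreducible.
\item \textbf{Quasi-affineness.} $\Xgg$ is the fibre of $\pi_\KK$ over a $\KK$-point of $Y_\KK$. So it is a closed subscheme of $X_\KK$, which is open in the affine $\overline{X}_\KK$ whenever $X\subseteq\overline{X}$ is open with $\overline{X}$ affine.
\end{itemize}

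Your field-theoretic route shows that $\kk(X)/\kk(Y)$ is regular, so the generic fibre is geometrically integral. It does not depend on Proposition~\ref{Prop.Base-change} or on the Popov and Mumford--Fogarty--Kirwan base-change input behind it. So it is a legitimate alternative, but it is longer.

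Two points in your write-up need tightening.
\begin{itemize}
\item \textbf{The ``finite set'' step.} ``A rational map from $A$ into a finite set'' needs to be made precise, since $a\mapsto\rho(a)^*f$ takes values in $\kk(X)$. A clean version: let $\theta\colon A\times X\to X$ be the action map. Then $\theta^*f\in\kk(A\times X)$ is a root of $P$, because invariant coefficients satisfy $\theta^*c=\pr_X^*c$. Since $\kk$ is algebraically closed and $A$ is irreducible, $\kk(X)$ is algebraically closed in $\kk(A\times X)$, so $\theta^*f=\pr_X^*f'$ for some $f'\in\kk(X)$. Restricting to the slices $\{a\}\times X$, which meet the domain of $\theta^*f$, gives $\rho(a)^*f=f'$ for all $a$. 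Taking $a_0$ with $\rho(a_0)=\id_X$ gives $f'=f$.
\item \textbf{The quasi-affineness paragraph.} The first half is muddled. $\Xg$ \emph{is} $X\times_Y\Spec\kk(Y)$, not an open subscheme of it. Also, $\pi$ need not extend to $\overline{X}$, so the ring $\OOO(\overline{X})\otimes\kk(Y)$ is not defined. Your ``more carefully'' version works once stated properly: $\pi^{-1}(V)\to V$ is a quasi-affine morphism (quasi-affine source, affine target), and quasi-affine morphisms are stable under base change. The ``limit'' phrasing is not needed, and the closed-subscheme argument above is simpler still.
\end{itemize}
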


\begin{prop}
\label{G-stable-submodule.pro}
Let $X$ be an irreducible  quasi-affine variety, and let $\GGG \subset \Aut(X)$ be an algebraically generated and solvable subgroup. Assume that
\begin{enumerate}[\rm (i)]
\item there is a geometric quotient $\pi\colon X \to Y=X/\GGG$, and
\item the action of $\GGG$ on  $\OOO(X_{\bar\eta})$ is locally $\KK$-finite.
\end{enumerate}
Then $\GGG$ is an algebraic subgroup of $\Aut(X)$.
\end{prop}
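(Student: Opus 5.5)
The plan is to linearize $\GGG$ over the invariant ring $R := \OOO(X)^\GGG$ and then use the Lie--Kolchin type Theorem~\ref{Lie-Kolchin.thm}.

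First, we set up the algebraic data. Since $\pi$ is a geometric quotient, $\OOO(X)^\GGG = \OOO(Y)$. By Definition~\ref{geometric-quotient.def}(iii), for any affine open $Y_0\subset Y$ we have $\OOO(\pi^{-1}(Y_0))^\GGG=\OOO(Y_0)$, which is finitely generated. We replace $X$ by $X_0 := \pi^{-1}(Y_0)$; this is dense, $\GGG$-stable and quasi-affine, and by Lemma~\ref{Lem.restriction-to-open}(2) it suffices to show that the image of $\GGG$ in $\Aut(X_0)$ is contained in an algebraic group. So we may assume that $Y$ is affine and that $R=\OOO(Y)$ is a finitely generated domain. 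We have $\OOO(\Xgg)=\OOO(X)\otimes_R \KK$ (flat base change, after localizing). Because $X$ is quasi-affine, finitely many functions $f_1,\dots,f_m\in\OOO(X)$ separate points and give a locally closed embedding (an immersion) of $X$ into $\bbA^m$, and also of $\Xgg$ into $\bbA^m_\KK$.

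Second, we use hypothesis (ii). The $\KK$-span of the $\GGG$-translates of the $f_i$ in $\OOO(\Xgg)$ is finite dimensional. Pick finitely many translates whose images span it, and let $M\subset \OOO(X)$ be the $R$-submodule they generate together with their translates. Here we must show that $M$ can be chosen finitely generated and $\GGG$-stable. The $R$-submodule $\sum_{g} R\, g^*f_i$ becomes finite dimensional after $\otimes\KK$. Since $\OOO(X)$ is torsion free over $R$, each $g^*f_i$ is a $\Quot(R)$-combination of finitely many basis translates. We would bound the denominators, possibly after shrinking $Y$ again to clear a common denominator: the translates under the irreducible generating family $\rho(A)$ land, via Proposition~\ref{embedding-into-GLR.lem}(1), in a finite-dimensional $\kk$-space, so only finitely many denominators occur, and we iterate using $\GGG=\langle\rho(A)\rangle=(\rho(A)\rho(A)^{-1})^N$-type stabilization. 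This yields a finitely generated, torsion-free, $\GGG$-stable $R$-module $M$ containing $f_1,\dots,f_m$.

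Third, we conclude. Proposition~\ref{embedding-into-GLR.lem}(2) shows that $\rho$ induces a morphism $A\to\Aut_R(M)$. The image of $\GGG$ is connected, solvable and generated by an irreducible family containing the identity, so Theorem~\ref{Lie-Kolchin.thm} makes it an algebraic subgroup $H$ of $\Aut_R(M)$. The homomorphism $\GGG\to H$ is injective because the $f_i\in M$ separate points of $X$: if $g^*$ fixes each $f_i$, then $g=\id$. Morphisms $B\to\GGG$ go to morphisms $B\to H$ by Proposition~\ref{embedding-into-GLR.lem}(2) together with the universal property of $H$. Lemma~\ref{alg-generated.lem} then gives that $\GGG$ is algebraic. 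The main obstacle is the second step: producing a finitely generated $\GGG$-stable $R$-module from local finiteness over $\KK$, that is, controlling denominators in $R$. I expect to handle it by shrinking $Y$ and using the irreducibility of $A$.
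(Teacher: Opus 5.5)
\textbf{Verdict.} Your outline is the paper's, but the step you yourself flag as the main obstacle is not closed: it relies on an unavailable, circular fact.

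\textbf{What matches.} You shrink $Y$ to an affine open and build a $\GGG$-stable, finitely generated, torsion-free module $M$ over a localization $R_f$ of $R=\OOO(X)^\GGG$ on which $\GGG$ acts faithfully. You then apply Theorem~\ref{Lie-Kolchin.thm} and pull back with Lemma~\ref{alg-generated.lem}. Using functions that separate points of $X$, instead of generators of $\kk(X)$, to get injectivity is fine.

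\textbf{The gap.} The identity $\GGG=(\rho(A)\rho(A)^{-1})^N$ is only known inside an algebraic group (Lemma~\ref{alg-gen-subgroup.lem}). That $\GGG$ is algebraic is exactly what you are proving. Without it, bounding the denominators of the translates $g^*f_i$ word by word in the generators may require infinitely many localizations. So you have not shown that $\sum_g R\,g^*f_i$ is finitely generated over any $R_f$.

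\textbf{The missing idea.} Control one fixed lattice instead of the individual translates; then no iteration is needed.
\begin{enumerate}
\item Descend (ii) to $K=\kk(Y)$. If $W_\KK\subset\OOO(\Xgg)$ is finite-dimensional and $\GGG$-stable, then $W_\KK\cap\OOO(\Xg)$ is a finite-dimensional $\GGG$-stable $K$-subspace. This gives a finite-dimensional $\GGG$-stable $K$-subspace $W\subset\OOO(\Xg)=\OOO(X)\otimes_R K$ containing the $f_i$.
\item Choose a $K$-basis $x_1,\dots,x_n$ of $W$ lying in $\OOO(X)$, and write $\rho(a)^*x_k=\sum_j h_{kj}(a)\,x_j$ with $h_{kj}(a)\in K$.
\item Your own observation from Proposition~\ref{embedding-into-GLR.lem}(1) applies here. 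All the $\rho(a)^*x_k$, $a\in A$, lie in a finite-dimensional $\kk$-subspace of $W\cap\OOO(X)$, so only finitely many denominators occur. The paper gets the same from $\theta_K^*(x_k)=\sum_j h_{kj}\otimes x_j$ with $h_{kj}\in\OOO(A)\otimes_\kk K$, using density of $A(\kk)$ in $A_K$.
\item A single $f\in R\setminus\{0\}$ clears these denominators, together with those expressing the $f_i$ in the basis. Then $M=\sum_j R_f x_j$ satisfies $\rho(a)^*M\subseteq M$ for all $a\in A$.
\item Stability under arbitrary words is now automatic, since products of matrices with entries in $R_f$ keep their entries in $R_f$.
\item To pass from the monoid to the group generated by $\rho(A)$, apply the same argument to the inverse family $a\mapsto\rho(a)^{-1}$, which is again a morphism, and localize once more.
\end{enumerate}
With this free, $\GGG$-stable $M$ containing the $f_i$, your third step goes through as you wrote it.
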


\begin{proof}
We will construct an affine open subset
$Y' \subset Y$ and a  $\GGG$-stable, finitely generated free $\OOO(Y')$-submodule $M \subset\OOO(\pi^{-1}(Y'))$
which generates $\kk(X)$ as a field over $\kk$.
Then we apply Theorem~\ref{Lie-Kolchin.thm}
to the image of $\GGG$ in $\Aut_{\OOO(Y')}(M)$.
\ps
(a) Using Lemma~\ref{Lem.restriction-to-open} and Remark~\ref{Rem.Geom_quotient_smooth} we can assume $\pi\colon X \to Y$ smooth and $Y$ affine, with coordinate ring $R:=\OOO(Y)=\OOO(X)^\GGG \subset \OOO(X)$. We  set $K \coloneqq \kk(Y)$. 
Proposition 2.6 of \cite[Ch. I, \S\,2]{DeGa1970Groupes-algebrique} implies $\OOO(\Xg) = \OOO(X) \otimes_{R} K$ and $\OOO(\Xgg) = \OOO(\Xg) \otimes_{K} \KK$.

The action of $\GGG$ on $\OOO(X_\eta)$ is locally $K$-finite. 
Indeed, let $V \subset \OOO(X_\eta)$ be a finite dimensional $K$-subspace. 
Then $V_\KK:= \KK\otimes_{K} V\subset \OOO(X_{\bar\eta})$ is a finite dimensional subspace over  $\KK$. By assumption (ii), it is contained in a finite dimensional and $\GGG$-stable subspace $W$ of $\OOO(X_{\bar\eta})$. 
Hence, $W':=W \cap \OOO(X_\eta)$ is a finite dimensional and $\GGG$-stable $K$-subspace containing $V$. 
\ps
(b) 
Write $\GGG=\langle\rho(A)\rangle$ where $\rho \colon A \to \Aut(X)$ is a morphism with
irreducible $A$ and $\id_X \in \rho(A) \subseteq \GGG$.
Let $\theta \colon A \times X \to X, (a,x)\mapsto \rho(a)x$ be the corresponding morphism and 
$\theta_K \colon A_K \times_K \Xg \to \Xg$ the pull-back.
Choose  elements $f_1,\ldots,f_m \in \OOO(X)$ that generate $\kk(X)$ as a field over $\kk$.
By (a), the  subspace $\sum_i K f_i \subset \OOO(X_\eta)$ is contained in a finite-dimensional $\GGG$-stable $K$-subspace $W$. Let $x_{1},\ldots,x_n \in \OOO(X)$ be a  basis of~$W$.
There exist elements $h_{kj}$ of $\OOO(A_K) = \OOO(A) \otimes_{\kk} K$ and linearly independent elements $x_{n+1}, \ldots, x_{r}$ in $\OOO(X)$ such that 
\[
\langle x_{n+1}, \ldots, x_r\rangle_K \cap W = 0 \quad {\textrm{and}} \quad 
    \theta_K^\ast(x_k) = \sum_{j=1}^r h_{kj} \otimes_K x_j
    \quad \textrm{for all $k = 1, \ldots, n$} \, .
\]
The subset of $\kk$-rational points $A = A(\kk)$ is dense in
$A_K(K)$. For $a \in A(\kk)$ we have
$\theta_K^\ast(x_k)(a, \cdot ) \in W$, because $W$ is $\GGG$-stable. 
Thus $h_{kj}(a) = 0$ for all $j > n$, which shows that $h_{kj} = 0$ for $j > n$ and hence
$\theta^*_K(x_k) = \sum_{j=1}^n h_{kj}\otimes_K x_j$. 
Let us fix an $f \in R\setminus\{0\}$  such the 
$h_{kj}$ are contained in $\OOO(A) \otimes_\kk R_f$, where $R_f$ denotes the localization
of $R$ at $f$. Thus, $\rho(a)^*(x_k) \in \sum_{j=1}^n R_f x_j$ for all $a \in A(\kk)$.

\ps
(c) 
Set $Y' \coloneqq \Spec(R_f) \subseteq Y$.
The  $\GGG$-stable, free $R_f$-module 
$M \coloneq \sum_{j=1}^n R_f x_j \subseteq \OOO(\pi^{-1}(Y'))$ generates $\kk(X)$
as a field over $\kk$. This gives an injective 
homomorphism $\phi\colon\GGG \to \Aut_{R_f}(M)$.
By Proposition~\ref{embedding-into-GLR.lem}(2),
$\varphi \circ \rho \colon A \to \Aut_{R_f}(M)$ is a morphism. Since  $\varphi(\GGG)$ is generated by  
$\varphi \circ \rho (A)$ and   
$\GGG$ is solvable, Theorem~\ref{Lie-Kolchin.thm} 
implies that $G$ is an algebraic subgroup of $\Aut_{R_f}(M)$. Then, Lemma~\ref{alg-generated.lem} shows that $\GGG$ is an algebraic subgroup of $\Aut(X)$.
\end{proof}

\subsection{The case of a commutative group}\label{commutative-nested.par}

As a first application of Proposition~\ref{G-stable-submodule.pro} we extend
Theorem~B of \cite{CaReXi2023Families-of-commut} to quasi-affine varieties.
Define $\Ast:=\bbA^1\setminus\{0\}$, the affine line minus the origin.

\begin{mthm}\label{orbits-of-commutative-groups.thm} 
Let $X$ be a quasi-affine variety, and 
let $\HHH$ be connected commutative  subgroup of $\Aut(X)$. 
\begin{enumerate}[\rm (1)]
\item\label{transitive-action.item}
If $\HHH$ acts transitively on $X$, then $\HHH$ is an algebraic subgroup of $\Aut(X)$ of dimension equal to $\dim(X)$,  and $X$ is isomorphic to $\bbA^p \times (\Ast)^q$ for some integers $p, q$ with $p+q=\dim(X)$.
\item\label{nested.item}
$\HHH$ is \algebraicallyTame{}. 
\item\label{nested-characteristic-0.item} 
$\HHH$ is a direct product of an algebraic subtorus $T$ and a 
 \algebraicallyTame{} unipotent subgroup $\UUU$. 
\end{enumerate}
\end{mthm}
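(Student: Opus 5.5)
For (1), the algebraicity and the dimension statement come from Proposition~\ref{commutative-subgroup.prop}: a transitive orbit is dense, so $\HHH$ is an algebraic subgroup with $\dim\HHH=\dim X$. Lemma~\ref{quasiaffine-gives-affine.lem} shows that $\HHH$ is affine. It is also connected and commutative, so $\HHH\simeq T\times U$ with $T\simeq\Gm^q$ a torus and $U$ a connected commutative unipotent group. Since $\HHH$ is commutative and acts transitively and faithfully, the stabilizer of a point fixes every point. Hence the orbit map $\HHH\to X$ is bijective, and $X\simeq \HHH/\HHH_x$ with $\HHH_x$ trivial as a group. The group scheme stabilizer might be non-reduced in positive characteristic, but even then $X$ is a homogeneous space $\HHH/F$ with $F$ an infinitesimal subgroup scheme. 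Dividing a torus by an infinitesimal subgroup gives a torus, and dividing a connected commutative unipotent group gives again such a group. So $X$ is isomorphic, as a variety, to a torus times a connected commutative unipotent group.

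It remains to see that a connected commutative unipotent group is isomorphic to $\bbA^p$ as a variety. In characteristic $0$ this is the exponential map. In positive characteristic, any connected unipotent group over an algebraically closed field is isomorphic to affine space as a variety (Lazard). This gives $X\simeq\bbA^p\times(\Ast)^q$ with $p+q=\dim X$.

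For (2), let $\GGG=\langle\rho(A)\rangle\subset\HHH$ be algebraically generated; I must show it is algebraic. I will argue by induction on $\dim X$, reducing to $X$ irreducible by passing to the action on components (connectedness preserves components). By Theorem~\ref{popov-rosenlicht.thm}, a dense open $\GGG$-stable $X'\subset X$ has a geometric quotient $\pi\colon X'\to Y$. By Lemma~\ref{Lem.restriction-to-open}(2), it suffices to show that the image of $\GGG$ in $\Aut(X')$ is algebraic, so I may take $X=X'$. Then I want to apply Proposition~\ref{G-stable-submodule.pro}. Condition~(i) holds, so the key point is the local $\KK$-finiteness of $\GGG$ on $\OOO(X_{\bar\eta})$.

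By Corollary~\ref{Cor.Geometric_fibre_of_Geom_quotient}, $X_{\bar\eta}$ is a quasi-affine $\KK$-variety on which the connected commutative group $\langle\SSS_\KK\rangle$ acts transitively. Applying (1) over $\KK$ shows that this group is an algebraic group acting regularly on $X_{\bar\eta}$. Regular actions of affine algebraic groups on quasi-affine varieties are locally finite on regular functions, which gives~(ii). Thus $\GGG$ is algebraic, and connectedness makes $\HHH$ \algebraicallyTame{}. No induction is actually needed, since (1) applies over the larger field.

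For (3), I will write $\HHH=\bigcup_k H_k$ as a union of connected commutative algebraic groups. This is possible because $\HHH$ is connected and every finite product of generating families is algebraic by (2); for the countability used here I expect to invoke Theorem~\ref{thm-nested-tame}, or to argue directly using bounded dimension of tori. Each $H_k$ splits as $T_k\times U_k$. The tori $T_k$ increase, and their dimension is bounded by $\dim X$ because of the faithful action on a quasi-affine variety (a torus acting faithfully has generic orbits of dimension equal to its dimension). Hence the $T_k$ stabilize to a torus $T$. I then set $\UUU=\bigcup U_k$, which is exactly the set of unipotent elements of $\HHH$. It is a subgroup, it is connected, and it is tempered by~(2), with $\HHH=T\times\UUU$.

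The main obstacle is part (1) in positive characteristic: the non-reduced stabilizer, and identifying the variety underlying a commutative unipotent group.
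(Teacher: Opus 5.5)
Your proposal is correct. Parts (2) and (3) are essentially the paper's own argument. For (2), the paper also shrinks $X$ to get a geometric quotient, applies (1) over $\KK$ to the transitive action on $\Xgg$, deduces local $\KK$-finiteness on $\OOO(\Xgg)$, and concludes with Proposition~\ref{G-stable-submodule.pro}. For (3), it uses (2), Lemma~\ref{quasiaffine-gives-affine.lem} and the bound on tori to single out a unique torus of maximal dimension.

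The genuine difference is the last step of (1). Once $\HHH$ is known to be an affine, connected, commutative (hence split solvable) algebraic group acting transitively on the quasi-affine $X$, the paper simply cites Brion's theorem on homogeneous varieties under split solvable groups. That gives $X\simeq\bbA^p\times(\Ast)^q$ at once and would work equally well for non-commutative solvable groups. You instead use commutativity to see that $X$ is the variety underlying a quotient group $(T\times U)/F\simeq T/F_m\times U/F_u$, and then apply Lazard's theorem. This is more hands-on, and it shows that $X$ is itself the variety of a commutative algebraic group.

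Two remarks on your route. First, the splitting of $F$ should be stated: a commutative affine group scheme over a perfect field is the product of its multiplicative and unipotent parts, and since there are no nontrivial homomorphisms between the two types, $F_m\subset T$ and $F_u\subset U$. Second, the detour through $F$ can be avoided. The schematic stabilizer is the same at every point, so the argument in the footnote to the proof of Theorem~\ref{normal-torus.thm} shows it is trivial, and $X\simeq\HHH$.

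In (3), drop the option of invoking Theorem~\ref{thm-nested-tame}. In the paper it is proved after, and by means of, Theorem~\ref{main-thm-A}, whose proof uses the present theorem in the commutative case, so that route would be circular. Countability is not needed anyway:
\begin{enumerate}
\item The algebraically generated subgroups of $\HHH$ form a covering, directed family. This uses connectedness, and the inclusions $\langle\rho(A)\rangle,\langle\rho'(A')\rangle\subset\langle\rho(A)\rho'(A')\rangle$.
\item Each member $H$ of this family is $T_H\times U_H$.
\item By the dimension bound there is a torus $T_0$ of maximal dimension. It is unique, because two such tori lie in the torus part of the connected commutative algebraic group they generate.
\item Every $H$ can be enlarged to contain $T_0$, which gives $\HHH=T_0\times\UUU$ with $\UUU=\bigcup_H U_H$.
\end{enumerate}
This is what your ``direct'' alternative amounts to, and it is exactly the paper's argument.
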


\begin{rem}\label{torus.faithful.rem}
The proof relies on the following fact: {\emph{if  a torus $T$ acts faithfully on an irreducible variety $X$, then 
$\dim(T)\leq \dim(X)$}}. See 
\cite[Corollaire~1, p.\,521]{De1970Sous-groupes-algeb} for a proof in arbitrary characteristic.
\end{rem}

\begin{proof}
(1) 
By Proposition~\ref{commutative-subgroup.prop}, $\HHH$ is an algebraic subgroup of $\Aut(X)$
of dimension $\dim (X)$. By Lemma~\ref{quasiaffine-gives-affine.lem}, $\HHH$ is affine 
and thus the statement follows from Theorem~1 of \cite{Br2021Homogeneous-variet}.
\ps
(2) We may assume that $X$ is irreducible. Let $\langle \rho \rangle \subset \HHH$ be an algebraically generated subgroup. 
By Lemma~\ref{Lem.restriction-to-open} and Theorem~\ref{popov-rosenlicht.thm}, replacing $X$ by an open dense subset, we can  assume that a geometric quotient 
$\pi \colon X \to X/\langle \rho \rangle$ exists.
Then the geometric generic fiber $\Xgg$ is an orbit under $\langle \rho_{\KK} \rangle$ 
(Corollary~\ref{Cor.Geometric_fibre_of_Geom_quotient}), and so, by (1), the image of $\langle \rho_{\KK} \rangle$ in $\Aut(\Xgg)$ is an algebraic group. Hence 
$\langle \rho \rangle$ acts locally $\KK$-finitely on $\OOO(\Xgg)$ (Proposition~8.6 in 
\cite{Hu1975Linear-algebraic-g}), and so 
$\langle \rho \rangle \subset \Aut(X)$ is an algebraic subgroup by 
Proposition~\ref{G-stable-submodule.pro}.

\ps
(3) 
By (2) and Lemma~\ref{quasiaffine-gives-affine.lem},  every algebraically generated subgroup of $\HHH$ is 
of the form $T \times U$ for an algebraic torus $T$ and 
a commutative connected unipotent algebraic group $U$. 
Remark~\ref{torus.faithful.rem} shows that the dimension of $T$ is bounded by the dimension of $X$, and thus $\HHH$ contains a unique torus $T_0$ of maximal dimension. This concludes the proof. 
\end{proof}

\section{Solvable automorphism groups}\label{maintheoremproof.sec}
\vspace{0.2cm}
\begin{center}
\begin{minipage}{12.0cm}
{\textit{
We prove Theorem~\ref{main-thm-A}. The first two sections provide preliminary tools.
}}
\end{minipage}
\end{center}
\vspace{0.2cm}

\subsection{Quotients by unipotent groups}
Let $Y$ be a variety. A \emph{$Y$-group scheme} is a morphism of varieties
$\pi \colon X \to Y$ together with a section $e \colon Y \to X$ (the identity, or neutral element), 
a $Y$-morphism $X \times_Y X \to X$ (the group operation) 
and a $Y$-morphism  $X \to X$ (the inverse), such that the usual
group axioms are satisfied (cf. \cite[\S\,1, Chapter~II]{DeGa1970Groupes-algebrique}). Note that the group operations are only defined for elements in the same 
fiber of $\pi$. 

\subsubsection{A (natural) quotient}
\label{Subsubsec.A_natural_quotient}
Let $X$ be an irreducible quasi-affine variety. 
Let $\UUU$ be a connected commutative and unipotent subgroup of $\Aut(X)$ (see~\S~\ref{connected_solvable_introduction.sec} and \S~\ref{Rosdecompandunipelem}).
By Theorem~\ref{orbits-of-commutative-groups.thm}\,(2),  $\UUU$ is \algebraicallyTame{}. Thus, Proposition~\ref{Orbits.prop}\,(2) provides a connected algebraic subgroup $U \subset \UUU$ with the same orbits as $\UUU$. In particular, $\OOO(X)^\UUU = \OOO(X)^U$. Since $U$ is unipotent and its action  on $\OOO(X)$ is locally finite and rational, we have
 $\kk(X)^U = \Quot(\OOO(X)^U)$ (see the Lemma on p.\,220 of \cite{Ro1961Quotient-Varieties}), hence  $\kk(X)^\UUU = \kk(X)^U$. This  proves the first assertion of the following proposition. 

\begin{prop}\label{commutative-quotient1.prop}
Let $X$ be an irreducible quasi-affine variety. 
Let $\UUU$ be a connected commutative and unipotent subgroup of $\Aut(X)$.
\begin{enumerate}[\rm (1)]
\item There is a connected unipotent algebraic subgroup $U \subset \UUU$ which has the same orbits in $X$ as $\UUU$. Moreover,
$\OOO(X)^\UUU = \OOO(X)^U$ and $\Quot(\OOO(X)^U) = \kk(X)^U = \kk(X)^\UUU.$
\end{enumerate}
Let $\pi\colon X \to \Spec\OOO(X)^\UUU$ be the canonical  morphism. 
There exists a smooth open dense and quasi-affine subvariety $Y_0 \subseteq Y$ such that,
for $X_0 \coloneqq \pi^{-1}(Y_0)$,
\begin{enumerate}[\rm (1)]
\setcounter{enumi}{1}
\item  
the morphism $\pi_0:=\pi|_{X_0} \colon X_0 \to Y_0$ is a  geometric quotient with respect to~$\UUU$ and is locally trivial in the Zariski topology, with fiber $\An$; 
\item the open subset $X_0$ is stable under the normalizer of $\UUU$ in $\Aut(X)$,
and the morphism $\pi_0\colon X_0\to Y_0$ is equivariant under this normalizer. 
\end{enumerate}
\end{prop}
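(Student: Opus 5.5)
Part (1) is already established in the paragraph preceding the statement, so the plan concerns (2) and (3). Note that $Y=\Spec\OOO(X)^\UUU$ need not be a variety, since $\OOO(X)^U$ may fail to be finitely generated. I would therefore not work with $Y$ globally. Instead, I would first find a good open piece and then make it canonical.

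\emph{Step 1: a local slice.} Since $U$ is a connected unipotent commutative algebraic group acting on the quasi-affine $X$, the action on $\OOO(X)$ is locally finite and rational. I would pick a finitely generated subalgebra $A_0\subset\OOO(X)^U$ whose fraction field is $\kk(X)^U$; this uses (1). Replacing $A_0$ by a localization $A_0[1/f]$, I would obtain an open $Y_1=\Spec A_0[1/f]$ over which a geometric quotient exists, by Theorem~\ref{popov-rosenlicht.thm}. The orbits of a unipotent group on a quasi-affine variety are closed and isomorphic to affine spaces (Rosenlicht/Kostant--Rosenlicht), so the fibers are $\bbA^d$ with $d=\mdo(\UUU;X)$. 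For local triviality, I would use the standard local-slice argument for unipotent actions. Take a composition series $\Ga$-by-$\Ga$ of $U$. For a single $\Ga$-action with locally nilpotent derivation (or, in characteristic $p$, an iterative higher derivation), a local slice $s$ with $\delta(s)=c$ invariant gives $X_c\simeq Y'_c\times\bbA^1$. Inducting through the series, after shrinking by an invariant $c\in\OOO(X)^U$, I get $\pi^{-1}(Y_2)\simeq Y_2\times\bbA^d$ over an affine open $Y_2$. Shrinking once more gives smoothness of $Y_2$, by Remark~\ref{Rem.Geom_quotient_smooth}.

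\emph{Step 2: making it canonical.} Define $Y_0\subset Y$ as the union of all translates $n(Y_2)$, where $n$ runs over the normalizer $N$ of $\UUU$ in $\Aut(X)$. This is well defined because each $n\in N$ normalizes $\UUU$, hence preserves $\OOO(X)^\UUU$ and induces an automorphism of $\Spec\OOO(X)^\UUU$ compatible with $\pi$. Alternatively, and more intrinsically, I would define $Y_0$ as the set of points $y$ having an open neighborhood $V=Y_h$ ($h$ invariant) such that $\OOO(X)^\UUU_h$ is finitely generated and smooth, and $\pi^{-1}(V)\to V$ is a trivial $\bbA^d$-bundle which is a geometric quotient. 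This set is open by definition, contains $Y_2$, and is visibly stable under every automorphism of the pair $(X,\UUU)$, hence under $N$. The subset $X_0=\pi^{-1}(Y_0)$ is then $N$-stable and $\pi_0$ is $N$-equivariant, which proves (3).

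\emph{Step 3: global properties of $Y_0$.} $Y_0$ is covered by affine opens $Y_h$, and it is separated and quasi-affine because it is open in $\Spec\OOO(X)^\UUU$, an affine scheme. The remaining point is that $Y_0$ is of finite type, since it is a possibly infinite union of finite type opens. I would handle this by quasi-compactness: $Y_0\cap\Spec A$ for a finitely generated $A$ is Noetherian. More simply, I would replace the union by a finite subcover, using that $N$-translates of $Y_2$ are covered by a Noetherian argument inside $X$: the set $\pi^{-1}(Y_0)$ is open in the Noetherian space $X$, so finitely many $\pi^{-1}(Y_{h_i})$ cover it, and hence finitely many $Y_{h_i}$ cover $Y_0$ by surjectivity. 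Being a geometric quotient is local on the target, by Remark~\ref{Rem.Geom_quotient_smooth} and Remark~\ref{Rem.global_geom_quotient}, so (2) follows.

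I expect the main obstacle to be the Zariski-local triviality in Step 1 in positive characteristic. Global slices for $\Ga$-actions there come from iterative higher derivations, and a unipotent $U$ need not be split by $\Ga$'s acting with nice slices. I would first try to reduce to the generic fiber: $\Xgg$ is a homogeneous quasi-affine space of a commutative unipotent group, hence $\simeq\bbA^d$ by Theorem~\ref{orbits-of-commutative-groups.thm}(1). Then I would spread a trivialization over a finite extension, and argue that a torsor under a split unipotent group over a field $\kk(Y)$ is trivial, so the trivialization descends to an open of $Y$.
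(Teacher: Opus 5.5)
Your Steps~2 and~3 are fine and agree with the paper, which also takes $Y_0$ to be the union of the normalizer-translates of one good open set. Your Noetherian argument that finitely many translates suffice is more explicit than the paper's.

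\textbf{Gap in Step~1.} The gap is exactly at what the paper calls the key step: connecting a quotient that exists on \emph{some} open set with the canonical map $\pi\colon X\to Y=\Spec\OOO(X)^\UUU$. Theorem~\ref{popov-rosenlicht.thm} only gives a dense $U$-stable open $X'\subseteq X$ with a geometric quotient $X'\to Y'$. It says nothing about $X'$ being $\pi^{-1}$ of an open subset of $Y$, nor about $Y'$ being open in $Y$. Your $\Spec A_0[1/f]$ is open in $\Spec A_0$, and $Y\to\Spec A_0$ is only birational. Closedness of unipotent orbits does not make the fibres of $\pi$ orbits, and $\pi$ need not be surjective over a basic open set. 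For $(x,y)\mapsto(x,y+tx)$ on $\bbA^2\setminus\{0\}$, the fibre $\pi^{-1}(0)$ is a punctured line of fixed points. For $z$-translations on $\bbA^3\setminus\{x=y=0\}$, the slice $s=z$ (with $c=1$) gives $X_c=X$, whose image misses the origin of $Y_c=\bbA^2$.

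The paper closes this as follows. Shrink $Y'$ to an affine open $\Spec A^U_s$ of $Y$ over which $\pi$ is flat and $\pi'=\pi|_{X'}$. The generic fibre of $\pi$ is quasi-affine with a dense $U_{\kk(Y)}$-orbit, which is closed since $U$ is unipotent, so it coincides with the generic fibre of $\pi'$. By flatness its coordinate ring is $A\otimes_{A^U}\kk(Y)$. Hence the fibre coordinates of $\OOO(X')=A^U_s[t_1,\dots,t_n]$ lie in $A_{ss'}$ for some $s'\in A^U\setminus\{0\}$, and this forces $\pi^{-1}(Y'_{s'})=X'_{s'}$. A shorter repair also works: the ideal of $X\setminus X'$ in $A=\OOO(X)$ is a nonzero $U$-stable subspace on which $U$ acts locally finitely, so it contains a nonzero invariant $h$. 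Then $X_h=\pi^{-1}(Y_h)\subseteq X'$ and $\OOO(X_h)^U=(A^U)_h$, which identifies the corresponding basic open of $Y'$ with $Y_h$ when $Y'$ is affine.

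\textbf{Local triviality.} The Zariski-local triviality you flag as the main obstacle is not reproved in the paper but quoted. Theorem~2 of \cite{Br2021Homogeneous-variet} (connected solvable groups over $\kk$ are split) gives, in every characteristic, a dense $U$-stable open $X'$ with a geometric quotient $X'\to Y'$ that is a \emph{trivial} $\An$-bundle, and the proof starts from there. Your proposed substitute does not work as stated:
\begin{itemize}
\item The generic fibre is a homogeneous space of $U_{\kk(Y)}$, not a torsor; its stabilizers can be nontrivial and, in characteristic $p$, even non-reduced. So triviality of torsors under split unipotent groups does not apply.
\item A trivialization over a finite, possibly inseparable, extension of $\kk(Y)$ does not descend to a Zariski open subset of $Y$ without a further argument.
\end{itemize}
The $\Ga$-by-$\Ga$ slice induction is plausible in characteristic $0$, but each step runs into the saturation and surjectivity issue above.

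Finally, Remark~\ref{Rem.Geom_quotient_smooth} is about smoothness of $\pi$, not of the base. Smoothness of $Y_2$ is obtained simply by shrinking to its smooth locus.
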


Two propositions will complete this statement in the next paragraphs. 

\begin{proof} We prove (2) and (3) simultaneously, the main point being to construct the quotient $\pi_0\colon X_0\to Y_0$ in an equivariant way for the normalizer of $\UUU$.

Set $A=\OOO(X)$, and $Y= \Spec A^U= \Spec A^\UUU $. 
The morphism  $\pi$ is the composition of the open immersion $X \into \Spec A$ with the algebraic quotient $\Spec A\to Y$; it is equivariant for the action of the normalizer of $\UUU$

We apply Theorem~2 of~\cite{Br2021Homogeneous-variet}\footnote{Over an algebraically closed field $\kk$, solvable connected algebraic groups are split, see for instance \cite{Br2021Homogeneous-variet}, page 1141. We shall systematically use this fact when referring to  \cite{Br2021Homogeneous-variet}.}
to the action of $U$ on $X$: there exists a dense and $U$-stable open subset $X' \subseteq X$, a 
variety $Y'$, and a 
geometric quotient $\pi' \colon X' \to Y'$ for the action of $U$ on $X'$ such that $\pi'$ is a trivial 
$\An$-bundle for some $n \geq 0$. 
Since $\kk(Y') = \kk(X)^U = \Quot(A^U)$, 
we may further assume after shrinking $Y'$ (and thus $X'$), that $Y'$ is an open smooth
affine subvariety in $Y$, $\pi'$ coincides with $ \pi |_{X'}$, 
$\pi |_{\pi^{-1}(Y')}$ is
flat, and $Y' = \Spec(A^U_s)$, where  $A^U_s$ is  the localization at some $s \in A^U\setminus \{ 0\}$.

For $s' \in A^U$, we denote by $Y'_{s'}$ (resp. $X'_{s'}$) its non-vanishing locus in $Y'$ (resp. in $X'$).
The key step is to find an element $s' \in A^U$ such that the preimage of $Y'_{s'}$ under $\pi$ (instead of $\pi'$) coincides with $X'_{s'}$; that is, $\pi^{-1}(Y'_{s'}) = X'_{s'}$; in geometric terms, this means that 
over $Y'_{s'}$ the morphism $\pi \colon X \to Y$ is a geometric quotient
and a trivial $\An$-bundle.
For this, note that the algebraic group $U_{\kk(Y)}$ is connected and 
unipotent. The generic fiber of $\pi$ is quasi-affine
over $\kk(Y)$, with a dense $U_{\kk(Y)}$-orbit. Thus, the generic fibers of 
$\pi |_{\pi^{-1}(Y')} \colon \pi^{-1}(Y') \to Y'$ and $\pi' \colon X' \to Y'$ coincide. 
By flatness of $\pi |_{\pi^{-1}(Y')}$, the ring of regular functions of its 
generic fiber
is given by $A_s \otimes_{A_s^U} \kk(Y) =  A \otimes_{A^U} \kk(Y)$, because 
$A_s = \OOO(\pi^{-1}(Y'))$ 
(see~Proposition 2.6 of \cite[Ch. I, \S\,2]{DeGa1970Groupes-algebrique}).
Writing $\OOO(X') = A^U_s[t_1, \ldots, t_n]$
for algebraically independent $t_1, \ldots, t_n \in \OOO(X')$ over $A^U_s$, this  
gives us a non-zero
$s' \in A^U$ such that $t_i \in A_{ss'}$ for all $i$. Hence, we have
\[
A^U_{ss'}[t_1, \ldots, t_n] =  \OOO(X'_{s'}) 
\supseteq A_{ss'} \supseteq A^U_{ss'}[t_1, \ldots, t_n]
\]
and thus $\pi^{-1}(Y'_{s'}) = X'_{s'}$.

Now, consider the subset $Y_0 = N_\UUU (Y'_{s'})$, where $N_\UUU$
denotes the normalizer of~$\UUU$. It is an open, smooth and quasi-affine subvariety of $Y$.
Setting $X_0 = \pi^{-1}(Y_0)$, the claim follows.
\end{proof}

\subsubsection{A group scheme structure} 
In this paragraph and the next one, we keep the following notation: $X$ is an algebraic variety, $\UUU\subset \Aut(X)$
is a \algebraicallyTame{} and commutative group whose algebraic subgroups are unipotent, and $\pi \colon X \to Y$ is the geometric quotient for the action of $\UUU$. (In the setting of Proposition~\ref{commutative-quotient1.prop}, $X$ has now been implicitly replaced by the invariant open subset $X_0$.)

\begin{prop}\label{commutative-quotient2.prop}
Suppose the quotient $\pi\colon X \to Y$ has a section $e \colon Y \to X$.
Then $\pi$ has a unique structure of a commutative $Y$-group scheme with identity $e \colon Y \to X$
such that the maps $\UUU \to \pi^{-1}(y)$, $u\mapsto u(e(y))$, are group homomorphisms. 
The group operation has the following description:
for  $x, x' \in \pi^{-1}(y)$
\begin{equation*}\label{group-law}
x + x' = (u \circ u')(e(y)) = u(x') = u'(x) \quad \text{and} \quad
-x =  u^{-1}(e(y)),
\end{equation*}
where $u$ and $u'$ are any elements of $\UUU$ such that $u(e(y)) = x$ and  $u'(e(y)) = x'$.
\end{prop}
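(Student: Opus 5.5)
The plan is to build the group law fiberwise from the simply transitive action of $\UUU$ on each fiber, and then show that it is algebraic by working locally with an algebraic subgroup of $\UUU$.

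\emph{Fiberwise structure.} Fix $y\in Y$. Since $\pi$ is a geometric quotient, $\pi^{-1}(y)=\UUU e(y)$ is a single orbit. The first claim is that the stabilizer of $e(y)$ in $\UUU$ fixes the whole fiber. Indeed, if $u(e(y))=e(y)$, then for $x=v(e(y))$ commutativity gives $u(x)=v(u(e(y)))=x$. Hence $\UUU/\UUU_{e(y)}\to\pi^{-1}(y)$, $u\mapsto u(e(y))$, is a bijection. We transport the group structure of $\UUU/\UUU_{e(y)}$ through this bijection. This gives exactly the stated formulas: $x+x'=uu'(e(y))=u(x')=u'(x)$, and $-x=u^{-1}(e(y))$. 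They are well defined because a different choice of $u$ differs by an element fixing the fiber pointwise. The neutral element is $e(y)$, and the maps $u\mapsto u(e(y))$ are homomorphisms. Uniqueness is immediate: $u\mapsto u(e(y))$ is surjective and must be a homomorphism, so the law on $\pi^{-1}(y)$ is forced.

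\emph{Algebraicity.} It remains to show that addition $X\times_Y X\to X$ and inversion $X\to X$ are morphisms. Since $\UUU$ is tempered, Proposition~\ref{Orbits.prop}(2) combined with temperedness yields a connected algebraic subgroup $U\subset\UUU$ with the same orbits as $\UUU$. The argument is the one preceding Proposition~\ref{commutative-quotient1.prop}: take $V$ with $Vx=\UUU x$ and set $U=\langle V\rangle$, which is algebraic. Consider the morphism
\[
\psi\colon U\times Y\to X\times_Y X,\qquad (u,y)\mapsto (e(y),\,u(e(y))),
\]
which is surjective onto the pairs $(e(y),x)$. Better, consider the action map
\[
\Phi\colon U\times X\to X\times_Y X,\qquad (u,x)\mapsto (x,u(x)).
\]
This map is surjective, and the addition is
\[
\bigl(u(e(\pi(x))),\,x'\bigr)\mapsto u(x').
\]
To get a morphism, I would realize $X\times_Y X$ as a geometric quotient of $U\times X$, or rather of $U\times Y$ via $(u,y)\mapsto u(e(y))$. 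I expect to show that $X$ is the quotient of $U\times Y$ by the $Y$-group scheme of stabilizers $S=\{(u,y): u(e(y))=e(y)\}$, which is a closed subgroup scheme of $U\times Y$. Then the addition is induced from multiplication on $U\times_Y U\times Y$ and descends.

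\emph{Main obstacle.} The delicate point is precisely this descent, namely that $U\times Y\to X$ is a quotient in the strong sense (flat, or at least universally open with reduced fibers), so that morphisms descend. I would handle it as follows. The map is surjective, and since $\pi$ is a geometric quotient it is an orbit map for the action of $U$. Properties (ii) and (iii) of Definition~\ref{geometric-quotient.def} then let one test regularity of the addition map locally on $Y$. Concretely, I would check that $(x,x')\mapsto x+x'$ is regular by pulling back along the surjective map
\[
U\times X\to X\times_Y X,\qquad (u,x')\mapsto \bigl(u(e(\pi(x'))),\,x'\bigr),
\]
where the pullback is the morphism $(u,x')\mapsto u(x')$. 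To descend, I would use that this map is smooth and surjective on an open dense set, by generic smoothness in the style of Remark~\ref{Rem.Geom_quotient_smooth}, and then extend by translating with $\UUU$, which acts transitively on fibers. If needed, I would fall back on normality of $X\times_Y X$, since $\pi$ is smooth in the setting of Proposition~\ref{commutative-quotient1.prop}, and apply Zariski's main theorem to the graph. Inversion is handled the same way, via $(u,y)\mapsto u^{-1}(e(y))$.
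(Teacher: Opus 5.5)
Your fiberwise construction is correct. By commutativity the stabilizer of $e(y)$ fixes $\pi^{-1}(y)$ pointwise, the law is transported, and uniqueness follows from surjectivity of $u\mapsto u(e(y))$. Pulling the addition back along
\[
\Psi\colon U\times X\to X\times_Y X,\qquad (u,x')\mapsto \bigl(u(e(\pi(x'))),\,x'\bigr),
\]
where it becomes the action morphism $(u,x')\mapsto u(x')$, is the right idea. In substance this is what the paper obtains by citing Demazure--Gabriel (Ch.~III, \S\,3).

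The gap is in how you justify descending along $\Psi$. Generic smoothness fails in characteristic $p$, and Remark~\ref{Rem.Geom_quotient_smooth} concerns the quotient map $\pi$ (separability of an invariant field), not orbit maps. Here $\Psi$ is the base change of $\Phi_e\colon U\times Y\to X$, $(u,y)\mapsto u(e(y))$, and $\Phi_e$ can be nowhere smooth within the hypotheses. Let $\Ga^2$ act on $X=(\bbA^1\setminus\{0\})\times\bbA^2$ by $(s,t)\cdot(y,x_1,x_2)=(y,\,x_1+s+yt,\,x_2+s^p)$. This action is schematically faithful, the projection to $y$ is a geometric quotient and a trivial $\bbA^2$-bundle, and $e(y)=(y,0,0)$ is a section. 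But $\Phi_e(s,t,y)=(y,s+yt,s^p)$ is bijective and purely inseparable of degree $p$.

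Independently of the characteristic, the step ``extend by translating with $\UUU$'' cannot work. $\Psi$ is equivariant for $U\times U$ (left multiplication on the factor $U$, the action on the factor $X$), so its smooth locus has the form $U\times\pi^{-1}(Y'')$ with $Y''\subseteq Y$ open. Translations by $\UUU$ preserve the fibers of $\pi$, so they cannot enlarge the set of $y$ over which you know regularity.

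The missing ingredient is flatness over the whole base. $U\times Y$ and $X$ are flat over $Y$, and over each $y$ the map $\Phi_e$ is the orbit map $U\to U/U_{e(y)}\simeq\pi^{-1}(y)$, which is faithfully flat. The fibral flatness criterion then shows that $\Phi_e$, hence $\Psi$, is faithfully flat, and fppf descent applies. You must, however, verify the descent condition scheme-theoretically: in the example above, $(U\times X)\times_{X\times_Y X}(U\times X)$ is non-reduced, since the stabilizer group scheme $S$ of $e$ has fibers $\simeq\alpha_p$. Concretely, you need that $S$ acts trivially on the fibers of $\pi$ as a group scheme, which your commutativity argument gives functorially.

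Your fallback (normality of $X\times_Y X$ plus Zariski's main theorem on the graph) is the paper's elementary proof, Proposition~\ref{group-scheme.prop} with Lemma~\ref{Igusa.lem}. Even there the decisive step is missing from your sketch. The projection $\overline{\Gamma_\alpha}\to X\times_Y X$ is only surjective and generically injective, and in characteristic $p$ such a map onto a normal variety need not be an isomorphism (think of Frobenius). So one must prove birationality. The paper does this using that the projection restricts to isomorphisms on the fibers over $Y$, which makes its differential invertible on a dense open subset.
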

\begin{proof} 
Fix a connected unipotent algebraic subgroup $U \subset \UUU$ with the same orbits as $\UUU$ (Proposition~\ref{Orbits.prop}(2)). Let $\KK$ be an algebraic closure of $\kk(Y)$.
By Corollary~\ref{Cor.Geometric_fibre_of_Geom_quotient}, 
$U_{\KK} = U \times_{\Spec \kk} \Spec \KK$
acts transitively on the geometric generic fiber $\Xgg$ of 
$\pi$.
Hence, Propositions~2.1 and~3.1 from~\cite[Ch. III, \S\,3]{DeGa1970Groupes-algebrique}
give the first statement, and the description of the group law in the fibers follows by an explicit calculation.
\end{proof}

For readers not familiar with the language of sheaves in \cite{DeGa1970Groupes-algebrique} we give an elementary proof of this proposition in the Appendix, see Proposition~\ref{group-scheme.prop}.

\subsubsection{Description of the normalizer}\label{description_of_normalizer.sec}
We assume that the geometric quotient 
$\pi \colon X \to Y$ has a section $e \colon Y \to X$ and we endow 
$\pi$ with the structure of a $Y$-group scheme given by Proposition~\ref{group-law}.
We want to describe  the structure of the normalizer of $\UUU$. Here are two examples to keep in mind.

\begin{exa}
Consider the group of translations $\Tr_n \simeq(\Ga)^n$ of the affine space $\bbA^n$. In characteristic $0$, its normalizer in $\Aut(\bbA^n)$ is the group of affine transformations $g(z)=L_g(z)+T_g$, with  translation part $T_g\in \Tr_n(\kk)$ and  linear part $L_g\in \GL_n(\kk)$. In characteristic $p>0$, its normalizer is the group of polynomial automorphisms $g(z)=L_g(z)+T_g$ with again  $T_g\in \Tr_n(\kk)$, and with $L_g$ in the group $\GL_n(\kk[F])$ of 
automorphisms of the algebraic group $(\Ga)^n$. 
Here, $\kk[F]$ is the \emph{non-commutative} $\kk$-algebra of additive polynomials in one variable. It is generated by the Frobenius endomorphism $F\colon t\mapsto t^p$ (see~\cite{Lamy:Book}, \S Additive Automorphisms, in Chapter 7).
\end{exa}

\begin{exa} 
Consider the case $X = Y \times \bbA^n$, with $\pi \colon X \to Y$  the first projection.
The group $\UUU = \Tr_n(\OOO(Y))$ and its algebraic subgroup $U=\Tr_n(\kk)$ act on each fiber $\{y\}\times \bbA^n$ transitively. This gives an action of $\UUU$ on $X$ with quotient $\pi$. An element $g$ in $\Aut(X)$ that normalizes $\UUU$ can be written as
$g(y, z)=(\bar{g}(y), L_g(y)(z)+T_g(y))$ where $\bar{g}\in \Aut(Y)$, $T_g\in \Tr_n(\OOO(Y))$ and $L_g$ can be considered as a function from $Y$ to $\GL_n(\End(\Ga))$:
$L_g\in \GL_n(\OOO(Y))$ in characteristic $0$, and $L_g\in \GL_n(\OOO(Y)[F])$ in characteristic $p>0$.
\end{exa}

We will encapsulate this type of examples in Proposition~\ref{commutative-quotient3.prop}. 
For $x\in X$, we set $e_x=e(\pi(x))$.
If $s$ is a section of $\pi$, then $\phi(s)\colon x \mapsto x + s(\pi(x))$ is an automorphism of $X$. Two such automorphisms commute, hence  
\begin{equation*}
\Gamma(\pi) \coloneqq \set{\phi(s)}{s \colon Y \to X \
\textrm{is a section of  \,$\pi$}}  
\end{equation*}
is a commutative subgroup of $ \Aut(X)$. 
If $g\in \Aut(X)$ normalizes $\UUU$, the fibration $\pi$ is $g$-equivariant. Thus, the section $e$ is mapped by $g$ to a new section $s_g$. This section is given by 
\begin{equation*}
s_g(y) = g(e(\bar{g}^{-1}(y)))
\end{equation*}
where $\bar{g} \in \Aut(Y)$ is the image of $g$. Thus, to $g$ one can associate an automorphism $\TP_g:=\varphi(s_g)$ of $X$ such that $\pi\circ \TP_g=\pi$ and $\TP_g$ acts by translations on the fibers of $\pi$ (for the group scheme structure). It satisfies 
\begin{equation}\label{equation_for_Tg}
\TP_g\circ e \circ \bar{g}=g\circ e. 
\end{equation}
Now, composing $g$  with $\TP_{g^{-1}}$, we get an automorphism $\LP_g$ of $X$ that maps the image of $e$ onto itself. More precisely, setting
\begin{equation*}
\LP_g:= g \circ \TP_{g^{-1}}
\end{equation*}
and applying Equation~\eqref{equation_for_Tg} to $g^{-1}$, we obtain $\LP_g(e(y))=e(\bar{g}(y))$. 

More generally, denote by $c_g\colon \UUU\to \UUU$ the action by conjugation: 
$c_g(u)=g\circ u\circ g^{-1}$.
Then, if $x = u(e_x)$ for some $u \in \UUU$, we have
$\LP_g(x) = c_g(u)(e_{g(x)})$. Indeed, we get successively
\begin{align*}
\LP_g(x) &= g(x + s_{g^{-1}}(\pi(x))) =g(x + (g^{-1} \circ e \circ \bar{g})(\pi(x))) \\
&=g(u((g^{-1} \circ e \circ \bar{g})(\pi(x)))) = (g \circ u \circ g^{-1})(e_{g(x)}) \, .
\end{align*}
In other words, $\LP_g$ describes the action by conjugacy of $g$ on $\UUU$.

\begin{prop}\label{commutative-quotient3.prop}
Assume that $\pi \colon X \to Y$ has a section $e$,
as in Proposition~\ref{commutative-quotient2.prop}. Let  $\GGG \subseteq \Aut(X)$ be a subgroup that normalizes $\UUU$. 
With the notation introduced above, we have the following results.
\begin{enumerate}[\rm (1)]
\item
If $x = u(e_x)$ for some $u \in \UUU$, then 
$\LP_g(x) = (g \circ u \circ g^{-1})(e_{g(x)})$.
\item 
For $g \in \GGG$ we have
$\LP_g(x + x') = \LP_g(x) + \LP_g(x')$. 
\item
The map $\LP \colon \GGG \to \Aut(X)$ is a group homomorphism.
\item
The subgroup $\UUU \cap \GGG$ of $\GGG$ lies in the kernel of  the homomorphism $\LP$.
\item 
The subgroup $\Gamma(\pi)$ is normalized by $\LP(\GGG)$,
$\Gamma(\pi) \cap \LP(\GGG) = \{\id\}$, 
and $\GGG$ is contained in $\LP(\GGG) \ltimes \Gamma(\pi)$.
\item
If $\rho \colon A \to \Aut(X)$ is a morphism with 
image in $\GGG$, then the compositions $\TP \circ \rho \colon A \to \Aut(X)$ 
and $\LP \circ \rho \colon A \to \Aut(X)$ are morphisms.
\end{enumerate}
\end{prop}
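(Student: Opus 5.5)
We treat the six assertions in order, working fibrewise with the $Y$-group scheme structure of Proposition~\ref{commutative-quotient2.prop}. Assertion (1) is the computation already carried out just before the statement. The one extra point is that every $x$ can be written as $u(e_x)$ with $u\in\UUU$, because the fibres of $\pi$ are the $\UUU$-orbits. For (2), let $x=u(e_y)$ and $x'=u'(e_y)$. Then $x+x'=(u\circ u')(e_y)$, and (1) gives
\[
\LP_g(x+x')=c_g(uu')(e_{\bar g(y)})=c_g(u)\bigl(c_g(u')(e_{\bar g(y)})\bigr)=c_g(u)(\LP_g(x'))=\LP_g(x)+\LP_g(x').
\]
The last equality uses the description $z+z'=v(z')$ when $z=v(e)$, applied with $v=c_g(u)$ and $\LP_g(x)=c_g(u)(e_{\bar g(y)})$. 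The same formula also shows that $\LP_g$ is well defined independently of the choice of $u$.

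For (3), we use (1) again. For $x=u(e_x)$ we get $\LP_g\LP_h(x)=\LP_g\bigl(c_h(u)(e_{h(x)})\bigr)=c_gc_h(u)(e_{gh(x)})=\LP_{gh}(x)$, since $\pi(gh(x))=\bar g\bar h(\pi(x))$. For (4), take $g\in\UUU\cap\GGG$. Then $c_g=\id$ because $\UUU$ is commutative, and $g$ preserves the fibres of $\pi$, so $\LP_g(x)=u(e_x)=x$.

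For (5), an automorphism of the form $\LP_h$ is additive on fibres by (2) and covers $\bar h$. Hence
\[
\LP_h\,\phi(s)\,\LP_h^{-1}=\phi\bigl(\LP_h\circ s\circ\bar h^{-1}\bigr),
\]
which shows that $\Gamma(\pi)$ is normalized. An element of $\Gamma(\pi)\cap\LP(\GGG)$ fixes the image of $e$, since $\LP_g(e(y))=e(\bar g(y))$. A translation $\phi(s)$ that fixes $e$ satisfies $s=e$, so it is the identity. Finally, $g=\LP_g\circ\TP_{g^{-1}}^{-1}$ with $\TP_{g^{-1}}\in\Gamma(\pi)$, which gives $\GGG\subseteq\LP(\GGG)\ltimes\Gamma(\pi)$.

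Assertion (6) is the main obstacle, because it requires the algebraicity of the group law. By Proposition~\ref{commutative-quotient2.prop}, addition $X\times_YX\to X$ and inversion are morphisms of varieties. For $a\in A$ we have
\[
\TP_{\rho(a)}(x)=x+\rho(a)\bigl(e(\overline{\rho(a)}^{-1}(\pi(x)))\bigr).
\]
The family $a\mapsto\rho(a)^{-1}$ is a morphism by \S\ref{algebraic_subgroups_and_basic_facts.sec}(3), and it descends to a morphism $A\to\Aut(Y)$ by the universal property of the geometric quotient. Therefore the map $(a,x)\mapsto\rho(a)\bigl(e(\overline{\rho(a)}^{-1}\pi(x))\bigr)$ is a morphism $A\times X\to X$ over $Y$, and composing with addition shows that $\TP\circ\rho$ is a morphism. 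Applying the same argument to the inverse family shows that $a\mapsto\TP_{\rho(a)^{-1}}$ is a morphism. Then $\LP_{\rho(a)}=\rho(a)\circ\TP_{\rho(a)^{-1}}$ is a composition of morphisms, hence $\LP\circ\rho$ is a morphism by property (2) of \S\ref{algebraic_subgroups_and_basic_facts.sec}.
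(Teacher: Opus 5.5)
Your proposal is correct and follows essentially the same route as the paper. Assertions (1)--(5) come from the conjugation formula $\LP_g(u(e_x))=(g\circ u\circ g^{-1})(e_{g(x)})$ together with the fibrewise description of the group law, and (6) comes from writing $\TP_{\rho(a)}(x)=x+(\rho(a)\circ e\circ\overline{\rho(a)}^{-1})(\pi(x))$, using that addition on $X\times_Y X$ is a morphism, and then passing to $\LP_{\rho(a)}=\rho(a)\circ\TP_{\rho(a)^{-1}}$ via the inverse family; you simply make explicit the descent of $a\mapsto\overline{\rho(a)}^{-1}$ to $\Aut(Y)$, which the paper leaves implicit.
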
 
The map $u \mapsto \TP_u$ embeds $\UUU$ in a subgroup of $\Gamma(\pi)$. Property~(4) means that the action of $u$ on $X$ is indeed given by $\TP_u$. 

\begin{proof} Property (1) was proven above.
Let $x, x' \in X$ satisfy $\pi(x)=\pi(x')=:y$, and let $u$ and $u' \in \UUU$ satisfy $x = u(e_x)$ and $x' = u'(e_{x'})$.  For Property (2), we apply Property (1) to get $  \LP_g(x + x') = c_g (u \circ u')(e(\bar{g}(y))) $; then, 
by definition of the addition and by (1), we obtain
\begin{align*}
\LP_g(x + x') 
&= c_g (u) c_g(u')(e(\bar{g}(y))) 
 = \LP_g(x) + \LP_g(x').
\end{align*}
For Property (3), we fix $g$ and $h$ in $\GGG$. Our goal is to prove $L_{g\circ h}=L_g\circ L_h$. 
Let $u' \coloneqq c_h(u)$ and let $x' \coloneqq u'(e_{h(x)})$. Then $x' = u'(e_{x'})$ because $\pi(x')=\pi(x)$ and
\begin{equation*}
\LP_g( \LP_h(x) ) \stackrel{(1)}{=} \LP_g(c_h(u)(e_{h(x)})) 
  = \LP_g(x') \stackrel{(1)}{=}c_g (u') (e_{g(x')}).
\end{equation*}
Since $c_{g}\circ c_h=c_{g\circ h}$, the conclusion follows again by~(1). 

Since $\UUU$ is commutative, it acts trivially on itself by conjugation. Since it acts transitively on each fiber of $\pi$, Property (1) implies that $L_u=\id$ for every $u\in \UUU$, which proves~(4).

The first claim in Property~(5) is a direct consequence of (2). Indeed, if  $s \colon Y \to X$ is a section of $\pi$, then
\begin{align*}
(\LP_g \circ \varphi(s) \circ \LP_g^{-1})(x)
&= \LP_g \left(\LP_g^{-1}(x) + s(\pi(\LP_g^{-1}(x))) \right) \\
&\stackrel{(2)}{=}
x + \LP_g\left(s(\bar{g}^{-1}(\pi(x)))\right) =\varphi(s')(x),
\end{align*}
where $s' \colon Y \to X$ denotes the section
$\LP_g \circ s \circ \bar{g}^{-1}$.
For the  second claim it is enough to note that $\LP_g(e_x) = e_{g(x)}$ for all $x$ 
(see again~(1)). The last claim in~(5) follows directly from the definitions.

For Property~(6), note  that
the map $  \eta \colon A \times X \to A \times X$ defined by 
\[
(a, x) \mapsto 
\left(a, x + \left(\rho(a) \circ e \circ \overline{\rho(a)}^{-1}\right)(\pi(x)) \right)
\]
is an $A$-isomorphism. This proves the first claim,  and the second follows from the first. 
\end{proof}

\subsection{Connected subgroups for which the commutator is a torus}
\label{Subsec.Commutator_torus}
If $H$ is a connected solvable algebraic group, then $[H,H] = [H_{\text{\Tiny aff}}, H_{\text{\Tiny aff}}]$ since $H_{\text{\Tiny ant}}$ is central in $H$, and thus $[H, H]$ is unipotent
(see~Theorem~19.3(b) in \cite{Hu1975Linear-algebraic-g}). In particular, if a connected 
algebraic group $G$ contains a normal torus $T$ such that $G/T$ is commutative, then $G$ is commutative. This result generalizes as follows (a key step towards Theorem~\ref{main-thm-A} and Corollary~\ref{corB}).

\begin{thm}\label{normal-torus.thm}
Let $\GGG \subseteq \Aut(X)$ be a connected subgroup where $X$ is 
irreducible. Assume that $\GGG$  contains a torus $T$ that is normal in $\GGG$. Then $\GGG$ is commutative if and only if the quotient $\GGG/T$ is commutative.
\end{thm}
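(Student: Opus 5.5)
One direction is trivial: if $\GGG$ is commutative, so is $\GGG/T$. For the converse, assume $\GGG/T$ is commutative, i.e. $[\GGG,\GGG]\subseteq T$. The plan is to show first that $T$ is central in $\GGG$, and then that the commutator map $\GGG\times\GGG\to T$ is trivial.

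\emph{Step 1: $T$ is central.} Conjugation gives a homomorphism $\GGG\to\Aut(T)\simeq\GL_s(\ZZ)$ (where $T\simeq\Gm^s$), since every automorphism of $T$ as an abstract group that comes from conjugation in $\Aut(X)$ is an algebraic automorphism by the universal property (UP) of \hyperlink{Ramanujam.thm}{Ramanujam's theorem}. For a morphism $\rho\colon A\to\Aut(X)$ with $A$ irreducible and $\id_X\in\rho(A)\subseteq\GGG$, the map $A\times T\to T$, $(a,t)\mapsto\rho(a)t\rho(a)^{-1}$, is a morphism of varieties. Hence $a\mapsto c_{\rho(a)}|_T$ is a morphism from $A$ into the discrete set of algebraic automorphisms of $T$ (these are rigid). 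As $A$ is irreducible and contains a point mapping to $\id_X$, this map is constant, equal to $\id_T$. Because $\GGG$ is connected, every $g\in\GGG$ lies in such a $\rho(A)$, so $T$ is central in $\GGG$.

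\emph{Step 2: the commutators vanish.} Now fix $g\in\GGG$ and consider $\chi_g\colon\GGG\to T$, $h\mapsto[g,h]$. Since $T$ is central and $[\GGG,\GGG]\subseteq T$, the identity $[g,hh']=[g,h]\,h[g,h']h^{-1}=[g,h][g,h']$ shows that $\chi_g$ is a group homomorphism. For an irreducible family $\rho\colon A\to\GGG$ through $\id_X$, the map $a\mapsto[g,\rho(a)]\in T$ is a morphism $A\to T$ by (UP). Its image is therefore irreducible, and it generates a connected algebraic subgroup $S\subseteq T$, hence a subtorus.

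The key point is to show $\chi_g$ is trivial. Suppose the image $S$ is a nontrivial torus. By (UP), $g\mapsto[g,h]$ is also a homomorphism in $g$, so $\chi$ is bimultiplicative. I would exploit the irreducibility of $X$ together with the torus action: choose a family $\rho$ in the connected group generated by $g$ and $\rho(A)$, and use that $g$ normalizes the image. Concretely, set $\HHH=\langle T,g,\rho(A)\rangle$; it is connected and nilpotent of class $2$. Pass to a geometric quotient for $T$ (Theorem~\ref{popov-rosenlicht.thm}), restricted to an open $\HHH$-stable subset. Elements of $\HHH$ act on $X$ commuting with $T$, and $[g,h]=t\in T$ means $ghg^{-1}=th$.

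Then I would look at the induced actions on the $T$-weight spaces of $\kk(X)$. An element $h$ with $[g,h]=t$ satisfies $g^*$ mapping the $\chi$-eigenspace of $h^*$ to the $\chi\cdot\lambda(t)$-eigenspace, for weights $\lambda$ of $T$. Since $t$ ranges continuously over the torus $S$ while $g^*$ is fixed, this would force infinitely many distinct eigenvalues on a finite-dimensional invariant space, which is a contradiction. An alternative route uses rigidity instead: the homomorphism $h\mapsto[g,h]$ restricted to an algebraic subgroup $G$ of $\HHH$ containing $\rho(A)$ (if available) is an algebraic homomorphism into a torus that kills $[G,G]$ and unipotent parts.

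\textbf{Main obstacle.} The main obstacle is exactly this step: ruling out a nontrivial homomorphism from a connected, possibly non-algebraic, $\GGG$ onto a subtorus via commutators. The cleanest attempt I would try first is the following. Fix $t$ in the image and $h$ with $[g,h]=t$. Then $g$ and $h$ generate a Heisenberg-type group, in which $h^n g h^{-n}=t^{-n}g$. Using a family in $h$, the elements $ugu^{-1}=[u,g]g$ for $u\in\rho(A)$ form an irreducible family $S'g$ with $S'\subseteq T$ a subtorus. Thus $Tg$ is contained in the conjugacy class of $g$ (up to $T$). Combining this with Remark~\ref{torus.faithful.rem} and the faithful action of $\langle T,g\rangle$ on an irreducible $T$-orbit closure should lead to a contradiction, because $\langle T, g\rangle$ is commutative yet conjugation would have to act nontrivially on it.
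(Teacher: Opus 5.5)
Your Step~1 is correct and coincides with step~(a) of the paper's proof. The gap is in Step~2. Showing that $h\mapsto[g,h]$ is trivial is the entire content of the theorem, and none of your sketches achieves it.

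\begin{itemize}
\item The eigenvalue argument needs a finite-dimensional $g^*$-stable subspace containing eigenvectors of $h^*$. That is a local finiteness property, which you only have if $g$ is already known to be algebraic.
\item The ``alternative route'' presupposes an algebraic subgroup containing $g$ and $\rho(A)$. That is exactly what must be proved.
\item The last attempt fails for a structural reason. It uses connectedness only in the moving variable $h$, with $g$ fixed, and that cannot suffice.
\end{itemize}

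Here is an example showing why. On $X=\Gm^2$ let $T=\{(x,y)\mapsto(\lambda x,y)\}$, $g(x,y)=(xy,y)$ and $h_\mu(x,y)=(x,\mu y)$. Then:
\begin{itemize}
\item $[g,h_\mu]$ is the element $(x,y)\mapsto(\mu x,y)$ of $T$, and $T$ is central in the group generated by $T$, $g$ and all $h_\mu$.
\item $\langle T,g\rangle$ is commutative and acts faithfully on the irreducible variety $X$.
\item Conjugation by the connected family $(h_\mu)_\mu$ moves $g$ along $Tg$.
\item On the monomials $x^my^k$, the map $g^*$ shifts the $h_\mu^*$-eigenvalue $\mu^k$ to $\mu^{k+m}$.
\end{itemize}
Every ingredient you list is present, yet the commutators are non-trivial. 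Here $g$ lies in no irreducible family through $\id_X$, and connectedness is the only hypothesis of the theorem that fails. So any valid argument must use connectedness in both entries of the commutator.

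The paper does this by proving algebraicity, in these steps:
\begin{enumerate}
\item After restricting to an orbit, one may assume $\GGG$ acts transitively.
\item Since $T$ is central and acts faithfully, it acts freely, so the quotient is a principal $T$-bundle $X\to Y$ (Lemma~\ref{quotient-by-torus.lem}).
\item The image of $\GGG$ in $\Aut(Y)$ is commutative and transitive, hence algebraic (Proposition~\ref{commutative-subgroup.prop}).
\item The kernel is finite dimensional because $\Aut_Y(X)^\circ=T$ (Lemma~\ref{AutT(X/Y).lem}), so $\GGG$ is algebraic (Lemma~\ref{Lem.Further_crit_to_be_alg}).
\item Then $[\GGG,\GGG]$ is unipotent and lies in $T$, hence is trivial.
\end{enumerate}

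A much shorter repair in your own spirit also works. Take families $\rho\colon A\to\GGG$ and $\sigma\colon B\to\GGG$ through $\id_X$, with $\rho(a_0)=\sigma(b_0)=\id_X$.
\begin{itemize}
\item By (UP), the map $(a,b)\mapsto[\rho(a),\sigma(b)]$ is a morphism $A\times B\to T\simeq\Gm^s$.
\item By Rosenlicht's theorem on units of products (the one used in the proof of Lemma~\ref{AutT(X/Y).lem}), each coordinate has the form $f_1(a)f_2(b)$.
\item It equals $1$ on $\{a_0\}\times B$ and on $A\times\{b_0\}$, so it is identically $1$.
\end{itemize}
Connectedness of $\GGG$ then gives commutativity, and your Step~1 is not even needed.
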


To prove it, we collect three lemmas. Recall that the action of an algebraic 
group on a variety is \emph{free} if the stabilizer of each point is 
\emph{schematically} trivial.

\begin{lem}\label{quotient-by-torus.lem}
Let $T$ be a torus acting on a normal variety~$X$.
There is a finite covering $X = \bigcup_i X_i$ by affine open and $T$-stable subsets $X_i\subset X$.
If the action of $T$ is free, 
there is a geometric quotient $\pi\colon X \to Y$ which is a principal $T$-bundle over a prevariety $Y$.
\end{lem}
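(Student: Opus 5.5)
The first assertion is Sumihiro's theorem, and I would cite it rather than reprove it. For a normal variety with an action of a torus $T$, every point has an affine open $T$-stable neighbourhood; this holds in arbitrary characteristic. Since $X$ is quasi-compact, finitely many such neighbourhoods cover $X$, which gives the covering $X=\bigcup_i X_i$. (If $X$ is not irreducible, apply Sumihiro to each connected component; these are irreducible because $X$ is normal, and each is $T$-stable because $T$ is connected.)

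For the second assertion, assume the action is free. On each affine $T$-stable piece $X_i$ I form the categorical quotient $\pi_i\colon X_i\to Y_i:=\Spec\OOO(X_i)^T$. Since $T$ is linearly reductive, $\OOO(X_i)^T$ is finitely generated and $\pi_i$ is a good quotient. Freeness gives the following for every $x\in X_i$:
\begin{itemize}
\item the stabilizer of $x$ is trivial, so $\dim Tx=\dim T$;
\item every orbit in $X_i$ has the maximal dimension, so no orbit lies in the closure of another, and every orbit is closed in $X_i$.
\end{itemize}
A good quotient all of whose orbits are closed is a geometric quotient. To see that $\pi_i$ is a principal $T$-bundle, I would invoke the standard result that a free action of a reductive group on an affine variety with all orbits closed yields a principal bundle. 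This is Luna's étale slice theorem in characteristic $0$. In arbitrary characteristic I would use Mumford's GIT, Proposition~0.9 (a proper free action whose geometric quotient exists is a principal bundle, locally trivial in the fppf topology). For a split torus, Hilbert 90 then upgrades this to Zariski local triviality.

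Properness of $T\times X_i\to X_i\times X_i$, $(t,x)\mapsto(tx,x)$, is the delicate point. I would argue it with the valuative criterion, using that orbits are closed with trivial stabilizers. Alternatively, I would follow the GIT argument for stable points: affine, closed orbits and finite (here trivial) stabilizers give properness.

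The last step is to glue. The intersections $X_i\cap X_j$ are $T$-stable open subsets, and for each $i,j$ I need an open $Y_{ij}\subset Y_i$ with $\pi_i^{-1}(Y_{ij})=X_i\cap X_j$, such that the quotient of $X_i\cap X_j$ is identified in $Y_i$ and in $Y_j$. This follows from the fact that a geometric quotient restricts to $T$-saturated open subsets: the intersection is a union of fibres of $\pi_i$, and $\pi_i$ is open. Gluing the $Y_i$ along the $Y_{ij}$ produces a prevariety $Y$, possibly non-separated, together with $\pi\colon X\to Y$. This $\pi$ is a geometric quotient and a principal $T$-bundle, because both properties are local on $Y$.

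I expect the main obstacle to be the principal bundle claim in positive characteristic, where Luna's theorem is unavailable. There I would rely on Mumford's Proposition~0.9 together with the scheme-theoretic freeness assumption. Schematic freeness, and not just set-theoretic freeness, is what makes the quotient map flat with fibres isomorphic to $T$, so it is the hypothesis to keep track of at that step.
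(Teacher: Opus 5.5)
Your overall plan is the paper's: Sumihiro's theorem gives the $T$-stable affine cover; you take the quotients $X_i\to X_i\quot T=\Spec\OOO(X_i)^T$; and you glue along the $T$-stable, hence saturated, intersections into a prevariety. Your argument that all orbits have dimension $\dim T$, hence are closed, hence that the affine quotients are geometric, is correct. In characteristic $0$, Luna's slice theorem settles the principal bundle claim.

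\textbf{The gap.} The principal bundle claim in positive characteristic is the only non-formal point of the lemma, and your argument does not close it. The paper handles it by citing Proposition~7.6 of Bardsley--Richardson, a characteristic-free version of Luna's slice theorem. Going from ``free action with a geometric quotient'' to ``principal bundle'' requires knowing that $\pi_i$ is flat. Flatness is part of GIT's definition of a principal fibre bundle, and the GIT proposition you cite does not produce it for you. Your last paragraph only asserts that schematic freeness yields flatness. Properness of $(t,x)\mapsto(tx,x)$, which you single out as the delicate point, gives at best a surjective closed immersion $T\times X_i\to X_i\times_{Y_i}X_i$. It does not give flatness of $\pi_i$ or reducedness of the fibre product, so it does not yield the bundle structure.

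\textbf{A direct fix for tori.} There is an elementary argument that makes properness, GIT and Hilbert~90 unnecessary.
\begin{enumerate}
\item Grade $\OOO(X_i)=\bigoplus_\chi A_\chi$ by characters, with $f(ty)=\chi(t)f(y)$ for $f\in A_\chi$.
\item For $x\in X_i$, let $M_x$ be the monoid of those $\chi$ for which some $f\in A_\chi$ satisfies $f(x)\neq 0$.
\item The orbit map shows that $\overline{Tx}\simeq\Spec\kk[M_x]$. It also shows that the schematic stabilizer of $x$ is the diagonalizable group with character group $X(T)/\ZZ M_x$.
\item Freeness therefore gives $\ZZ M_x=X(T)$, and closedness of $Tx$ gives $M_x=\ZZ M_x$.
\item Choose a basis $\chi_1,\dots,\chi_r$ of $X(T)$ and $f_j\in A_{\chi_j}$ with $f_j(x)\neq 0$. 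Let $U=\{f_1\cdots f_r\neq 0\}$, which is $T$-stable and hence saturated.
\item The map $y\mapsto (f_1(y),\dots,f_r(y))$ is a $T$-equivariant morphism $U\to T\simeq\Gm^r$. Hence $U\simeq T\times F$, where $F$ is its fibre over the identity.
\item So $\pi_i|_U$ is a trivial $T$-bundle over $\pi_i(U)\simeq F$.
\end{enumerate}
This gives Zariski-local triviality in every characteristic.
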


\begin{proof}
The first statement follows from Corollary 2 in \cite{Su1974Equivariant-comple}.
For a free action of a torus $T$ on an affine variety $Z$, it follows from Luna's Slice Theorem that the quotient morphism 
$Z \to Z\quot T := \Spec\OOO(Z)^T
$
is a principal $T$-bundle, see Proposition~7.6 of \cite{BaRi1985Etale-slices-for-a}. 
Gluing the principal bundles $X_i \to X_i\quot T$ we obtain a principal $T$-bundle $\pi\colon X \to X\quot T$ where $X\quot T$ is a prevariety.
\end{proof}

\noindent{\bf{Notation.}} If $\pi \colon X \to Y$ is a morphism, we denote by $\Aut_Y(X)$ the
subgroup of automorphisms $\varphi$
of $X$ such that $\pi \circ \varphi = \pi$. 
As in \S\,\ref{connected_subgroups.sec}, we denote by $\Aut_Y(X)^\circ$ the connected component
of the identity of $\Aut_Y(X)$. 
 
\begin{lem}\label{AutT(X/Y).lem}
Let $T \subseteq \Aut(X)$ be a torus, and let $\pi\colon X \to Y$ be a principal $T$-bundle 
of irreducible varieties.  
Then $\Aut_Y(X)^\circ = T$.
\end{lem}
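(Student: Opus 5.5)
The plan is to identify $\Aut_Y(X)$ with the group of automorphisms of the principal bundle that commute with $T$. From there, connected families of such automorphisms turn out to be gauge transformations given by maps $Y\to T$, and such a family is forced to be constant.

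\emph{Step 1: elements of $\Aut_Y(X)^\circ$ commute with $T$.} Let $\rho\colon A\to \Aut_Y(X)$ be a morphism with $A$ irreducible and $\id_X\in\rho(A)$. For $t\in T$ and $a\in A$, the element $\rho(a)\,t\,\rho(a)^{-1}$ lies in $\Aut_Y(X)$. Over a point $y$, with $\pi^{-1}(y)\simeq T$, this element acts on the fiber as a fiber automorphism. I would argue locally: over an open $V\subseteq Y$ where the bundle is trivial, $\pi^{-1}(V)\simeq V\times T$, and any $Y$-automorphism has the form $(v,s)\mapsto (v,\psi(v,s))$. Conjugating a translation by $t$ gives a family $A\times V\to \Aut(T)$ of automorphisms of the variety $T$.

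I would instead use the rigidity of tori. For fixed $v$, the map $s\mapsto \psi(v,s)$ is an automorphism of the variety $T\simeq (\Ast)^r$, and $\Aut((\Ast)^r)=(\Ast)^r\rtimes \GL_r(\ZZ)$. A connected family of such automorphisms containing the identity has constant $\GL_r(\ZZ)$-part, namely the identity. So each $\rho(a)$ acts on each fiber $\{v\}\times T$ by a translation $s\mapsto \lambda_a(v)s$, where $\lambda_a\colon V\to T$ is a morphism. In particular $\rho(a)$ commutes with $T$.

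\emph{Step 2: gauge maps.} Any $T$-equivariant $Y$-automorphism $\varphi$ of a principal $T$-bundle is given by $\varphi(x)=\lambda(\pi(x))\cdot x$ for a unique morphism $\lambda\colon Y\to T$. Uniqueness follows from freeness of the action. The local $\lambda$'s from Step 1 glue to such a global map. This yields an injective homomorphism $\Aut_Y(X)^\circ\to \Hom(Y,T)=\OOO(Y)^\times\otimes X_*(T)\simeq (\OOO(Y)^\times)^r$. The homomorphism is compatible with families, since $\lambda_a(\pi(x))$ can be recovered algebraically from $x$ and $\rho(a)(x)$ via the action morphism $T\times X\to X\times_Y X$, which is an isomorphism for a principal bundle.

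\emph{Step 3: conclusion.} By Example~\ref{nested.exa}(5), the connected component $(\OOO(Y)^\times)^\circ$ is $\kst$ because $Y$ is irreducible, that is, the constants. A morphism from an irreducible $A$ to $\OOO(Y)^\times$ whose image contains $1$ lands in this connected component. So every $\lambda_a$ is a constant element of $T$, and $\rho(a)\in T$. Hence $\Aut_Y(X)^\circ\subseteq T$. The reverse inclusion is clear, since $T$ is connected and acts fiberwise.

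The main obstacle is Step 1, namely justifying that connected families act fiberwise by translations rather than by other automorphisms of $T$. This includes making the discreteness of $\GL_r(\ZZ)$ rigorous in families over an irreducible base. Here I would use that the induced map to the character lattice is locally constant, by Rosenlicht's finiteness of $\OOO(V\times T)^\times/\OOO(V)^\times$.
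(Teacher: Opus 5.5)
Your proof is correct, but it is organized differently from the paper's. The paper works over a single dense affine open $Y'\subseteq Y$ over which $\pi$ is trivial. It takes the $T$-component of the $A$-isomorphism $A\times Y'\times T\to A\times Y'\times T$ induced by $\rho$ and applies Rosenlicht's theorem on units of a product of irreducible varieties, which writes it as $(a,y,t)\mapsto f_1(a)f_2(y)f_3(t)$. Evaluating at some $a_0$ with $\rho(a_0)=\id_X$ forces $f_2$ to be constant and $f_3(t)=\lambda t$. So each $\rho(a)$ acts on $\pi^{-1}(Y')$ by an element of $T$, and injectivity of $\Aut_Y(X)\to\Aut_{Y'}(\pi^{-1}(Y'))$ concludes.

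In your version, the honest justification of Step~1 is exactly this decomposition applied on $A\times V\times T$. The fact that $\OOO(V\times T)^\times/\OOO(V)^\times\simeq\ZZ^r$ does not by itself show that the exponent stays constant as $a$ varies; you need the family version. Once you have it, the same evaluation at $a_0$ already removes the $v$-dependence, so Steps~2 and~3 become unnecessary.

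What your detour buys is a structural statement: $\Aut_Y(X)^\circ$ embeds into the gauge group $\Hom(Y,T)\simeq(\OOO(Y)^\times)^r$, and the last step reduces to $(\OOO(Y)^\times)^\circ=\kst$ from Example~\ref{nested.exa}(5). The cost is extra bookkeeping. The gluing needs Zariski-local triviality over all of $Y$; this is true for tori by Hilbert~90, but you can avoid it by staying on one trivializing dense open $V$ and using injectivity of restriction, as the paper does. You must also check that $a\mapsto\lambda_a$ is a morphism into the ind-group $\OOO(Y)^\times$, which your argument via $T\times X\simeq X\times_Y X$ does handle. The conjugation computation at the start of Step~1 plays no role and can be dropped.
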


\begin{proof}
Let $\rho\colon A \to \Aut(X)$ be a morphism where $A$ is irreducible and $\id_X \in \rho(A) \subset \Aut_Y(X)$. We have to show that $\rho(A) \subset T$.
The morphism $\rho$ induces an $A$-isomorphism $\phi \colon A \times X \to A \times X$
(see Property~(\ref{item2.5.3}) in \S\,\ref{algebraic_subgroups_and_basic_facts.sec}).
Let $Y' \subseteq Y$ be an affine and dense open subset such that $X':=\pi^{-1}(Y') \to Y'$  is a trivial bundle, so that we can identify $X' = Y' \times T$. 
Then $\varphi$ restricts to an $A$-isomorphism $A \times Y' \times T \to A \times Y' \times T$.
By a theorem of Rosenlicht's (Theorem~2 in \cite{Ro1961Toroidal-algebraic}), the composition with the projection onto $T$ is of the form
\[
A \times Y' \times T \to T , \quad (a, y, t) \mapsto f_1(a) f_2(y) f_3(t)
\]
for morphisms $f_1 \colon A \to T$, $f_2 \colon Y' \to T$, and $f_3 \colon T \to T$. At least 
one element of $A$ acts as the identity on $X'$, so $f_2$ is constant
and $f_3(t) = \lambda t$ for some $\lambda \in T$.
Hence, the image of $A$ in $\Aut(X')$ is contained in $T$. 
Since 
$\Aut_Y(X) \to \Aut_{Y'}(X')$, $\psi \mapsto \psi |_{X'}$ is injective,  
we conclude that $\rho(A)$ is contained in $T$.
\end{proof}

\begin{lem}
    \label{Lem.Further_crit_to_be_alg}
    Let $\pi \colon X \to Y$ be a geometric quotient for a connected subgroup 
    $\HHH \subseteq \Aut(X)$, normalized by a connected 
    subgroup $\GGG \subseteq \Aut(X)$. Assume that 
    the image $G \subseteq \Aut(Y)$ of $\GGG$ is an algebraic 
    group and the kernel of $\GGG \to G$ is finite dimensional. Then $\GGG$ is
    an algebraic subgroup of $\Aut(X)$.
\end{lem}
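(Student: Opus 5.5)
The plan is to show that $\GGG$ is finite dimensional and then apply \hyperlink{Ramanujam.thm}{Ramanujam's theorem}. Since $\GGG$ is connected, this is exactly what the theorem requires; it then gives $\GGG$ the structure of an algebraic subgroup. So it suffices to bound $\dim \GGG$ by $\dim G + \dim N$, where $N := \ker(\GGG \to G)$.

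First I check that the restriction map $\phi\colon \GGG \to G$ is compatible with families. Let $\mu\colon B \to \Aut(X)$ be a morphism with image in $\GGG$. Each $\mu(b)$ normalizes $\HHH$, so it permutes the $\HHH$-orbits, which are the fibers of $\pi$. By the universal property of geometric quotients recalled after Definition~\ref{geometric-quotient.def}, the morphism $B\times X \to Y$, $(b,x)\mapsto \pi(\mu(b)x)$, descends along $\mathrm{id}_B\times \pi$ to a morphism $B \times Y \to Y$. Strictly, this needs $\mathrm{id}_B\times\pi$ to again be a geometric quotient for the trivial extension of the action; for varieties this holds, as in the descent statement already used in the paper. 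Hence $\phi\circ\mu\colon B \to \Aut(Y)$ is a morphism with image in $G$. The universal property (UP) of the algebraic group $G$ then shows that $\phi\circ\mu\colon B \to G$ is a morphism of varieties.

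Next comes the dimension count. Let $\iota\colon A \to \Aut(X)$ be an injective morphism with image in $\GGG$, where we may take $A$ irreducible. The map $\phi\circ\iota\colon A \to G$ is a morphism. Let $F$ be a fiber of $\phi\circ\iota$ over a point of its image, say $\phi(\iota(a_0))$ for some $a_0\in A$. Then $a \mapsto \iota(a_0)^{-1}\iota(a)$ is an injective morphism $F \to \Aut(X)$ whose image lies in $N$; this uses property (3) of \S\,\ref{algebraic_subgroups_and_basic_facts.sec}, namely that inverses of families are families. So $\dim F \le \dim N$.

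By the fiber dimension theorem,
\[
\dim A \le \dim \overline{\phi\iota(A)} + \max_F \dim F \le \dim G + \dim N .
\]
Hence $\dim\GGG \le \dim G + \dim N < \infty$, and Ramanujam's theorem concludes.

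The main obstacle is the first step: verifying that the induced action on $Y$ is algebraic in families. The geometric quotient is only a categorical quotient, and we need it to be stable under the base change $B\times -$. If the descent via the stated universal property is insufficient, I would argue locally instead, over affine opens of $Y$, using property (iii) of Definition~\ref{geometric-quotient.def}. There, $\OOO_Y(U)=\OOO_X(\pi^{-1}U)^\HHH$, and the comorphism of $B\times X\to X$ maps $\HHH$-invariants to elements of $\OOO(B)\otimes\OOO_X(\cdot)^{\HHH}$. This gives the required morphism $B\times U\to Y$ directly.
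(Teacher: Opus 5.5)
Your proposal is correct and follows the paper's own proof: the paper likewise takes an injective family $\mu\colon B\to\Aut(X)$ with image in $\GGG$, notes that its composition with $\GGG\to G$ is a morphism of varieties, embeds each irreducible component $C$ of a fiber into the kernel via $c\mapsto\mu(c_0)^{-1}\circ\mu(c)$, deduces $\dim B\le \dim G+\dim(\ker)$, and concludes with Ramanujam's theorem. Your extra care about why the induced family on $Y$ is algebraic is a point the paper takes for granted from the remark after Definition~\ref{geometric-quotient.def}, not a deviation from its route.
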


\begin{proof}
    Let $\mu\colon B\to \Aut(X)$ be an injective morphism with 
    $\mu(B)\subset \GGG$, and let $\phi\colon B\to G$ 
    be the composition of $\mu$ with the projection $\GGG \to G$. 
    Then $\phi$ is a morphism of varieties. 
    Let $C$ be an irreducible component of $\phi^{-1}(g)$ for some $g \in G$.   
Pick $c_0$ in $C$. Then the morphism $C\to \Aut(X)$, $c\mapsto \mu(c_0)^{-1}\circ \mu(c)$, is injective and takes values in $\KKK = \ker(\GGG \to \Aut(X))$. 
Thus, $\dim(C)\leq \dim(\KKK)$, and so $\dim(B)\leq \dim(\KKK)+\dim(H)$.
Now \hyperlink{Ramanujam.thm}{Ramanujam's theorem}
implies that $\GGG$ is algebraic.
\end{proof}

\begin{proof}[Proof of Theorem~\ref{normal-torus.thm}] 
We have to show that $\GGG$ is commutative when $\GGG/T$ is.
\ps
(a) We first show that $\GGG$ centralizes $T$. 
Let $A$ be an irreducible variety, and let $\eta \colon A \to \Aut(X)$ be a morphism such that $\id_X \in \eta(A)  \subseteq \GGG$. 
Consider the morphism $\mu\colon A\times T \to \Aut(X)$ defined by $\mu(a,t)=\eta(a)\,t\,\eta(a)^{-1}$. 
As $T$ is normal in $\GGG$, $\mu(A\times T)\subset T$. By \hyperlink{Ramanujam.thm}{Ramanujam's theorem}, $\mu$ defines a morphism $A\times T \to T$ of algebraic varieties, hence a morphism 
$\rho\colon  A \to \Aut(T)$ of ind-varieties with image in $\Aut_{\tiny gr}(T)$. Since $A$ is irreducible and 
$\Aut_{\tiny gr}(T)^\circ = \{\id_T\}$ we see that $\rho(A)$ centralizes $T$, and the claim follows.
\ps
(b) Now assume that $\GGG$ acts transitively on $X$, in particular $X$ is smooth.
To prove the theorem in this case, we show that $\GGG$ is an algebraic group. 

Since $T$ is central in $\GGG$, all schematic stabilizers $T_x$, $x \in X$, are equal, hence are trivial\footnote{For this, one can apply Lemma~4.5 of \cite{Note_on_Ramanujam} to $\set{(t, y)}{ty = y}$ in $N \times X$, where $N=T_x$ is the common schematic stabilizer, and use Remark~1.2 of \cite{Note_on_Ramanujam}},
so $T$ acts freely on $X$. By Lemma~\ref{quotient-by-torus.lem}, the geometric quotient exists and is a principal $T$-bundle $\pi\colon X \to Y$, where $Y$ is a prevariety. 
Let  $G\subseteq \Aut(Y)$ be the image  of $\GGG$ in $\Aut(Y)$. 
Since $G$ is connected and acts transitively on $Y$, Lemma~\ref{Prevariety.lem}
shows that $Y$ is a variety. Moreover, $G$ is commutative, as $\GGG/T$ is commutative. 
Thus,  Proposition~\ref{commutative-subgroup.prop} shows that $G$ is an algebraic group.
The kernel of $\GGG \to G$ is contained in $\Aut_Y(X)$, which is finite dimensional, 
as $\Aut_Y(X)^\circ = T$ (Lemma~\ref{AutT(X/Y).lem}). Hence, 
$\GGG$ is an algebraic group (Lemma~\ref{Lem.Further_crit_to_be_alg}).
\ps
(c) In the general case we use (b) to show that for every $\GGG$-orbit $O\subset X$, the restriction $\GGG_{\vert \OOO}\subset \Aut(O)$ is commutative, which implies that $\GGG$ itself is commutative.
\end{proof}

\subsection{Proof of Theorem~\ref{main-thm-A}}
\label{Subsec.Proof_Main_Thm}
The following is a more specific version of Theorem~\ref{main-thm-A}.

\begin{thm} 
\label{thm.Proof_of_them_A}
Let $X$ be a quasi-affine variety of dimension $n$. 
If $\GGG$ is a connected and solvable subgroup of $\Aut(X)$, then $\GGG$ is \algebraicallyTame{}.
If furthermore $\GGG$ acts transitively on $X$, then  $X$ is affine. 
\end{thm}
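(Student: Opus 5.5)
We argue by induction on the derived length $\dl(\GGG)$, and within that on $n=\dim X$. Replacing $\GGG$ by an algebraically generated subgroup $\langle\rho(A)\rangle$, which is again connected and solvable, it suffices to show that such a subgroup is algebraic. By Lemma~\ref{Lem.restriction-to-open}(2) and Theorem~\ref{popov-rosenlicht.thm}, we may shrink $X$ to a dense open $\GGG$-stable subset; we may also assume $X$ irreducible, since $\GGG$ is connected and thus stabilizes each component. The base case $\dl(\GGG)\le 1$ is Theorem~\ref{orbits-of-commutative-groups.thm}(2).

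For the inductive step, let $\HHH$ be the last non-trivial term of the derived series of $\GGG$. It is commutative and normal in $\GGG$, and it is connected, because commutators of irreducible families through $\id_X$ are irreducible families through $\id_X$. By Theorem~\ref{orbits-of-commutative-groups.thm}(3), $\HHH=T\times\UUU$ with $T$ a torus and $\UUU$ a tempered unipotent group. Both factors are characteristic, so both are normal in $\GGG$.

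We first treat the case $\UUU\ne\{\id\}$. By Proposition~\ref{commutative-quotient1.prop} there is a $\GGG$-stable open set $X_0$ with an $N_\UUU$-equivariant geometric quotient $\pi_0\colon X_0\to Y_0$. This quotient is locally trivial with fibre $\bbA^m$, $m\ge1$, and $Y_0$ is quasi-affine with $\dim Y_0<n$. The image of $\GGG$ in $\Aut(Y_0)$ is algebraically generated and solvable, so by induction on dimension it is an algebraic group $G$.

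To control the kernel, we pass to the geometric generic fibre $\Xgg\simeq\bbA^m_\KK$. After a finite étale base change we may assume $\pi_0$ has a section $e$. Proposition~\ref{commutative-quotient3.prop} then places the fibrewise part of $\GGG$ inside $\LP(\GGG)\ltimes\Gamma(\pi)$, where $\LP(\GGG)$ acts by group-scheme automorphisms of the vector group $\Xgg$. The subgroup generated by the translation and linear parts should act locally $\KK$-finitely on $\OOO(\Xgg)=\KK[t_1,\dots,t_m]$. Roughly, the translations lie in a finite-dimensional group, since they come from the orbit of $e$ under the algebraic family, and the linear parts land in a solvable subgroup of $\GL_m(\KK)$ or of its Frobenius analogue, algebraic by Theorem~\ref{Lie-Kolchin.thm}. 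Proposition~\ref{G-stable-submodule.pro} then gives that $\GGG$ is algebraic.

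An alternative route for the kernel is Lemma~\ref{Lem.Further_crit_to_be_alg}: show that the kernel of $\GGG\to G$ is finite dimensional, using that it lies in $\LP(\GGG)\ltimes\Gamma(\pi)$, with its $\Gamma(\pi)$-part algebraic by the commutative case and its $\LP$-part of derived length smaller than $\dl(\GGG)$.

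Next, the case $\UUU=\{\id\}$, so that $\HHH=T$ is a normal torus. Replace $X$ by the open set where $T$ acts with generic stabilizer, and divide by that stabilizer so the action is free. Lemma~\ref{quotient-by-torus.lem} gives a principal $T$-bundle $X\to Y$, with $Y$ quasi-affine after shrinking. The group $\GGG/T$ acts on $Y$ with smaller derived length, so by induction its image is algebraic. The kernel lies in $\Aut_Y(X)$, whose identity component is $T$ by Lemma~\ref{AutT(X/Y).lem}. Lemma~\ref{Lem.Further_crit_to_be_alg} then shows that $\GGG$ is algebraic. (Theorem~\ref{normal-torus.thm} in fact shows that this case cannot arise when $\dl(\GGG)=2$, consistent with this.)

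For the last assertion, suppose $\GGG$ acts transitively. Then $\GGG$ is tempered, so by Proposition~\ref{Orbits.prop}(2) a connected solvable algebraic subgroup $G\subset\GGG$ already acts transitively. By Lemma~\ref{quasiaffine-gives-affine.lem}, $G$ is affine. Hence $X\simeq G/G_x$ is a homogeneous space of a connected solvable affine group, and such a space is affine; when it is moreover quasi-affine, this follows from Theorem~2 of \cite{Br2021Homogeneous-variet}.

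The main obstacle is the kernel control in the unipotent case. One must show that the fibrewise action of $\GGG$ on $\Xgg$ is locally finite, i.e. that the linear parts $\LP_g$ do not involve unbounded Frobenius powers or unbounded polynomial dependence. I would attack this by working with the algebraic family $\LP\circ\rho$ from Proposition~\ref{commutative-quotient3.prop}(6) and applying Theorem~\ref{Lie-Kolchin.thm} over $R=\OOO(Y_0)$.
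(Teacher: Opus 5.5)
Your proposal has a genuine gap, and it sits exactly where you locate the main obstacle.

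\textbf{The section and the choice of quotient.} The homomorphism $\LP$ of Proposition~\ref{commutative-quotient3.prop} requires a section $e$ of $\pi_0$ over the whole $\GGG$-stable base $Y_0$, and you have none. After a ``finite \'etale base change'' $Y'\to Y_0$, the group $\GGG$ no longer acts on $X_0\times_{Y_0}Y'$, so $\LP$ is not defined on $\GGG$. Worse, a Zariski-locally trivial $\bbA^m$-bundle over a quasi-affine base may have no section at all; the first-column map $\SL_2\to\bbA^2\setminus\{0\}$ is an example. Likewise, Proposition~\ref{G-stable-submodule.pro} needs the geometric generic fibre of a quotient by $\GGG$ itself. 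On the generic fibre $\bbA^m_\KK$ of the $\UUU$-quotient, only the elements inducing the identity on $Y_0$ act by $\KK$-automorphisms.

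The missing idea is the paper's dichotomy. If $\GGG$ has a dense orbit, restrict to it. Then $\GGG$ is transitive, the $\UUU$-quotient $\pi\colon X\to Y$ is global, and the image of $\GGG$ acts transitively on the lower-dimensional $Y$, so $Y$ is affine by induction. This is why ``transitive $\Rightarrow$ affine'' is carried along in the inductive statement. Then \cite{BaCoWr1976Locally-polynomial} makes $\pi$ a vector bundle, which has a section. If there is no dense orbit, take the quotient $X\to X/\GGG$ of Theorem~\ref{popov-rosenlicht.thm}. Apply the transitive case over $\KK$ to its lower-dimensional geometric generic fibre, and conclude with Proposition~\ref{G-stable-submodule.pro}.

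\textbf{The linear parts.} Even with a section, you do not show that the linear parts act locally finitely. Theorem~\ref{Lie-Kolchin.thm} presupposes a $\GGG$-stable finitely generated $R$-module, which is exactly what is at stake. In characteristic $p$ the $\LP_g$ are additive polynomials of a priori unbounded degree.

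The paper never analyses them. Since $\LP(\UUU)=\{\id\}$, the image $\LP(\HHH)=\LP(T)$ is a torus, normal in $\LP(\GGG^{(d-2)})$ with commutative quotient. Theorem~\ref{normal-torus.thm} then gives $\LP(\HHH)=\{\id\}$. Hence $\dl(\LP(\GGG))<d$, and $\LP(\GGG)=\langle \LP\circ\rho(A)\rangle$ is algebraic by induction on derived length, on the same $X$. Only then is $A\times\LP(\GGG)$ an irreducible variety. So the family $(a,f)\mapsto f\circ\TP_{\rho(a)^{-1}}^{-1}\circ f^{-1}$ generates a commutative algebraic subgroup $S\subset\Gamma(\pi)$ with $\GGG\subset\LP(\GGG)\ltimes S$.

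Your ``alternative route'' points toward this but omits the torus argument. It also treats the $\Gamma(\pi)$-part of the kernel as algebraically generated, which it is not a priori.

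\textbf{Minor points.} Applying Theorem~\ref{normal-torus.thm} to $\GGG^{(d-2)}$ shows that $\UUU=\{\id\}$ never occurs for any $d\geq 2$, so your separate torus case is superfluous. Your deduction of affineness from temperedness is fine. However, the input you need is Theorem~1 of \cite{Br2021Homogeneous-variet} ($X\simeq\bbA^p\times(\Ast)^q$), not Theorem~2.
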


\begin{proof}
We can assume that $\GGG$ is algebraically generated, 
and our goal is to prove that it is an algebraic subgroup of $\Aut(X)$. 
Since $\GGG$ is connected, it preserves the irreducible components of $X$, and hence 
we can assume that $X$ is irreducible by \hyperlink{Ramanujam.thm}{Ramanujam's theorem}.
We proceed by induction on the pair $(n, d)$ where $n=\dim(X)$ and $d=\dl(\GGG)$ are positive integers. 
We use the lexicographic order and  start with $n=1$. 
\ps
(a)
In dimension $1$, $X$ is affine and its automorphism group is either finite, the group $\Aff_1$ of affine transformations if $X\simeq \bbA^1$, or the group $\Gm$ of  homotheties  if $X\simeq \bbA^1\setminus\{0\}$.
Thus, the result is established for all pairs $(1, d)$.
\ps
(b)
 Now we assume that $n = \dim X \geq 2$ and that the result holds  for all pairs $(m,d)$ with $m\leq n-1$.
  
To prove the result in dimension $n$, we start with $d=1$: 
this is the case of commutative groups.  Assertions~\eqref{nested.item} and~\eqref{transitive-action.item}  of Theorem~\ref{orbits-of-commutative-groups.thm}  from \S\,\ref{commutative-nested.par} imply respectively that $\GGG$ is an algebraic subgroup and  that $X$ is affine if the action is transitive.
This proves our result for  the pair  $(n, 1)$. 
 
From now on, we still work in dimension $n$, but we assume  that $d\geq 2$ 
and that the result holds for pairs up to $(n, d-1)$. 
Define $r=d-1$. Then $r\geq 1$ and the derived series reads
$$
\GGG^{(0)}=\GGG  \supset \GGG^{(1)} = [\GGG,\GGG] \supset \cdots\supset \GGG^{(j)} \supset \cdots \supset \GGG^{(r)} \supset \{ \id_X \}.
$$
Let $\HHH := \GGG^{(r)}$ denote the last non-trivial member, 
it is a connected and commutative subgroup of  $\Aut(X)$. 
The assertions~\eqref{nested.item} and~\eqref{nested-characteristic-0.item} of Theorem~\ref{orbits-of-commutative-groups.thm} show that 
$\HHH$ is a product $T \times \UUU$ where $T$ is a torus and $\UUU$ is 
unipotent, commutative, and \algebraicallyTame{}. 
Now, we split the proof in two cases.

\ps
(c) 
First, we assume  that  $\GGG$ has a dense orbit. 
Using Lemma~\ref{Lem.restriction-to-open} we can assume that $X = \Xm$, i.e.\ $\GGG$ acts transitively.
Applied to $\GGG^{(r-1)}$,  Theorem~\ref{normal-torus.thm} shows that the unipotent part $\UUU$ of $\HHH$ is non-trivial. 
By Proposition~\ref{commutative-quotient1.prop} 
and the fact that $\GGG$ acts transitively on $X$,  we infer that 
\begin{enumerate}[\rm $\quad \bullet$]
\item there exists a geometric quotient $\pi\colon X \to Y$
for $\UUU$,  
\item $\pi$ is a $\GGG$-equivariant $\bbA^k$-bundle, 
locally trivial in the Zariski topology,
and
\item the image $\GGG' \subseteq \Aut(Y)$
of $\GGG$ is an algebraically generated subgroup.
\end{enumerate}
Since $\dim(Y) < \dim (X)$ and the action is transitive, the induction hypothesis implies that $Y$ is affine and $\GGG'$ is an algebraic subgroup of $\Aut(Y)$. 
The morphism $\pi$ being affine, $X$ is also affine. 
Moreover, by \cite{BaCoWr1976Locally-polynomial}, the $\bbA^k$-bundle $\pi \colon X \to Y$ has a section, and thus a natural $Y$-group scheme structure 
(Proposition~\ref{commutative-quotient2.prop}).
We use the homomorphism 
\[
\LP \colon \GGG \to \Aut(X)
\]
of Proposition~\ref{commutative-quotient3.prop}.
The images $\LP(\GGG)$ and  $\LP(\HHH)$ are connected subgroups of $\Aut(X)$, and $\LP(\UUU)$ is trivial, thus $\LP(\HHH) = \LP(T)$ is an algebraic subtorus of $\Aut(X)$.
Since $\HHH = \GGG^{(r)}$ is the commutator of $\GGG^{(r-1)}$, Theorem~\ref{normal-torus.thm} 
applied to $\LP(\GGG^{(r-1)})$ and $\LP(\HHH)$ implies that $\LP(\HHH)$ is trivial. Thus, the derived length of $\LP(\GGG)$ is strictly less than $d$, and,
by induction, $\LP(\GGG)$  is  a connected algebraic subgroup of $\Aut(X)$. 

By Proposition~\ref{commutative-quotient3.prop},  
$\GGG$ is contained in $\LP(\GGG) \ltimes \Gamma(\pi)$
where $\Gamma(\pi) \subseteq \Aut(X)$ is the commutative group given by 
all sections of $\pi$.
Denote by $\rho \colon A \to \Aut(X)$ a morphism from an irreducible variety $A$
such that $\id \in \rho(A)$ and $\langle \rho(A)\rangle = \GGG$.
Let $S \subseteq \Gamma(\pi)$ be the subgroup generated by the image of the morphism 
\[
A \times \LP(\GGG) \to \Aut(X) \, , \quad (a, f) \mapsto 
f \circ \TP_{\rho(a)^{-1}}^{-1} \circ f^{-1} \, ,
\]
where $\TP_g$ is the ``translation part'' of $g$, as in  \S\,\ref{description_of_normalizer.sec} and
Proposition~\ref{commutative-quotient3.prop}.
This
group $S$
is algebraically generated and, 
$\Gamma(\pi)$ being commutative, Theorem~\ref{orbits-of-commutative-groups.thm} shows that $S$  is an algebraic subgroup of $\Aut(X)$.
By construction, $\LP(\GGG)$ normalizes $S$.
Since $\GGG$ is generated by $\rho(A)$ and $\rho(a) = \LP_{\rho(a)} \circ \TP_{\rho(a)^{-1}}^{-1} \in \LP(\GGG) \ltimes S$ 
for all $a \in A$,  
$\GGG$ is contained in the algebraic subgroup 
$\LP(\GGG) \ltimes S \subseteq \Aut(X)$.  
By Lemma~\ref{alg-gen-subgroup.lem} $\GGG$  is a closed algebraic group, which proves  the result for the pair $(n,d)$ when $\GGG$ has a dense orbit.  
\ps
(d) Now we assume that  the result holds for pairs $(n',d')<(n,d)$ in lexicographic order, as well as for $(n,d)$ when $\mdo(\GGG)=n$. 
We want to prove it for $(n,d)$ when $1\leq \mdo(\GGG) \leq n-1$.

Theorem~\ref{popov-rosenlicht.thm} and Remark~\ref{Rem.Geom_quotient_smooth}
provide a dense, $\GGG$-stable and open subset $X'$ of $X$ that admits a geometric quotient 
for $\GGG$. Using Lemma~\ref{Lem.restriction-to-open} we are free to replace $X$ by $X'$
and thus we may assume that there exists a geometric quotient 
$\pi\colon X \to Y\coloneqq X/\GGG$.
The geometric generic fibre $\Xgg$ is an irreducible quasi-affine $\KK$-variety, 
where $\KK$ is an algebraic closure of $\kk(Y)$  (Corollary~\ref{Cor.Geometric_fibre_of_Geom_quotient}).

Let $\rho \colon A \to \Aut(X)$ be a morphism, where $A$ is irreducible, $\id \in \rho(A) \subseteq \GGG$ and $\rho(A)$ generates $\GGG$ as a group. Denote by $\GGG_{\KK} \subseteq \Aut(\Xgg)$
the subgroup generated by the image of $\rho_{\KK} \colon A_{\KK} \to \Aut(\Xgg)$.
By Corollary~\ref{Cor.Geometric_fibre_of_Geom_quotient}, 
$\GGG_{\KK}$ acts transitively on $\Xgg$, so we derive from our standing assumption 
that $\GGG_{\KK}$ is an algebraic subgroup.
Hence, by Proposition~\ref{G-stable-submodule.pro} together with 
Proposition~8.6 of \cite{Hu1975Linear-algebraic-g}, 
$\GGG$ is an algebraic subgroup of $\Aut(X)$.

Thus, the result is obtained for the pair $(n,d)$, which  concludes the proof by induction. 
\end{proof}

\section{Applications}\label{applications.sec}

\vspace{0.1cm}
 \begin{center}
 \begin{minipage}{12.0cm}
 {\textit{We present our applications of Theorem~\ref{main-thm-A}, as well as a general definition of the notion of solvable and unipotent radicals. We recommend to read Example~\ref{exa.perepechko} below to get an idea of
some of the phenomena arising in positive characteristic.
 }}
 \end{minipage}
 \end{center}

\subsection{Levi decomposition and proof of Corollary~\ref{corB}}\label{sec.Levi}
Recall the notion of unipotent radical $R_u(G)$ for an algebraic group $G$ from \S~\ref{Rosdecompandunipelem}.

\begin{lem}\label{Quotient_by_unipotent_radical_bounded.lem}
Let $X$ be a variety. There exists an integer $d$, depending only on $\dim(X)$, such that  $\dim (G/R_u(G)) \leq d$ for any
algebraic subgroup $G \subset \Aut(X)$.
\end{lem}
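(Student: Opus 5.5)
The plan is to reduce to a connected group, split off the anti-affine part using Rosenlicht's decomposition (\S\,\ref{Rosdecompandunipelem}), and bound the abelian quotient and the reductive quotient separately, each by a function of $n=\dim X$.

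\emph{Reduction.} Since $R_u(G)=R_u(G^\circ)$ and $\dim G=\dim G^\circ$, we may assume that $G$ is connected. Then $G$ preserves each irreducible component of $X$.

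One warning first. If $X$ has many components, a statement depending only on $\dim X$ fails as literally stated. For instance, $(\Gm)^c$ acts faithfully on the disjoint union of $c$ copies of $\bbA^1$, and $\dim\big((\Gm)^c/R_u\big)=c$. So I would either assume $X$ irreducible, or allow $d$ to depend on $n$ and on the number $c$ of irreducible components. In the latter case one restricts $G$ to each component $X_i$, and $G$ embeds into $\prod_i G|_{X_i}$, each factor being an algebraic group by Lemma~\ref{Lem.restriction-to-open}. I describe the irreducible case.

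\emph{Splitting.} Write $G=\Gaff\Gant$, so that $G/\Gaff$ is an abelian variety. Since $R_u(G)=R_u(\Gaff)$, we get
\[
\dim\big(G/R_u(G)\big)=\dim(G/\Gaff)+\dim\big(\Gaff/R_u(\Gaff)\big).
\]

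\emph{The reductive part.} The group $L:=\Gaff/R_u(\Gaff)$ is connected reductive, and its rank $r$ equals the dimension of a maximal torus $T\subset\Gaff$. This torus acts faithfully on the irreducible variety $X$, so Remark~\ref{torus.faithful.rem} gives $r\leq n$. A connected reductive group of rank $r$ has dimension bounded by a function of $r$ alone: by the classification of root systems there are only finitely many root data of rank $r$ up to isomorphism, and $\dim L=r+|\Phi|$. A crude explicit choice is
\[
f(r)=\max(2r^2+r,\,248).
\]
Hence $\dim L\leq f(n)$.

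\emph{The abelian-variety part (main obstacle).} We need $\dim(G/\Gaff)\leq n$. I would invoke the Nishi--Matsumura theorem, in Brion's form valid in arbitrary characteristic: for a faithful action of a connected algebraic group $G$ on a variety $X$, the induced homomorphism from $G/\Gaff$ to the Albanese of $X$ (or of a resolution, or of the normalization of a $G$-stable open subset) has finite kernel. This yields $\dim(G/\Gaff)\leq\dim \mathrm{Alb}\leq\dim X$ after passing to a suitable model.

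If I want to avoid that citation, I would try a direct argument instead. By Rosenlicht's theorem~\ref{popov-rosenlicht.thm}, take a general $x\in X$. The group $\Gant$ is commutative, so the stabilizer $S$ of $x$ in $\Gant$ fixes the whole orbit $\Gant x$. I would then need to show that the stabilizers are constant along a dense set of orbits, so that $S$ acts trivially on $X$ and faithfulness forces $\dim\Gant=\dim\Gant x\leq n$. This constancy is exactly where the difficulty lies, and it is why I would rely on Nishi--Matsumura.

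\emph{Conclusion.} Combining the two bounds,
\[
\dim\big(G/R_u(G)\big)\leq n+f(n)=:d.
\]
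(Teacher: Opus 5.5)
Your skeleton is the paper's. You reduce to connected $G$, use $R_u(G)=R_u(\Gaff)$, bound the rank of $\Gaff/R_u(\Gaff)$ by $n$ via Remark~\ref{torus.faithful.rem}, bound the dimension of a reductive group by a function of its rank, and control the non-affine part by a structure theorem. The only real difference is that last input. The paper takes $\dim\Gant\leq 3\dim X$ from \cite[proof of Prop.~5.5.4]{BrSaUm2013Lectures-on-the-st} and uses the cruder inequality $\dim(G/R_u(G))\leq\dim\Gant+\dim(\Gaff/R_u(\Gaff))$. Your formula is an exact equality, so you only need to bound $\dim(G/\Gaff)$, which is the more economical target.

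Your caveat about reducible $X$ is justified. The paper allows reducible varieties, and $(\Gm)^c$ acting on $c$ disjoint copies of $\Aone$ contradicts the statement. The paper's proof applies Remark~\ref{torus.faithful.rem}, which needs irreducibility. The lemma is only used for a fixed $X$ (Proposition~\ref{Pro.Unipotent_radical} and the proof of Theorem~\ref{main-thm-Perepechko}), so a bound depending on $n$ and the number of components suffices there.

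There is a step that fails as written. In $\dim(G/\Gaff)\leq\dim\mathrm{Alb}\leq\dim X$, the second inequality is false in general. For a smooth projective curve of genus $g\geq 2$ the Albanese is the Jacobian, of dimension $g$. No choice of model helps, since morphisms from an open subset of a smooth variety to an abelian variety extend.

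What Nishi--Matsumura actually gives is a homomorphism $\phi\colon G\to\mathrm{Alb}(\tilde X)$ with affine kernel, such that the Albanese morphism satisfies $\alpha(gx)=\phi(g)+\alpha(x)$. Take $\tilde X$ to be the normalization: the action of the connected group $G$ lifts to it and stays faithful, and resolutions are not available in positive characteristic. Then $\phi(G)$ is a translate of $\alpha(Gx)$, so $\dim\phi(G)\leq\dim Gx\leq n$. Since $(\ker\phi)^\circ=\Gaff$, you get $\dim(G/\Gaff)=\dim\phi(G)\leq n$. With this repair your argument is complete and gives $d=n+f(n)$, sharper than the paper's $3n$ term.

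Your explicit choice $f(r)=\max(2r^2+r,248)$ is also wrong for $r=9,10,11$: for example, $E_8\times\SL_2$ has rank $9$ and dimension $251$. The finiteness of root systems of rank at most $r$ still yields a valid bound, for instance $2r^2+r+248$.
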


\begin{proof}
First, assume $G$ is affine. By Remark~\ref{torus.faithful.rem},  the dimension of a torus acting faithfully on $X$ is at most $\dim (X)$. Thus, the rank of $G/R_u(G)$ is at most $\dim (X)$ which implies
$\dim (G/R_u(G))$ is bounded by a quadratic function of $\dim (X)$. Indeed, 
let $H$ be a reductive algebraic group: for $r = \mathrm{rank}(H) \geq 15$ we have $\dim(H) \leq 2r^2 + r$ and this bound is realized by $\SO_{2r+1}$; for $r \leq 15$ we have  $\dim(H) \leq 31 r$, but this bound is not optimal.
The case of affine groups gives an upper bound for $\dim \Gaff/R_u(\Gaff)$.
By \cite[proof of Prop.~5.5.4]{BrSaUm2013Lectures-on-the-st},  $\dim \Gant\leq 3\dim(X)$. 
Since $R_u(\Gaff) = R_u(G)$ we get
$\dim (G / R_u(G)) \leq \dim \Gant + \dim (\Gaff /R_u(\Gaff))$
and the conclusion follows.
\end{proof}

Let $\GGG$ be a group. A set $\SSS$ of subgroups of $\GGG$ is \emph{covering} if  $\GGG = \bigcup_{G \in \SSS} G$. It is \emph{directed} if, for any pair $G_1,G_2 \in \SSS$, there is a $G \in \SSS$ containing both. 

\begin{rem}\label{nested_alg_tame_covering.rem}
If  $\GGG $ is \algebraicallyTame{} the set $\MMM$ of all its connected
algebraic subgroups  is covering and directed. 
If $\GGG$ is nested by connected algebraic subgroups 
$G_1 \subseteq G_2 \subseteq \cdots$, then $\SSS = \{G_1, G_2, \ldots \}$
is covering and directed. 
\end{rem}

\begin{prop}\label{Pro.Unipotent_radical}
Let $\GGG \subset \Aut(X)$ be a subgroup. 
Suppose that $\SSS$ is a set of connected and algebraic subgroups of $\GGG$ that is covering and directed.
Then,
\begin{enumerate}[\rm (1)]
\item \label{Pro.Unipotent_radical1} 
The subset $\SSS'$ of those $G \in \SSS$ such that $\dim( G / R_u(G) )$ is maximal is covering and directed.
\item \label{Pro.Unipotent_radical2} 
The set of unipotent radicals $R_u(G)$ with $G \in \SSS'$ is directed.
\item \label{Pro.Unipotent_radical3} The group $R_u^{\SSS}(\GGG) :=\bigcup_{G \in \SSS'} R_u(G)$ 
is  connected and normal in $\GGG$, it is a union of connected unipotent algebraic groups, and  $\GGG = G \,R_u^{\SSS}(\GGG)$ for any $G \in \SSS'$.
\end{enumerate}
\end{prop}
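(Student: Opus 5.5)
We first use Lemma~\ref{Quotient_by_unipotent_radical_bounded.lem}. For $G\in\SSS$ the integer $\dim(G/R_u(G))$ is bounded by a constant depending only on $\dim X$, so the maximum $m$ is attained and $\SSS'$ is non-empty. The basic monotonicity fact is the following. If $G\subseteq G'$ are connected algebraic groups, then $R_u(G')\cap G$ is a closed normal unipotent subgroup of $G$. Its identity component therefore lies in $R_u(G)$. Hence $G/(R_u(G')\cap G)^\circ$ surjects onto $G/R_u(G)$, and $G/(R_u(G')\cap G)$ embeds in $G'/R_u(G')$. This gives $\dim G/R_u(G)\le \dim G'/R_u(G')$.

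For (1), take $G_1\in\SSS'$ and any $G_2\in\SSS$, and choose $G\in\SSS$ containing both. Then $G$ has maximal value $m$, so $G\in\SSS'$. This gives directedness of $\SSS'$. Covering follows the same way: any $g\in\GGG$ lies in some $G_2\in\SSS$, and we enlarge $G_2$ together with a fixed element of $\SSS'$.

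For (2), the key claim is that if $G\subseteq G'$ both lie in $\SSS'$, then $R_u(G)=(R_u(G')\cap G)^\circ$, and in particular $R_u(G)\subseteq R_u(G')$. From the inequalities above, equality of dimensions forces $\dim (R_u(G')\cap G)=\dim R_u(G)$. Since $(R_u(G')\cap G)^\circ\subseteq R_u(G)$ and both are connected of the same dimension, they coincide. Directedness of the radicals then follows from (1): given $G_1,G_2\in\SSS'$, a common overgroup $G\in\SSS'$ satisfies $R_u(G_i)\subseteq R_u(G)$. I expect this step to be the main technical point. One must check that the image of $G$ in $G'/R_u(G')$ has dimension $\dim G-\dim(R_u(G')\cap G)$, and that the resulting inequalities are tight. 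One must also be careful that $R_u(\cdot)$ is defined via $\Gaff$ in the non-affine case, but $R_u(G)=R_u(\Gaff)$ and unipotent subgroups are affine, so the argument carries over.

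For (3), $R_u^{\SSS}(\GGG)$ is a directed union of connected unipotent algebraic groups, hence a subgroup, and it is connected in the sense of Definition~\ref{defi:introduction_connected}. For normality, let $g\in\GGG$ and $u\in R_u(G)$. Pick $G'\in\SSS'$ containing $G$ and $g$. Then $u\in R_u(G')$, and $R_u(G')$ is normal in $G'$, so $gug^{-1}\in R_u(G')\subseteq R_u^{\SSS}(\GGG)$.

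Finally, fix $G\in\SSS'$ and $g\in\GGG$, and choose $G'\in\SSS'$ containing $G$ and $g$. Equality of $\dim G/R_u(G)=\dim G'/R_u(G')$, together with injectivity of $G/(R_u(G')\cap G)\to G'/R_u(G')$, shows that the image of $G$ is a closed subgroup of full dimension in the connected group $G'/R_u(G')$. So this image is everything, which gives $G'=G\,R_u(G')$. Hence $g\in G\,R_u^{\SSS}(\GGG)$, proving $\GGG=G\,R_u^{\SSS}(\GGG)$.
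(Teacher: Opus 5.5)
Your proof is correct and follows the paper's argument. You get monotonicity of $\dim G/R_u(G)$ from $(G\cap R_u(G'))^\circ\subseteq R_u(G)$, the equality case forces $R_u(G)=(G\cap R_u(G'))^\circ\subseteq R_u(G')$, and directedness and normality then follow exactly as in the paper.

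You are more explicit than the paper at two points. First, you invoke Lemma~\ref{Quotient_by_unipotent_radical_bounded.lem} to guarantee that the maximum is attained. Second, you give the dimension count showing $G'=G\,R_u(G')$ for $G\subseteq G'$ in $\SSS'$, which the paper compresses into ``because $\SSS'$ covers $\GGG$''.
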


\begin{proof}
(a)
If $G_1, G_2 \in \SSS$ and $G_1 \subset G_2$, then $G_1 \cap R_u(G_2)$ is  normal in $G_1$, so 
$(G_1 \cap R_u(G_2))^\circ \subset R_u(G_1)$. 
It follows that $\dim G_2/R_u(G_2) \geq \dim G_1/R_u(G_1)$. 

In case of equality,  $(G_1\cap R_u(G_2))^\circ$ has finite index in $R_u(G_1)$, hence $(G_1\cap R_u(G_2))^\circ= R_u(G_1)$ by  connectedness. In particular, $R_u(G_1)\subset R_u(G_2)$ and $R_u(G_1)$ has finite index in $G_1\cap R_u(G_2)$.
\ps
(b) For $G \in \SSS$ and $G' \in \SSS'$,  there is some $G'' \in \SSS$ containing $G$ and $G'$, and by (a) we deduce that $G'' \in \SSS'$. This proves Assertions~\eqref{Pro.Unipotent_radical1} and~\eqref{Pro.Unipotent_radical2}.

\ps
(c)
By construction, $R_u^\SSS(\GGG)$ is a union of connected and unipotent algebraic groups.  
If $g\in \GGG$ and $h\in R_u^\SSS(\GGG)$, then by (b) there is an element $G$ of $\SSS'$ 
containing both $g$ and $h$, with $h\in R_u(G)$. Since $R_u(G)$ is normal in $G$, we deduce that $ghg^{-1}\in R_u(\GGG)$.
To conclude the proof of Assertion~\eqref{Pro.Unipotent_radical3}, note that  
$\GGG = G R_u^\SSS(\GGG)$ for any $G \in \SSS'$ because $\SSS'$ covers $\GGG$.
\end{proof}

\begin{rem}\label{dim-G/R_u.rem}
Part (a) of the proof above shows that for two connected algebraic subgroup $G_1 \subset G_2$ we have $\dim G_1/R_u(G_1) \leq \dim G_2/R_u(G_2)$.
\end{rem}

\begin{rem} 
\label{first_definition_unipotent_radical.rem}
Suppose 
$\GGG$ is \algebraicallyTame{}. By Remark~\ref{nested_alg_tame_covering.rem}, Proposition~\ref{Pro.Unipotent_radical} can be applied to the set $\MMM$ of {\emph{all}} its algebraic and connected subgroups. The subgroup $R_u^\MMM(\GGG)$ will coincide
with the {\emph{unipotent radical}} of $\GGG$ defined below in \S~\ref{unipotent_radical.sec}. So for simplicity, we denote it by $R_u(\GGG)$.
\end{rem}

\begin{proof}[Proof of Corollary~\ref{corB}]
By Theorem~\ref{main-thm-A} the connected solvable subgroup $\HHH$ is \algebraicallyTame{}, and all its algebraic subgroups are affine (Lemma~\ref{quasiaffine-gives-affine.lem}).  
Let $\MMM$ be the covering directed set of all connected algebraic subgroups of $\HHH$, and let $\MMM'$ be the subset of those $G \in \MMM$ such that
$\dim G/ R_u(G)$ is maximal. By Proposition~\ref{Pro.Unipotent_radical}\,\eqref{Pro.Unipotent_radical1}, $\MMM'$ is also covering and directed.
Fix a torus $T \subseteq \HHH$  of  maximal dimension. Then $T \in \MMM'$.
By Proposition~\ref{Pro.Unipotent_radical}\,\eqref{Pro.Unipotent_radical3}, 
$\HHH = T R_u(\HHH)$. Every algebraic subgroup in $R_u(\HHH)$ 
is unipotent and 
the unipotent elements of $\HHH$ are exactly the elements in $R_u(\HHH)$.
Hence, $T \cap R_u(\HHH)$ is trivial and so 
$\HHH = T \ltimes R_u(\HHH)$.
\end{proof}

\subsection{Solvable subgroups of maximal derived length}\label{solvable.subgroups.sec}

The Jonquières subgroup $\Jonq(n) \subset \Aut(\An)$ was introduced in \S\,\ref{jonquieres.sec}. 
It is connected, its derived length is equal to $n+1$, and its  derived subgroup  $\Jonq_u(n)$ coincides with $R_u(\Jonq(n))$.
The following proposition and its corollary generalize Theorems~D and~C of \cite{KZ24} to arbitrary characteristic.  

\begin{prop}\label{Prop.U_transitive}
Let $X$ be a quasi-affine variety and $\HHH \subset \Aut(X)$ a connected solvable subgroup. If $R_u(\HHH)$ has a dense orbit, then $X \simeq \An$ and $\HHH$ is conjugate to a subgroup of $\Jonq(n)$. In particular, $\dl(\HHH) \leq n+1$ and $\dl(R_u(\HHH)) \leq n$. 
\end{prop}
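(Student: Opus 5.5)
We argue by induction on $n=\dim X$, using the structure results established so far. By Corollary~\ref{corB}, $\HHH = T\ltimes\UUU$ with $T$ a torus and $\UUU=R_u(\HHH)$ a tempered unipotent group. Since $\UUU$ is connected it preserves irreducible components, so a dense orbit forces $X$ to be irreducible. We first reduce to the case where $\UUU$ acts transitively. By Theorem~\ref{thm.Proof_of_them_A}, any algebraically generated subgroup of $\UUU$ is a unipotent algebraic group. By Proposition~\ref{Orbits.prop}(2), there is a connected unipotent algebraic subgroup $U\subset\UUU$ with the same orbits as $\UUU$, and its dense orbit $O$ is open. For a unipotent group acting on a quasi-affine variety, orbits are closed (Kostant--Rosenlicht), so $O=X$. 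Then $X\simeq U/U_x$ is affine and isomorphic to $\bbA^n$; here I would again appeal to the structure of homogeneous spaces of split unipotent groups, e.g.\ Theorem~1 of \cite{Br2021Homogeneous-variet}.

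Next, let $\UUU^{(r)}$ be the last nontrivial term of the derived series of $\UUU$, or better a suitable nontrivial commutative connected normal subgroup $\NNN$ of $\HHH$ contained in $\UUU$ (for instance the center of $\UUU$, or the last derived term). By Proposition~\ref{commutative-quotient1.prop}, there is a geometric quotient $\pi\colon X\to Y$ for $\NNN$ on an open set stable under the normalizer. Because $\HHH$ acts transitively, this open set is all of $X$, and $\pi$ is an $\HHH$-equivariant locally trivial $\bbA^k$-bundle with $k\geq1$. The image of $\UUU$ in $\Aut(Y)$ is unipotent, connected and transitive, and $Y$ is quasi-affine (indeed affine, by Theorem~\ref{thm.Proof_of_them_A}) of smaller dimension. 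By induction, $Y\simeq\bbA^{n-k}$ and the image of $\HHH$ is conjugate into $\Jonq(n-k)$. By \cite{BaCoWr1976Locally-polynomial}, or simply since vector-bundle-type $\bbA^k$-bundles over affine space are trivial, we get $X\simeq Y\times\bbA^k$ with a section, which gives the $Y$-group scheme structure of Proposition~\ref{commutative-quotient2.prop}.

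The main obstacle is showing that the action of $\HHH$ on the fibre coordinates is triangular. Via Proposition~\ref{commutative-quotient3.prop}, every $g\in\HHH$ acts fibrewise as $\LP_g$ followed by a translation in $\Gamma(\pi)$, and $\LP_g$ is a fibrewise additive group-scheme automorphism. Over $Y\simeq\bbA^{n-k}$ the group scheme is a form of $\Ga^k$. I would try to choose $\NNN$ minimal, ideally with $k=1$, by taking a one-dimensional $T$-stable unipotent subgroup inside the center of $\UUU$, in analogy with root subgroups. Then the fibre group is $\Ga$ over $Y$, and additive automorphisms are $\lambda\cdot$ (in characteristic $p$, additive polynomials in $F$). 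Connectedness and solvability, combined with the $\Gm$-weight given by $T$, should force linearity $z\mapsto a z+p(y)$ with $a\in\kk^\times$. This would give triangular form $x_n\mapsto a_nx_n+p_n(x_1,\dots,x_{n-1})$, with the induced action on $Y$ already of Jonquières type, so $\HHH$ is conjugate into $\Jonq(n)$. If achieving $k=1$ fails, I would instead run the induction on the fibre bundle directly, treating the fibre $\bbA^k$ with its unipotent group of translations as a second inductive piece.

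Finally, the bounds follow from $\dl(\Jonq(n))=n+1$, $[\Jonq(n),\Jonq(n)]=\Jonq_u(n)$, and $\dl(\Jonq_u(n))=n$, all established in \cite{FuPo2018On-the-maximality-}. The image of $\UUU$ consists of unipotent elements and so lies in $\Jonq_u(n)$.
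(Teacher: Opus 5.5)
Your reduction to the transitive case and the identification $X\simeq U/U_x\simeq\An$ are the paper's step (a). Taking $\NNN=\UUU^{(r)}$ does give an $\HHH$-equivariant, Zariski-locally trivial $\bbA^k$-bundle $X\to Y$ to which the induction applies.

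The genuine gap is the step you flag as the main obstacle.

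\textbf{The case $k>1$.} You give no argument that the maps $\LP_g$ can be simultaneously put in triangular form. These are fibrewise additive automorphisms of a $Y$-group scheme, they cover $\bar g$ on the base, and in characteristic $p$ they need not be linear. ``Should force linearity'' and ``run the induction on the fibre bundle directly'' are not arguments.

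\textbf{Your route to $k=1$.} Proposition~\ref{Thm.Torus_actions_on_U} applies only to connected unipotent \emph{algebraic} groups. So it does not give a $T$-stable $\Ga$ in the centre of the merely nested group $\UUU$. If instead you take $N\simeq\Ga$ central and $T$-stable in some algebraic subgroup $U\subset\UUU$, then $N$ need not be normal in $\UUU$. In that case Proposition~\ref{commutative-quotient1.prop}(3) gives no $\HHH$-equivariance of the quotient by $N$.

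\textbf{The missing idea: make $U$ large.}
\begin{enumerate}
\item Fix $x\in X$. Since $X^{\UUU_x}=\bigcap_U X^{U_x}$ over the directed family of connected algebraic $U\subset\UUU$, noetherianity lets you choose $U$ with $X^{U_x}=X^{\UUU_x}$. Enlarging $U$ by its $T$-conjugates keeps this property, so you may also assume $T$ normalizes $U$.
\item Let $N\simeq\Ga$ be central and $T$-stable in $U$. For $m\in N$ you get $U_{mx}=mU_xm^{-1}=U_x$, so $mx\in X^{U_x}=X^{\UUU_x}$. Hence $\UUU_x\subseteq\UUU_{mx}=m\UUU_xm^{-1}$. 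Doing the same for $m^{-1}$ shows that $N$ normalizes $\UUU_x$.
\item Therefore $N\UUU_x$ is a subgroup, and $\UUU/\UUU_x\to\UUU/N\UUU_x$ is $\UUU$-equivariant with fibres the $N$-orbits. It is also $T$-equivariant, because $T$ normalizes $N$.
\item Since $N$ is central in the transitive group $U$, it acts freely. So this map is a principal $\Ga$-bundle over $U/NU_x\simeq\bbA^{n-1}$, hence trivial, i.e.\ a linear projection $\An\to\bbA^{n-1}$.
\end{enumerate}

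Once $k=1$, triangularity is automatic and needs neither solvability nor $T$-weights. Any automorphism of $\bbA^{n-1}\times\bbA^1$ permuting the fibres of the projection has the form $(y,z)\mapsto(\bar g(y),az+b(y))$ with $a\in\OOO(\bbA^{n-1})^\times=\kk^\times$, because $\Aut(\bbA^1)$ is the affine group. So after conjugating the base into $\Jonq(n-1)$ by induction, you land in $\Jonq(n)$.
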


For the proof we need the following result, see Proposition~3.8 in 
\cite[Ch.\,IV, \S\,2]{DeGa1970Groupes-algebrique}.

\begin{prop}\label{Thm.Torus_actions_on_U}
Let $U$ be a connected unipotent algebraic group. Assume that a torus $T$ acts on 
$U$ by algebraic group automorphisms. If $U$ is non-trivial, then 
$U$ contains a central, $T$-stable subgroup that is isomorphic to $\Ga$.
\end{prop}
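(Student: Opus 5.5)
The plan is to locate the required subgroup inside the identity component of the center. Then we cut it down, one dimension at a time, along $T$-eigenvectors. Concretely, let $Z:=Z(U)^\circ$. Since $U$ is a non-trivial connected unipotent group, it is nilpotent, so its center is non-trivial. I also expect $Z(U)^\circ\neq\{e\}$, since the last non-trivial term of the descending central series of $U$ is connected and central. The center is characteristic in $U$, hence stable under every algebraic group automorphism, and so $Z$ is a non-trivial connected commutative unipotent group which is $T$-stable. Any closed connected one-dimensional $T$-stable subgroup of $Z$ is then automatically central in $U$. It is isomorphic to $\Ga$ because a connected one-dimensional unipotent group over an algebraically closed field is $\Ga$. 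So it suffices to produce a $T$-stable closed connected subgroup of $Z$ of dimension $1$.

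Next we replace $Z$ by a vector group. In characteristic $0$, $Z$ is already a vector group, $Z\simeq \Ga^m$, via the exponential of its Lie algebra. In characteristic $p>0$, set $V:=(\ker(p\cdot\id_Z))^\circ$. Multiplication by $p$ commutes with group automorphisms, so $V$ is $T$-stable. It is non-trivial, because $Z$ has exponent a power of $p$: if $p^k$ is the exponent, then $p^{k-1}Z$ is a non-trivial connected subgroup of $V$. Thus $V$ is a connected commutative unipotent group killed by $p$. By the structure theorem for such groups over an algebraically closed field (Demazure--Gabriel, Ch.~IV), $V\simeq\Ga^m$ with $m\geq 1$. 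In both cases we get a $T$-stable subgroup $V\simeq \Ga^m$ of $Z$.

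Now we induct on $m$. If $m=1$ we are done. If $m\geq 2$, consider the $\kk$-vector space $\Hom_{gr}(V,\Ga)$ of additive polynomials on $V$, on which $T$ acts by $(t\cdot f)(v)=f(t^{-1}v)$. Because $T\times V\to V$ is a morphism, the action of $T$ preserves the subspace of additive polynomials of degree $\le p^k$ (or $\le 1$ in characteristic $0$). This subspace is finite dimensional, the action on it is rational, and it contains non-zero elements. The torus $T$ is diagonalizable, so this subspace contains a non-zero $T$-eigenvector $f$. Its kernel $\ker f$ is then a $T$-stable closed subgroup of $V$ of dimension $m-1$. Let $V':=(\ker f)^\circ$; it is still $T$-stable and killed by $p$ (resp.\ a vector group in characteristic $0$), so $V'\simeq\Ga^{m-1}$ again by the structure theorem. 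Iterating reduces the dimension to $1$.

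The main obstacle is the characteristic $p$ step. There automorphisms of $\Ga^m$ involve Frobenius twists, so one cannot simply diagonalize $T$ on a Lie algebra as in characteristic $0$. The argument rests on two facts: the structure theorem identifying connected $p$-torsion commutative unipotent groups with vector groups, and the local finiteness of the $T$-action on additive polynomials, which is what provides an eigenvector. I would cite the former from Demazure--Gabriel, and I would check the latter carefully via the comorphism of the action, as in Proposition~\ref{embedding-into-GLR.lem}(1).
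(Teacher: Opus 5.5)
Your strategy is sound and gives a self-contained proof, whereas the paper proves nothing here and simply cites Proposition~3.8 of Demazure--Gabriel, Ch.~IV, \S2. Your reduction is correct: you pass to $Z(U)^\circ$, which is non-trivial by nilpotency and connectedness of the terms of the descending central series. In characteristic $p$ you then take $V=(\ker p)^\circ$, a $T$-stable vector group, using the structure theorem for connected commutative groups of exponent $p$. You then cut $V$ down along $T$-eigenvectors in $\Hom_{\mathrm{gr}}(V,\Ga)$.

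However, one step of your induction is false as written. In characteristic $p$ the torus does \emph{not} in general preserve the space of additive polynomials of degree $\le p^k$, for any $k$. For example, fix $a\in\kk^\times$. Then
\[
t\cdot(x,y)=\bigl(tx,\; y+a(t^p-1)x^p\bigr)
\]
is an action of $\Gm$ on $\Ga^2$ by algebraic group automorphisms: with $c(t)=a(t^p-1)$ one checks $c(st)=c(t)+c(s)t^p$. Under $(t\cdot f)(v)=f(t^{-1}v)$ one gets
\[
t\cdot y^{p^k}=y^{p^k}+a^{p^k}(t^{-p}-1)^{p^k}x^{p^{k+1}},
\]
which has degree $p^{k+1}$ for every $t\neq 1$. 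The fact that $T\times V\to V$ is a morphism yields local finiteness, not a degree bound. Only in characteristic $0$, where automorphisms of $\Ga^m$ are linear, is your degree-$\le 1$ claim correct.

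The repair is the tool you mention in your last paragraph. Take any nonzero $f\in\Hom_{\mathrm{gr}}(V,\Ga)$. By the comorphism argument of Proposition~\ref{embedding-into-GLR.lem}(1), the span $W$ of $\{t\cdot f\mid t\in T\}$ is finite dimensional. It consists of additive polynomials and is $T$-stable, and the matrix coefficients of $T$ on $W$ are regular. So $W$ is a rational $T$-module and, $T$ being diagonalizable, it contains a $T$-eigenvector. With this substitution the rest of your induction goes through. The kernel of a nonzero semi-invariant $f$ is $T$-stable of dimension $m-1$, and $(\ker f)^\circ\simeq\Ga^{m-1}$ by the same structure theorem.
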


\begin{proof}[Proof of Proposition~\ref{Prop.U_transitive}]
We argue by induction on $n:=\dim X$, the case $n=1$ being trivial. 
Let $T\subset \HHH$ be a torus of maximal dimension. From
Corollary~\ref{corB} 
we get  $\HHH=T\ltimes \UUU$ where $\UUU:=R_u(\HHH)$ is \algebraicallyTame{}.
\ps
(a) Let $U \subset \UUU$ be a connected algebraic subgroup with the same orbits and the same invariants as $\UUU$ (see Proposition~\ref{Orbits.prop}\,(2)). 
Since $U$ is unipotent its orbits are closed (Proposition~4.10 in 
\cite{Bo1991Linear-algebraic-g})
and isomorphic to $\An$ 
(Theorem~1 in \cite{Br2021Homogeneous-variet}), so $X$ is isomorphic to $\An$. 
\ps
(b) Choose a point $x \in X$ and denote by $\UUU_x$ its stabilizer.  The fixed point set $X^{\UUU_x}$ of $\UUU_x$ coincides with $\bigcap_U X^{U_x}$, where $U$  runs through the set of connected algebraic subgroups of $\UUU$. Choosing $U$ big enough, we can assume that $X^{U_x} = X^{\UUU_x}$ and that $T$ normalizes~$U$. 
\ps
(c) Choose a central $T$-stable subgroup $N$ of $U$ which is isomorphic to $\Ga$, as in Proposition~\ref{Thm.Torus_actions_on_U}. 
Since $U \subseteq \Aut(X)$
acts transitively on $X$ and $N$ is central in $U$ the schematic intersection $N \cap U_x$ acts trivially on $X$ and thus it is trivial
(see Remark~1.2 in \cite{Note_on_Ramanujam}).
An element $g$ of $U$ normalizes $U_x$ (resp. $\UUU_x$) if and only if $gx$ is in $X^{U_x}$  (resp. in  $X^{\UUU_x}$). Thus, if $g$ normalizes $U_x$, then it normalizes $\UUU_x$ too. Hence,  $N$ normalizes $\UUU_x$.
\ps
(d) Now, we identify $X$ with $U/U_x$ and with $\bbA^n$. The quotient morphism 
\[
\pi\colon \An = U/U_x \to U/NU_x
\]
is $U$-equivariant and $U/NU_x \simeq \bbA^{n-1}$, as in (a).
Since $N$ is central in $U$ and acts freely on $X$,   
Lemma~3.2\,(v) in \cite{Br2021Homogeneous-variet} shows that $\pi$ is a principal  $N$-bundle. Every principal $\Ga$-bundle over an affine variety is trivial (see Corollaire 6.6 \cite[III, \S\,4]{DeGa1970Groupes-algebrique}), thus $\pi$ can be identified with a linear projection $\An \to \bbA^{n-1}$.
\ps
(e) Since $N$ normalizes $\UUU_x$ the quotient morphism $\pi$ can be identified with the quotient $\UUU/\UUU_x \to \UUU/N\UUU_x$; as such, it is $\UUU$-equivari\-ant. 
Since $\pi$ is a geometric quotient for $N$ and $T$ normalizes $N$, 
$\pi$ is also $T$-equivariant.
Thus, $\pi$ is $\HHH$-equivariant, and the image of $\HHH$ in $\Aut(\bbA^{n-1})$ is a connected  solvable subgroup whose unipotent part acts with a dense orbit. Now, the conclusion follows by induction.
\end{proof}

\begin{cor}
\label{Cor.bound_dl}
If $X$ is quasi-affine and
$\UUU \subseteq \Aut(X)$ a connected unipotent solvable subgroup, then
$\dl(\UUU) \leq \max \set{\dim (\UUU x)}{x \in X}$.
\end{cor}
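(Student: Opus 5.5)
We reduce to Proposition~\ref{Prop.U_transitive} by passing to orbits, or more precisely to generic fibers of a geometric quotient. Set $m := \max\{\dim(\UUU x)\mid x\in X\}$. The bound is monotone in $X$ in the right way: if $\UUU$ acts faithfully on a dense open $\UUU$-stable subset, derived lengths agree. So we may assume $X$ is irreducible, since connected groups preserve irreducible components. If $\UUU$ acts faithfully on $X$, it acts faithfully on the union of the components, and the derived length of $\UUU$ is the maximum over the images in each $\Aut(X_i)$. We may also replace $X$ by a dense open $\UUU$-stable subset (Lemma~\ref{Lem.restriction-to-open}).

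Concretely, we proceed in four steps.

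\textbf{Step 1.} By Theorem~\ref{main-thm-A}, $\UUU$ is \algebraicallyTame{}. We first handle an algebraically generated subgroup $\GGG\subseteq \UUU$, which is then a connected unipotent algebraic group. Since $\dl(\UUU)=\sup_\GGG \dl(\GGG)$ over algebraically generated $\GGG$ (commutators of finitely many elements already live in some such $\GGG$, by directedness), it suffices to bound $\dl(\GGG)\le m$. Note that $\GGG$-orbits have dimension at most $m$.

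\textbf{Step 2.} By Theorem~\ref{popov-rosenlicht.thm}, after shrinking $X$ there is a geometric quotient $\pi\colon X\to Y$ for $\GGG$, with generic orbits of dimension $k\le m$. Let $\Xgg$ be the geometric generic fiber over $\KK=\overline{\kk(Y)}$. By Corollary~\ref{Cor.Geometric_fibre_of_Geom_quotient}, $\Xgg$ is an irreducible quasi-affine $\KK$-variety of dimension $k$, and $\GGG_\KK$ acts transitively on it. $\GGG_\KK$ is a connected unipotent algebraic $\KK$-group. Its image in $\Aut(\Xgg)$ is a connected solvable subgroup equal to its own unipotent radical, and it has a dense orbit. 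Proposition~\ref{Prop.U_transitive}, applied over $\KK$, then gives $\dl(\mathrm{im}\,\GGG_\KK)\le k$.

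\textbf{Step 3.} We compare $\dl(\GGG)$ with the derived length of the image of $\GGG_\KK$ in $\Aut(\Xgg)$, and this is the main obstacle. We need the kernel of $\GGG\to\Aut(\Xgg)$ to be trivial, i.e.\ an element acting trivially on the generic fiber acts trivially on $X$. The argument is that the kernel of $\GGG_\KK \to \Aut(\Xgg)$ is a closed subgroup scheme defined over $\kk(Y)$. Its $\kk$-points $g\in\GGG$ act trivially on $X_\eta$, hence on a dense open subset of $X$ (since $\OOO(X)\subset \OOO(X)\otimes_R K$ after shrinking $Y$ to be affine), hence trivially on $X$. Thus $\GGG\hookrightarrow \Aut(\Xgg)$, derived subgroups map to derived subgroups, and $\dl(\GGG)\le \dl(\mathrm{im}\,\GGG_\KK)\le k\le m$. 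One must take care that the image of $\GGG$ (the $\kk$-points) sits inside the image of $\GGG_\KK$, which is clear. Iterated commutators of elements of $\GGG$ are then $\kk$-points of the iterated derived groups of $\GGG_\KK$, and those vanish at step $k$.

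\textbf{Step 4.} Taking the supremum over $\GGG$ and over the components yields $\dl(\UUU)\le m$.
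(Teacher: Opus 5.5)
Your proof is correct, but it takes a longer route than the paper's.

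The paper argues orbit by orbit. Each orbit $\UUU x$ is locally closed and irreducible (Proposition~\ref{Orbits.prop}), hence quasi-affine. The image $\UUU^{(x)}$ of $\UUU$ in $\Aut(\UUU x)$ is a connected unipotent solvable group acting transitively. So Proposition~\ref{Prop.U_transitive} applies directly over $\kk$ and gives $\dl(\UUU^{(x)})\le \dim \UUU x\le m$. Since $X$ is the union of the $\UUU$-orbits and $\UUU$ acts faithfully on $X$, one has $\dl(\UUU)=\max_x \dl(\UUU^{(x)})$.

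You replace this family of orbit restrictions by a single generic one. You first reduce to algebraically generated $\GGG\subseteq\UUU$, which are algebraic by Theorem~\ref{main-thm-A}; this reduction is needed so that a geometric quotient exists and the base change $\GGG_\KK$ makes sense. You then shrink $X$ via Theorem~\ref{popov-rosenlicht.thm}, pass to the geometric generic fiber $\Xgg$ over $\KK$, and apply Proposition~\ref{Prop.U_transitive} once, over $\KK$. Finally you must prove that $\GGG\to\Aut(\Xgg)$ is injective. Your argument for this works: an element trivial on $\Xgg$ is trivial on $X_\eta$ by descent along $\KK/\kk(Y)$, hence acts trivially on $\kk(X)$, hence on $X$.

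The paper never needs this injectivity step, because faithfulness on the union of all orbits is automatic. Your route gives a slightly different insight: the derived length is already detected on the geometric generic orbit. However, it yields no better bound, since the maximal orbit dimension is attained on a dense open set. It also costs extra machinery — Theorem~\ref{main-thm-A} invoked directly, the Popov–Rosenlicht quotient, base change, and descent — all of which the orbitwise argument avoids.
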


\begin{proof}
For all $x \in X$, the orbit $\UUU x$ is locally closed in $X$ and irreducible 
(Proposition~\ref{Orbits.prop}). 
Let $\UUU^{(x)}$ be the image of $\UUU$ in $\Aut(\UUU x)$. It is a connected unipotent solvable subgroup of $\Aut(\UUU x)$
acting transitively on $\UUU x$. By Proposition~\ref{Prop.U_transitive}, 
$\dl(\UUU^{(x)}) \leq \dim \UUU x$. Since 
$\dl(\UUU) = \max \{\dl(\UUU^{(x)})\mid x \in X\}$, the claim follows.
\end{proof}

\begin{lem}
    \label{Lem.affine-morphism} 
    Let $\pi \colon X \to Y$ be a dominant morphism of irreducible varieties
    such that $\Xgg$ is isomorphic to the affine space 
    of dimension $k$.
    \begin{enumerate}[\rm (1)]
        \item  There is a flat morphism $U \to Y$ with 
               $X \times_Y U \simeq \bbA^k \times U$.
        \item  The morphism $\pi$ is affine over an open dense subset of $Y$. 
    \end{enumerate}
\end{lem}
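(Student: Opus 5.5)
The plan is a spreading-out argument: an isomorphism over $\KK$ is defined over a finite extension of $\kk(Y)$, hence over a quasi-finite dominant étale-ish cover of an open subset of $Y$, and affineness descends along such a cover.

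\textbf{Step 1: descend the isomorphism.} Fix an isomorphism $\psi\colon \Xgg \simto \bbA^k_\KK$. Both $\psi$ and $\psi^{-1}$ involve only finitely many elements of $\KK$. Since $\OOO(\Xgg)=\OOO(\Xg)\otimes_{\kk(Y)}\KK$ on an affine chart (Proposition~2.6 of \cite[Ch.~I, \S\,2]{DeGa1970Groupes-algebrique}), these elements lie in a finite extension $L/\kk(Y)$. So already $X_\eta\times_{\kk(Y)}\Spec L\simeq \bbA^k_L$.

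One point needs care. We do not know a priori that $\Xg$ is affine, so we should not phrase this via $\OOO(\Xg)$ alone. Instead, take a finite open affine cover of $\Xg$. The isomorphism and its inverse, as morphisms of finite type $\KK$-schemes, are given by finitely many polynomial data on this cover together with its gluing. All of these data are defined over some finite $L$, by the standard limit argument (EGA~IV~8).

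\textbf{Step 2: spread out.} Let $V\to Y$ be the normalization of $Y$ in $L$, restricted over an affine open. Shrinking, choose an affine open $U\subseteq V$ such that:
\begin{enumerate}[\rm (i)]
\item $U\to Y$ is finite and flat onto an open subset of $Y$ (generic flatness);
\item the isomorphism of Step 1 extends to an isomorphism $X\times_Y U \simeq \bbA^k\times U$.
\end{enumerate}
Condition (ii) is again obtained by spreading out. Both sides are of finite type over $U$, and two $U$-morphisms that agree at the generic point, with compositions equal to the identity there, agree over a dense open subset of $U$. Replacing $U$ by a smaller principal open keeps $U\to Y$ flat (its image is open) and quasi-finite. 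This proves (1).

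\textbf{Step 3: descend affineness, which gives (2).} Shrink $Y$ to an affine open $Y_0$ over which $U\to Y$ is finite, flat and surjective. To get this, take the open subset of $Y$ over which the finite morphism $V\to Y$ is flat. Then remove the image of the closed set $V\setminus U$, which is closed since $V\to Y$ is finite.

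Over $Y_0$ the base change $X\times_Y U\to U$ is affine, being isomorphic to $\bbA^k\times U\to U$. Affineness descends along faithfully flat quasi-compact morphisms (fpqc descent, EGA~IV~2.7.1). Hence $\pi^{-1}(Y_0)\to Y_0$ is affine.

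\textbf{Main obstacle.} The delicate step is arranging $U\to Y$ to be both flat and surjective over a dense open subset, while keeping the trivialization of Step 2. I would handle it by working with the finite map $V\to Y$ first, so that images of closed sets stay closed, and only then shrinking.

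If one wants to avoid fpqc descent, there is an alternative. Since $U\to Y_0$ is finite and surjective and $X\times_Y U$ is affine, Chevalley's theorem (a scheme admitting a finite surjective morphism from an affine scheme is affine) shows that $\pi^{-1}(Y_0)$ is affine. Then $\pi$ is affine over $Y_0$, since $Y_0$ is affine.
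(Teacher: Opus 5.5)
Your argument is correct and is essentially the paper's. Both proofs descend the trivialization of $X_{\bar\eta}$ to a finite extension of $\kk(Y)$, spread it out over a flat base dominating an open subset of $Y$, and get (2) by faithfully flat descent of affineness. The one difference in (1) is that the paper spreads out only the coordinate functions and then uses descent plus EGA~IV~9.6.1 to propagate the isomorphism property, whereas you spread out $\psi$ and $\psi^{-1}$ together.

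The finiteness you arrange via the normalization of $Y$ in $L$ is not needed for the descent step, since a flat morphism of finite type already has open image, so your ``main obstacle'' is harmless. It is, however, what makes your Chevalley-theorem alternative for (2) available.
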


\begin{proof}
  Assertion (2) follows from (1) by faithfully-flat descent
  (Proposition 14.53 in \cite{GoWe0Algebraic-geometry-I}).
  Let us prove Assertion (1).
  We may assume that $\pi$ is flat and $Y$ is affine and irreducible.
  Denote by $\KK / \kk(Y)$ an algebraic closure.
  Let $f_1, \ldots, f_k \in \OOO(\Xgg) = \OOO(X) \otimes_{\OOO(Y)} \KK$
  be the coordinate functions of an isomorphism 
  $f \colon X_{\KK} \xrightarrow{\sim} \bbA^k_{\KK}$. 
  Replacing $\pi$ with
  the pull-back by a flat morphism $Y' \to Y$, 
  we may assume that $f_1, \ldots, f_k \in \OOO(X)$, i.e.,
  $f$ is equal to the pull-back $\varphi_{\KK}$ of a 
  $Y$-morphism $\varphi \colon X \to \bbA^k \times Y$. Faithfully-flat descent
  applied to the pull-back via 
  $\bbA^k_{\KK} \to \bbA^k_{\kk(Y)}$ 
  gives that $\varphi_{\kk(Y)} \colon X_{\kk(Y)} \xrightarrow{\sim} \bbA^k_{\kk(Y)}$ is an isomorphism.
  By Proposition 5.7 in Exposition~I of \cite{GrRa2003Revetements-etales}
  and Proposition~9.6.1 of \cite{Gr1966Elements-de-geomet-28} we find an 
  open dense subset $U \subseteq Y$ such that the restriction of
  the $Y$-morphism $\varphi$ over $U$ gives a $U$-isomorphism
  $\pi^{-1}(U) \simeq \bbA^k \times U$, hence Assertion (1).
  %
\end{proof}

\begin{proof}[Proof of Theorem~\ref{main-thm-derived-length}] Assertion (1) claims that a connected solvable subgroup $\GGG \subset \Aut(X)$ has derived length $\leq \dim X + 1$.
For the proof we argue by induction on $n:=\dim(X)$, starting with the trivial case $n=0$. Now, we assume $n \geq 1$ and that the statement has been obtained in all dimensions $\leq n-1$.
Considering the action of $\GGG$ on each of its orbits, we may assume that $\GGG$ acts transitively on $X$. Arguing by contradiction, we assume $d \coloneqq \dl(\GGG) \geq n + 2$.

  \ps
  (a)
  Corollary~\ref{Cor.global_geom_quotient}
  gives a geometric quotient $\pi \colon X \to Y$ for $\HHH \coloneqq \GGG^{(d-1)}$.
  Let $\KK / \kk(Y)$ be an algebraic closure.
  For a connected subgroup
  $\BBB \subset \Aut_Y(X)$ 
  we denote by 
  $\BBB_{\KK} \subseteq \Aut(\Xgg)$ the subgroup generated by the morphisms
  $\rho_{\KK} \colon A_{\KK} \to \Aut(\Xgg)$ for all morphisms 
  $\rho \colon A \to \Aut(X)$ with irreducible $A$ and 
  $\id_X \in \rho(A) \subseteq \BBB$. 
  By Proposition~\ref{commutative-subgroup.prop} and
  Corollary~\ref{Cor.Geometric_fibre_of_Geom_quotient}, $\HHH_{\KK}$ is a commutative
  algebraic group acting transitively on $\Xgg$ and $\dim \HHH_{\KK} = \dim \Xgg$.

\ps
(b)
Set $m := \dim Y \leq n-1$ and $\KKK:=\GGG^{(m+1)}$. By the induction, $\KKK$ acts trivially on $Y$, and $ \KKK^{(1)}=\GGG^{(m+2)}$ contains $\HHH$, because $m+2 \leq n+1\leq d-1$.  
The (commutative) algebraic group $\HHH_{\KK} / R_u(\HHH_{\KK})$ is a semi-abelian variety, i.e., an extension of an abelian variety with a torus, which acts transitively
on $Z \coloneqq \Xgg / R_u(\HHH_{\KK})$.  
In particular, $Z$ is semi-abelian, and so $\Aut(Z)^\circ = Z$. 
In fact, every automorphism of $Z$ is a group automorphism up to translation with an element of $Z$ (see Theorem~3 in \cite{Ro1961Toroidal-algebraic}), and the 
group automorphisms of $Z$ form a discrete group 
(cf. Proposition~4.3.2 of \cite{BrSaUm2013Lectures-on-the-st}).
It follows that
$\KKK_{\KK}^{(1)}$ acts trivially on $Z$. 
Since $\HHH\subset \KKK^{(1)}$, $\KKK^{(1)}_{\KK}$ acts transitively on $\Xgg$, and we conclude that 
$\HHH_{\KK} = R_u(\HHH_{\KK})$, and so $\Xgg \simeq \bbA_{\KK}^{n-m}$ 
(by Theorem~1 in \cite{Br2021Homogeneous-variet}). 

 By Corollary~\ref{corB}, $\KKK_{\KK}^{(1)} \subseteq R_u(\KKK_{\KK})$, and so $R_u(\KKK_{\KK})$ acts transitively on $\Xgg$. Now, up to conjugation, $\KKK_{\KK} \subseteq \Jonq_{\KK}(n-m)$ by Proposition~\ref{Prop.U_transitive}. In particular, $\KKK_{\KK}^{(2)}=\GGG^{(m+3)}$ does not act transitively on $\Xgg$, and so $m+3 \geq d$, which implies $n = \dim X =m+1 = \dim Y + 1$. Then, by Lemma~\ref{Lem.affine-morphism} 
  and the $\GGG$-transitivity, 
  all $\HHH$-orbits are isomorphic to~$\bbA^1$.
 
  \ps
  (c)  
  Let $X' \subseteq X$ be an orbit of $\GGG^{(n-1)}$ and 
  $\FFF \subseteq \Aut(X')$ be the image of $\GGG^{(n-1)}$.
  We will show
  $\FFF^{(2)} = \{\id\}$, implying the contradiction $\GGG^{(n+1)} = \{ \id \}$.

  Let $\pi' \colon X' \to Y'$ be a geometric quotient 
  for the image of $\HHH$ in $\Aut(X')$ (Corollary~\ref{Cor.global_geom_quotient}). 
  As all $\HHH$-orbits are isomorphic to $\mathbb{A}^1$,
  $\pi'$ has  fibers isomorphic to $\bbA^1$.
  By Lemma~\ref{Lem.affine-morphism} and the $\FFF$-transitivity, $\pi' \colon X' \to Y'$ is 
  affine and locally a trivial $\bbA^1$-bundle after a flat base-change, as in (b). 
  By induction, $\KKK' \coloneqq \FFF^{(1)}$ acts trivially on $Y'$.
  
  Restricting the $\KKK'$-action to $(\pi')^{-1}(Y_0)$ for an
  open dense affine subset $Y_0 \subseteq Y'$, Theorem~\ref{main-thm-A} and Lemma~\ref{Lem.restriction-to-open} yield that $\KKK'$
  is \algebraicallyTame{},
  and $\KKK' = T \ltimes R_u(\KKK')$ for a torus $T \subseteq \KKK'$.
  As the fibers of $\pi'$ are affine lines, $R_u(\KKK')$ is commutative.
  
  If $T = \{\id\}$, then $\FFF^{(2)} = \{ \id \}$. 
  Otherwise,  $\dim(T) \geq 1$.
  Given any flat morphism  $U \to Y'$ such that $U$ is irreducible and 
  $X' \times_{Y'} U \simeq \bbA^1 \times U$, 
  we have $\Aut_U(\bbA^1 \times U)^\circ = (\OOO(U)^\ast)^\circ \ltimes \OOO(U)$
  and $(\OOO(U)^\ast)^\circ \simeq \mathbb{G}_m$ (see Example~\ref{nested.exa}(5));
  thus, the fixed points  of $T$ on 
  $X' \times_{Y'} U \simeq \bbA^1 \times U$ 
  give a section $s_U$  and 
  $T = \set{\varphi \in \Aut_U(\bbA^1 \times U) }{\varphi(s_U) = s_U}^\circ \simeq \mathbb{G}_m  $. 
  Hence, the fixed points of  $T$ on $X'$ provide a section 
  $e \colon Y' \to X'$ of $\pi'$
  and $T = \set{\varphi \in \Aut_Y(X)}{e = \varphi \circ e}^\circ$.
  
  Note that $\pi'$ is a geometric quotient for
  $R_u(\KKK')$. 
  By Propositions~\ref{commutative-quotient2.prop} 
  and~\ref{commutative-quotient3.prop}, $\pi'$
  has the structure of a $Y'$-group scheme with identity $e$ and 
  $\FFF \subseteq \LP(\FFF) \ltimes \Gamma(\pi')$.
  The image $F \subseteq \Aut(Y')$ of $\FFF$ is an algebraic group, 
  as it is commutative and acts transitively on $Y'$
  (Proposition~\ref{commutative-subgroup.prop}). 
  The kernel of $\LP(\FFF) \to F$ is finite dimensional, 
  for its connected component is $T$. By Lemma~\ref{Lem.Further_crit_to_be_alg}, $\LP(\FFF)$ is an algebraic group
  and $\LP(\FFF)^{(1)} \subseteq T$. Thus 
  $\LP(\FFF)$ is commutative (see Section~\ref{Subsec.Commutator_torus}) and 
  hence $\FFF^{(2)} = \{ \id \}$. This gives Assertion (1).

\ps
(d) Now suppose that $\dl(\UUU) \geq n$ and $X$ is connected and quasi-affine. 
By Corollary~\ref{Cor.bound_dl},  $\dl(\UUU)=n$ and $\UUU$ has an orbit $O$ of dimension~$n$.
Since $O \subset X$ is closed, $O$ is a connected component of $X$ and so $O = X$.
Then, Proposition~\ref{Prop.U_transitive} gives Assertion~(2), namely 
$X \simeq \An$ and $\HHH$ is conjugate to a subgroup of $\Jonq(n)$.
\end{proof}

\subsection{Nested and \algebraicallyTame{} subgroups}\label{sec,nested-and-tame}
The following result gives Theorem~\ref{thm-nested-tame}.

\begin{thm}
\label{Thm.Equivalence_alg_tame_nested}
For a subgroup $\GGG \subseteq \Aut(X)$ the following are equivalent:
\begin{enumerate}[\rm (1)]
\item \label{Eq1.nested_alg_tame} $\GGG$ is \algebraicallyTame{};
\item \label{Eq2.nested_alg_tame} $\GGG$ admits a covering directed set of connected
algebraic subgroups;
\item \label{Eq3.nested_alg_tame} $\GGG$ is nested by connected algebraic subgroups.
\end{enumerate}
\end{thm}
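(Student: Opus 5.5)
The three conditions will be proved equivalent via $(1)\Rightarrow(2)\Rightarrow(3)\Rightarrow(1)$; the first step is essentially Remark~\ref{nested_alg_tame_covering.rem}. For $(1)\Rightarrow(2)$, let $\MMM$ be the set of all connected algebraic subgroups of $\GGG$.
\begin{enumerate}[\rm (i)]
\item $\MMM$ is covering: given $g\in\GGG$, connectedness gives an irreducible family $\rho\colon A\to\Aut(X)$ with $\id_X,g\in\rho(A)\subset\GGG$. The group $\langle\rho(A)\rangle$ is algebraically generated, hence a connected algebraic subgroup by temperedness.
\item $\MMM$ is directed: for $G_1,G_2\in\MMM$, the map $G_1\times G_2\to\Aut(X)$, $(g_1,g_2)\mapsto g_1g_2$, is an irreducible family through $\id_X$. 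It generates $\langle G_1\cup G_2\rangle$, which is therefore an algebraic subgroup containing both.
\end{enumerate}
For $(3)\Rightarrow(1)$, write $\GGG=\bigcup_kG_k$ with $G_k$ connected algebraic. Then $\GGG$ is connected, since each $g$ lies in some irreducible $G_k$ containing $\id_X$. Let $\rho\colon A\to\Aut(X)$ with $A$ irreducible and $\id_X\in\rho(A)\subset\GGG$. Then $A=\bigcup_k\rho^{-1}(G_k)$, and each $\rho^{-1}(G_k)$ is constructible by Lemma~\ref{lem.intersection-of-constructible} and the remark after it. The hard case is an uncountable field $\kk$. There an irreducible variety is not a countable union of proper constructible subsets, so $\rho(A)\subset G_k$ for some $k$, and $\langle\rho(A)\rangle$ is a closed connected subgroup of $G_k$ by Lemma~\ref{alg-gen-subgroup.lem}. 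For countable $\kk$ I would reduce to the uncountable case by base change to an uncountable algebraically closed extension, in the spirit of Proposition~\ref{Prop.Base-change}. Alternatively, I would first bound $\dim\langle\rho(A)\rangle$ using Lemma~\ref{Quotient_by_unipotent_radical_bounded.lem} together with Proposition~\ref{Pro.Unipotent_radical}, and then apply Ramanujam's theorem. I would settle this point only after the main step below.

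The main obstacle is $(2)\Rightarrow(3)$: extracting a countable chain from a possibly uncountable covering directed set $\SSS$. Directedness alone is not enough, and I would add a countability input of Hilbert-scheme type. Fix a completion $\bar X$ of $X$ and identify each automorphism with its graph in $\bar X\times\bar X$. A connected algebraic subgroup $G$ then maps by a morphism into the Hilbert scheme of $\bar X\times\bar X$, which has only countably many components $\HHH_1,\HHH_2,\ldots$. In each $\HHH_i$ consider the constructible images $C_{G,i}$ of the members $G\in\SSS$. I would choose, using noetherianity of $\HHH_i$, a member $G_i\in\SSS$ such that $C_{G_i,i}$ is maximal in the following sense. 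First, its closure has maximal dimension. Second, among those, its number of top-dimensional components is maximal. Third, one iterates this on the boundary strata, which is a finite process by noetherian induction. Directedness then shows that any $G\supset G_i$ in $\SSS$ has the same closure data in $\HHH_i$.

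The remaining task is to show that every point $g\in\GGG$ over $\HHH_i$ lies in some $G\supset G_i$ with $g\in G$; I would handle this with a translation argument. Pick $G\in\SSS$ containing $g$ and $G_i$. The coset $gG_i\subset G$ is irreducible of the same dimension as $G_i$ in the relevant stratum, so maximality forces $g$ into the stabilized constructible set. If this is delicate, I would instead use Proposition~\ref{Pro.Unipotent_radical}, which gives $\GGG=G'R_u^{\SSS}(\GGG)$ with $G'\in\SSS'$, and run the counting argument only inside the unipotent union $R_u^{\SSS}(\GGG)$. Once countably many members $G_{i}\in\SSS$ suffice to absorb everything, directedness lets me define $G'_1\subset G'_2\subset\cdots$ in $\SSS$ with $G'_k\supset G_1,\ldots,G_k$, and these exhaust $\GGG$.
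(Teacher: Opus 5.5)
\textbf{There is a genuine gap in $(2)\Rightarrow(3)$, the heart of the theorem.} Noetherianity of a Hilbert component $\HHH_i$ does not give a maximal member of a directed family of constructible subsets. Neither constructible nor closed subsets satisfy the ascending chain condition, and the number of top-dimensional components can grow without bound (growing finite sets already do this). The translation step is not meaningful either: left multiplication by $g$ changes the Hilbert polynomial of a graph, so it does not preserve $\HHH_i$.

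More decisively, the absorption you aim for is false, namely that all points of $\GGG$ lying over $\HHH_i$ are captured by one member of $\SSS$. Take Construction~\ref{cons.NewPerepechko} over an \emph{uncountable} $\kk$ of characteristic $p$, with $W$ of countably infinite codimension, as in Example~\ref{exa.perepechko}(b). The constant translations $(x,y)\mapsto(x,y+a)$ have graphs forming one $\Ga$-orbit in the Hilbert scheme, hence lying in one component. The nested group $\UUU=\bigcup_nU_n$ meets these translations in an infinite countable group $V$. But every algebraic subgroup of $\UUU$ lies in some $U_n$ (Lemma~\ref{Lem.Increading_seq_of_algebraic_subgroups}), so it meets $V$ in a finite set. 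Your fallback to $R_u^{\SSS}(\GGG)$ does not help, since this $\UUU$ is already unipotent.

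Countability has to be applied to \emph{groups}, not points. You need all members of $\SSS$ of bounded size to lie in one algebraic group. Then finitely many of them generate it (Proposition~7.5 of \cite{Hu1975Linear-algebraic-g}), and directedness gives a single member containing them. That is a finite-dimensionality statement as strong as temperedness, and nothing in your proposal supplies it. (For countable $\kk$ this step is trivial: $\Aut(X)$ is then countable, so you can enumerate $\GGG$ and use directedness.)

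\textbf{Your $(3)\Rightarrow(1)$ is also incomplete over countable $\kk$, and neither proposed fix works.} Base change fails by Example~\ref{exa.perepechko}(a). There $\Ga\subset\bigcup_nU_n$ with $\Ga\cap U_n$ finite, so the sets $\rho^{-1}(G_k)$ cover $A(\kk)$, but their base changes cover only the $\kk$-points of $A_\KK$. Hence the uncountable case cannot be invoked. The other fix also fails: Lemma~\ref{Quotient_by_unipotent_radical_bounded.lem} bounds only $\dim G/R_u(G)$, while the unbounded part is unipotent. (In the uncountable case, the correct fact is that an \emph{increasing} countable union of constructible sets equal to $A$ stabilizes, by noetherian induction.)

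\textbf{The missing ingredient in both directions is Theorem~\ref{main-thm-A}, which you never use.} The paper proves $(2)\Rightarrow(3)$ and $(2)\Rightarrow(1)$ together as follows. Proposition~\ref{Pro.Unipotent_radical} gives $\GGG=G\,R_u^{\SSS}(\GGG)$ with $G\in\SSS$. One then passes to a dense affine open subset $X_0$ stable under $R_u^{\SSS}(\GGG)$. Corollary~\ref{Cor.bound_dl} makes the image $\UUU'$ solvable, so Theorem~\ref{main-thm-A} makes $\UUU'$ and its closure $\overline{\UUU'}$ in the ind-group $\Aut(X_0)$ tempered. The countable admissible filtration of $\overline{\UUU'}$ (Lemma~\ref{connected-indgroups.lem}) yields algebraic subgroups exhausting it, and every member of the covering family lies in one of them. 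Finite generation plus directedness then extract the chain (Proposition~\ref{unipotent-indgroup-is-tame.prop}). Temperedness follows from $\langle\rho(A)\rangle=G\,V$, where $V$ lies in an algebraic subgroup of $\overline{\UUU'}$.
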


\begin{cor}\label{nested-strongly-nested.cor}
If $X$ is affine and $\GGG \subset \Aut(X)$ is a closed subgroup that is nested by connected algebraic groups, then $\GGG$ is strongly nested and \algebraicallyTame{}.
\end{cor}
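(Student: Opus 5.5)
The statement has two parts. \algebraicallyTame{}ness follows at once from Theorem~\ref{Thm.Equivalence_alg_tame_nested}, implication (3)$\Rightarrow$(1). The real content is strong nestedness. The filtration $G_1\subset G_2\subset\cdots$ we are given need not be admissible (compare the $\SL_2(\overline{\FF_p})$ example in \S\,\ref{strongly_nested.sec}). So I will not try to prove that it is admissible. Instead I will build a new filtration by connected algebraic subgroups that is admissible by construction, using \algebraicallyTame{}ness. This is also what avoids the countable-field problem: a Baire-type argument showing that a given algebraic subset lies in some $G_k$ is unavailable when $\kk$ is countable.

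\emph{Step 1: $\GGG$ is connected as an ind-group.} $\GGG$ is an increasing union of the connected algebraic subgroups $G_k$. Each $G_k$ is the image of a morphism from an irreducible variety containing $\id_X$, so $\GGG$ is connected in the sense of Definition~\ref{defi:introduction_connected}. By \S\,\ref{strongly_nested.sec} it is then connected for its ind-group structure as a closed subgroup of $\Aut(X)$. Lemma~\ref{connected-indgroups.lem}(c) now gives an admissible filtration $\GGG=\bigcup_k C_k$ by irreducible closed algebraic subsets $C_k$ containing $\id_X$, which we may take increasing.

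\emph{Step 2: the new filtration.} For each $k$, set
\[
H_k:=\langle\, G_k\cup C_k\,\rangle\subset\GGG .
\]
The set $\overline{G_kC_k}$ is irreducible and contains $\id_X$, $G_k$ and $C_k$. It is a closed algebraic subset of the ind-variety $\Aut(X)$, so its inclusion is a morphism in the sense of Definition~\ref{Def.morphism}, because ind-morphisms from varieties correspond to algebraic families \cite[Theorem~5.1.1]{FuKr2018On-the-geometry-of}. Hence $H_k$ is algebraically generated, and since $\GGG$ is \algebraicallyTame{}, $H_k$ is a connected algebraic subgroup. The sequence $H_k$ is increasing and $\bigcup_k H_k\supseteq\bigcup_k G_k=\GGG$. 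Every algebraic subset of $\GGG$ lies in some $C_k$, hence in $H_k$, so the filtration is admissible once each $H_k$ is known to be a closed algebraic subset of $\GGG$.

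\emph{Step 3 (main obstacle): each $H_k$ is a closed algebraic subset of the ind-group.} I would argue as follows. By Lemma~\ref{alg-gen-subgroup.lem} applied inside $H_k$, we have $H_k=(V V^{-1})^{N}$ for $V=\overline{G_kC_k}$ and $N$ large. This is the image of a variety under an ind-morphism (products and inverses are ind-morphisms), so it is a constructible subset contained in a single filtration piece $\GGG_\ell$. Its closure $\overline{H_k}$ in $\GGG_\ell$ is then a closed algebraic subset, and it is a subgroup because closures of subgroups in ind-groups are subgroups. Hence $\overline{H_k}$ is an algebraic group containing $H_k$ as a dense constructible subgroup. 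A constructible subgroup of an algebraic group is closed \cite[Proposition~1.3(c)]{Bo1991Linear-algebraic-g}, so $H_k=\overline{H_k}$, which gives closedness.

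The delicate point is identifying the algebraic-group structure on $H_k$ from Ramanujam's theorem with the induced ind-structure. Property (UP) together with the correspondence between algebraic families and ind-morphisms gives a bijective morphism $H_k\to\overline{H_k}$. For strong nestedness only the closed-algebraic-subgroup property of the filtration is needed, and that is exactly what was established above.
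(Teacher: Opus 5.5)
Your proposal is correct and follows the paper's proof: temperedness from Theorem~\ref{Thm.Equivalence_alg_tame_nested}, an admissible filtration by irreducible algebraic subsets containing $\id_X$ from Lemma~\ref{connected-indgroups.lem}, and then the subgroups these subsets generate, which are connected algebraic groups by temperedness. Adjoining $G_k$ is harmless but unnecessary, since the $C_k$ already cover $\GGG$. Your Step~3 makes explicit a point the paper leaves implicit, namely that these generated subgroups are closed algebraic subsets of the ind-group.
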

\begin{proof}
By Theorem~\ref{Thm.Equivalence_alg_tame_nested}, $\GGG$ is \algebraicallyTame{}. Let $\GGG = \bigcup_k\GGG_k$ be an admissible filtration where we can assume that each $\GGG_k$ is irreducible and contains $e_\GGG$ (Lemma~\ref{connected-indgroups.lem}). Then $G_k:=\langle \GGG_k\rangle \subset \GGG$ is a connected algebraic group and $\GGG = \bigcup_k G_k$ is an admissible filtration.
\end{proof}

Before proving Theorem~\ref{Thm.Equivalence_alg_tame_nested} we start with a family of examples to illustrate some of the difficulties. 

\subsubsection{A construction and an example}\label{par:NewPerepechko}
The following construction is inspired by Example~5 in \cite{St2012On-the-topologies-}.

\begin{cons}
  \label{cons.NewPerepechko}
  We define $ \Ga^\infty$ as the increasing union $\bigcup_{n\geq 1}\Ga^n$, 
  and we denote simply by $\Ga\subset \Ga^\infty$ (resp.\ $\Ga\subset \Ga^n$) the subgroup $\Ga\times\{(0, \ldots, 0, \ldots)\}$. 

  \textit{Assume $\Char(\kk) = p > 0$. Let $W$ be an $\FF_p$-subspace 
  of $\Ga$ of positive dimension such that  $\Ga / W$ is countable. There is 
  a subgroup $\UUU \subseteq \Ga^\infty$, nested by connected unipotent algebraic subgroups, with $\UUU \oplus W = \Ga^\infty$
  (direct sum as $\FF_p$-vectors spaces).}

   The index of $W$ being countable, we can choose an increasing sequence 
    $V_1 = \{ 0 \} \subseteq V_2 \subseteq \cdots \subseteq \Ga$ of finite
  $\FF_p$-subspaces such that their union $V = \bigcup_n V_n$ is an $\FF_p$-complement
  of $W$ in $\Ga$: $V \oplus W = \Ga$. We set $V_n'=V_n\times \{(0, \ldots, 0) \} \subseteq \Ga^n$.

   Let us   define a sequence of connected algebraic subgroups
  $U_{n+1} \subseteq \Ga^{n+1}$, each of codimension $1$ in $\Ga^{n+1}$. 
  We start with $U_1 = \{0\} \subseteq \Ga$ and  define $U_{n+1}$ by induction. 
  For this, remark that $U_n$ and $U_n+V_{n+1}'$ have codimension $1$ in $\Ga^n$ by construction.
  By Proposition~20.3 in \cite{Hu1975Linear-algebraic-g} there is an algebraic homomorphism
  $\varphi_n \colon \Ga^n \to \Ga$ such that ${\mathrm{Ker}}(\varphi_n)=U_n+V_{n+1}'$, and we set 
  \[U_{n+1}=\set{ (x_1, \ldots, x_{n},  \varphi_n(x_1, \ldots, x_n))}{(x_1, \ldots, x_{n})\in \Ga^n } \subset \Ga^{n+1}
  \]
  Obviously $U_{n} \times \{0\}\subset U_{n+1} \cap (\Ga^n \times \{0\})$.
  Moreover, $(x_1, 0 \ldots, 0)$  is in $U_{n+1}$ if and only if $\varphi_n(x_1, 0, \ldots, 0)=0$
  which is equivalent to $x_1\in V_{n+1}'$, by induction. Thus
    \begin{equation*}
    \label{Eq.containment}
    U_{n+1} \cap \Ga \times \{(0, \ldots, 0) \} 
    = V_{n+1}' \subseteq \Ga^{n+1} \, .
  \end{equation*}
 This implies that the projection 
  $U_{n+1} \to \Ga^n$, $(x_1, \ldots, x_{n+1}) \mapsto (x_2, \ldots, x_{n+1})$
  is surjective (for $n\geq 1$), because its kernel is finite. 
 
 Now, consider the group $\UUU = \bigcup_n U_n \subseteq \Ga^\infty$. 
  By construction, $V$ is the intersection of $\UUU$ with
  $\Ga $, so 
  $\Ga \subseteq \UUU + W$ and 
  $\UUU \oplus W = \Ga^\infty$. 
\end{cons}

  \begin{exa} 
  \label{exa.perepechko}
  Here, we identify $\kk[x]$ with $\Ga^\infty$  by 
  $ p(x)=\sum_{i\geq 0}^\infty a_i x^i\mapsto (a_i)_{i\geq 0}$.
  Doing so, the closed subgroup   $\set{(x, y) \mapsto (x, y + p(x))}{p \in \kk[x]}$ of $\Aut(\bbA^2)$ is identified to $\Ga^\infty$.
  Using Construction~\ref{cons.NewPerepechko} we obtain   
  \begin{itemize}
    \item[{\rm(a)}] If $\kk$ is countable of characteristic $p > 0$ and if we choose $W = \{0\}$, 
  then $\UUU = \bigcup_n U_n = \Ga^{\infty}$ is closed in $\Aut(\bbA^2)$,
  and $\UUU$ contains an algebraic subgroup $\Ga$ which is not contained 
  in any $U_n$.
   \item[{\rm(b)}] If  $\kk$ is of characteristic $p > 0$ and if we choose $W\neq \{0\}$ 
   with $\Ga/W$ infinite  countable, then $\UUU$ is not closed in the ind-group 
  $\Aut(\bbA^2)$, because $\UUU \cap \Ga$ is reduced to a proper dense subgroup of $\Ga(\kk)$.
  \end{itemize}
  In both cases, one can take $\Ga$ to be the group of translations 
  $(x,y)\mapsto (x+t,y)$, $t\in \kk$.
  \end{exa}

The  phenomenon described in Example~\ref{exa.perepechko}~(a) does not occur when $\kk$ is uncountable or when $\Char(\kk) = 0$. 
Indeed, if $\HHH = \bigcup_{k} H_k$ is a nested subgroup of $\Aut(X)$ and $G\subset \Aut(X)$ an algebraic
subgroup of $\HHH$, then $G$ is contained in one of the $H_i$, because 
$G$ is the increasing union of the closed subgroups 
$G \cap H_i$, and this sequence stabilizes
by Lemma~\ref{Lem.Increading_seq_of_algebraic_subgroups}.
The phenomenon described in Example~\ref{exa.perepechko}~(b) does not occur when 
$\Char(\kk) = 0$, by Theorem~\ref{main-thm-Perepechko}.

\subsubsection{Nestedness of $R_u^\SSS(\GGG)$}

\begin{prop}\label{unipotent-indgroup-is-tame.prop}
Let $X$ be an affine variety. Let $\UUU \subset \Aut(X)$ be a connected subgroup which admits a covering and directed set $\SSS$ of connected unipotent algebraic groups. Then $\UUU$ and $\bUUU$ are solvable and
\algebraicallyTame{}, $\UUU$ is nested by elements of $\SSS$ and $\bUUU$ is strongly nested by connected unipotent algebraic groups.
\end{prop}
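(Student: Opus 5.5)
The strategy is to reduce everything to solvability, so that Theorem~\ref{main-thm-A} applies, and then extract the nested and strongly nested filtrations from the ind-structure of $\Aut(X)$. I would proceed in four steps.

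\textbf{Solvability of $\UUU$.} Every $G\in\SSS$ is a connected unipotent algebraic group, hence nilpotent, and so solvable. Since $G$ acts faithfully on $X$ of dimension $n$, Corollary~\ref{Cor.bound_dl} gives $\dl(G)\leq n$. Now take finitely many elements of $\UUU^{(n)}$. Each is built from finitely many elements of $\UUU$, and because $\SSS$ is directed, all of these lie in a single $G\in\SSS$. Therefore $\UUU^{(n)}\subset\bigcup_{G\in\SSS}G^{(n)}=\{\id\}$, and $\UUU$ is solvable of derived length at most $n$.

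\textbf{Solvability of $\bUUU$.} The derived series of the closure is controlled because the commutator map $\bUUU\times\bUUU\to\Aut(X)$ is an ind-morphism. Hence $[\overline{\UUU^{(i)}},\overline{\UUU^{(i)}}]\subset\overline{\UUU^{(i+1)}}$, and inductively $\bUUU^{(n)}\subset\overline{\UUU^{(n)}}=\{\id\}$. The closure $\bUUU$ is also connected, since $\UUU$ is connected. By Theorem~\ref{main-thm-A}, both $\UUU$ and $\bUUU$ are therefore tempered, and all their algebraic subgroups are affine by Lemma~\ref{quasiaffine-gives-affine.lem}.

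\textbf{Nestedness of $\UUU$ by elements of $\SSS$.} Choose an admissible filtration $\Aut(X)=\bigcup_k\mathcal{A}_k$, and let $\UUU_k:=\UUU\cap\mathcal{A}_k$. I expect this step to be the main obstacle, because $\UUU$ need not be closed and $\kk$ may be countable, as in Example~\ref{exa.perepechko}(b). For this reason I would not try to exhaust $\UUU$ by its intersections with filtration pieces. Instead I would use countability directly. Each $G\in\SSS$ is algebraic, so it lies in some $\mathcal{A}_k$. Because $\SSS$ is directed, it suffices to produce countably many members of $\SSS$ whose union is $\UUU$.

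I would first try the following argument. Every element $g\in\UUU$ lies in a connected algebraic subgroup of $\UUU$, for instance some $G\in\SSS$. The connected algebraic subgroups contained in $\bUUU\cap\mathcal{A}_k$ are directed, because $\bUUU$ is tempered. The subgroup generated by all of them is algebraically generated by an irreducible family: using Lemma~\ref{alg-gen-subgroup.lem}, irreducible families of bounded dimension inside $\mathcal{A}_k$ suffice. Hence this subgroup is a single connected algebraic group $M_k\subset\bUUU$. Then $\UUU\cap\mathcal{A}_k\subset M_k$, and by directedness there is $G_k\in\SSS$ that contains $M_k\cap\UUU$ up to finitely many generators. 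If this containment fails in positive characteristic, I would fall back on the following: pick, for each $k$, a dense countable generating set of $M_k\cap\UUU$, place finitely many of its generators into some member of $\SSS$ at each stage, and enlarge step by step using directedness, so that the union of the resulting chain is $\UUU$.

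\textbf{Strong nestedness of $\bUUU$.} By Lemma~\ref{connected-indgroups.lem}, $\bUUU$ has an admissible filtration by irreducible closed algebraic subsets $\bUUU_k$ containing $\id$. Since $\bUUU$ is tempered, $G_k:=\langle\bUUU_k\rangle$ is a connected algebraic group, and $\bUUU=\bigcup_k G_k$ is an admissible filtration, exactly as in Corollary~\ref{nested-strongly-nested.cor}. It remains to show that each $G_k$ is unipotent. By Corollary~\ref{corB}, $\bUUU=T\ltimes R_u(\bUUU)$ for some torus $T$. The unipotent elements of $\UUU$ form a dense subset lying in the closed ind-subgroup $R_u(\bUUU)$, which one checks is closed via the retraction onto $T$. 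It follows that $\bUUU\subset R_u(\bUUU)$, so $T$ is trivial. Every algebraic subgroup of $\bUUU$ is then unipotent by Remark~\ref{unipotent-group.rem}, and in particular each $G_k$ is a connected unipotent algebraic group.
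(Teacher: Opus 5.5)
Your steps on solvability of $\UUU$ and $\bUUU$, on temperedness, and on the strong nestedness and unipotence of $\bUUU$ are correct and follow the paper. The paper gets closedness of $R_u(\bUUU)$ from $R_u(\bUUU)\cap U_k=R_u(U_k)$, which is what your retraction argument amounts to.

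The gap is in the step ``$\UUU$ is nested by elements of $\SSS$''. The target you set is a single $G_k\in\SSS$ containing $M_k\cap\UUU$, and this is false in general in positive characteristic. Take Example~\ref{exa.perepechko} with $\SSS=\{U_1,U_2,\ldots\}$. There $\UUU$ contains the copy of $\Ga$ (case (a)), or an infinite subgroup of it (case (b), over any $\kk$ of characteristic $p$). This subgroup lies in a fixed $\mathcal{A}_k$, hence in $M_k$, but in no single $U_n$.

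Your fallback does not repair this. When $\kk$ is uncountable, $M_k\cap\UUU$ is uncountable and has no countable generating set. A chain in $\SSS$ that absorbs a countable dense subset gives no control over the other elements of $\UUU$: Construction~\ref{cons.NewPerepechko} gives exactly such chains, absorbing a dense countable subgroup of $\Ga$ while missing most of $\Ga$. Only for countable $\kk$ does a plain enumeration of $\UUU$ work.

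There are also two smaller unjustified claims.
\begin{enumerate}
\item Algebraicity of $M_k$ does not follow from ``families of bounded dimension inside $\mathcal{A}_k$''. Groups generated by subgroups lying in $\mathcal{A}_k$ leave $\mathcal{A}_k$, and for the same reason those subgroups are not directed within $\mathcal{A}_k$.
\item The inclusion $\UUU\cap\mathcal{A}_k\subset M_k$ does not follow, since the member of $\SSS$ containing a given $g\in\mathcal{A}_k$ need not lie in $\mathcal{A}_k$.
\end{enumerate}

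The missing idea is to cover the \emph{members} of $\SSS$ rather than $M_k\cap\UUU$, using finite generation inside a fixed algebraic group. Do your last step first; it does not depend on nestedness of $\UUU$. It gives $\bUUU=\bigcup_k U_k$ for an admissible chain of connected algebraic subgroups $U_k$.

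Every $V\in\SSS$ is an algebraic subset of $\bUUU$, so it lies in some $U_k$. For fixed $k$, the members of $\SSS_k:=\{V\in\SSS\mid V\subset U_k\}$ are connected closed subgroups of the algebraic group $U_k$. By Proposition~7.5 in \cite{Hu1975Linear-algebraic-g}, the subgroup they generate is generated by finitely many of them, say $V_{k,1},\ldots,V_{k,n_k}$. Directedness then gives $V_k\in\SSS$ containing $V_{k-1}$ and all the $V_{k,i}$, hence every member of $\SSS_k$. Since $\SSS$ covers $\UUU$, you get $\UUU=\bigcup_k V_k$.

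This is the paper's argument, and it works over any $\kk$. It never asks one member of $\SSS$ to contain $\UUU\cap U_k$, which the example shows can fail. It only asks it to contain the possibly uncountably many members of $\SSS$ inside $U_k$, and finite generation reduces these to finitely many.
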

\begin{proof}
If $U$ is a connected unipotent algebraic subgroup of $\UUU$, then  $\dl (U) \leq \dim X$ by Corollary~\ref{Cor.bound_dl}, and so $\UUU$ and $\bUUU$ are solvable of derived length $\leq \dim X$. Theorem~\ref{main-thm-A} implies that $\UUU$ and $\bUUU$ are \algebraicallyTame{}. 
As $\bUUU$ is a connected ind-group it admits an admissible filtration $W_1 \subseteq W_2 \subseteq \ldots$ where each $W_k$ is irreducible and contains
the identity (Lemma~\ref{connected-indgroups.lem}). 
Hence, $\bUUU$ is strongly nested, $\bUUU = \bigcup_k U_k$, where each $U_k = \langle W_k \rangle$ is a connected algebraic subgroup. 
The group $R_u(\bUUU)$ is closed in $\bUUU$, because 
$R_u(\bUUU) \cap U_k = R_u(U_k)$ for all $k$. 
As $\UUU \subseteq R_u(\bUUU)$,
we get $R_u(\bUUU) = \bUUU$ and thus $U_k$ is unipotent for all $k$.

It remains to see that $\UUU$ is nested by elements of $\SSS$. 
The subgroup of $U_k$ generated by all $V \in \SSS_k:=\set{V \in \SSS}{V\subset U_k}$ is  
generated by finitely many
$V_{k, 1}, \ldots V_{k, n_k} \in \SSS$ (Proposition 7.5~in \cite{Hu1975Linear-algebraic-g}).
Since $\SSS$ is directed, there exists an increasing sequence 
$V_1 \subseteq V_2 \subseteq \cdots$ of elements in $\SSS$ such that $V_k$ contains $V_{k, i}$ for all $i$. As $\SSS$ covers $\UUU$ we conclude $\UUU = \bigcup_k V_k$.
\end{proof}

\begin{prop}
\label{Pro.Unipotent_radical_second_part}
Let $\GGG$ be a subgroup of $\Aut(X)$.
Suppose that $\SSS$ is a covering and directed set of connected algebraic subgroups of $\GGG$.
Then $R_u^{\SSS}(\GGG)$ is solvable, \algebraicallyTame{}, and nested by connected unipotent algebraic 
subgroups. 
\end{prop}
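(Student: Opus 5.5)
The plan is to run the argument of Proposition~\ref{unipotent-indgroup-is-tame.prop}, but without assuming that $X$ is affine and without an ambient ind-group. Set $\UUU := R_u^{\SSS}(\GGG)$ and let $\SSS'\subset\SSS$ be the covering and directed subset of Proposition~\ref{Pro.Unipotent_radical}. By Proposition~\ref{Pro.Unipotent_radical}\,(2),(3), the family $\SSS_u:=\set{R_u(G)}{G\in\SSS'}$ is a directed set of connected unipotent algebraic groups covering $\UUU$, and $\UUU$ is connected.

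\textbf{Solvability.} Every element $R_u(G)\in\SSS_u$ is a connected unipotent algebraic group acting on $X$. I want to bound $\dl(R_u(G))$ uniformly by $\dim X$. Corollary~\ref{Cor.bound_dl} requires $X$ quasi-affine, so I argue on orbits instead. An orbit $O=R_u(G)x$ is a homogeneous space of a unipotent group, hence affine, isomorphic to $\bbA^k$ by Theorem~1 of \cite{Br2021Homogeneous-variet}. Moreover $R_u(G)$ acts on $O$ through its image, which is a connected unipotent subgroup of $\Aut(O)$ acting transitively. Proposition~\ref{Prop.U_transitive} then gives $\dl \leq \dim O\le\dim X$ for this image. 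Since $R_u(G)$ acts faithfully on $X$, its derived length is the maximum over the orbits, so $\dl(R_u(G))\le \dim X$. As $\SSS_u$ is directed and covering, any finite set of iterated commutators lies in a single $R_u(G)$, and therefore $\dl(\UUU)\le\dim X$.

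\textbf{Nestedness.} Because $\UUU$ is connected and solvable, I would like to apply Theorem~\ref{main-thm-A}, which is the main obstacle since $X$ need not be quasi-affine. The way around it is to avoid tameness of $\UUU$ as a black box and instead prove nestedness directly, then use Theorem~\ref{Thm.Equivalence_alg_tame_nested}: condition (2) there holds for $\SSS_u$, so $\UUU$ is \algebraicallyTame{} and nested by connected algebraic subgroups. That argument, however, yields a nesting by some connected algebraic subgroups, not necessarily unipotent ones. To get a unipotent nesting I proceed as in the last paragraph of the proof of Proposition~\ref{unipotent-indgroup-is-tame.prop}. Take a nesting $\UUU=\bigcup_k U_k$ by connected algebraic groups. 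For each $k$, the subgroup of $U_k$ generated by all $V\in\SSS_u$ with $V\subset U_k$ is generated by finitely many of them (Proposition~7.5 of \cite{Hu1975Linear-algebraic-g}). By directedness these finitely many are contained in a single $V_k\in\SSS_u$, and I choose the $V_k$ increasing.

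It remains to show that $\bigcup_k V_k=\UUU$. Given $u\in\UUU$, we have $u\in R_u(G)$ for some $G\in\SSS'$. This $R_u(G)$ is a connected algebraic subgroup of $\UUU$, and I need it to lie in some $U_k$. For $\kk$ uncountable this follows from the stabilization argument after Example~\ref{exa.perepechko}. In general I would rather choose the nesting $U_k$ from elements of $\SSS_u$ in the first place. The natural route is to check that the proof of (2)$\Rightarrow$(3) in Theorem~\ref{Thm.Equivalence_alg_tame_nested} builds the nesting out of members of the given directed set, or can be modified so that it does. If it does, the nesting consists of groups $R_u(G)$, which are unipotent, and the statement follows. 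This compatibility of the nesting with the chosen family $\SSS_u$ is the step I expect to require the most care.
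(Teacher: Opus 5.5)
\textbf{Gap: the nestedness and temperedness step is circular.} Your solvability argument is correct. A connected unipotent algebraic group has affine orbits (Theorem~1 of \cite{Br2021Homogeneous-variet}) on any variety, so the orbitwise argument of Corollary~\ref{Cor.bound_dl} via Proposition~\ref{Prop.U_transitive} gives $\dl(R_u(G))\le\dim X$. Directedness of $\SSS_u$ then transfers this bound to $R_u^{\SSS}(\GGG)$.

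The failure is in the nestedness and temperedness part. You appeal to Theorem~\ref{Thm.Equivalence_alg_tame_nested}, but its proof of (2)$\Rightarrow$(3) opens by invoking Proposition~\ref{Pro.Unipotent_radical} and the very statement you are proving: that $R_u^{\SSS}(\GGG)$ is tempered, solvable and nested. Its temperedness part (b) also rests on restricting to a dense affine open subset, which is exactly the reduction you are trying to avoid. So you have neither the nesting $\bigcup_k U_k$ nor temperedness, and rechecking that proof cannot repair this.

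The underlying obstruction is countability. $\SSS_u$ may be uncountable, and on an arbitrary $X$ nothing in your argument produces a countable cofinal chain. Nor does anything place a given $R_u(G)$ inside some term of a nesting when $\kk$ is countable of positive characteristic. Example~\ref{exa.perepechko}(a) shows this really can fail for a nesting that is not admissible.

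\textbf{Missing idea: reduce to an affine open set where Theorem~\ref{main-thm-A} and the ind-group structure apply.}
\begin{enumerate}
\item Choose a single connected unipotent algebraic subgroup $U\subset R_u^{\SSS}(\GGG)$ with the same orbits as $R_u^{\SSS}(\GGG)$ (Proposition~2.4(4) of \cite{Note_on_Ramanujam}).
\item Theorem~2 of \cite{Br2021Homogeneous-variet} gives a dense affine open $U$-stable subset $X_0\subset X$. Since the orbits coincide, $X_0$ is also $R_u^{\SSS}(\GGG)$-stable.
\item Restriction to $X_0$ is injective, and by Lemma~\ref{Lem.restriction-to-open} the image is connected and covered by the directed images of $\SSS_u$.
\item Proposition~\ref{unipotent-indgroup-is-tame.prop} now applies on the affine $X_0$. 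Theorem~\ref{main-thm-A} makes the image and its closure in the ind-group $\Aut(X_0)$ tempered. The closure then has a countable admissible filtration by connected unipotent algebraic groups.
\item Admissibility is precisely what puts every member of $\SSS_u$ inside some term of the filtration, for every $\kk$. This is the step you could not carry out.
\item Temperedness returns to $X$ by Lemma~\ref{Lem.restriction-to-open}(2). The nesting by members of $\SSS_u$ pulls back by injectivity.
\end{enumerate}
Solvability also falls out of this route, so your orbitwise bound, though valid, is not needed.
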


\begin{proof}
Choose a connected, unipotent and algebraic subgroup $U \subset R_u^\SSS(\GGG)$ which has the same orbits as $R_u^\SSS(\GGG)$ (Proposition 2.4\,(4) in \cite{Note_on_Ramanujam}).
By Theorem~2 in \cite{Br2021Homogeneous-variet}, there is a dense affine open subset $X_0 \subset X$ which is stable under~$U$, hence also under $R_u^\SSS(\GGG)$.
By Lemma~\ref{Lem.restriction-to-open} the image $\UUU \subset \Aut(X_0)$ of $R_u^\SSS(\GGG)$ is connected and admits a covering directed set of connected unipotent algebraic groups. Now the claim follows from Proposition~\ref{unipotent-indgroup-is-tame.prop}.
\end{proof}

\subsubsection{Proof of Theorem~\ref{Thm.Equivalence_alg_tame_nested}}
(a) Remark~\ref{nested_alg_tame_covering.rem} shows that Assertions~\eqref{Eq1.nested_alg_tame} 
and~\eqref{Eq3.nested_alg_tame} both imply Assertion~\eqref{Eq2.nested_alg_tame}.
Now, we  assume that Assertion~\eqref{Eq2.nested_alg_tame} is satisfied. 
Let $\SSS$ be a covering and directed set of connected algebraic subgroups of $\GGG$.
By 
Propositions~\ref{Pro.Unipotent_radical}   and~\ref{Pro.Unipotent_radical_second_part},
$\GGG = G \UUU$ where $G\in \SSS$ is a connected algebraic subgroup normalizing $\UUU$, and $\UUU:=R_u^\SSS(\GGG)$ is 
\algebraicallyTame{}, solvable and  nested: $\UUU = \bigcup_kU_k$ where the $U_k$ are connected unipotent algebraic subgroups, $U_k\subset U_{k+1}$. Since $\UUU$ is \algebraicallyTame{} and normal we can assume that the $U_k$ are normalized by $G$. 
Thus, $\GGG = \bigcup_k GU_k$ is nested by the connected algebraic subgroups $GU_k$, which proves Assertion~\eqref{Eq3.nested_alg_tame}.
\ps
(b)
It remains to see that $\GGG$ is \algebraicallyTame{}. Let $\rho\colon A \to \Aut(X)$ be a morphism with 
$A$ irreducible and $\id_X\in\rho(A) \subset \GGG$. We can assume that $\rho(A)$ is stable under left and right multiplication by $G$. Then $\rho(A) = G (\rho(A)\cap \UUU)$, and $\rho(A)\cap\UUU$ is stable under conjugation  by $G$. Thus the subgroup $V:=\langle\rho(A)\cap\UUU\rangle$ is normalized by $G$ and $\langle \rho(A) \rangle = G \,V$. We claim that $V$ is contained in an algebraic subgroup of $\UUU$. Then $\langle \rho(A)\rangle$ is finite dimensional, 
hence an algebraic group, and we are done.

In order to prove the claim  we apply Theorem~2 of~ \cite{Br2021Homogeneous-variet} to get a dense and affine open subset $X_0 \subset X$ which is stable under $\UUU$. The image $\UUU'$ of $\UUU$ in $\Aut(X_0)$ is connected and solvable, so $\UUU'$ and its closure $\overline{\UUU'} \subset \Aut(X_0)$ are \algebraicallyTame{}, by Theorem~\ref{main-thm-A} and Lemma~\ref{connected-indgroups.lem}. 
By Lemma~\ref{Lem.restriction-to-open}, the set 
\[
A_0:=\{a \in A\mid \rho(a)(X_0) = X_0 \; \text{ and } \; \rho(a)|_{X_0} \in \overline{\UUU'}\}
\]
is closed in $A$. And the induced map $\rho_0\colon A_0 \to \Aut(X_0)$ is a morphism. Hence $\langle\rho_0(A_0)\rangle \subset \overline{\UUU'}$ is an algebraic group. By construction  $\rho(A) \cap \UUU \subset \rho(A_0)$, thus $V$ is contained in an algebraic group, hence the claim. \hfill $\square$

\subsection{Solvable and unipotent radicals} \label{unipotent_radical.sec}

\begin{thm} 
\label{existence_of_solvable_radical.thm}
Any subgroup $\GGG$ of $\Aut(X)$ contains a unique maximal connected normal and solvable subgroup $R(\GGG)$, called the solvable radical of $\GGG$.
\end{thm}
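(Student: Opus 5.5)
The plan is to define $R(\GGG)$ as the subgroup generated by all connected normal solvable subgroups of $\GGG$. I will show that the product of two such subgroups is again of this type, and that the derived length is uniformly bounded, so that the generated subgroup is solvable. Connectedness and normality of the generated subgroup are automatic. It is connected because a subgroup generated by connected subgroups is connected in the sense of Definition~\ref{defi:introduction_connected}: products of irreducible families through $\id_X$ are irreducible families through $\id_X$ (property (2) of \S\,\ref{algebraic_subgroups_and_basic_facts.sec}). It is normal because the generating set is stable under conjugation by $\GGG$. Maximality and uniqueness then follow at once, since every connected normal solvable subgroup lies in $R(\GGG)$.

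For two connected normal solvable subgroups $\NNN_1,\NNN_2\subset\GGG$, the product $\NNN_1\NNN_2$ is a normal subgroup, and it is connected by the remark above. It is solvable because $\NNN_1\NNN_2/\NNN_2\simeq \NNN_1/(\NNN_1\cap\NNN_2)$ is solvable and $\NNN_2$ is solvable. By induction, every finite product $\NNN_1\cdots\NNN_k$ is a connected normal solvable subgroup. Theorem~\ref{main-thm-derived-length}(1) gives $\dl(\NNN_1\cdots\NNN_k)\leq \dim(X)+1$, a bound independent of $k$.

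Now $R(\GGG)$ is the directed union of these finite products, since any element of the generated subgroup lies in some finite product. The derived series commutes with directed unions: $R(\GGG)^{(j)}=\bigcup (\NNN_1\cdots\NNN_k)^{(j)}$, because each commutator involves finitely many elements, all of which lie in a common finite product. Hence $R(\GGG)^{(\dim X+1)}=\{\id_X\}$, so $R(\GGG)$ is solvable, connected and normal. It contains every connected normal solvable subgroup, so it is the unique maximal one.

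The main obstacle is this uniform bound on the derived length. Without Theorem~\ref{main-thm-derived-length}(1), an increasing union of solvable groups need not be solvable. The only other point needing care is that connectedness of the product holds for arbitrary, possibly non-quasi-affine, $X$. This is clear from the definition, because the pointwise product of two irreducible families containing $\id_X$ is again such a family.
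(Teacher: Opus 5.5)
Your proposal is correct and follows essentially the same route as the paper. The paper also notes that the connected normal solvable subgroups of $\GGG$ form a directed family, and then uses the uniform bound $\dl\leq \dim X+1$ from Theorem~\ref{main-thm-derived-length}(1) to conclude that their union is a connected normal solvable subgroup; you simply spell out in more detail why products are connected and why the derived series commutes with directed unions.
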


\begin{proof}
  Let $\HHH_1$ and $\HHH_2$ be connected normal solvable subgroups of $\GGG$. Then
  $\langle \HHH_1, \HHH_2 \rangle$ is again a connected normal solvable subgroup of $\GGG$. Hence, the set $\SSS$ of connected normal solvable subgroups $\HHH$ of $\GGG$ is directed.
  By Theorem~\ref{main-thm-derived-length}, 
  every $\HHH \in \SSS$ has derived length 
  $\leq \dim X + 1$.
  This shows that $\bigcup_{\HHH \in \SSS} \HHH$ is a connected normal solvable subgroup
  of $\GGG$, hence equal to $R(\GGG)$.
\end{proof}

\begin{rem}
The heart of the proof of Theorem~\ref{existence_of_solvable_radical.thm} is the upper bound $\dl(\HHH)\leq \dim(X)+1$ from  Theorem~\ref{main-thm-derived-length}. This can be replaced by the  simpler inequality $\dl(\HHH)\leq 2\dim(X)$. Indeed, for $n\geq 0$ let $d(n)$ be the maximum of the derived length of connected and solvable subgroups of $\Aut(X)$, where $X$ is any variety of dimension $n$. We want to prove $d(n)\leq 2n$. For $n=1$, $d(1)=2$, a bound reached by the group of affine transformations of the line. Now, suppose $d(k)\leq 2k$ for any $k\leq n-1$. 
 Pick $X$ of dimension $n$ and $\GGG$ a connected and solvable subgroup of $\Aut(X)$. 
 Set $d=\dl(\GGG)$ and $\HHH=\GGG^{(d-1)}$. Since $\HHH$ is normal, 
 Theorem~\ref{popov-rosenlicht.thm} and
 Corollary~\ref{Cor.global_geom_quotient} give a dense open subset $X_0\subset X$ and a $\GGG$-equivariant geometric quotient $\pi \colon X_0 \to Y_0$ for $\HHH$. 
 Set $\KKK= \GGG^{(d(m))}$ where $m=\dim(Y)$. Then, 
 the image of $\KKK$ in $\Aut(Y)$ is trivial
 and $\KKK^{(d(n-m))}=\{\id\}$ in restriction to any fiber of $\pi$. Thus, $d\leq d(m)+d(n-m)$ and finally $d(n)\leq 2n$ for every $n$. 
\end{rem}

\begin{thm}
  \label{thm.Existence_radical}
  Any subgroup $\GGG \subseteq \Aut(X)$ contains a unique maximal subgroup
  $R_u(\GGG)$ which is
  normal solvable and 
  generated by connected unipotent algebraic subgroups. It is 
  nested by connected unipotent algebraic subgroups,
  and its derived length is bounded by $\dim X$.
\end{thm}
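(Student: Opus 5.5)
The plan is to imitate the proof of Theorem~\ref{existence_of_solvable_radical.thm}. Let $\SSS$ be the set of subgroups $\NNN\subseteq\GGG$ that are normal in $\GGG$ and generated by connected unipotent algebraic subgroups of $\Aut(X)$. Every such $\NNN$ is connected, since each generating subgroup is an irreducible algebraic family containing $\id_X$. The trivial group lies in $\SSS$. If $\NNN_1,\NNN_2\in\SSS$, then $\langle\NNN_1,\NNN_2\rangle$ is normal in $\GGG$ and generated by connected unipotent algebraic subgroups. The first real step is to show that every $\NNN\in\SSS$ is solvable with $\dl(\NNN)\leq\dim X$, and that it is nested by connected unipotent algebraic subgroups. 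Granting this, $\SSS$ is directed and has uniformly bounded derived length. The union $R_u(\GGG):=\bigcup_{\NNN\in\SSS}\NNN$ is then a normal subgroup, generated by connected unipotent algebraic subgroups and solvable of length $\leq\dim X$, hence the unique maximal element of $\SSS$. Nestedness of the union then follows by applying the same structural statement to $R_u(\GGG)$ itself.

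The structural step uses the set $\MMM$ of subgroups generated by finitely many connected unipotent algebraic subgroups $U_1,\dots,U_k\subseteq\NNN$. These are algebraically generated, and products $U_1U_2\cdots U_k$ are irreducible constructible subsets containing $\id_X$. First I show such a group is algebraic and unipotent, and this is where quasi-affineness must be manufactured.

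\begin{itemize}
\item Choose a connected unipotent algebraic subgroup $U\subseteq\NNN$ with the same orbits as $\NNN$ (Proposition~\ref{Orbits.prop}(2), as in the proof of Proposition~\ref{Pro.Unipotent_radical_second_part}).
\item Apply Theorem~2 of \cite{Br2021Homogeneous-variet} to obtain a dense affine open subset $X_0\subseteq X$ stable under $U$, hence under $\NNN$, since the two groups have the same orbits. This works on each irreducible component, and $\NNN$ preserves components because it is connected.
\item On $X_0$, first bound the derived length. A subgroup generated by finitely many connected unipotent algebraic groups is connected, and it should be a unipotent algebraic group once we know it is algebraic. Corollary~\ref{Cor.bound_dl} then bounds its derived length by $\dim X$.
\item Since $\NNN$ is a directed union of such groups, $\NNN|_{X_0}$ is solvable with $\dl\leq\dim X$.
\item Theorem~\ref{main-thm-A} now gives that $\NNN|_{X_0}$ is tempered, so each group in $\MMM$ is algebraic on $X_0$, and hence on $X$ by Lemma~\ref{Lem.restriction-to-open}(2).
\item Such a group is unipotent by Remark~\ref{unipotent-group.rem}.
\end{itemize}

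The order of these steps is circular: bounding the derived length needs algebraicity, and algebraicity comes from Theorem~\ref{main-thm-A}, which needs solvability. This is the main obstacle. The fix I would try first is an induction on the number $k$ of generators. Suppose $\langle U_1,\dots,U_{k-1}\rangle$ is already known to be a unipotent algebraic group $V$, and adjoin $U_k$. Consider the chain $V\subseteq\langle V,\,gU_kg^{-1}\rangle\subseteq\cdots$ inside $\NNN$. Alternatively, use normality in $\GGG$ together with the ind-group structure of $\Aut(X_0)$, in the spirit of Proposition~\ref{unipotent-indgroup-is-tame.prop}. If this induction resists, I would replace $\NNN$ by a smaller generating family: the subgroups $U$ whose $\GGG$-conjugates all lie in $\NNN$.

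Once $\MMM$ consists of connected unipotent algebraic groups, it is a covering directed set of $\NNN$. Proposition~\ref{unipotent-indgroup-is-tame.prop}, applied on the affine $X_0$ and pulled back by restriction, then shows that $\NNN$ is nested by elements of $\MMM$, which gives the claimed nestedness.
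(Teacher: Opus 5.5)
Your outer skeleton (a directed family of normal subgroups whose union is the radical, with nestedness at the end from Theorem~\ref{Thm.Equivalence_alg_tame_nested}) matches the paper's. The gap is in your ``first real step'': you dropped solvability from the definition of $\SSS$. The statement asks for the maximal subgroup that is normal, \emph{solvable}, and generated by connected unipotent algebraic subgroups. Without the solvability requirement, your claim that every $\NNN\in\SSS$ satisfies $\dl(\NNN)\leq\dim X$ is false. Take $\GGG=\SL_2(\kk)$ acting linearly on $\bbA^2$. Then $\NNN=\GGG$ is normal and generated by the upper and lower unitriangular subgroups $U^+,U^-$, which are connected unipotent algebraic subgroups. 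Yet $\NNN$ is not solvable, and $\langle U^+,U^-\rangle$ is algebraic but not unipotent. In $\Aut(\bbA^2)$ the groups $\{(x,y)\mapsto(x,y+tx^2)\}$ and $\{(x,y)\mapsto(x+sy^2,y)\}$ even generate a non-algebraic group. So the circularity you noticed cannot be removed by an induction on the number of generators or by passing to conjugates. The claim being inducted is false, and since $\NNN$ is normal your ``smaller'' family is not smaller at all. Solvability must be built into $\SSS$; directedness then holds because the product of two normal solvable subgroups is solvable.

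Even after restoring solvability, your route to an affine open set is circular. Proposition~\ref{Orbits.prop}(2) only gives an irreducible constructible subset with the same orbits as $\NNN$ (a finite product of generating subgroups), not an algebraic subgroup. Producing a connected unipotent algebraic subgroup with the same orbits, as in the proof of Proposition~\ref{Pro.Unipotent_radical_second_part}, presupposes that $\NNN$ is a directed union of algebraic groups, i.e.\ tempered. That is exactly what Theorem~\ref{main-thm-A} is supposed to give you afterwards, so Brion's theorem cannot be invoked yet.

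The missing ingredient is Proposition~\ref{Prop.Estimate_dl}: a solvable group generated by images of affine spaces through $\id_X$ has a stable dense affine open subset. It is proved by induction on $\dim X$:
\begin{enumerate}
\item Take a geometric quotient, on an open set, for the last non-trivial derived subgroup $\HHH$.
\item Its geometric generic fibre is an orbit of a commutative algebraic group. That group is unipotent, because morphisms from $\bbA^\ell$ to $\Gm$ or to an abelian variety are constant. Hence the fibre is an affine space, and the quotient is affine over a dense open set.
\item Glue translates of this quotient (Corollary~\ref{Cor.global_geom_quotient}) and induct on the base.
\end{enumerate}
On the resulting affine open set, Theorem~\ref{main-thm-A}, Corollary~\ref{corB} and Corollary~\ref{Cor.bound_dl} show that each member of $\SSS$ is tempered, has only unipotent connected algebraic subgroups, and satisfies $\dl\leq\dim X$ (Corollary~\ref{Cor.unipotent_solvable_subgroup}). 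Only then does the union over $\SSS$ lie in $\SSS$ and equal $R_u(\GGG)$.
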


We call $R_u(\GGG)$ \emph{the unipotent radical} of $\GGG$.
To prove this result, we start with the following proposition.

\begin{prop}
  \label{Prop.Estimate_dl}
  Let $\GGG \subseteq \Aut(X)$ be a solvable 
  subgroup such that 
  \begin{equation}
\tag{$\ast$}
\label{Covering_property}
\begin{minipage}{0.8\textwidth}
\textit{%
$\GGG$ is generated by images of morphisms $\mathbb{A}^\ell \to \Aut(X)$ containing $\id_X$ where $\ell \in \NN$.
}
\end{minipage}
\end{equation}
Then $X$ contains a $\GGG$-stable open affine dense subset.
\end{prop}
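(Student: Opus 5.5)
The plan is to replace $\GGG$ by an algebraic subgroup with the same orbits, and then use the structure theory of connected affine solvable groups. A subset of $X$ is $\GGG$-stable as soon as it is stable under a subgroup of $\GGG$ having the same orbits, so it suffices to produce such a subgroup $G\subseteq\GGG$ which is a connected affine solvable algebraic group.

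\emph{Step 1: a finitely generated subgroup with the same orbits.} By Proposition~\ref{Orbits.prop}\,(2), or rather its refinement Theorem~1 of \cite{Po2014On-infinite-dimens} already used in Proposition~\ref{Prop.Base-change}, there are finitely many of the generating families $\rho_i\colon\bbA^{\ell_i}\to\Aut(X)$ and signs $\varepsilon_i$ such that $V=\rho_1(\bbA^{\ell_1})^{\varepsilon_1}\cdots\rho_k(\bbA^{\ell_k})^{\varepsilon_k}$ satisfies $Vx=\GGG x$ for all $x\in X$. Hence $G:=\langle V\rangle$ has the same orbits as $\GGG$. It is algebraically generated, solvable, and still satisfies~\eqref{Covering_property}, since inverses of such families are again parametrized by $\bbA^{\ell_i}$ (\S\,\ref{algebraic_subgroups_and_basic_facts.sec}\,(3)).

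\emph{Step 2: $G$ is an algebraic group (main obstacle).} Theorem~\ref{main-thm-A} gives this only for quasi-affine $X$, which we do not have. I would argue by induction on $(\dim X,\dl(G))$, in the same order as in the proof of Theorem~\ref{thm.Proof_of_them_A}, and use~\eqref{Covering_property} to exclude anti-affine phenomena.

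First I would show that the commutative group $\HHH=G^{(d-1)}$ is algebraic on each orbit, using Proposition~\ref{commutative-subgroup.prop}. Then I would pass to a geometric quotient, via Theorem~\ref{popov-rosenlicht.thm} and Corollary~\ref{Cor.global_geom_quotient}, and apply the induction hypothesis to the image of $G$. Finally Lemma~\ref{Lem.Further_crit_to_be_alg} and Lemma~\ref{Lem.restriction-to-open} would give that $G$ is algebraic.

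The point where~\eqref{Covering_property} enters is this. Any morphism from $\bbA^\ell$ to an abelian variety, or to $\Aut$ of a semi-abelian variety modulo translations, is constant. So the abelian-variety parts that appear in part (b) of the proof of Theorem~\ref{main-thm-derived-length} act trivially. This should allow the quasi-affine input in Proposition~\ref{G-stable-submodule.pro} to be replaced by the local finiteness of affine algebraic group actions on the geometric generic fibre.

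\emph{Step 3: $G$ is affine.} Granting Step~2, Rosenlicht's decomposition gives $G=\Gaff\Gant$, with $G/\Gaff$ an abelian variety. Each composition $\bbA^{\ell_i}\to G\to G/\Gaff$ is constant, and it sends $0$ to the identity class because $\id_X\in\rho_i(\bbA^{\ell_i})$. Therefore the generators lie in $\Gaff$, and so $G=\Gaff$ is affine, connected and solvable. By Example~\ref{mostow_theorem.exa}, $G=T\ltimes U$.

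\emph{Step 4: the open set.} Theorem~2 of \cite{Br2021Homogeneous-variet} (for split solvable groups) provides a dense open affine subset $X_0\subseteq X$ that is $G$-stable. If that reference only yields a $U$-stable affine open set, I would instead use the normalization and the torus covering of Lemma~\ref{quotient-by-torus.lem}, intersected with the $T$-translates of the $U$-stable affine open set. Since $G$ and $\GGG$ have the same orbits, $X_0$ is $\GGG$-stable.
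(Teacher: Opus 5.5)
Your Steps~1, 3 and~4 are sound reductions, but the whole argument rests on Step~2, and Step~2 is not established. It is essentially a non-quasi-affine version of Theorem~\ref{main-thm-A}, so it is harder than the statement you are asked to prove. The tools you cite do not close it.

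Lemma~\ref{Lem.Further_crit_to_be_alg} needs the kernel of $G\to\Aut(Y)$ to be finite dimensional. That kernel lies in $\Aut_Y(X)$, which is in general infinite dimensional: for $X=Y\times\bbA^1$ it contains all translations $(y,t)\mapsto(y,t+f(y))$ with $f\in\OOO(Y)$. You have no control on this kernel; even $\HHH=G^{(d-1)}$ is only known to be algebraic orbit by orbit. In the quasi-affine proof, this is exactly where the $\LP$/$\TP$ decomposition, Theorem~\ref{orbits-of-commutative-groups.thm} and Proposition~\ref{G-stable-submodule.pro} enter. All of them use regular functions on $X$ that generate $\kk(X)$, in order to embed the group into some $\Aut_R(M)$. ``Local finiteness on the geometric generic fibre'' does not supply such functions on $X$ unless you already know the quotient map is affine over a dense open subset of the base. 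Your sketch never shows this, and it is the real content of the statement.

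That affineness is the missing idea, and once you have it you do not need algebraicity at all. The paper inducts on $\dim X$ using $\HHH=\GGG^{(d-1)}$, which is commutative and again satisfies $(\ast)$.
\begin{enumerate}
\item Shrink to an $\HHH$-stable dense open $X'$ with a smooth geometric quotient $\pi'\colon X'\to Y'$. Then $\HHH_\KK$ is a commutative algebraic group acting transitively on $X'_{\bar\eta}$ (Proposition~\ref{commutative-subgroup.prop}, Corollary~\ref{Cor.Geometric_fibre_of_Geom_quotient}).
\item $\HHH_\KK$ is unipotent because morphisms from $\bbA^\ell$ to $\Gm$ or to an abelian variety are constant. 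This is the only place $(\ast)$ is used. Hence $X'_{\bar\eta}\simeq\bbA^{n-m}_\KK$, and $\pi'$ is affine over a dense open subset (Lemma~\ref{Lem.affine-morphism}).
\item Gluing $\GGG$-translates gives a $\GGG$-equivariant affine quotient $\pi_0\colon\GGG X'\to Y_0$ (Corollary~\ref{Cor.global_geom_quotient}, Remark~\ref{Rem.global_geom_quotient}).
\item The image of $\GGG$ in $\Aut(Y_0)$ still satisfies $(\ast)$. By induction, $Y_0$ contains a $\GGG$-stable dense affine open $V$, and $\pi_0^{-1}(V)$ is the required subset.
\end{enumerate}
Algebraicity of your $G$ then follows afterwards, from Theorem~\ref{main-thm-A} applied on $\pi_0^{-1}(V)$ together with Lemma~\ref{Lem.restriction-to-open}, as in Corollary~\ref{Cor.unipotent_solvable_subgroup}. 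So your Step~2 is a consequence of the proposition, not a route to it.
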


\begin{cor}
  \label{Cor.unipotent_solvable_subgroup}
  If $\GGG \subseteq \Aut(X)$ is a solvable subgroup, generated by 
  connected unipotent algebraic subgroups, then $\GGG$ is \algebraicallyTame{}, $\dl(\GGG) \leq \dim X$ and all connected algebraic subgroups are unipotent.
\end{cor}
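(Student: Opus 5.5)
The plan is to reduce to the quasi-affine case by means of Proposition~\ref{Prop.Estimate_dl}, and then invoke Theorem~\ref{main-thm-A} and Corollary~\ref{Cor.bound_dl}.

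\emph{Step 1: reduction to an affine open set.} Every connected unipotent algebraic group $U$ is, as a variety, an affine space $\bbA^\ell$. Its inclusion $U \to \Aut(X)$ is therefore a morphism $\bbA^\ell \to \Aut(X)$ whose image contains $\id_X$. Hence $\GGG$ satisfies condition~\eqref{Covering_property}. Since $\GGG$ is solvable, Proposition~\ref{Prop.Estimate_dl} provides a $\GGG$-stable dense open affine subset $X_0 \subseteq X$. First note that $\GGG$ is connected, because it is generated by connected algebraic subgroups, each of which contains $\id_X$.

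\emph{Step 2: temperedness.} Restriction gives an injective homomorphism $\GGG \to \Aut(X_0)$, because $X_0$ is dense. Its image $\GGG'$ is a connected solvable subgroup of $\Aut(X_0)$. By Theorem~\ref{main-thm-A}, $\GGG'$ is tempered. Lemma~\ref{Lem.restriction-to-open}(2) then transfers temperedness to $\GGG$: any algebraically generated subgroup of $\GGG$ maps onto an algebraic subgroup of $\Aut(X_0)$, hence is algebraic.

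\emph{Step 3: unipotence of algebraic subgroups.} Let $G \subseteq \GGG$ be a connected algebraic subgroup. Its image in $\Aut(X_0)$ is affine by Lemma~\ref{quasiaffine-gives-affine.lem}, so $G$ is affine. I would show that $G$ is unipotent as follows. Use the generating unipotent groups $U_1,\dots,U_k$ together with temperedness to enlarge $G$ to a connected algebraic subgroup $G_1 \supseteq G$ that contains the generators of a suitable subgroup. The aim is to show that the collection $\SSS$ of connected algebraic subgroups generated by finitely many of the given unipotent groups is covering and directed, and consists of unipotent groups.

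Directedness is clear from the construction. Each member $\langle U_1,\dots,U_k\rangle$ is algebraic by temperedness. It is also unipotent: it is a connected solvable affine group generated by unipotent subgroups, and $G = T\ltimes R_u(G)$ with $R_u(G)$ containing all unipotent elements (Example~\ref{mostow_theorem.exa}), so $G = R_u(G)$. For covering, note that any $g \in \GGG$ is a finite product of elements of the generating groups.

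Finally, any connected algebraic $G \subseteq \GGG$ lies in some member of $\SSS$. Indeed, by Lemma~\ref{alg-gen-subgroup.lem}, $G$ is a finite product $(VV^{-1})^N$ for a constructible irreducible $V$. Covering $V$ by a constructible family inside products of the $U_i$ requires care; this is the main obstacle. To handle it, I would instead argue with the torus: if $G$ contained a nontrivial torus $T$, then $T$ would lie in the tempered group $\GGG$. I would show that $\GGG$ equals its unipotent radical $R_u(\GGG)$, in the sense of Remark~\ref{first_definition_unipotent_radical.rem}, by using Proposition~\ref{Pro.Unipotent_radical}. Since every generator $U_i$ is a connected unipotent group, it lies in $R_u(G')$ for any $G'\in\MMM'$ containing it. For this I would check that $U_i\subset G'$ normalized by enough of $\GGG$ forces $U_i \subseteq R_u$, using that $R_u^{\MMM}(\GGG)$ is normal and $\GGG = G' R_u^\MMM(\GGG)$ with $G'/R_u(G')$ a torus. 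Then the images of the $U_i$ in the torus quotient are trivial, so $\GGG = R_u(\GGG)$, and every connected algebraic subgroup is unipotent.

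\emph{Step 4: derived length.} Since $\GGG'$ is a connected unipotent solvable subgroup of $\Aut(X_0)$ and $X_0$ is quasi-affine, Corollary~\ref{Cor.bound_dl} gives $\dl(\GGG) = \dl(\GGG') \le \dim X_0 = \dim X$.
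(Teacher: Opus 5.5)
Your proposal is correct and follows the paper's route. You reduce via Proposition~\ref{Prop.Estimate_dl} and Lemma~\ref{Lem.restriction-to-open} to a $\GGG$-stable affine open subset, then use the quasi-affine theory and Corollary~\ref{Cor.bound_dl}. Your Step~3, however, is a detour: Corollary~\ref{corB} gives directly that the unipotent elements of the image of $\GGG$ form a subgroup, which contains all the generators and hence is everything, so every connected algebraic subgroup (affine by Lemma~\ref{quasiaffine-gives-affine.lem}) is unipotent. Your covering family of unipotent groups had in fact already shown this, so the containment you flagged as the ``main obstacle'' is never needed, and it can actually fail, as in Example~\ref{exa.perepechko}(a).
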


\begin{proof}
  By Proposition~\ref{Prop.Estimate_dl} above and  Lemma~\ref{Lem.restriction-to-open} we can assume that $X$ is affine. Then the claims follow from Corollary~\ref{corB} and 
  Corollary~\ref{Cor.bound_dl}.
\end{proof}

\begin{proof}[Proof of Proposition~\ref{Prop.Estimate_dl}]
   We may assume $X$ irreducible and $\GGG \neq \{\id\}$. 
  We proceed by induction on $n:=\dim(X)$. 
  The case $n = 1$ is clear, thus now $n \geq 2$.
  We set $\HHH = \GGG^{(d-1)}$ for $d = \dl(\GGG)$: 
  it is non-trivial, commutative, and  satisfies~\eqref{Covering_property}. 
  
  By Theorem~\ref{popov-rosenlicht.thm} and Remark~\ref{Rem.Geom_quotient_smooth} 
  there is an 
  $\HHH$-stable open dense $X' \subseteq X$ admitting a smooth geometric quotient 
  $\pi' \colon X' \to Y'$ for $\HHH$. 
  Set $m=\dim(Y')$ and denote by $\KK / \kk(Y)$ an algebraic closure.
  Let $\HHH_{\KK} \subseteq \Aut(\Xgg')$ be the subgroup generated by 
  the images of all $\rho_{\KK} \colon \mathbb{A}^l_{\KK} \to \Aut(\Xgg')$, 
  for all morphisms $\rho \colon \mathbb{A}^l\to \Aut(X)$ with $\id \in \rho( {\mathbb{A}}^l)\subset \HHH$. By Proposition~\ref{commutative-subgroup.prop} and Corollary~\ref{Cor.Geometric_fibre_of_Geom_quotient},  
  $\HHH_{\KK} \subseteq \Aut(\Xgg')$ is a commutative algebraic subgroup that acts transitively on~$\Xgg'$. 
  And $\HHH_{\KK}$ is unipotent because every morphism from 
  $\mathbb{A}^\ell$ to $\Gm$ or an abelian variety is constant
  (see Proposition~3.10 in \cite{milneAV}).
  Hence $\Xgg' \simeq \bbA^{n-m}_{\KK}$ and,  
  after shrinking $Y'$, we may assume that $\pi'$ is affine (Lemma~\ref{Lem.affine-morphism}).
  Then a $\GGG$-equivariant 
  affine geometric quotient $\pi_0 \colon X_0 \coloneqq \GGG X' \to Y_0$ for $\HHH$ exists by Corollary~\ref{Cor.global_geom_quotient} and Remark~\ref{Rem.global_geom_quotient}.
  
  As the image of $\GGG$ in $\Aut(Y_0)$ satisfies~\eqref{Covering_property},
  by induction there is an open dense affine $\GGG$-stable $V \subseteq Y_0$
  and  $\pi_0^{-1}(V) \subseteq X_0$ is our desired subset.
\end{proof}
 
\begin{proof}[Proof of Theorem~\ref{thm.Existence_radical}]
  Let $\SSS$ be the set of normal solvable subgroups of $\GGG$
  that are generated by connected unipotent algebraic subgroups.
  If $\HHH_1, \HHH_2 \in \SSS$, then 
  $\langle \HHH_1, \HHH_2 \rangle \in \SSS$, i.e.\ $\SSS$ is directed. 
  By Corollary~\ref{Cor.unipotent_solvable_subgroup}, 
  every $\HHH \in \SSS$ is \algebraicallyTame{}, $\dl(\HHH) \leq \dim X$ 
  and all connected algebraic 
  subgroups are unipotent. Thus
  $\HHH_0 \coloneq \bigcup_{\HHH \in \SSS} \HHH \in \SSS$
  and $\HHH_0 = R_u(\GGG)$. 
  Theorem~\ref{Thm.Equivalence_alg_tame_nested} completes the proof.
\end{proof}

When $\GGG$ is \algebraicallyTame, the definition of the 
unipotent radical from Theorem~\ref{thm.Existence_radical} coincides with the one used 
in Remark~\ref{first_definition_unipotent_radical.rem}. Moreover, if $\SSS$
is any covering directed set of connected algebraic subgroups of $\GGG$, one can show that (a) $R_u^\SSS(\GGG) = R_u(\GGG)$ if $\Char(\kk)=0$ or $\kk$ is uncountable and (b) $R_u^\SSS(\GGG)\subset R_u(\GGG)$ has index bounded by $p^{3\dim(X)^2}$ if $\Char(\kk)=p>0$. See Corollary~\ref{Cor.R_u^SSS(GGG)_R_u(GGG)} in Section~\ref{Appendix}. The following example illustrates this phenomenon.

\begin{exa}
  \label{Exa.Unipotent_radical}
  \textit{Assume $\Char(\kk) = p > 0$ and  $\kk$ is countable.
  There is a smooth variety $X$ of dimension $3$ and a 
  subgroup $\GGG = \bigcup_n G_n \subseteq \Aut(X)$,
  nested by a countable family $\SSS =  \{G_1, G_2, \ldots \}$ of connected algebraic subgroups, such that 
  $R_u^\SSS(\GGG)$ has index $p$ in $R_u(\GGG)$.}

  Let $E$ be an elliptic curve containing a finite cyclic subgroup of order $p$.
  Then the $p$-torsion $E[p]$ is isomorphic to $\ZZ/p\ZZ$. The prime field of $\kk$ 
  being equal to $\FF_p$, we obtain two embeddings of the
  cyclic group $(\ZZ/p\ZZ, +)$, one in $(E, +)$ and one in $(\kk, +)$:
  \begin{align*}
  \iota_E \colon \ZZ/p\ZZ\to E[p]\subset E \; {\textrm{ and }} \; 
  \iota_\kk \colon \ZZ/p\ZZ\to \FF_p\subset \Ga\simeq \kk.
  \end{align*}
  Let $X$ be the geometric quotient of $E \times \bbA^2$ by the $\ZZ/p\ZZ$-action
  $\gamma \cdot (e, x, y) = (e + \iota_E(\gamma), x - \iota_\kk(\gamma), y)$, with $\gamma \in \ZZ/p\ZZ$. 
    
  Then, we identify $\Ga^{\infty}\simeq \kk[y]$ with 
  $\set{(e, x, y) \mapsto (e, x + p(y), y)}{p \in \kk[y]} \subseteq \Aut(X)$
  and $E$ with $\set{(e, x, y) \mapsto (e + f, x, y)}{f \in E} \subseteq \Aut(X)$. 
  The intersection $E \cap \Ga^\infty \subset \Aut(X)$ is identified to $E[p]$ and $\FF_p$: it consists precisely of the automorphisms
  $(e, x, y) \mapsto (e + \iota_E(\gamma), x, y) = (e, x + \iota_\kk(\gamma), y) \; \textrm{ for $\gamma \in \ZZ/p\ZZ$}$.
    
Let $\GGG \subseteq \Aut(X)$ be the subgroup generated by $E$ and $\Ga^\infty$.
This is a commutative and connected group.  Its unipotent radical coincides with the image of  $\Ga^\infty$
in $\Aut(X)$. 

Set $W=\FF_p=\iota_\kk(\ZZ/p\ZZ)\subset \Ga$, and apply Construction~\ref{cons.NewPerepechko}. 
This gives an increasing union of algebraic subgroups $U_n\subset \Ga^{n}$ such that their union $\UUU=\bigcup_nU_n$ satisfies $\UUU\oplus W=\Ga^\infty$ and hence
$(\UUU\cap \Ga)\oplus W=\Ga$ (as $\FF_p$-linear subspaces).

  Define $G_n = E \times U_n \subseteq \Aut(X)$. Each $G_n$ contains the finite group $E[p]$. Thus
  $\GGG = \bigcup_n G_n \subseteq \Aut(X)$ and $\SSS = \{G_1, G_2, \ldots \}$ is a covering and directed set of connected algebraic subgroups for $\GGG$.  On the other hand, $R_u^\SSS(\GGG) = \UUU$ for $\SSS = \{G_1, G_2, \ldots \}$ and its index in $R_u(\GGG)$ is equal to $p$.
\end{exa}

\subsection{Borel subgroups of $\bbA^n$}\label{par:Borel_Subgroups}

Recall from \S~\ref{Borel-subgroups} that a Borel subgroup of $\Aut(X)$ is a connected solvable subgroup which is maximal among all such subgroups for inclusion. The next result is a consequence of Theorem~\ref{main-thm-derived-length}
and Zorn's lemma.

\begin{cor}
\label{Cor:Borel}
Every connected solvable subgroup of $\Aut(X)$ is contained in a Borel subgroup. 
\qed
\end{cor}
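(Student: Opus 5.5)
The plan is a Zorn's lemma argument on the set $\mathcal{P}$ of all connected solvable subgroups of $\Aut(X)$ containing a fixed connected solvable subgroup $\HHH_0$, partially ordered by inclusion. The set $\mathcal{P}$ is nonempty because it contains $\HHH_0$. A maximal element of $\mathcal{P}$ is then maximal among \emph{all} connected solvable subgroups of $\Aut(X)$: any connected solvable subgroup containing it also contains $\HHH_0$, hence lies in $\mathcal{P}$. So such an element is a Borel subgroup containing $\HHH_0$, and it suffices to show that every chain in $\mathcal{P}$ has an upper bound in $\mathcal{P}$.

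Let $(\HHH_i)_{i\in I}$ be a nonempty chain in $\mathcal{P}$ and set $\HHH:=\bigcup_i \HHH_i$. We check three things.
\begin{enumerate}[\rm (1)]
\item $\HHH$ is a subgroup, because the union of a chain of subgroups is a subgroup.
\item $\HHH$ is connected in the sense of Definition~\ref{defi:introduction_connected}. Any $g\in\HHH$ lies in some $\HHH_i$. Since $\HHH_i$ is connected, there is an irreducible family $\rho\colon A\to\Aut(X)$ with $\id_X,g\in\rho(A)\subset\HHH_i\subset\HHH$.
\item $\HHH$ is solvable. This is the only step with content. By Theorem~\ref{main-thm-derived-length}(1), every $\HHH_i$ satisfies $\dl(\HHH_i)\le n+1$ with $n=\dim X$, so $\HHH_i^{(n+1)}=\{\id_X\}$ for all $i$.
\end{enumerate}

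For (3), we show by induction on $k$ that $\HHH^{(k)}=\bigcup_i \HHH_i^{(k)}$. The inclusion $\supseteq$ is clear. For $\subseteq$, an element of $\HHH^{(k+1)}$ is a finite product of commutators of elements of $\HHH^{(k)}$. By induction, each of these finitely many elements lies in some $\HHH_j^{(k)}$. Since the $\HHH_j$ form a chain, all of them lie in a single $\HHH_{i}^{(k)}$, so the product lies in $\HHH_i^{(k+1)}$. Taking $k=n+1$ gives $\HHH^{(n+1)}=\{\id_X\}$, so $\HHH$ is solvable and belongs to $\mathcal{P}$. Zorn's lemma then yields the corollary.

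The main (indeed only) obstacle is the uniform bound on derived length. Without it, an increasing union of solvable groups could fail to be solvable. Theorem~\ref{main-thm-derived-length}(1) supplies exactly this uniformity, and it holds for arbitrary varieties $X$, so no quasi-affineness assumption is needed here.
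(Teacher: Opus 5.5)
Your proof is correct and is the argument the paper intends: it states the corollary as a consequence of Zorn's lemma and Theorem~\ref{main-thm-derived-length}. The uniform bound $\dl\le \dim X+1$ from part~(1) of that theorem is exactly what makes the union of a chain of connected solvable subgroups again solvable, and your inductive check that $\HHH^{(k)}=\bigcup_i \HHH_i^{(k)}$ fills in that step correctly.
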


\begin{cor}
The group $\Jonq(n)$ is its own normalizer and is maximal among all solvable subgroups of $\Aut(\An)$.
In particular, $\Jonq(n)$ is a Borel-subgroup of $\Aut(\An)$.
\end{cor}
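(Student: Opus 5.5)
We argue in two steps, both resting on the preceding results. Let $\GGG\subset\Aut(\An)$ be a solvable subgroup containing $\Jonq(n)$; we want $\GGG=\Jonq(n)$. First replace $\GGG$ by its connected component $\GGG^\circ$ (\S\,\ref{connected_subgroups.sec}): since $\Jonq(n)$ is connected, $\Jonq(n)\subset\GGG^\circ$, and $\GGG^\circ$ is connected and solvable. Its unipotent radical contains $\Jonq_u(n)=R_u(\Jonq(n))$, which has derived length $n$ because $\dl(\Jonq(n))=n+1$ and $\Jonq(n)=T_n\ltimes\Jonq_u(n)$ with $T_n$ commutative. More precisely, by Corollary~\ref{corB} we have $\GGG^\circ=T\ltimes R_u(\GGG^\circ)$. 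By Proposition~\ref{Pro.Unipotent_radical} we also get $R_u(\Jonq(n))\subset R_u(\GGG^\circ)$, since a torus of $\Jonq(n)$ of maximal dimension $n$ is already maximal in $\GGG^\circ$ by Remark~\ref{torus.faithful.rem}. Hence $\dl(R_u(\GGG^\circ))\geq n$, and Theorem~\ref{main-thm-derived-length}(2) shows that $\GGG^\circ$ is conjugate to a subgroup of $\Jonq(n)$, say $\varphi\GGG^\circ\varphi^{-1}\subset\Jonq(n)$.

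We then show $\GGG^\circ=\Jonq(n)$ using the rigidity of the coflag~\eqref{Eq.coflag}. The key observation is that $\Jonq(n)$ determines its coflag intrinsically: $\kk[x_1,\dots,x_j]$ is recovered as the invariant ring of the $j$-th term of a canonical chain of normal subgroups of $\Jonq_u(n)$, namely the subgroups $\{x_i\mapsto x_i+p_i,\ i>j\}$, obtained for instance from the derived or lower central series. Concretely, from $\varphi\Jonq(n)\varphi^{-1}\subset\varphi\GGG^\circ\varphi^{-1}\subset\Jonq(n)$ we apply this invariant description. The $\Jonq_u(n)$-orbits in $\An$ are the fibers of the projections, so the invariant rings $\kk[\An]^{\Jonq_u(n)^{(k)}}$ give the coflag: $\Jonq_u(n)^{(k)}$ fixes exactly the first $k$ coordinates generically. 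The conjugate $\varphi\Jonq_u(n)\varphi^{-1}$ sits inside $\Jonq_u(n)$ with the same derived length, and comparing orbit dimensions of the derived terms at each level forces $\varphi^*$ to preserve each $\kk[x_1,\dots,x_j]$, i.e.\ $\varphi\in\Jonq(n)$. Then $\varphi\GGG^\circ\varphi^{-1}\subset\Jonq(n)=\varphi\Jonq(n)\varphi^{-1}$ gives $\GGG^\circ=\Jonq(n)$.

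Next we handle non-connected $\GGG$, and at the same time the normalizer statement. If $g$ normalizes $\Jonq(n)$, or if $g\in\GGG$ with $\GGG^\circ=\Jonq(n)$ normal in $\GGG$, then $g$ normalizes $\Jonq(n)$. By the intrinsic characterization it permutes nothing: $g^*$ preserves the invariant rings $\kk[x_1,\dots,x_j]$ of the canonical normal subgroups, so $g\in\Jonq(n)$. This gives both self-normalization and $\GGG=\Jonq(n)$. Maximality among connected solvable subgroups then follows, so $\Jonq(n)$ is a Borel subgroup.

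The main obstacle is the canonical chain. We must check that the subgroups $N_j=\{f\in\Jonq_u(n): f^*x_i=x_i,\ i\le j\}$ are characteristic, or at least determined by derived and orbit data, inside $\Jonq_u(n)$, and that their invariant rings are exactly $\kk[x_1,\dots,x_j]$. The latter holds since $N_j$ contains the maps $x_{j+1}\mapsto x_{j+1}+c$. For the former we would try $N_j=$ the centralizer-type description via derived series of $\Jonq(n)$, checking inductively that $\Jonq(n)^{(k)}$ acts with orbits of dimension $n-k+1$.
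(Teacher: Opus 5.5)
Your proposal is correct and follows the paper's proof. Theorem~\ref{main-thm-derived-length}(2) conjugates the connected solvable overgroup into $\Jonq(n)$. The coflag is then recovered from the invariants, or equivalently the orbits, of the derived subgroups $\Jonq_u(n)^{(j)}$, so any $\varphi$ with $\varphi\Jonq_u(n)\varphi^{-1}\subseteq\Jonq_u(n)$ lies in $\Jonq(n)$, and self-normalization handles the non-connected case. The differences are minor: you use the abstract component $\GGG^\circ$ instead of the ind-closure $\bHHH^\circ$, and you check explicitly that the unipotent radical has derived length $n$; the point you flag as the main obstacle (that the $\Jonq_u(n)^{(j)}$-orbits are exactly the fibers of $\An\to\bbA^j$) is simply asserted in the paper and follows from a direct commutator computation.
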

\begin{proof} 
We provide a characteristic-free proof (see also \cite{FuPo2018On-the-maximality-} when $\kk=\CC$). 

(a) For each  integer $ j\geq 0$,  set $D^j=\Jonq_u(n)^{(j)}$, 
the $j$-th derived subgroup. The projection 
$\An\to \bbA^{j}$  from Equation~\eqref{Eq.coflag} is invariant under $D^{j}$ and is a quotient map, that is
$\kk[x_1,\ldots,x_{n}]^{D^{j}} = \kk[x_1,\ldots,x_{j}]$.
Similarly,  $\kk[x_1,\ldots, x_{j+1}]^{D^{j} } = \kk[x_1,\ldots,x_{j}]$.
Let $\UUU \subset \Jonq_u(n)$ be any subgroup of derived length~$n$. 
Since $\UUU^{(j)}$ is a non-trivial subgroup of $\Jonq_u(n)^{(j)}$, we see that 
$\kk[x_1,\ldots,x_{n}]^{\UUU^{j}} = \kk[x_1,\ldots,x_{j}]$ and $\kk[x_1,\ldots, x_{j+1}]^{\UUU^{j} } = \kk[x_1,\ldots,x_{j}]$.

 (b) If $\varphi\in \Aut(\An)$ conjugates $\Jonq_u(n)$ to a subgroup $\UUU:=\varphi \Jonq_u(n)\varphi^{-1}$ of $\Jonq_u(n)$, 
Step (a) implies that $\varphi\in \Jonq(n)$, because it preserves the coflag from Equation~\eqref{Eq.coflag}. 
 In particular, $\Jonq(n)$ is its own normalizer in $\Aut(\An)$.

(c) Let $\HHH \supseteq\Jonq(n)$ be a solvable subgroup of $\Aut(\An)$.
Its closure $\bHHH$ is solvable as well and $\bHHH^\circ \supseteq \Jonq(n)$.
Theorem~\ref{main-thm-derived-length}\,(2) gives a $\varphi\in \Aut(\An)$
with $\varphi \bHHH^\circ \varphi^{-1} \subseteq \Jonq(n)$. 
Since $\Jonq(n)\subset  \bHHH^\circ$, we obtain 
$\varphi \Jonq(n)\varphi^{-1}\subset \Jonq(n)$ and we conclude with Step (b):   $\varphi$ is in $\Jonq(n)$, $\bHHH^\circ=\Jonq(n)$, and then $\HHH=\Jonq(n)$ 
because it normalizes $\bHHH^\circ$. 
\end{proof}

\begin{prop}\label{Prop.Borel_dim2}
Every connected solvable subgroup of $\Aut(\Atwo)$ is conjugate to a subgroup of $\Jonq(2)$. All Borel-subgroups of $\Aut(\Atwo)$ are conjugate.
\end{prop}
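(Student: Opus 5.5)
The plan is to combine the structure theory of Corollary~\ref{corB} with the results of \S\,\ref{solvable.subgroups.sec}. Let $\HHH\subset\Aut(\Atwo)$ be connected and solvable. By Corollary~\ref{corB}, $\HHH = T\ltimes\UUU$ with $T$ an algebraic torus and $\UUU=R_u(\HHH)$ a \algebraicallyTame{} unipotent subgroup. I would split into three cases according to $\mdo(\UUU;\Atwo)\in\{0,1,2\}$.

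\emph{Case $\mdo(\UUU)=2$.} Here $\UUU$ has a dense orbit, and Proposition~\ref{Prop.U_transitive} directly gives that $\HHH$ is conjugate to a subgroup of $\Jonq(2)$.

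\emph{Case $\mdo(\UUU)=0$.} Then $\UUU$ is trivial and $\HHH=T$ is an algebraic torus. I would invoke the structure of algebraic subgroups of $\Aut(\Atwo)$ coming from Jung's theorem (the amalgamated product $\Aff_2 *_{\Aff_2\cap\Jonq(2)}\Jonq(2)$). By this, an algebraic subgroup, having bounded degree, is conjugate into $\Aff_2$ or into $\Jonq(2)$. A torus in $\Aff_2$ fixes a point and is conjugate into $T_2\subset\Jonq(2)$. A torus in $\Jonq(2)$ is already in $\Jonq(2)$.

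\emph{Case $\mdo(\UUU)=1$.} By Corollary~\ref{Cor.bound_dl}, $\dl(\UUU)\le 1$, so $\UUU$ is commutative. Proposition~\ref{commutative-quotient1.prop}(1) gives a connected unipotent algebraic subgroup $U\subset\UUU$ with the same orbits and $\kk[x,y]^\UUU=\kk[x,y]^U$. For a nontrivial connected unipotent group acting on $\Atwo$ with one-dimensional orbits, the invariant ring should be $\kk[f]$ with $f$ a variable, i.e.\ part of a coordinate system. This is Rentschler's theorem in characteristic $0$ and Miyanishi's in positive characteristic. Alternatively, one can deduce it from the previous case analysis: $U$ is conjugate into $\Jonq(2)$, and there the invariants are computed by hand.

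After conjugating by an automorphism sending $f$ to $x$, we get $\kk[x,y]^\UUU=\kk[x]$. Since $\HHH$ normalizes $\UUU$, it preserves $\kk[x,y]^\UUU=\kk[x]$, so $h^*\kk[x]=\kk[x]$ for every $h\in\HHH$. By the description of $\Jonq(2)$ in \S\,\ref{jonquieres.sec}, this says exactly that $\HHH\subset\Jonq(2)$.

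For the second statement, let $\BBB$ be a Borel subgroup of $\Aut(\Atwo)$. By the first part, $\varphi\BBB\varphi^{-1}\subset\Jonq(2)$ for some $\varphi$. Since $\Jonq(2)$ is connected and solvable and $\varphi\BBB\varphi^{-1}$ is maximal among such subgroups, equality holds. Hence every Borel subgroup is conjugate to $\Jonq(2)$.

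The main obstacle is the case $\mdo(\UUU)=1$, specifically the characteristic-free fact that the invariant ring is generated by a variable. The torus case also needs care in positive characteristic. If citing Miyanishi feels too heavy, I would first try to apply Jung's theorem to the algebraic group $TU$, which is conjugate into $\Aff_2$ or $\Jonq(2)$. In $\Aff_2$, a unipotent group with one-dimensional orbits consists of translations along a line, up to conjugation, which gives the invariant directly. One must then check that the conjugation can be chosen uniformly for all of $\UUU$. This works because all the algebraic subgroups $U'\supset U$ of $\UUU$ share the same invariant ring $\kk[x,y]^{U}$.
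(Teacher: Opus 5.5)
Your proof is correct and follows essentially the paper's skeleton: the decomposition $\HHH=T\ltimes\UUU$ from Corollary~\ref{corB}, then a dense-orbit case, a torus case, and a Rentschler/Miyanishi case. For the dense-orbit case you apply Proposition~\ref{Prop.U_transitive} directly. The paper instead notes that a non-commutative $\UUU$ has a dense orbit and invokes Theorem~\ref{main-thm-derived-length}(2).

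There are two local differences.
\begin{enumerate}
\item For the torus you use the amalgamated-product structure: an algebraic, hence bounded, subgroup is conjugate into $\Aff_2$ or $\Jonq(2)$. The paper simply cites Gutwirth. Both are fine, and yours is visibly characteristic-free.
\item In the case of one-dimensional orbits, you use the ring $\kk[x,y]^{\UUU}$, which is automatically $\HHH$-stable because $\UUU$ is normal. The paper instead uses Proposition~\ref{Thm.Torus_actions_on_U} to find $N\simeq\Ga$ in $\UUU$ normalized by $T$, hence by all of $\HHH$ since $\UUU$ is commutative, and applies Rentschler/Miyanishi to $N$ itself.
\end{enumerate}

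Your variant avoids Proposition~\ref{Thm.Torus_actions_on_U}, but it needs one line you did not write. Rentschler's and Miyanishi's theorems concern $\Ga$-actions, whereas your $U$ may have large dimension. The fix is as follows. Pick any $N\simeq\Ga$ in $U$. Then $\kk[x,y]^U\subseteq\kk[x,y]^N=\kk[f]$ with $f$ a variable. The ring $\kk[x,y]^U$ is not $\kk$, since its fraction field is $\kk(x,y)^U$, of transcendence degree one. The invariant ring of a connected group is algebraically closed in $\kk[x,y]$, so $f\in\kk[x,y]^U$ and hence $\kk[x,y]^U=\kk[f]$.

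Your fallback via Jung's theorem is unnecessary and also slightly inaccurate. A unipotent subgroup of $\Aff_2$ with one-dimensional orbits need not consist of translations along a line. Examples are $(x,y)\mapsto(x,y+ax+b)$, or $(x,y)\mapsto(x+s,y+s^p)$ in characteristic $p$, although the invariant ring is still generated by a variable in both cases.
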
 
 When $\kk=\CC$, this   is Theorem~1 of~\cite{Berest-Eshmatov-Eshmatov}. The proof given by Berest and Eshmatov is based on Jung's theorem and Bass-Serre theory of amalgamated products. 
We  provide a characteristic-free proof based on Theorem~\ref{main-thm-derived-length}. We refer 
to Proposition~\ref{Prop.Exotic_Borels} and Theorem~\ref{infmanyBorels} for the existence of other Borel subgroups in dimension $n\geq 3$.

\begin{proof}
As in Corollary~\ref{corB}, write $\HHH = T\ltimes \UUU$ with a torus $T$ of dimension $\leq 2$.
If $\UUU$ is trivial, then $\HHH$ is a torus and the claim follows from \cite{Gu1962The-action-of-an-a}. 
If $\UUU$ is not commutative, then $\UUU$ has a dense orbit, by Corollary~\ref{Cor.bound_dl}, and the claim follows from Theorem~\ref{main-thm-derived-length}. 
Thus we can assume that the connected, unipotent group $\UUU$ is  non-trivial and commutative. 
As $\HHH$ is \algebraicallyTame{}, Proposition~\ref{Thm.Torus_actions_on_U} provides an
 algebraic subgroup $N$ of $\UUU$, which is isomorphic to $\Ga$  and is normalized by $T$.
 As $\UUU$ is commutative, $N$ is normalized by $\HHH$.
By \cite{Re1968Operations-du-grou} (for $\Char(\kk) = 0$) resp. by \cite{Mi1971Ga-action-of-the-a} (for $\Char \kk >0$) the group $N$ is conjugate to a subgroup fixing the variable $x$, and so $\HHH$ stabilizes the coflag $x\colon \Atwo \to \Aone$.
\end{proof}

\subsection{Connected nested subgroups are closed}\label{proof_perepechko_sec}

Here, we prove Theorem~\ref{main-thm-Perepechko} which states that a connected nested subgroup of $\Aut(X)$  is closed when $X$ is affine and $\Char(\kk) = 0$. This doesn't hold when $\Char(\kk) > 0$, see Example~\ref{exa.perepechko}.

\begin{proof}[Proof of Theorem~\ref{main-thm-Perepechko}]
(a) 
By Theorem~\ref{main-thm-A}, $\GGG$ is \algebraicallyTame{}. 
Let $L$ be a connected reductive algebraic group of maximal dimension in $\GGG$ (see Lemma~\ref{Quotient_by_unipotent_radical_bounded.lem}). 
Every connected algebraic subgroup $G$ in $\GGG$ is affine
(see Lemma~\ref{quasiaffine-gives-affine.lem}) and 
from Mostow's theorem we get 
$G = L \ltimes R_u(G)$ if $G$ contains $L$ (Example~\ref{mostow_theorem.exa}). 
Hence $\GGG = L \ltimes \UUU$, where
$\UUU :=R_u(\GGG)$ is the unipotent radical.

From Proposition~\ref{unipotent-indgroup-is-tame.prop}, $\UUU$ and its closure $\bUUU \subset \Aut(X)$ are \algebraicallyTame{}, $\UUU$ is nested ($\UUU = \bigcup_k U_k$ with the $U_k$ unipotent), and $\bUUU$ is strongly nested by unipotent algebraic subgroups. If $U \subset \bUUU$ is a closed unipotent algebraic subgroup, then $U \cap \UUU = \bigcup_k U\cap U_k$, which is an ascending filtration of $U$ by connected closed unipotent subgroups. Hence, $U \cap \UUU = U \cap U_k$ for $k$ large enough. Since $\bUUU$ is strongly nested this implies that $\UUU$ is closed in $\bUUU$ and so $\UUU = \bUUU$. 
\ps
(b) 
We can now assume that
$\UUU$ is strongly nested by some unipotent algebraic groups $U_k$.
Let $\Aut(X) = \bigcup_k A_k$ be an admissible filtration.  
We may assume further that $A_k$ is invariant under left multiplication by $L$ 
and that $L$ normalizes each $U_k$. 
For each $k$ there is some $j$ such that $\UUU \cap A_k \subseteq U_j$.
Hence, $\GGG \cap A_k = L(\UUU \cap A_k)$ is a closed subset of the algebraic 
subgroup $L \ltimes U_j$ and 
thus $\GGG$ is closed in $\Aut(X)$.
\end{proof}

\subsection{Solvable subgroups of $\Aut(\An)$ containing an $n$-dimensional torus}

\begin{thm}\label{Tn-subgroup.thm}
If $\HHH \subset \Aut(\An)$ is a connected solvable 
subgroup containing a torus $T$ of dimension $n$,  then $\HHH$ is conjugate to a subgroup of 
$\Jonq(n)$.
\end{thm}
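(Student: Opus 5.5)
The plan is to first normalize the torus and then run an induction on $n$ that peels off one coordinate at a time, as in Proposition~\ref{Prop.U_transitive}.

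\emph{Normalizing the torus.} An $n$-dimensional torus acting faithfully on $\An$ is conjugate to the standard torus $T_n$; this is the result of \cite{Gu1962The-action-of-an-a} in dimension $2$, and in general it is Bia\l ynicki-Birula's theorem, valid in any characteristic. So we may assume $T=T_n$. By Corollary~\ref{corB}, $\HHH=T'\ltimes\UUU$ with $\UUU=R_u(\HHH)$ tempered and $T'$ a maximal torus. Remark~\ref{torus.faithful.rem} forces $\dim T'\le n$, so $T'$ is a maximal torus. Since the algebraic subgroups form a covering directed set, $T_n$ lies in some connected algebraic subgroup $G$ together with any given element. Conjugating inside $G$, we may take $T'=T_n$, so that $\HHH=T_n\ltimes\UUU$ and $T_n$ normalizes $\UUU$.

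\emph{Structure of $\UUU$.} Because $T_n$ normalizes $\UUU$ and $\UUU$ is nested by connected unipotent algebraic subgroups (Proposition~\ref{Pro.Unipotent_radical_second_part}), we can choose these subgroups $U_k$ to be $T_n$-stable. By Proposition~\ref{Thm.Torus_actions_on_U}, each nontrivial $U_k$ contains a central $T_n$-stable subgroup $N\simeq\Ga$. A $\Ga$-action on $\An$ normalized by $T_n$ is given by a homogeneous locally finite derivation (respectively, a homogeneous exponential action in characteristic $p$). In the standard weight grading these are of the form $t\mapsto (x_i\mapsto x_i+t\,m(x))$ for a single $i$, with $m$ a monomial not involving $x_i$. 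The reason is that the weight of the action must be a character of $T_n$, and $T_n$-equivariance pins down the coordinate being moved.

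\emph{Choosing the coflag.} The goal is a linear order on the coordinates such that every element of $\UUU$ acts triangularly. I would proceed as follows. Consider the $T_n$-stable subgroup $\UUU_i$ generated by root subgroups moving $x_i$. The total root datum is then a set $\Phi$ of pairs $(i,m)$ with $m$ a monomial in the $x_j$, $j\ne i$. Solvability of $\UUU$, which is a commutator condition, should force the directed graph on indices, with an edge $j\to i$ whenever $x_j$ occurs in some $m$ of a root $(i,m)$, to be acyclic. If there were a cycle $i_1\to i_2\to\cdots\to i_1$, the corresponding root subgroups would generate a non-solvable group: their commutators reproduce root subgroups of the same type, much as $x\mapsto x+y^a$ and $y\mapsto y+x^b$ generate a non-solvable group. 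Acyclicity gives a topological order, and in that order every root subgroup lies in $\Jonq(n)$. Since $\UUU$ is generated by its $T_n$-weight root subgroups (each $U_k$ is generated by the $\Ga$'s it contains as a $T_n$-stable unipotent group), we get $\HHH\subset\Jonq(n)$ after permuting coordinates.

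\emph{Main obstacle.} I expect two steps to be hardest. The first is proving that a cycle forces non-solvability; I would argue via the derived length bound of Theorem~\ref{main-thm-derived-length}(1), showing that iterated commutators along a cycle never vanish. The second is the claim that $T_n$-stable unipotent algebraic groups are generated by root subgroups, which needs care in characteristic $p$, where $T_n$-stable $\Ga$'s might be non-reduced additive-polynomial actions. An alternative would be to induct using Proposition~\ref{Thm.Torus_actions_on_U}: take $N$ central and $\HHH$-normalized, show its quotient $\An\to\bbA^{n-1}$ is a coordinate projection (it forgets $x_i$), and observe that $T_n$ descends to an $(n-1)$-torus, then apply induction.
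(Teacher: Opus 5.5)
\textbf{There is a genuine gap.} Your main route is not yet a proof. The two steps that carry all the weight are only asserted, as you acknowledge yourself:
\begin{enumerate}
\item that solvability of $\UUU$ forces the root graph to be acyclic, which you support only by analogy with $x\mapsto x+y^a$, $y\mapsto y+x^b$;
\item that each $T_n$-stable $U_k$ is generated by $T_n$-root subgroups.
\end{enumerate}
The second step is genuinely delicate in characteristic $p$. A connected unipotent group need not be generated by copies of $\Ga$ at all: $W_2$, the Witt vectors of length $2$, contains a unique subgroup isomorphic to $\Ga$. So you would need an argument specific to $T_n$ acting on $\An$. For the description of $T_n$-normalized $\Ga$'s, you should also cite \cite{LaLi2016Additive-group-act} rather than the weight heuristic. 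None of this is actually needed.

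Your fallback is the right strategy; it is essentially the paper's. But it breaks at ``take $N$ central and $\HHH$-normalized''. Proposition~\ref{Thm.Torus_actions_on_U} applies only to an \emph{algebraic} group. It therefore yields $N_k\simeq\Ga$ central in $U_k$, normalized only by $T_n\ltimes U_k$. Since $\UUU$ is merely tempered, $N_k$ may change with $k$, and nothing produces a $\Ga$ normalized by all of $\HHH$.

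The missing idea is that you do not need such a $\Ga$. It suffices that one of the $n$ coordinate projections $q_j\colon\An\to\bbA^{n-1}$ forgetting $x_j$ be $\HHH$-equivariant. This follows from a pigeonhole argument:
\begin{enumerate}
\item By the root-subgroup description, $\OOO(\An)^{N_k}=\kk[x_i : i\neq j_k]$.
\item $T_n\ltimes U_k$ normalizes $N_k$, so it preserves this ring; hence $q_{j_k}$ is $T_n\ltimes U_k$-equivariant.
\item As $j_k$ takes only $n$ values, some $j$ occurs for infinitely many $k$.
\item Since the $U_k$ increase, $q_j$ is then equivariant for all of $\HHH=\bigcup_k T_n\ltimes U_k$.
\end{enumerate}
The paper phrases this as a contradiction. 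If each $q_j$ were broken by some $u_j\in\UUU$, Theorem~\ref{main-thm-A} would put $T$ and $u_1,\dots,u_n$ into one connected algebraic subgroup $H=T\ltimes U$. A central root subgroup of $U$ would then give some $q_j$ preserved by $H\ni u_j$, which is impossible.

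Once $q_j$ is $\HHH$-equivariant, the image of $\HHH$ in $\Aut(\bbA^{n-1})$ is connected, solvable and contains an $(n-1)$-dimensional torus. Induction, with $x_j$ placed last, finishes the proof. This route bypasses both the acyclicity claim and generation by root subgroups.
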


To prove this theorem, denote by $T_n\subset\GL_n\subset \Aut(\An)$ the maximal torus of diagonal matrices, and recall that a {\emph{root subgroup}} $U \subseteq \Aut(\bbA^n)$ is a connected 
one-dimensional unipotent subgroup normalized by $T_n$.

\begin{exa}
Let $(a,b)$ be a pair of non-negative integers.
For $s\in \kk$, let $f_s$ be the automorphism of $\bbA^3$ defined by $f_s(x_1,x_2,x_3)=(x_1,x_2,x_3 + s x_1^ax_2^b)$.
If $t=\diag(t_1, t_2, t_3) \in T_3$, then
$ t^{-1} f_st= f_{s'} $
with $s'=st_1^{a}t_2^{b}t_{3}^{-1}$. In other words, the additive group $\{f_s\mid s\in \Ga\}$ 
is a root subgroup.
\end{exa}

This example is typical. Indeed, 
\cite[Theorem~3.5]{LaLi2016Additive-group-act}
shows that every root subgroup of $\Aut(\bbA^n)$ is of the form
\begin{equation}
\label{Eq.Root_subgroup}
U=\set{ (x_1,\ldots,x_j+s m_j, x_{j+1},\ldots,x_n)}{ s \in \kk}
\end{equation}
for some $j \in \{1, 2 \ldots,  n\}$ where $m_j$ is a monomial in the variables $x_i$ with $i\neq j$.

\begin{proof}[Proof of Theorem~\ref{Tn-subgroup.thm}]
By Corollary~\ref{corB},  $\HHH$ is  equal to a semi-direct product $T \ltimes \UUU$ where $T$ is a torus of dimension $n$ and $\UUU$ is connected unipotent and solvable. 
Since all tori of dimension $n$ in $\Aut(\An)$ are conjugate (see \cite{Bi1966Remarks-on-the-act}) we can assume $T =T_n$. 
Let 
$q_j \colon \An \to \bbA^{n-1}$ be the projection that ``forgets'' the variable $x_j$. 
By induction on $n$, it suffices to find an index $j$ such that   $q_j$ is $\HHH$-equivariant. 
Arguing by contradiction, we find elements $u_j\in \UUU$ such that $u_j$ does not preserve the fibration defined by $q_j$. By connectedness and Theorem~\ref{main-thm-A},  there is a connected algebraic subgroup $H\subset \HHH$ containing $T$ and the $u_j$. Write 
$H=T\ltimes U$ for some connected unipotent algebraic group $U$ containing the~$u_j$.  Proposition~\ref{Thm.Torus_actions_on_U}
provides a central root subgroup $V\subset U$ for~$T$. In particular, $V$ is normalized by $H$.
The explicit description of root subgroups given in Equation~\eqref{Eq.Root_subgroup} shows that, for some index $j$, the general orbit of $V$ is a fiber of $q_j$. But then, $H$, hence also $u_j$, should preserve the fibration defined by~$q_j$. This contradiction concludes the proof.  
\end{proof}

\section{Appendix}\label{Appendix}
{\small{

\subsection{Non-conjugate Borel subgroups} \label{Subsec.Exotic_Borel}

The following proposition extends Corollary~1.5 
of~\cite{ReUrSa2025Group-theoretical-} to any field.

\begin{prop}
  \label{Prop.Exotic_Borels}
  For $n \geq 3$, $\Aut(\An)$ contains Borel subgrous 
  that are non-conjugate to $\Jonq(n)$.
\end{prop}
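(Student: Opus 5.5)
We construct a connected solvable subgroup $\BBB_0\subset\Aut(\An)$ that is not conjugate to any subgroup of $\Jonq(n)$, then enlarge it to a Borel subgroup $\BBB\supseteq\BBB_0$ using Corollary~\ref{Cor:Borel}. Such a $\BBB$ cannot be conjugate to $\Jonq(n)$: if $\varphi\BBB\varphi^{-1}=\Jonq(n)$, then $\varphi\BBB_0\varphi^{-1}\subset\Jonq(n)$, contradicting the choice of $\BBB_0$. By Theorem~\ref{main-thm-derived-length}(2), $\BBB_0$ must have unipotent radical of derived length $<n$, and by Theorem~\ref{Tn-subgroup.thm} it must not contain an $n$-dimensional torus. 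So we look for a small group that is rigid in some other way.

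The natural candidate is an exotic $\Ga$-action. For $n\ge3$ we take a triangular but non-linearizable additive group. A cleaner choice is a commutative unipotent group $\UUU_0$ with a dense orbit on a hypersurface that is not a coordinate. For instance, in $\bbA^3$ we take $\BBB_0=\Gm\ltimes\UUU_0$, where $\UUU_0$ is the group of automorphisms $\exp(pD)$ (in positive characteristic, the corresponding explicit additive family) for a triangular locally nilpotent derivation $D$ whose kernel $\kk[f,g]$ contains no variable of $\bbA^3$. Bass's example, or a suitable modification of it, provides such a $D$; we extend to $\bbA^n$ by taking a product with $\bbA^{n-3}$ with trivial action there. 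The group $\UUU_0=\{\exp(hD): h\in\ker D\}$ is connected and abelian, hence solvable.

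The key step is to show that $\BBB_0$ is not conjugate into $\Jonq(n)$. Suppose $\varphi\BBB_0\varphi^{-1}\subset\Jonq(n)$. Then the invariant ring $\OOO(\An)^{\varphi\UUU_0\varphi^{-1}}$ would have to be compatible with the coflag~\eqref{Eq.coflag}. The point to exploit is that every subgroup of $\Jonq(n)$ stabilizes the fibration $x_1\colon\An\to\bbA^1$, so $\varphi^*x_1$ would be a variable of $\An$ whose level sets are permuted by $\BBB_0$. We would then use the torus $\Gm\subset\BBB_0$ to force $\varphi^*x_1$ to be semi-invariant, and $\UUU_0$ to force it to be $\UUU_0$-invariant or a $\UUU_0$-eigen-translate. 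For an abelian unipotent group acting on $\bbA^1$ by affine maps, a semi-invariant that is moved only by translations must in fact lie in $\ker D$ after accounting for the translation part. This leads to a variable of $\An$ lying in $\ker D$, contradicting the choice of $D$.

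The main obstacle is to pick $D$ so that no variable lies in $\ker D$, and no variable is moved only by translations by elements of $\ker D$, and to do this uniformly in every characteristic, where exponentials of derivations are unavailable. To handle positive characteristic, we would first try to work directly with explicit $\Ga$-actions defined by polynomials, so that the argument only uses invariant rings and degree considerations, in the style of Corollary~1.5 of~\cite{ReUrSa2025Group-theoretical-}.
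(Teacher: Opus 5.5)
Your reduction is sound: a connected solvable $\BBB_0\subset\Aut(\An)$ that is not conjugate into $\Jonq(n)$ gives, via Corollary~\ref{Cor:Borel}, a Borel subgroup not conjugate to $\Jonq(n)$. All the content lies in producing $\BBB_0$, and there the proposal has a genuine gap.

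\emph{The obstruction cannot hold for your $D$.} You plan to use ``no variable of $\An$ lies in $\ker D$'', but this fails for every triangular LND. If $D$ is triangular in coordinates $x_1,\dots,x_n$, then $D(x_1)\in\kk$. Either $x_1\in\ker D$, or $x_1$ is, up to a scalar, a slice. In the latter case $\kk[x_1,\dots,x_n]=(\ker D)[x_1]$, and $\ker D\simeq\kk[x_1,\dots,x_n]/(x_1)$ is a polynomial ring whose generators, together with $x_1$, form a coordinate system. Bass's example shows this concretely: $(xz+y^2)(-2y\partial_x+z\partial_y)$ has kernel $\kk[z,xz+y^2]$, which contains the variable $z$. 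Since $\ker(hD)=\ker D$ for $0\neq h\in\ker D$, the invariant ring of your $\UUU_0$ always contains a variable. So your key step does not end in a contradiction.

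\emph{The key step is also not an argument as written.} If $\UUU_0$ translates $\varphi^*x_1$ nontrivially, then $\varphi^*x_1\notin\ker D$. In characteristic $0$ it is a slice, and only the slice theorem then yields an invariant variable. The $\Gm$-factor contributes nothing. Bass's action is indeed not conjugate into $\Jonq(3)$, but for another reason. Its fixed locus is the cone $\{xz+y^2=0\}$. By contrast, any $\Ga$-subgroup of $\Jonq(n)$ lies in $\Jonq_u(n)$ and has fixed locus of the form $Z\times\bbA^1$, because the conditions $p_i(x_1,\dots,x_{i-1})=0$ never involve $x_n$.

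\emph{Two further gaps.}

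First, extending from $\bbA^3$ to $\An$ by the trivial action on $\bbA^{n-3}$ puts $x_4,\dots,x_n$ into the invariant ring and turns the fixed locus into a cylinder. Every obstruction available in dimension $3$ therefore disappears. Nothing in the proposal shows that the stabilized group is not conjugate into $\Jonq(n)$ for $n\ge 4$. With your method you would need an LND of $\kk[x_1,\dots,x_n]$ with no variable in its kernel, which is a nontrivial input that stabilization can never provide.

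Second, positive characteristic is not treated at all, yet the proposition holds for every $\kk$. The paper handles this by rerunning the argument of \cite{ReUrSa2025Group-theoretical-} with $f=x_1x_3+x_2^2+x_4^k+\dots+x_n^k$. This polynomial involves all variables, and $k=3$ in characteristic $2$: there $x_2^2+x_4^2+\dots+x_n^2=(x_2+x_4+\dots+x_n)^2$, so with $k=2$ the polynomial would depend on only three coordinates.

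\emph{A possible repair.} If you want to keep your Bass idea, a characteristic-free version is the $\Ga$-action $x\mapsto\sigma(sf(x))\,x$. Here $\sigma(s)$ is the linear $\Ga$-action $x_2\mapsto x_2+sx_1$, $x_3\mapsto x_3-2sx_2-s^2x_1$, with other coordinates fixed; it preserves $f$. The fixed locus of this action has the hypersurface $\{f=0\}$ as an irreducible component, and that hypersurface has exactly one singular point. So it cannot be of the form $Z_j\times\bbA^1$, and the fixed locus is not a cylinder.
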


\begin{proof} The proof is the same as in~\cite{ReUrSa2025Group-theoretical-}, except that we define $f = x_1 x_3 + x_2^2 + x_4^k + x_5^k 
  + \ldots + x_n^k$, where $k = 2$ if $\Char(\kk) \neq 2$ and
  $k = 3$ if $\Char(\kk) = 2$.
\end{proof}

\begin{thm}\label{infmanyBorels}
If $\Char(\kk) > 0$ and $n\geq 4$, there
are infinitely many pairwise non-conjugate Borel subgroups in $\Aut(\mathbb{A}^n)$.
\end{thm}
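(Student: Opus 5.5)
The plan is to use Gupta's counterexamples to the cancellation problem in positive characteristic. For $n\geq 4$ (take $\Char(\kk)=p>0$), Gupta constructed affine threefolds $V_m$, $m\geq 1$, which are pairwise non-isomorphic (and not isomorphic to $\bbA^3$), yet satisfy $V_m\times\Aone\simeq\bbA^4$. Multiplying by $\bbA^{n-4}$ gives embeddings $\bbA^n\simeq V_m\times\bbA^{n-3}$. From each $m$ I will build a Borel subgroup $\BBB_m$ of $\Aut(\An)$ whose invariants recover $V_m$. If $\BBB_m$ and $\BBB_{m'}$ were conjugate, the invariant data would then force $V_m\simeq V_{m'}$. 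One may first have to pass to an infinite subfamily where the $V_m$ are pairwise non-isomorphic, which Gupta's results provide.

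\textbf{Construction.} On $V_m\times\bbA^{n-3}$, with coordinates $y_1,\dots,y_{n-3}$ on the second factor, let $\HHH_m$ be the group of triangular automorphisms
\[(v,y)\mapsto (v,\ a_1y_1+p_1(v),\ a_2y_2+p_2(v,y_1),\ \dots),\]
where $a_i\in\kk^\times$ and the $p_i$ are regular functions on $V_m\times\bbA^{i-1}$. This group is connected and solvable, so by Corollary~\ref{Cor:Borel} it lies in some Borel subgroup $\BBB_m$. The key claim is that $\OOO(\An)^{R_u(\BBB_m)}\simeq\OOO(V_m)$, or at least that $V_m$ is recovered intrinsically from $\BBB_m$, for instance as the geometric quotient $\An/R_u(\BBB_m)$ or $\An/\BBB_m$.

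\textbf{Maximality.} To prove the claim, I will show that no connected solvable group containing $\HHH_m$ acts nontrivially on the quotient $V_m$. Let $\GGG\supseteq\HHH_m$ be connected and solvable. Then $R_u(\HHH_m)$ contains the fiberwise translations $y_{n-3}\mapsto y_{n-3}+p(v,y_1,\dots)$, and its invariant ring is $\OOO(V_m)$. I will argue, as in step (a) of the proof that $\Jonq(n)$ is self-normalizing, that $\GGG$ must preserve the coflag $V_m\times\bbA^{n-3}\to\cdots\to V_m$ defined by the successive invariant rings of the derived subgroups of $R_u(\HHH_m)$. This will likely require Theorem~\ref{main-thm-A} and the radical machinery, in order to see that $\GGG^{(j)}$ and $\HHH_m^{(j)}$ have the same invariants. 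Consequently $\GGG$ induces a connected solvable group on $V_m$. Gupta's threefolds are known to have very small automorphism groups; in particular they are not $\bbA^3$ and carry few $\Ga$-actions (their Makar-Limanov-type invariants are nontrivial). The step I expect to be hardest is bounding this induced action: it should be at most a torus, or trivial on a canonical part, so that $V_m$, or a canonical invariant of it, is determined by $\BBB_m$.

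\textbf{Conclusion.} Suppose $\varphi\BBB_m\varphi^{-1}=\BBB_{m'}$. Then $\varphi$ carries unipotent radicals to unipotent radicals (Theorem~\ref{thm.Existence_radical}), and hence induces an isomorphism of the invariant rings, so $V_m\simeq V_{m'}$. Since the $V_m$ are pairwise non-isomorphic, the groups $\BBB_m$ represent infinitely many conjugacy classes. The main obstacle is the maximality analysis, namely controlling the connected solvable groups acting on Gupta's $V_m$ and showing the coflag is preserved. If this fails directly, I would instead use the weaker invariant that $\dl(R_u(\BBB_m))$ together with the first quotient variety of the derived series distinguishes the classes.
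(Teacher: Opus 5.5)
\textbf{There is a genuine gap in the maximality step.} Your overall strategy is the right one: realize Gupta's varieties inside $\An$, take a Borel subgroup containing a large fibrewise group, and recover the Gupta variety as the invariant ring of a conjugation-invariant subgroup. But the step that carries all the weight is false as you formulate it.

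\emph{Gupta's varieties are not rigid.} They carry many $\Ga$-actions. For instance, on $\{x^ry=h(z,t)\}$ one has $z\mapsto z+sx^r$, $y\mapsto y+\bigl(h(z+sx^r,t)-h(z,t)\bigr)/x^r$. Their Makar-Limanov invariant is nontrivial in the sense of being different from $\kk$, but it is far from the whole coordinate ring.

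\emph{Consequences for your construction.}
Any connected solvable $S\subseteq\Aut(V_m)$, acting on the first factor of $V_m\times\bbA^{n-3}$, normalizes $\HHH_m$. Hence $S\ltimes\HHH_m$ is connected and solvable. So it is false that no connected solvable overgroup of $\HHH_m$ acts nontrivially on $V_m$, and false that such an action is at most a torus. A Borel subgroup containing $S\ltimes\HHH_m$ has lifts of $\Ga$-actions on $V_m$ in its unipotent radical. Then $\OOO(\An)^{R_u(\BBB_m)}\subsetneq\OOO(V_m)$.

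\emph{The coflag argument.} $\GGG^{(j)}$ and $\HHH_m^{(j)}$ need not have the same invariants; take $S$ non-abelian. Your argument modelled on the self-normalizing property of $\Jonq(n)$ works there only because Theorem~\ref{main-thm-derived-length}(2) first places the overgroup inside $\Jonq(n)$. Nothing comparable is available for $\HHH_m$.

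\emph{A second problem with using threefolds.} For $n\geq 5$, whatever you read off at the bottom of a derived series is at best $V_m\times\bbA^{n-4}\simeq\bbA^{n-1}$, which does not distinguish the $V_m$. You would need to recover the whole coflag down to $V_m$, and your plan gives no mechanism for this. Your fallback via $\dl(R_u(\BBB_m))$ takes only boundedly many values, so it cannot separate infinitely many classes.

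\textbf{The missing idea} is to give up on controlling the Borel subgroup on the base, and to control only a canonical abelian piece at the bottom of its derived series.

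The paper uses Gupta varieties $X_A$ of dimension $n-1$, pairwise non-isomorphic by Ghosh--Gupta, with $X_A\times\Aone\simeq\An$, so there is only one extra factor. It takes:
\begin{enumerate}
\item the self-centralizing group $U_A$ of maps $(x,w)\mapsto(x,w+f(x))$ with $f\in A$;
\item a Borel subgroup $B_A\supseteq U_A$;
\item $W_A$, the connected component of the subgroup of elements of order at most $p$ in the last non-trivial derived subgroup $B_A^{(k)}$.
\end{enumerate}
The subgroup $W_A$ is normal in $B_A$, commutative, unipotent and non-trivial.

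The paper then proves $W_A\subseteq U_A$ in two steps. First, $(W_A\cap U_A)^\circ\neq\{\id\}$, using commutators with $U_A$ and a result of Bia\l{}ynicki-Birula. Second, suppose $V\simeq\Ga$ lies in $W_A$ but not in $U_A$. Then $V$ commutes with $(W_A\cap U_A)^\circ$, so it descends to a faithful $\Ga$-action on $X_A$. Lemma~\ref{expressionnotzerousingBrion} then gives $[V,[V,U_A]]\neq\{\id\}$, contradicting the commutativity of $W_A$.

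Hence $\OOO(\An)^{W_A}=A$ is an invariant of the conjugacy class. The $\Ga$-actions on the base that break your argument are harmless in this approach: they may lie in $B_A$, but they cannot survive into its last derived subgroup.
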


\begin{proof}
We rely on Gupta's counterexample to Zariski's Cancellation Problem~\cite{GuptaCansel_counterex_any_dim}.	
Gupta considered the $\kk$-algebras:
 \[
 A_{(r,h)} =\kk[x_1,\dots,
 x_m,y,z,t]/(y x_1^{r_1}\dots
  x_m^{r_m}
 - h(z,t))
 \]
 where $m \ge 1$, the list of  exponents $r=(r_1, \ldots, r_m)$ satisfies $r_i\geq 2$ for all indices, and 
  $\kk[z,t]/\langle h\rangle \simeq \kk[x]$, but
  $h(z,t) \in \kk[z,t]$ is not a variable of $\kk[z,t]$ (i.e.,\   it is not the image of the variable $z$ by an automorphism of $\bbA^2$). For such an algebra $A=A_{(r,h)}$, 
the affine variety $X_A = \Spec(A)$ has dimension $m+2 \ge 3$. 
Gupta  proved  that
$X_A \times \mathbb{A}^1$ is isomorphic to  $\mathbb{A}^{m+3}$ but
 $X_A$ is not isomorphic to $\mathbb{A}^{m+2}$.
 Moreover, according to Theorem 4.3 and Corollary 4.4 of \cite{GhoshGuptatriv_of_family_hypers}, distinct parameters $r=(r_i)$ and $h$ provide non-isomorphic algebras.
 
 \ps
 (a) Fix such an algebra $A$. Let $U_A$ be the subgroup of $\Aut(X_A \times \bbA^1)$ generated by the automorphisms
\[ 
\TP_f \colon  (x, w) \mapsto 
(x,w + f(x)),
\]
where $f \in \OOO(X_A) = A$. 
First note that $U_A$ coincides with its centralizer, and this group is not a Borel subgroup (because of the automorphisms $(x,w)\mapsto (x, aw)$ with $a\in \kk^\times$). So, 
by Corollary \ref{Cor:Borel}, $U_A$ is contained in
a Borel subgroup $B_A$ of $\Aut(X_A \times \bbA^1)$, which is not commutative.
Consider the derived  series
$
B_A \supset B_A^{(1)} \supset \dots \supset B_A^{(k)}  \supset  \{ \id \}
$, where $k+1 =\dl(B_A)\geq 2$.
Denote by $W_A \subset B_A^{(k)}$ the connected component
  of the subgroup 
  of  elements of order $\leq p = \Char(\kk)$. Both $B_A^{(k)}$ and $W_A$ are normal in $B_A$. Since $B_A^{(k)}\subset [B_A,B_A]$, Corollary \ref{corB} implies that $B_A^{(k)}$ is unipotent; and 
  $B_A^{(k)}$ being~\algebraicallyTame{},  
  $W_A$ is non-trivial.

\ps
(b) We first claim that
 $W_A \cap U_A$ contains a non-trivial connected subgroup. 
 Assume this is not the case. Then
  we have an algebraic subgroup $U' \subset W_A$ that is isomorphic to $\Ga$ and satisfies $U' \not\subseteq U_A$. Since $U_A$ coincides with its own centralizer, 
  there exists some $\TP_f \in U_A$ that does not commute with $U'$.
  Then, 
 \[
U_f:=\set{\TP_{cf} u' ( \TP_{cf})^{-1}}{u' \in U' \, , \ c \in \kk} \subset W_A
 \]
 is an irreducible constructible subset of positive dimension of commuting automorphisms. By Theorem \ref{main-thm-A}, the subgroup $U \subset W_A$ generated by $U_f$ is commutative and algebraic,  hence   a vector group $\subset W_A$, and by construction $U$ is normalized by
 $
 \mathrm{T} := \{  \TP_{cf} \mid   c \in \kk \}  \simeq \mathbb{G}_a.
 $
By Corollary~4 of \cite{Bi1966Remarks-on-the-act}, 
there is a subgroup $V \subset U \subset W_A$ which is centralized by $\mathrm{T}$ and isomorphic to $\Ga$.
By assumption the connected component of $U \cap W_A$ is the identity, so $V$ is not contained in  $U_A$.
Since it commutes with $\mathrm{T}$,  $V$ induces a non-trivial action on $A = \OOO(X_A \times \mathbb{A}^1)^{ \mathrm{T}}$ and $V$ normalizes $U_A$. 
And
the commutator $[V,U_A]$  is a non-trivial subgroup of $ U_A$ because $U_A$ coincides with its centralizer.
Since $U_A$ normalizes  $W_A$ and $V \subset W_A$, $[V,U_A] $ is a subgroup of $ W_A$.
Hence,  $[V,U_A]$ is a non-trivial connected subgroup of 
$W_A \cap U_A$ which proves the claim.

\ps
(c) Let us prove that $W_A \subset U_A$.  
Otherwise we can  pick a subgroup $V \subset W_A$ which is isomorphic to $\Ga$ and is not contained in $U_A$.
Note that $V$ commutes with the non-trivial connected subgroup $(W_A \cap U_A)^\circ$ of $U_A$, hence it
induces an action on $A = \OOO(X_A \times \mathbb{A}^1)^{ (W_A \cap U_A)^\circ}$ and consequently on $X_A$. Thus, $V$ permutes the fibers of
$
\pi \colon X_A \times \mathbb{A}^1 \to X_A
$: every   $v \in V$ can be written as $v(x,w) =(\varphi_v(x), w + \alpha_v(x))$ for some pair $(\varphi_v, \alpha_v)\in \Aut(X_A)\times A$.  
Then,  $v^{-1} T_f v = T_{f \circ \varphi_v}$ and
$[v^{-1},T_f] =  T_{f \circ \varphi_v - f}$.
Since $[V,U_A]$ is a subgroup of the commutative group $W_A$, 
$[V,[V,U_A]] = \{ \id \}$. 
For $\Tilde{v} \in V$, this gives
\[
    \id
    [\Tilde{v}^{-1},T_{f \circ \varphi_v - f}] = T_{(f \circ \varphi_v - f)\circ \varphi_{\Tilde{v}} - (f \circ \varphi_v - f)}
\]
which implies   
\[
    (f \circ \varphi_v - f)\circ \varphi_{\Tilde{v}} = (f \circ \varphi_v - f)
\]
for all $v,\Tilde{v} \in V$, $f \in A$.
This contradicts Lemma \ref{expressionnotzerousingBrion} below and proves the claim.

\ps
(d) Pick two algebras $A=A_{(r,g)}$ and $A'=A_{(r',g')}$. If 
 $B_A$ is conjugate to $B_{A'}$ in $\Aut(X_A \times \mathbb{A}^1) \simeq \Aut(\mathbb{A}^{m+3})$ then $W_A$ is conjugate to $W_{A'}$, 
 hence $A=\OOO(X_A \times \mathbb{A}^1)^{W_A} \simeq 
 \OOO(X_{A'} \times \mathbb{A}^1)^{W_{A'}}=A'$ as algebras. Since the 
 $A_{(r,g)}$ are pairwise non-isomorphic, this concludes the proof.
\end{proof}

\begin{lem}\label{expressionnotzerousingBrion}
Let $X$ be an irreducible affine variety 
endowed with a faithful $\mathbb{G}_a$-action. Then there exist 
$s, \tilde{s} \in \Ga$ and $f \in \mathcal{O}(X)$ such that 
\[
  \tilde{s} \cdot (s \cdot f - f) \neq (s \cdot f - f) \, ,
\]
where $s \cdot f$ denotes the regular function on $X$ given by $x \mapsto f(sx)$.
\end{lem}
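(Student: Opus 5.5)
The plan is to argue by contradiction. Suppose that for all $s,\tilde s\in\Ga$ and all $f\in\OOO(X)$ we have $\tilde s\cdot(s\cdot f-f)=s\cdot f-f$. Then for every $s$ the function $s\cdot f-f$ lies in the invariant ring $\OOO(X)^{\Ga}$. For a fixed $f$, the map $s\mapsto \delta_s(f):=s\cdot f-f$ is then a homomorphism from $\Ga$ to the additive group $(\OOO(X)^{\Ga},+)$. Indeed,
\[
\delta_{s+s'}(f)=s\cdot(s'\cdot f)-f=s\cdot(\delta_{s'}(f)+f)-f=\delta_{s'}(f)+\delta_s(f),
\]
where the last step uses that $\delta_{s'}(f)$ is invariant. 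The comorphism gives $s\cdot f=\sum_i h_i(s)f_i$ with polynomials $h_i$. Hence $s\mapsto\delta_s(f)$ takes values in a finite-dimensional subspace $W_f$ of $\OOO(X)^{\Ga}$, and it is a morphism $\Ga\to W_f$ that is additive. So it is an additive polynomial map $\Ga\to W_f\simeq\Ga^r$: linear in $s$ in characteristic $0$, and a $p$-polynomial in positive characteristic.

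The key step is to show that these conditions force the action to factor through something too small, contradicting faithfulness together with the affine geometry. The structure I would use is the following. By the Rosenlicht-type results already used (Proposition~\ref{commutative-quotient1.prop}), there is an invariant open $X_0=\pi^{-1}(Y_0)$ on which $\pi\colon X_0\to Y_0$ is a geometric quotient with fibres $\bbA^1$ (the action is faithful, hence nontrivial). After shrinking, the action is locally trivial, $X_0\simeq Y_0\times\bbA^1$, and $\Ga$ acts by $s\cdot(y,w)=(y,\,w+\psi_y(s))$. Here $\psi_y$ is an additive polynomial with coefficients in $\OOO(Y_0)$, injective up to finite kernel on the generic fibre.

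Now take $f$ to be (a suitable power of an invariant times) the fibre coordinate $w$, and consider $f=w^2$, or in characteristic $2$ the function $w^3$. Then
\[
s\cdot w^2-w^2=2w\,\psi_y(s)+\psi_y(s)^2,
\]
which is not invariant whenever $2\psi_y(s)\neq0$. For $w^3$ one gets $3w^2\psi+3w\psi^2+\psi^3$, and $3\neq0$ in characteristic $2$. Since $\OOO(X)\to\OOO(X_0)$ is injective and $\OOO(X_0)$ is a localization of $\OOO(X)$ by invariants, I would clear denominators. That is, I would pick $f=c\,w^k\in\OOO(X)$ with $c\in\OOO(X)^{\Ga}$ nonzero, and then $s\cdot f-f=c\,(s\cdot w^k-w^k)$ still depends on $w$. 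This contradicts its invariance for a suitable $\tilde s$ with $\psi(\tilde s)\neq0$.

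The main obstacle is the clean reduction to the local model $Y_0\times\bbA^1$ with an explicit additive $\psi$. This includes ensuring that $w$ can be realized, up to an invariant factor, as a restriction of a regular function on $X$. In positive characteristic the orbit map may also be purely inseparable. I would handle this with a more elementary route if needed: choose $f\in\OOO(X)$ that is not invariant and of minimal degree in $s$ for $s\cdot f$. Then $\delta_s(f)$ is invariant, and $g=f^2$ (or $f^3$) gives $\delta_s(g)=\delta_s(f)(2f+\delta_s(f))$, which is not invariant because $f$ is not and $\delta_s(f)\neq0$ for some $s$.
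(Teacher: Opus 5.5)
Your main route is the paper's proof. The paper likewise passes to a $\Ga$-stable principal open set $X_0=\{h\neq0\}$, $h$ invariant, identified with $\bbA^1\times Z$ so that $s\cdot(t,z)=(t+P(z,s),z)$ with $P$ an additive polynomial in $s$. It then tests $f=h^l t^k$.

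The differences are minor:
\begin{enumerate}
\item The paper takes the local model directly from Corollary~4 of \cite{Br2021Homogeneous-variet}. Proposition~\ref{commutative-quotient1.prop} only gives you a trivial $\bbA^1$-bundle over a principal open $D(c)$, $c$ invariant, so you still have to check the translation form. This is short: an $\OOO(Y_0)$-automorphism of $\OOO(Y_0)[w]$ has the form $w\mapsto aw+b$ with $a$ a unit, and $s\mapsto a(s,y)$ is a morphism $\Ga\to\Gm$, hence $a\equiv1$.
\item The paper uses the single exponent $k=p+1$, $p$ the characteristic exponent. This gives $h^lP_sP_{\tilde s}(P_s^{p-1}+P_{\tilde s}^{p-1})$ uniformly, where you use $k=2$ or $k=3$. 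In characteristic $2$ your difference is $c\,\psi_s\psi_{\tilde s}(\psi_s+\psi_{\tilde s})$, so $\tilde s$ must also avoid $\psi_{\tilde s}=\psi_s$, not only $\psi_{\tilde s}=0$.
\end{enumerate}

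Your fallback is genuinely different, and it is a complete, more elementary proof. Write $s\cdot f=\sum_{i\le d}s^if_i$. Then $s\cdot\delta_{s_0}(f)=\sum_i\bigl((s+s_0)^i-s^i\bigr)f_i$ has $s$-degree $<d$. Hence for a non-invariant $f$ of minimal degree, every $\delta_{s_0}(f)$ is invariant. Under your contradiction hypothesis this holds for every $f$ anyway, so minimality is not even needed.

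Now set $a=\delta_s(f)$ and $b=\delta_{\tilde s}(f)$. You get $\tilde s\cdot\delta_s(f^2)-\delta_s(f^2)=2ab$, and $ab(a+b)$ for $f^3$ in characteristic $2$. Here $a+b=\delta_{s+\tilde s}(f)$ by your additivity computation. Since $s\mapsto\delta_s(f)$ is a nonzero polynomial map, it vanishes at only finitely many $s$, so you can choose $s,\tilde s$ with $a,b,a+b\neq0$. Then $\OOO(X)$ being a domain finishes the argument.

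In this route $f$ plays the role of the paper's slice $t$, but it is already a global regular function. So you need no structure theorem for $\Ga$-actions and no clearing of denominators; the only inputs are that $\OOO(X)$ is a domain and detects the nontrivial action. Using $f^{p+1}$ instead would reproduce the paper's formula exactly, with $a,b$ in place of $P_s,P_{\tilde s}$.
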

\begin{proof} 
Let $p$ be the characteristic exponent of $\kk$, i.e.\ $p = \max(1, \Char(\kk))$.
By Corollary~4 of \cite{Br2021Homogeneous-variet},  
$X$ contains a $\Ga$-stable open subset $X_0$ which is $\Ga$-isomorphic to 
$\mathbb{A}^1 \times Z$ for some affine variety $Z$ with $\kk(Z) = \kk(X)^{\Ga}$ and 
$\Ga$ acts on $\mathbb{A}^1 \times Z$ via
$
  s \cdot (t,z) = (t + P(z,s), z),
$
where $P \in \mathcal{O}(Z)[s]$ is a monic additive polynomial in $s$.
As $\kk(X)^{\Ga}$ is the quotient field of $\OOO(X)^{\Ga}$, after shrinking $X_0$ we may
assume that $X_0 = \set{x \in X}{h(x) \neq 0}$ for some non-zero invariant 
$h \in \mathcal{O}(X)^{\Ga}$. We identify $X_0 = \mathbb{A}^1 \times Z$.

There exists $l \geq 0$ such that $f \coloneqq h^l t^{p+1}$
is a regular function on $X$, for $\OOO(X_0)$ is the localization of $\OOO(X)$ at $h$.
For $s \in \Ga$ we have
\begin{equation*} 
    s \cdot f
    - f = h^l(t+P_s)^{p+1} - h^lt^{p+1} =
    h^l(P_s t^p + P_s^p t + P_s^{p+1}) \; ,
\end{equation*}
where $P_s \in \OOO(Z)$ denotes the function $z \mapsto P(z, s)$.
Then for $s, \tilde{s} \in \Ga$ we compute
\begin{align*}
\tilde{s} \cdot (s \cdot f - f) - (s \cdot f - f) 
= h^l P_s P_{\tilde{s}}
(P_s^{p-1} + P_{\tilde{s}}^{p-1}) \; .
\end{align*}
Hence, for any given $s \in \Ga\setminus \{0\}$ we
find some $\tilde{s} \in \Ga\setminus \{0\}$ with $ 
  P_s^{p-1} + P_{\tilde{s}}^{p-1} \neq 0$, 
(because $P_s^{p-1} + T^{p-1} \in \kk(Z)[T]$ 
has only finitely many roots in $\kk(Z)$). Since $s, \tilde{s}$
are non-zero, $P_s, P_{\tilde{s}}$ are non-zero and the proof is finished. 
\end{proof}

\subsection{Group scheme structures on quotients}

\begin{prop}\label{group-scheme.prop}
Let $X$ be an irreducible variety, 
and let $U$ be a connected commutative unipotent algebraic subgroup of $\Aut(X)$. 
Assume that $X$ admits a geometric quotient  $\pi\colon X \to Y:=X/U$ such that $Y$ is smooth and $\pi$  is locally trivial in the Zariski-topology with fiber $\An$. If $\pi$ has a section $e\colon Y \to X$, then $X \to Y$ has a unique structure of a commutative group scheme with identity $e$
such that $\Phi \colon U \times Y \to X$, $(u,y)\mapsto ue(y)$, is a homomorphism of $Y$-group schemes. 
\par
\end{prop}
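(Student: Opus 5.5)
The plan is to build the group law from the orbit map $\Phi\colon U\times Y\to X$, $(u,y)\mapsto ue(y)$, which is a $Y$-morphism when $U\times Y$ is regarded as the constant $Y$-group scheme $U_Y$. Let $K\subseteq U_Y$ be the stabilizer group scheme of the section $e$. It is the fiber product of $\Phi$ and $e$ over $X$, namely $K=\{(u,y)\mid ue(y)=e(y)\}$. Since $U$ is commutative, the stabilizer of $ue(y)$ equals the stabilizer of $e(y)$, so $K$ is the stabilizer of every point of the fiber. It is a closed normal $Y$-subgroup scheme of $U_Y$. The goal is to show that $\Phi$ identifies $X$ with the quotient $U_Y/K$ and that this quotient is a commutative $Y$-group scheme. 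Uniqueness is immediate. If $\Phi$ is a homomorphism with identity $e$, then $ue(y)+u'e(y)=uu'e(y)$, and since $\Phi$ is surjective fiberwise (the fibers of $\pi$ are $U$-orbits), this determines the law on points. The schemes are reduced varieties and the maps are morphisms, so the law is determined as a morphism.

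For existence, work locally on $Y$. Take $Y'\subseteq Y$ open affine with $\pi^{-1}(Y')\simeq Y'\times\An$. The steps are as follows.

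\textbf{Step 1.} Show that $\Phi$ is faithfully flat and that $X\times_Y X\simeq U_Y\times_{Y}X/\!\sim$. Concretely, the map $\Psi\colon U_Y\times_Y X\to X\times_Y X$, $(u,x)\mapsto(x,ux)$, should be surjective and flat. Flatness follows from miracle flatness, since both sides are smooth over $Y$ (here $Y$ is smooth and $\pi$ is locally $\An$), and all fibers of $\Psi$ are translates of $K_y$ and have constant dimension $\dim U-n$.

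\textbf{Step 2.} Define $m\colon X\times_Y X\to X$ by descent along the faithfully flat map $\Psi'\colon U_Y\times_Y U_Y\to X\times_Y X$, $(u,u',y)\mapsto(ue(y),u'e(y))$, using the formula $(u,u',y)\mapsto uu'e(y)$. The formula is constant on the fibers of $\Psi'$, which are $K$-cosets, because $K$ fixes $e$ and $U$ is commutative. To descend a morphism one needs the two pullbacks to $(U_Y\times_YU_Y)\times_{X\times_YX}(U_Y\times_YU_Y)$ to agree. That fiber product is reduced, since it is isomorphic to $U_Y\times_Y U_Y\times_Y K\times_Y K$ and this is a variety only if $K$ is reduced. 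This is the expected main obstacle: in positive characteristic the stabilizer scheme $K$ may be non-reduced.

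\textbf{Step 3 (the obstacle).} I would first try to get around it by replacing the schematic descent with descent of the actual map $\Psi'$. The point is that $\Psi'$ is a geometric quotient by the free action of the group scheme $K\times_Y K$, so it is a categorical quotient in the category of varieties, and the universal property of categorical quotients gives $m$ directly. To obtain this quotient property, invoke the local triviality hypothesis and Proposition~\ref{commutative-quotient1.prop}: over $Y'$, the map $U\to\An$, $u\mapsto ue(y)$, is a quotient of $U$ by $K_y$. Doing this uniformly in $y$ is the delicate part. If necessary I would fall back on the sheaf-theoretic construction of Propositions~2.1 and~3.1 of \cite[Ch.~III, \S\,3]{DeGa1970Groupes-algebrique} quoted in Proposition~\ref{commutative-quotient2.prop}.

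Inverse and identity are handled the same way. The inverse descends from $u\mapsto u^{-1}$, and the identity is $e$. The group axioms (associativity, commutativity) hold after pullback to the faithfully flat cover $U_Y^{3}$, because they hold in $U$, and they therefore hold on $X$. Finally, $\Phi$ is a homomorphism by construction. The local structures agree on overlaps by uniqueness, so they glue to a global structure on $\pi$.
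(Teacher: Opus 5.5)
Your descent strategy works and is genuinely different from the paper's. However, the ``main obstacle'' you flag in Step~2 does not exist, and Step~3 should be dropped.

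\textbf{Why the obstacle is illusory.} Set $S=U\times U\times Y$ and $B=X\times_YX$. You can compare the two composites $S\times_BS\rightrightarrows X$ on $T$-valued points for arbitrary $\kk$-schemes $T$, so reducedness of $S\times_BS$ (equivalently, of $K$) never enters.
\begin{itemize}
\item The action axioms and the commutativity of $U$ are identities of morphisms out of reduced varieties. Hence they hold on $T$-points.
\item A $T$-point of $S\times_BS$ is given by $u,u',v,v'\in U(T)$ and $y\in Y(T)$ with $ue(y)=ve(y)$ and $u'e(y)=v'e(y)$. The two $Y$-components agree because $\pi(ue(y))=y$.
\item Then $vv'e(y)=v(u'e(y))=u'(ve(y))=u'(ue(y))=uu'e(y)$.
\end{itemize}
By Yoneda the two composites coincide. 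Since representable functors are fpqc sheaves, the formula $(u,u',y)\mapsto uu'e(y)$ descends to $m\colon B\to X$. The inverse descends along $\Phi$ by the same kind of computation: $v^{-1}e(y)=v^{-1}u^{-1}(ue(y))=v^{-1}u^{-1}(ve(y))=u^{-1}e(y)$.

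\textbf{Step~3 would not help anyway.} To use the universal property of a quotient by the possibly non-reduced $K\times_YK$, you must know your map is $K\times_YK$-invariant scheme-theoretically. That is literally the same condition, so the detour is circular. The ``uniformly in $y$'' part is also left unproved.

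\textbf{The rest of your argument is fine.}
\begin{itemize}
\item Miracle flatness applies: $S$ and $B$ are smooth and irreducible ($Y$ is smooth and $\pi$ is a Zariski $\An$-bundle).
\item Every fiber of $\Psi'$ is a product of two cosets of $K_y$, of dimension $2(\dim U-n)=\dim S-\dim B$.
\item Uniqueness and the group axioms follow from surjectivity and reducedness, as you say.
\end{itemize}

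\textbf{Comparison with the paper.} The appendix proof deliberately avoids sheaves and descent. It argues as follows.
\begin{itemize}
\item The graph $\Gamma_\alpha$ of the set-theoretic addition is the image of the graph of $(u,u',y)\mapsto uu'e(y)$, hence constructible.
\item The projection $\overline{\Gamma_\alpha}\to X\times_YX$ is surjective and birational. Separability is obtained from the differential, using that each fiberwise map $\alpha_y$ is a morphism.
\item It concludes with Igusa's lemma, a Zariski-main-theorem-type statement. This needs normality of $X\times_YX$ and a reduction to affine $Y$.
\end{itemize}
Your argument is essentially a hands-on version of the Demazure--Gabriel construction cited in Proposition~\ref{commutative-quotient2.prop}. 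It shows that $\Psi'$ is a $K\times_YK$-torsor, i.e.\ that $X$ is the fppf quotient $U_Y/K$. It costs fpqc descent and miracle flatness. In exchange it handles non-reduced stabilizers and inseparability in characteristic $p$ automatically, with no separate birationality or separability check.
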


\begin{proof}
We have to show that the addition $\alpha\colon X \times_Y X \to X$ and the inverse $\iota\colon X \to X$ are morphisms. 
We have a commutative diagram
\[
\begin{CD}
U \times U \times Y @>{(u_1,u_2,y)\mapsto (u_1+u_2,y)}>{\tilde\alpha}> U \times Y\\
@VV{\Phi\times_Y\Phi}V @VV{\Phi}V \\
X\times_Y X @>{\alpha}>> X\\
@VV{\pi\times_Y\pi}V @VV{\pi}V\\
Y@=Y
\end{CD}
\]
Since the question is local on the base, we can assume that $Y$ is affine.
Since $\pi$ is a smooth and affine morphism, all varieties in this diagram 
are smooth and affine.
Note that the induced maps $\alpha_y\colon X_y \times X_y \to X_y$ on the fibers are morphisms for any $y \in Y$. 

To simplify the notation set $A:= U \times U \times Y$, $B:=X\times_Y X$, $\phi:=\Phi\times_Y\Phi\colon A \to B$ and $\psi:=\Phi\circ\tilde\alpha\colon A \to X$. Denoting by $\Gamma_{\psi}$  the graph of the $Y$-morphism $\psi$ and by $\Gamma_{\alpha}$ the graph of the $Y$-map $\alpha$  we get the following commutative diagram:
\[
\begin{CD}
A @>{\simeq}>> \Gamma_{\psi} @>{\subseteq}>> A \times_Y X \\
@VV{\phi}V @VV{\text{surjective}}V @VV{\phi \times_Y\id}V \\
B @>\beta>{\text{bijective}}> \Gamma_{\alpha} @>{\subseteq}>> B \times_Y X \\
@VV{p_B}V @VV{p_\Gamma}V @VVV\\
Y @= Y @= Y
\end{CD}
\]
It follows that $\Gamma_{\alpha}$ is the image of the irreducible
variety $\Gamma_{\psi}$ under the morphism $\phi\times_Y\id$, hence it is a constructible  subset of $B\times_Y X$. By construction, $p_\Gamma^{-1}(y)$ is the graph of the morphism $\alpha_y\colon B_y \to X_y$, and so  $\beta$ induces an isomorphism on the fibers $B_y \simto (\Gamma_\alpha)_y$.

As a consequence, the projection $\pr_{B}\colon B \times_Y X \to B$ induces a surjective morphism $p\colon \overline{\Gamma_{\alpha}} \to B$ which is injective on an open dense subset and restricts to isomorphisms $(\Gamma_\alpha)_y \simto B_y$ on the fibers for all $y \in Y$. Hence the differential $dp_{(b, x)}$ is an isomorphism on an open subset of $\Gamma_\alpha$ which implies that $p$ is birational.  
Since  $\overline{\Gamma_{\alpha}}$ is irreducible and 
$B$ is normal this implies that $p$ is an isomorphism, by the Lemma ~\ref{Igusa.lem} below. Hence
$\Gamma_{\alpha}=\overline{\Gamma_{\alpha}}$ and $\alpha = \pr_{X}\circ p^{-1}$ is a morphism.
\par
The proof that the inverse $\iota\colon X \to X$ is a morphism goes along the same lines.
\end{proof}

The following result is due to Igusa who used it in his proof of 
\cite[Lemma~4]{Ig1973Geometry-of-absolu}.
Another proof can be found in \cite[II.3.4 Lemma and Bemerkung on p.\,106]{Kr1984Geometrische-Metho}.

\begin{lem}  \label{Igusa.lem}
Let $A,B$ be irreducible affine varieties. Let $\phi\colon A \to B$ be a birational morphism. Assume that $B$ is normal and that $\codim_{B}\overline{B\setminus\phi(A)} \geq 2$. Then $\phi$ is an isomorphism.
\end{lem}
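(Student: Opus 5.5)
The plan is to prove that the pull-back $\phi^*\colon \OOO(B)\to \OOO(A)$ is surjective. It is injective because $\phi$ is dominant, and $A$ and $B$ are affine, so this will show that $\phi$ is an isomorphism. Since $\phi$ is birational, $\phi^*$ extends to an isomorphism $\kk(B)\simto \kk(A)$. We identify the two function fields, so that
\[
\OOO(B)\subseteq \OOO(A)\subseteq \kk(B).
\]
It then suffices to show that every $f\in \OOO(A)$ is regular on $B$.

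\textbf{Step 1 (normality).} Since $B$ is normal and affine, $\OOO(B)=\bigcap_D \OOO_{B,D}$, where $D$ runs over the prime divisors of $B$. Each local ring $\OOO_{B,D}$ is a discrete valuation ring with fraction field $\kk(B)$. So we fix $f\in\OOO(A)$ and a prime divisor $D\subset B$, and we must show that $f\in\OOO_{B,D}$.

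\textbf{Step 2 (the codimension hypothesis).} Because $\codim_B \overline{B\setminus\phi(A)}\geq 2$, the divisor $D$ is not contained in $\overline{B\setminus\phi(A)}$. By Chevalley's theorem $\phi(A)$ is constructible. Hence $D\cap\phi(A)$ contains a dense open subset of $D$, so the generic point $\eta_D$ of $D$ lies in the scheme-theoretic image of $\phi$. Choose a scheme point $a\in A$ with $\phi(a)=\eta_D$. The local homomorphism $\OOO_{B,\eta_D}\to\OOO_{A,a}$ is then an inclusion of local subrings of $\kk(B)$, and $\OOO_{A,a}$ dominates $\OOO_{B,D}$.

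\textbf{Step 3 (maximality of valuation rings).} A valuation ring of a field is maximal for the domination relation among local subrings of that field. Hence $\OOO_{A,a}=\OOO_{B,D}$. Since $f\in\OOO(A)\subseteq\OOO_{A,a}$, we get $f\in\OOO_{B,D}$. As $D$ was arbitrary, Step 1 gives $f\in\OOO(B)$, so $\OOO(A)=\OOO(B)$ and $\phi$ is an isomorphism.

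The main obstacle is Step 2. There one has to pass carefully from the codimension condition on closed points to the existence of a scheme-theoretic point of $A$ lying over the generic point of every prime divisor. I would handle this via constructibility of $\phi(A)$ and its stability under generization, applied to the closure of the preimage of a dense open subset of $D$. Steps 1 and 3 are standard commutative algebra.
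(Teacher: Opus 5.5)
Your proof is correct and is essentially the paper's argument in valuation-theoretic language. In both, normality reduces regularity on $B$ to the prime divisors of $B$, and the codimension hypothesis supplies something in $A$ over each divisor: your point $a$ over $\eta_D$, the paper's hypersurface $L\subseteq\phi^{-1}(H)$. Your maximality-of-valuation-rings step is exactly the paper's observation that if $\phi^*(r^{-1})$ vanishes along $L$ then $\phi^*(r)$ cannot be regular.

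For Step~2 you need neither Chevalley nor generization. Since $\phi(\phi^{-1}(D))=D\cap\phi(A)$ contains the nonempty open set $D\setminus\overline{B\setminus\phi(A)}$, some irreducible component $Z$ of $\phi^{-1}(D)$ dominates $D$, and the generic point of $Z$ maps to $\eta_D$. This is also what the paper's choice of $L$ implicitly requires.
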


\begin{proof}
Let $r \in\kk(B)$ be a rational function on $B$. If $r$ is not regular on $B$, then $r^{-1}$ vanishes on a hypersurface $H \subset B$, because $B$ is normal. Since $\overline{B\setminus\phi(A)}$ has codimension at least 2 the inverse image $\phi^{-1}(H)$ contains a hypersurface $L \subset A$, and $\phi^*(r^{-1})$ vanishes on $L$. It follows that $\phi^*(r)$ is not regular on $A$. Hence, if the pull-back of a rational function $r$ on $B$ is regular on $A$, then $r$ is regular on $B$. Hence, $\phi^*\colon \OOO(B) \to \OOO(A)$ is surjective and thus an isomorphism.
\end{proof}

\subsection{Increasing sequences of algebraic subgroups.}

\begin{lem}
\label{Lem.Increading_seq_of_algebraic_subgroups}
Assume $\Char(\kk) = 0$ or $\kk$ is uncountable.
Let $G$ be an algebraic group that is an increasing union of 
closed subgroups $H_i$, $i\in \NN$. Then 
$G = H_i$ for some $i$.
\end{lem}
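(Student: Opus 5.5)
The plan is to reduce first to the identity component, then show that the identity components $H_i^\circ$ already fill $G^\circ$, and treat the two hypotheses separately. The sequence $H_1^\circ\subseteq H_2^\circ\subseteq\cdots$ consists of connected closed subgroups of $G$. Their dimensions are bounded by $\dim G$, so the dimension stabilizes. A closed connected subgroup of the same dimension as a connected closed subgroup containing it is equal to it. Hence there is an index $i_0$ with $H_i^\circ=H_{i_0}^\circ=:H^\circ$ for all $i\ge i_0$. Set $H:=\bigcup_i H_i$, so that $H=G$ by assumption.

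\emph{Uncountable case.} For $i\geq i_0$, each $H_i$ is a finite union of cosets of $H^\circ$. So $G$ is a countable union of cosets of $H^\circ$, each a closed subvariety. Over an uncountable algebraically closed field, an irreducible variety is not a countable union of proper closed subsets. Applied to $G^\circ$, this forces $H^\circ=G^\circ$. Then $G/G^\circ$ is finite, and the increasing sequence of subgroups $H_i/G^\circ$ of this finite group stabilizes. This gives $G=H_i$ for large $i$.

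\emph{Characteristic zero case.} Here countability cannot be used, since $\kk$ may be countable (e.g.\ $\overline{\mathbb Q}$). Instead I use that $H^\circ$ is normal in $G$: for $g\in G$ we have $g\in H_j$ for some $j\ge i_0$, and $H_j^\circ=H^\circ$ is normal in $H_j$. Pass to the algebraic group $Q:=G/H^\circ$. Its closed subgroups $H_i/H^\circ$ ($i\ge i_0$) are finite, and their union is $Q$. So every element of $Q$ has finite order, and every finitely generated subgroup of $Q$ is finite.

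It remains to show that such a $Q$ is finite in characteristic $0$. Suppose $Q^\circ$ is nontrivial. By Rosenlicht's decomposition (\S\ref{Rosdecompandunipelem}), $Q^\circ$ has either a nontrivial connected affine part or a nontrivial abelian variety quotient.
\begin{itemize}
\item If the affine part is nontrivial, it contains either a copy of $\Ga$ or a copy of $\Gm$, by the structure of connected affine groups. A nontrivial element of $\Ga$ has infinite order in characteristic $0$, which is impossible. A torsion-only $\Gm$ is not a contradiction by itself, since $\kk^\times$ can be torsion for $\kk=\overline{\FF_p}$, but that field has positive characteristic. In characteristic $0$ the element $2\in\kk^\times$ has infinite order, so $\Gm$ is also excluded.
\item If $Q^\circ$ is an abelian variety, I need a point of infinite order. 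A cheap route to one: if $\kk$ is uncountable, the torsion subgroup is countable, so non-torsion points exist. For $\kk=\overline{\mathbb Q}$, I would invoke the known fact that abelian varieties over algebraically closed fields other than $\overline{\FF_p}$ have points of infinite order.
\end{itemize}

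I expect the abelian variety step to be the main obstacle. To avoid that citation, I would first try to reduce $G$ to the affine case via Lemma~\ref{quasiaffine-gives-affine.lem}, since in the applications of this lemma $G$ acts faithfully on an affine variety. Alternatively, one can base change to an uncountable algebraically closed field and compare torsion. Once the abelian variety case is settled, $Q^\circ$ is trivial, so $Q$ is finite, and the increasing subgroups $H_i/H^\circ$ stabilize to $Q$. Hence $G=H_i$ for some $i$.
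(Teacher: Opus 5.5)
Your proof is correct, but it is organized differently from the paper's. The paper reduces to connected $G$ and shows that, in characteristic $0$, $G$ has a finitely generated dense subgroup, which must then lie in some $H_i$. It builds this subgroup from the unipotent radical of the affine part, the reductive quotient (generated by root subgroups and a maximal torus), and the abelian-variety quotient split up to isogeny into simple factors, each with a point of infinite order by Theorem~10.1 of \cite{FrJa1974Approximation-theo}. For the uncountable case the paper simply cites Lemma~1.3.1 of \cite{FuKr2018On-the-geometry-of}.

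You instead stabilize the identity components $H_i^\circ$ by dimension, which makes $H^\circ$ normal in $G$ and $Q=G/H^\circ$ a torsion group. The uncountable case then becomes a self-contained Baire-type argument. In characteristic $0$ you only need one element of infinite order in $\Ga$, in $\Gm$, and in a nontrivial abelian variety. This avoids root subgroups, the isogeny decomposition, and the extension argument for finitely generated dense subgroups. The paper's route proves the stronger fact that connected groups in characteristic $0$ are topologically finitely generated. Yours proves the equally natural statement that a torsion algebraic group in characteristic $0$ is finite.

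Both arguments rest on the same external input: the Frey--Jarden theorem for countable fields such as $\overline{\mathbb Q}$ or $\overline{\mathbb Q(t)}$. Do not treat your fallbacks as substitutes for it.

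\begin{enumerate}
\item Base change to an uncountable field cannot produce a non-torsion $\kk$-point. All torsion points of $A_{\mathbb K}$ are already $\kk$-rational, and the same comparison holds over $\overline{\FF_p}$, where the conclusion fails.
\item Reducing to affine $G$ via Lemma~\ref{quasiaffine-gives-affine.lem} would not prove the lemma as stated. The paper applies it to algebraic subgroups of $\Aut(X)$ for arbitrary $X$, and such subgroups may have a nontrivial abelian-variety part.
\end{enumerate}
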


This  lemma does not hold if $\kk$ is  countable  and of positive characteristic. For example,
$\SL_2(\overline{\mathbb{F}_p})$ is the increasing union of the finite subgroups $\SL_2(\mathbb{F}_{p^n})$.
 
\begin{proof} 
The statement follows from \cite[Lemma~1.3.1]{FuKr2018On-the-geometry-of} when $\kk$ is uncountbale, so we assume 
$\Char(\kk) = 0$.
It suffices to show that $G$ contains a finitely generated 
dense subgroup, and we may assume $G$ connected.  
Since $\Gaff/R_u(\Gaff)$ is reductive,
it is generated by its root subgroups and a maximal torus 
(Theorem 26.3 in \cite{Hu1975Linear-algebraic-g}). Since $\Char(\kk) = 0$, both $\Ga(\kk)$ and $\Gm(\kk)$
contain elements that generate a dense subgroup. We deduce that both $R_u(\Gaff)$ and $\Gaff/R_u(\Gaff)$ contain a finitely generated dense subgroup, and so does $\Gaff$.

Using the short exact sequence $1 \to \Gaff \to G \to G/\Gaff \to 1$,
we may now assume that $G$ is an abelian variety. By  Corollary~1 in 
\cite[Ch. IV, \S\,18]{Mu2008Abelian-varieties} any abelian variety
is isogenous to a finite product of simple ones, thus
we may assume that $G$ is simple. Then, again because $\Char(\kk) = 0$, there is  an element of infinite order in $G$
(see~\cite[Theorem~10.1]{FrJa1974Approximation-theo}) and the claim follows.
\end{proof}

\subsection{Complements on the unipotent radical} In this section, the results will concern \algebraicallyTame{} subgroups $\GGG$ of $\Aut(X)$. Thus, the unipotent radical $R_u(\GGG)$ coincides with $R_u^\MMM(\GGG)$ where $\MMM$ is the set of all connected algebraic subgroups of $\GGG$.

\begin{rem} 
\label{Rem.Unipotent_radical}
\textit{Let $G$ be a connected  affine algebraic group. Then $R_u(G)$ contains every 
normal subgroup  $U$ of $G$ consisting of unipotent elements.} 
Indeed, since the unipotent elements form a closed subset of $G$ 
we may assume that $U$ is closed. 
Its connected component $U^\circ$ is unipotent and normal, hence contained in $R_u(G)$.
It follows that  $U / (U \cap R_u(G))$ is a finite unipotent normal subgroup of the reductive group $L = G / R_u(G)$. Such a subgroup must be central in $L$, hence contained in a maximal torus of $L$ (Corollary~A in \cite[\S26.2]{Hu1975Linear-algebraic-g}). Therefore, it is trivial, and so $U \subseteq R_u(G)$.
\end{rem}

\begin{lem}
\label{Lem.pr-torsion}
Assume $\Char(\kk) = p > 0$. If $U$ is a connected and unipotent 
algebraic subgroup of $\Aut(X)$, every element of $U$ is $p^{\dim X}$-torsion.  
\end{lem}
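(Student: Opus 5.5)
The statement is Lemma~\ref{Lem.pr-torsion}: if $U$ is a connected unipotent algebraic subgroup of $\Aut(X)$, every element of $U$ has order dividing $p^{\dim X}$. The plan is to reduce to the derived length of $U$ and then treat each step of the derived series as an exponent-$p$ layer. One caution first: a bound of the form $p^{\dim U}$ would be useless here, because $\dim U$ is not controlled by $\dim X$, so the argument has to go through orbits and derived length.

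\emph{Step 1 (reduction to an orbit).} Since $U$ is algebraic and connected, $u\in U$ is the identity if and only if $u$ acts trivially on every $U$-orbit. So it suffices to show that $u^{p^{\dim X}}$ acts trivially on each orbit $O=Ux$. The orbit $O$ is locally closed and $\dim O\le \dim X$. The image of $U$ in $\Aut(O)$ is again a connected unipotent algebraic group; it is affine by Lemma~\ref{quasiaffine-gives-affine.lem} once we know $O$ is quasi-affine. For that, Proposition~\ref{Prop.U_transitive}, applied after restricting to $O$, gives $O\simeq \bbA^m$ with $m=\dim O$; alternatively, use Theorem~1 of \cite{Br2021Homogeneous-variet} directly.

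\emph{Step 2 (derived length).} By Proposition~\ref{Prop.U_transitive}, the image $\bar U$ of $U$ in $\Aut(O)$ is conjugate to a subgroup of $\Jonq_u(m)$, and $\dl(\bar U)\le m$. Corollary~\ref{Cor.bound_dl} gives the same bound directly.

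\emph{Step 3 (exponent of each layer).} I will show by induction that a connected unipotent group $V$ of derived length $d$ satisfies $v^{p^d}=1$ for all $v\in V$.

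For $d=1$, $V$ is a commutative connected unipotent group, but such a group need not have exponent $p$ (for instance Witt vectors of length 2). So derived length alone is not enough, and I will use the Jonquières embedding instead.

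Concretely, $\bar U\subset \Jonq_u(m)$, whose elements have the form $f=(x_1+c_1,\,x_2+p_2(x_1),\,\ldots,\,x_m+p_m(x_1,\ldots,x_{m-1}))$. Let $\pi_j\colon \bbA^m\to\bbA^j$ be the coflag projections. I prove by induction on $j$ that $f^{p^j}$ acts trivially on $\bbA^j$.

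For $j=1$: $f$ acts on $x_1$ by translation by $c_1$, and $p\,c_1=0$ in characteristic $p$.

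For the inductive step, set $g=f^{p^{j-1}}$. By induction, $g$ is the identity on $\bbA^{j-1}$, so $g$ acts on $x_j$ as $x_j\mapsto x_j+q(x_1,\ldots,x_{j-1})$, where $q$ is unchanged by $g$. Hence $g^p$ acts on $x_j$ as $x_j\mapsto x_j+p\,q=x_j$. This completes the induction.

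\emph{Step 4 (conclusion).} Taking $j=m$ shows that $u^{p^{m}}$ is trivial on $O$ for every $u\in U$. Since $m=\dim O\le\dim X$, we get $u^{p^{\dim X}}=\id$ on every orbit, hence on $X$.

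The main obstacle is justifying the Jonquières conjugation in Step 2 for an orbit $O$ on which $U$ acts transitively. Proposition~\ref{Prop.U_transitive} requires a quasi-affine ambient variety. Each orbit of a unipotent group is isomorphic to affine space by Theorem~1 of \cite{Br2021Homogeneous-variet}, so $O$ is affine. Applying the proposition to $\bar U\subset\Aut(O)$, which is connected, solvable and equal to its own unipotent radical, then provides the conjugation.
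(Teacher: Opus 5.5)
Your argument is correct, but it takes a different route from the paper's. The paper also reduces to a single orbit, writing $X=U/U_x$ with $n=\dim X$. It then quotes Corollaire~3.15 of \cite[IV, \S4]{DeGa1970Groupes-algebrique} to get $u^{p^n}\in U_x$ for every $u\in U$. The identity $u^{p^n}vx=v(v^{-1}uv)^{p^n}x=vx$ then shows that $u^{p^n}$ fixes every point of the orbit.

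You instead identify the orbit with $\bbA^m$ via Brion's theorem. You then use Proposition~\ref{Prop.U_transitive} to put the image $\bar U$ in Jonquières form, and finish with the explicit computation that $\Jonq_u(m)$ has exponent dividing $p^m$, peeling off the coflag one coordinate at a time. This is legitimate and not circular, since Proposition~\ref{Prop.U_transitive} never uses this lemma. It also makes the bound visibly come from the $m$ successive translation layers. The price is that you route a statement about unipotent groups through the much heavier Proposition~\ref{Prop.U_transitive}, hence through Corollary~\ref{corB} and Theorem~\ref{main-thm-A}. The paper's proof needs only the Demazure--Gabriel result.

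Two small repairs are needed:
\begin{enumerate}
\item Proposition~\ref{Prop.U_transitive} conjugates $\bar U$ into $\Jonq(m)$, not $\Jonq_u(m)$. You must add that each $\bar u$ has $p$-power order, so its diagonal part in $(\kk^\times)^m$ is trivial, because $\kk^\times$ has no nontrivial $p$-torsion in characteristic $p$.
\item The announced induction ``derived length $d$ implies exponent dividing $p^d$'' is false, as your own Witt vector example shows. Delete it: neither it nor the derived-length bound of Step~2 plays any role in the argument you actually give.
\end{enumerate}
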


\begin{proof}
By considering the image of $U$ in $\Aut(Ux)$ for all $U$-orbits $U x$, 
we are reduced to the case where $U$ acts transitively on $X$, thus $X = U/U_x$
where
$x\in X$. Set  $n=\dim X$. Corollaire~3.15 of \cite[IV, \S4]{DeGa1970Groupes-algebrique} implies that $u^{p^n} \in U_x$ for every $u \in U$. Thus, for $v\in U$, 
we get 
$u^{p^n}  vx = v (v^{-1} u v)^{p^n} x= vx$,  
and the claim follows by transitivity of $U$.
\end{proof}

If $\GGG \subseteq \Aut(X)$ is a subgroup with a covering and directed set $\SSS$
of connected algebraic subgroups, 
we define $\GGG_{\aff}^{\SSS} \coloneqq \bigcup_{G \in \SSS} \Gaff$. It is a subgroup of $\GGG$, because $\Gaff$ contains all affine connected subgroups of $G$ (see \S~\ref{Rosdecompandunipelem}).

\begin{prop}
  \label{Prop.Key_unipotent_radical}
  Let $\GGG \subseteq \Aut(X)$ be a subgroup with a
  covering and directed set $\SSS$ of connected algebraic subgroups. 
  Let $\VVV$ be a subgroup of $\GGG$ that is a union of connected and unipotent algebraic subgroups.
\begin{enumerate}[\rm (a)]
    \item If $\Char(\kk) = p > 0$, then 
  $\VVV \cap \GGG_{\aff}^{\SSS}$ has index $\leq p^{3 \dim(X)^2}$ in $\VVV$.
  \item If $\Char(\kk) = 0$ or $\kk$ is uncountable or $X$ is quasi-affine, 
  then $\VVV \subseteq \GGG_{\aff}^{\SSS}$.
\end{enumerate}
\end{prop}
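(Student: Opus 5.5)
The idea is to reduce everything to a single connected algebraic group $G\in\SSS$ and to its abelian-variety quotient $G/\Gaff$. Fix a connected unipotent algebraic subgroup $V\subseteq\VVV$. Since $\SSS$ is covering and directed, and $V$ is generated by finitely many of its elements (or by an irreducible constructible set containing $\id_X$), there is $G\in\SSS$ with $V\subseteq G$. To see this, take $G_1,\dots,G_k\in\SSS$ containing the chosen generators and a common upper bound $G$; then $V=\langle\cdots\rangle\subseteq G$. So $V$ is a connected unipotent subgroup of $G$, and its image in the abelian variety $A:=G/\Gaff$ is a connected unipotent subgroup of $A$. It is a closed connected subgroup of an abelian variety which is affine, hence trivial. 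It follows that $V\subseteq\Gaff\subseteq\GGG^\SSS_\aff$.

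Thus every connected unipotent algebraic subgroup of $\VVV$ already lies in $\GGG_\aff^\SSS$, which gives (b) whenever $\VVV$ is literally a union of such groups. I suspect, however, that the subtlety is that $\VVV$ being a union of connected unipotent groups $V$ does not mean each $V$ sits inside some $G\in\SSS$. In positive characteristic over countable $\kk$, Example~\ref{exa.perepechko} shows that an algebraic subgroup need not lie in any member of a nested family. I would therefore argue more carefully element by element.

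Let $v\in\VVV$, $v\in V$ with $V$ connected unipotent, and choose $G\in\SSS$ with $v\in G$ (covering). The image $\bar v$ of $v$ in $A=G/\Gaff$ is unipotent as an element of $V$, so it has order $p^r$ with $r\le\dim X$ by Lemma~\ref{Lem.pr-torsion}. In characteristic $0$ it is torsion-free in a suitable sense: $\overline{\langle v\rangle}$ is a $\Ga$ whose image in $A$ is trivial, so $v\in\Gaff$. This handles $\Char(\kk)=0$. If $\kk$ is uncountable, Lemma~\ref{Lem.Increading_seq_of_algebraic_subgroups}-type arguments place $V$ inside a single $G$, and the first paragraph applies. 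If $X$ is quasi-affine, every algebraic subgroup is affine by Lemma~\ref{quasiaffine-gives-affine.lem}, so $G=\Gaff$.

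For (a), consider the homomorphism-like map $v\mapsto\bar v\in G/\Gaff$. More precisely, pass to a large $G\in\SSS$ containing given elements $v_1,\dots,v_m\in\VVV$ (directedness). Every $v_i$ maps into the $p^{\dim X}$-torsion of $A$, which is a group of order at most $p^{\dim X\cdot\dim A}$. Indeed, the $p^N$-torsion of an abelian variety of dimension $g$ has at most $p^{Ng}$ elements, using that the $p$-rank is $\le g$. The proof of Lemma~\ref{Quotient_by_unipotent_radical_bounded.lem} gives $\dim A=\dim\Gant\le3\dim X$. So $\langle v_1,\dots,v_m\rangle/(\langle v_i\rangle\cap\Gaff)$ has order $\le p^{3\dim(X)^2}$. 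Since $\Gaff$ increases with $G$ (for $G\subseteq G'$ one has $\Gaff\subseteq G'_\aff$), we get: every finitely generated subgroup $F\subseteq\VVV$ satisfies $[F:F\cap\GGG_\aff^\SSS]\le p^{3\dim(X)^2}$. A group in which all finitely generated subgroups have this uniformly bounded index property has the same bound for the whole group. Take more than $p^{3\dim(X)^2}$ elements in distinct cosets; they would generate a counterexample.

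The main obstacle is the torsion count, namely bounding the $p$-power torsion of $G/\Gaff$ uniformly via $\dim\Gant\le3\dim X$, together with making sure $\Gaff$ is monotone in $G$.
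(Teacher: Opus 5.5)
\paragraph{Gap: part (b) for uncountable $\kk$.} The gap is in (b) when $\kk$ is uncountable of characteristic $p>0$. Lemma~\ref{Lem.Increading_seq_of_algebraic_subgroups} concerns \emph{countable increasing} unions of closed subgroups. Your union $V=\bigcup_{G\in\SSS}(V\cap G)$ is instead a directed union over a family $\SSS$ that may be uncountable and need not be a chain. For such unions the uncountability of $\kk$ gives nothing: $\Ga(\kk)$ is the directed union of its finite subgroups.

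So ``Lemma-type arguments'' do not put $V$ inside a single $G\in\SSS$. This is exactly the point your second paragraph flags as delicate, and it needs a countability input, namely the nestedness in Theorem~\ref{Thm.Equivalence_alg_tame_nested}. The paper proceeds as follows:
\begin{enumerate}
\item It applies that theorem to $\GGG_{\aff}^{\SSS}$, which carries the covering directed family $\{G_{\aff}\}_{G\in\SSS}$, to write $\GGG_{\aff}^{\SSS}=\bigcup_k G_k$.
\item By (a), $V$ is a finite union of cosets of $V\cap\GGG_{\aff}^{\SSS}=\bigcup_k (V\cap G_k)$, hence a countable union of cosets of closed subgroups.
\item Since $V$ is irreducible and $\kk$ is uncountable, this forces $V\subseteq G_k$ for some $k$.
\end{enumerate}

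Your own route can be repaired with the same input:
\begin{enumerate}
\item Write $\GGG=\bigcup_N H_N$ using Theorem~\ref{Thm.Equivalence_alg_tame_nested}.
\item Each $G\in\SSS$ lies in some $H_N$, by the lemma applied to $G=\bigcup_N (G\cap H_N)$.
\item The members of $\SSS$ contained in $H_N$ generate a connected group that is generated by finitely many of them. By directedness it lies in some $G_N^*\in\SSS$.
\item Choosing the $G_N^*$ increasing gives a countable cofinal chain in $\SSS$. Only at this point does the lemma give $V\subseteq G_N^*$, after which your first paragraph applies.
\end{enumerate}

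\paragraph{The remaining parts.} The rest is fine. Your proof of (a) differs mildly from the paper's. You use directedness to place finitely many $v_1,\dots,v_m$ in one $G\in\SSS$, bound $[F:F\cap G_{\aff}]$ for $F=\langle v_1,\dots,v_m\rangle$, and then count cosets. The paper instead bounds $[V\cap G:V\cap G_{\aff}]$ for connected algebraic $V\subseteq\VVV$ and uses that $\VVV$ is tempered, so your version avoids that input. Note that only $\dim G/G_{\aff}\le\dim G_{\mathrm{ant}}$ holds, not equality, but the inequality is all you use.

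In characteristic $0$, observe that the closure of $\langle v\rangle$ in $G$ equals its closure in $V$, because $V\cap G$ is closed in both (Lemma~\ref{lem.intersection-of-constructible}). This is what makes it a copy of $\Ga$ inside $G$. The quasi-affine case is as in the paper.

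You are right to abandon your first paragraph. $V$ is not finitely generated as an abstract group. Moreover, that argument would prove (b) unconditionally, which contradicts Example~\ref{Exa.Unipotent_radical}.
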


\begin{proof}
  By Theorem~\ref{Thm.Equivalence_alg_tame_nested} $\GGG$ is \algebraicallyTame{}, hence $\VVV$ too.
  Let $V \subseteq \VVV$ be a connected algebraic subgroup. Then by Remark~\ref{unipotent-group.rem}, $V$ is unipotent.
  
  \ps
  (a) Set $n = \dim X$.
  By Lemma~\ref{Lem.pr-torsion}, every element of $V$ is of order $p^k$ for some $k\leq n$.
  By \cite[proof of Prop.~5.5.4]{BrSaUm2013Lectures-on-the-st}
  the dimension of the abelian variety 
  $A \coloneqq G/G_{\aff}$ is bounded by $3n$ for any $G \in \SSS$. 
  In particular, the $p^n$-torsion $A[p^n]$ contains at most
  $p^{3n^2}$ elements. 
  Thus, the index of $V \cap G_{\aff}$ in $V \cap G$ is bounded by $p^{3n^2}$, so the index of $V \cap \GGG_{\aff}^{\SSS}$ in $V$ is also bounded by $p^{3n^2}$. This implies the first claim.

  \ps
  (b) If $\Char(\kk) = 0$, then $V \cap G$ is connected and affine, hence
  contained in $G_{\aff}$  for all 
  $G \in \SSS$. Thus $\VVV \subseteq \GGG_{\aff}^{\SSS}$. 
  If $X$ is quasi-affine, then $\GGG_{\aff}^{\SSS} = \GGG$, by 
  Lemma~\ref{quasiaffine-gives-affine.lem}.
  Now, suppose $\kk$ is uncountable, and 
  write $\GGG_{\aff}^{\SSS} = \bigcup_k G_k$
  as an increasing countable union of connected algebraic subgroups
  (Theorem~\ref{Thm.Equivalence_alg_tame_nested}). 
  Then, $V = V \cap G_k$ for some $k$ because
  $V$ is the union of at most countably many closed $V \cap G_k$-cosets (for variying $k$), and $\kk$ is uncountable; 
  here we use (a). Hence, the second claim follows.
\end{proof}
\begin{cor}
  \label{Cor.R_u^SSS(GGG)_R_u(GGG)}
  If $\GGG \subseteq \Aut(X)$ admits a covering and directed set $\SSS$ of connected algebraic subgroups, then $R_u(\GGG) \cap \GGG_{\aff}^{\SSS} \subseteq R_u^{\SSS}(\GGG)$ and the index of
  $R_u^{\SSS}(\GGG)$  in $R_u(\GGG)$ is bounded by 
  $p^{3\dim(X)^2}$, where $p$ 
  is the characterstic exponent of $\kk$.
  If, moreover, $X$ is quasi-affine or $\Char(\kk) = 0$ or $\kk$ is uncountable, then 
  $R_u^{\SSS}(\GGG) = R_u(\GGG)$.
\end{cor}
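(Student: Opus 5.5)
We prove the two assertions separately, beginning with the inclusion $R_u(\GGG) \cap \GGG_{\aff}^{\SSS} \subseteq R_u^{\SSS}(\GGG)$. Let $\SSS'$ be the covering directed subset from Proposition~\ref{Pro.Unipotent_radical}. Since $\SSS'$ is covering and directed, any $g \in R_u(\GGG)\cap\GGG_{\aff}^{\SSS}$ lies in some $G\in\SSS'$. Enlarging $G$ inside $\SSS'$, we can arrange $g\in G_{\aff}$.

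Next we show $g \in R_u(G)$. By Theorem~\ref{thm.Existence_radical}, $R_u(\GGG)$ is a union of connected unipotent algebraic subgroups, and every element of $R_u(\GGG)$ is unipotent in the sense of Remark~\ref{unipotent-group.rem}. Moreover $R_u(\GGG)\cap G_{\aff}$ is a normal subgroup of the connected affine group $G_{\aff}$. We must check that it consists of unipotent elements. An element of it lies in a connected unipotent algebraic subgroup $V\subset R_u(\GGG)$, and $V\cap G_{\aff}$ is a closed subgroup of the unipotent group $V$, so the element is unipotent in $V$. By Lemma~\ref{lem.intersection-of-constructible} it is then unipotent in $G_{\aff}$. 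Remark~\ref{Rem.Unipotent_radical} now gives $R_u(\GGG)\cap G_{\aff}\subseteq R_u(G_{\aff})=R_u(G)$. Since $G\in\SSS'$, this yields $g\in R_u^{\SSS}(\GGG)$.

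For the index bound, apply Proposition~\ref{Prop.Key_unipotent_radical}(a) with $\VVV=R_u(\GGG)$, which is allowed because $R_u(\GGG)$ is a union of connected unipotent algebraic subgroups. It says $R_u(\GGG)\cap\GGG_{\aff}^{\SSS}$ has index at most $p^{3\dim(X)^2}$ in $R_u(\GGG)$. By the first step this intersection is contained in $R_u^{\SSS}(\GGG)$, and $R_u^{\SSS}(\GGG)\subseteq R_u(\GGG)$ holds because $R_u^{\SSS}(\GGG)$ is normal and solvable and generated by connected unipotent algebraic groups (Propositions~\ref{Pro.Unipotent_radical} and~\ref{Pro.Unipotent_radical_second_part}, together with maximality in Theorem~\ref{thm.Existence_radical}). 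The index bound follows. In characteristic $0$, where $p=1$, part (b) applies instead.

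Finally, in the remaining cases ($X$ quasi-affine, $\Char(\kk)=0$, or $\kk$ uncountable), Proposition~\ref{Prop.Key_unipotent_radical}(b) gives $R_u(\GGG)\subseteq\GGG_{\aff}^{\SSS}$, and the first step yields equality. The main obstacle is the unipotence check in the first step: the intersection with $G_{\aff}$ must be done carefully, since Example~\ref{example_unipotent_char_p.exa} shows that $p$-power torsion need not lie in $R_u(G)$ outside $G_{\aff}$. This is exactly why the intersection with $\GGG_{\aff}^{\SSS}$ appears in the statement.
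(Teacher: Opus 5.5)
Your proposal is correct and follows essentially the paper's route. Remark~\ref{Rem.Unipotent_radical} applied to $R_u(\GGG)\cap G_{\aff}$ with $G\in\SSS'$ gives $R_u(\GGG)\cap\GGG_{\aff}^{\SSS}\subseteq R_u^{\SSS}(\GGG)$, solvability plus maximality in Theorem~\ref{thm.Existence_radical} gives $R_u^{\SSS}(\GGG)\subseteq R_u(\GGG)$, and Proposition~\ref{Prop.Key_unipotent_radical} finishes. The differences are minor. You quote Proposition~\ref{Pro.Unipotent_radical_second_part} for solvability, where the paper re-derives it via Corollary~\ref{Cor.unipotent_solvable_subgroup}. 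You also spell out the unipotence check and the choice $G\in\SSS'$ (via directedness and $H_{\aff}\subseteq G_{\aff}$), which are needed to land in $R_u^{\SSS}(\GGG)$ and which the paper leaves implicit.
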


\begin{proof}
  By Proposition~\ref{Pro.Unipotent_radical} 
  the subgroup
  $R_u^{\SSS}(\GGG) \subset \GGG$  is normal and is a union of
  connected unipotent algebraic groups.
  As $\GGG$ is \algebraicallyTame{} (Theorem~\ref{Thm.Equivalence_alg_tame_nested}),
  the same holds for $R_u(\GGG)^{\SSS}$ and hence $R_u(\GGG)^{\SSS}$ is a countable 
  increasing union of connected algebraic subgroups $U_k$, all of which are unipotent 
  (Remark~\ref{unipotent-group.rem}). 
  By Corollary~\ref{Cor.unipotent_solvable_subgroup}, 
  $\dl(U_k) \leq \dim X$, thus $R_u(\GGG)^{\SSS}$ is solvable.
  Hence, by Theorem~\ref{thm.Existence_radical}, 
  $R_u^{\SSS}(\GGG) \subseteq R_u(\GGG)$. 
  For any $G \in \SSS$ such that $\dim G/G_{\aff}$
  is maximal, Remark~\ref{Rem.Unipotent_radical} implies $R_u(\GGG) \cap G_{\aff} \subseteq R_u(G)$,
  and then $R_u(\GGG) \cap \GGG_{\aff}^{\SSS} \subseteq R_u^{\SSS}(\GGG)$. 
  Now the claims follow from Proposition~\ref{Prop.Key_unipotent_radical}.
\end{proof}

\subsection{Maximal derived length solvable subgroups}
The following example shows that Theorem~\ref{main-thm-derived-length}(2) fails
for non-quasi-affine varieties:

\begin{exa}
  \label{Exa.Smooth_proj_dl_n_plus_1}
  \textit{There is a smooth projective variety of dimension $n$ with a faithful
  action of a connected solvable algebraic group of derived length $n+1$
  (its unipotent radical has derived length $n$).}
  Let $d \geq 0$. The group $\Gm^n$ acts on $(\bbA^2 \setminus \{0 \})^n$ by  
  \[ 
   \lambda\cdot z = 
    (\lambda_i \lambda_{i-1}^d \cdots \lambda_1^{d^{i-1}} x_i, \lambda_i y_i )_{_{i = 1, \ldots, n}},
  \]
  where $\lambda=(\lambda_1, \cdots, \lambda_n)\in \Gm^n$ and 
  $z=((x_1, y_1), \ldots, (x_n, y_n))\in (\bbA^2 \setminus \{0 \})^n$. 
  Let $X_n$ be the corresponding geometric quotient. 
  Forgetting successively the last pair of coordinates yields a composition
  $
    X_n \to X_{n-1} \to \cdots \to X_1 = \mathbb{P}^1
  $
  of $\mathbb{P}^1$-bundles, hence $X_n$ is a smooth projective variety.
  Let $\Jonq_{n, d} \subseteq \Jonq_n \subseteq \Aut(\mathbb{A}^n)$ 
  be the connected algebraic subgroup consisting of those elements
  \begin{equation}\label{eq:example_jonq_n,d}
    (x_1, \ldots, x_n) \mapsto (a_1 x_1 + p_1, 
    a_2 x_2 + p_2(x_1), \ldots, a_n x_n + p_n(x_1, \ldots, x_{n-1})) \, ,
  \end{equation}
  where each monomial $x_1^{e_1} \cdots x_{i-1}^{e_{i-1}}$ in $p_i$ 
  satisfies 
  $e_1 + d e_2 + \ldots + d^{i-2} e_{i-1} \leq d^{i-1}$
  (see Proposition~15.2.5 in \cite{FuKr2018On-the-geometry-of}).
  The element from~\eqref{eq:example_jonq_n,d} acts on $(\bbA^2 \setminus \{0 \})^n$ via
  \[
    (x_i, y_i)_{i=1, \ldots, n} \mapsto 
    (a_i x_i + y_iP_{i}(x_1, y_1, \ldots, x_{i-1}, y_{i-1}), y_i)_{i=1, \ldots, n}
  \]
  where $P_i \in \kk[x_1, y_1, \ldots, x_{i-1}, y_{i-1}]$ is the unique 
  semi-invariant of weight $\lambda_{i-1}^d \cdots \lambda_1^{d^{i-1}}$
  for the given $\mathbb{G}_m^n$-action on $(\bbA^2 \setminus \{0\})^n$ with
  $p_i(x_1, \ldots, x_{i-1}) = P_i(x_1, 1, \ldots, x_{i-1}, 1)$. This action commutes with the $\Gm^n$-action and induces
  an algebraic action of $\Jonq_{n, d}$  on $X_n$, which is  faithful because its restriction to the 
  affine chart $y_1 = \cdots = y_n = 1$ is the natural action on $\mathbb{A}^n$.
  Now, $\dl(\Jonq_{n, d}) = n+1$ if $d$ is big enough since
  $\cup_{d \geq 0} \Jonq_{n, d} = \Jonq_n$.
\end{exa}
}}

\par\bigskip
\bibliographystyle{alpha}
\bibliography{biblio-solvable}

\end{document}